\theoremstyle{plain} 
\newtheorem{global-theorem}{Theorem}
\newtheorem{theorem}{Theorem}[section]
\newtheorem{lemma}[theorem]{Lemma}
\newtheorem{corollary}[theorem]{Corollary}
\newtheorem{conjecture}[theorem]{Conjecture}
\newtheorem{definition}[theorem]{Definition}
\newtheorem{proposition}[theorem]{Proposition}
\newtheorem{remark}[theorem]{Remark}
\newtheorem{question}[theorem]{Question}
\newtheorem{example}[theorem]{Example}
\newtheorem{condition}[theorem]{Condition}
\numberwithin{equation}{section}
\newcommand{\cc}{{\mathbb C}}
\newcommand{\pp}{{\mathbb P}}
\newcommand{\qq}{{\mathbb Q}}
\newcommand{\zz}{{\mathbb Z}}
\newcommand{\nn}{{\mathbb N}}
\newcommand{\Gm}{{\mathbb G}_{\rm m}}
\newcommand{\aaa}{{\mathbb A}}
\newcommand{\Dd}{{\mathcal D}}
\newcommand{\Ff}{{\mathcal F}}
\newcommand{\Oo}{{\mathcal O}}
\newcommand{\Uu}{{\mathcal U}}
\newcommand{\Rr}{{\mathcal R}}
\newcommand{\Mm}{{\mathcal M}}
\newcommand{\Kk}{{\mathcal K}}
\newcommand{\Ww}{{\mathcal W}}
\newcommand{\Cc}{{\mathcal C}}
\newcommand{\Gg}{{\mathcal G}}
\newcommand{\Ll}{{\mathcal L}}
\newcommand{\Ee}{{\mathcal E}}
\newcommand{\Xx}{{\mathcal X}}
\newcommand{\bF}{{\bf F}}
\newcommand{\hdot}{{\bullet}}
\newcommand{\stackquot}{{/\;\!\!\! /}}
\newcommand{\sk}{{\rm sk}}
\newcommand{\csk}{{\rm csk}}
\newcommand{\ArtSt}{{\bf ArtSt}}
\newcommand{\DMS}{{\bf DMSt}}
\newcommand{\stck}{{\bf St}}
\newcommand{\spre}{{\bf SP}}
\newcommand{\TOP}{{\bf TOP}}
\newcommand{\twoarrows}{\,
\overset{\longrightarrow}{\rule{0pt}{1.6pt}\smash[t]{{\scriptstyle \longrightarrow}}}
\,}
\newcommand{\twoarrowsforthree}{\overset{\longrightarrow}{\rule{0pt}{1pt}\smash{{\scriptstyle \longrightarrow}}}}
\newcommand{\threearrows}{\, \overset{\longrightarrow}{\rule{0pt}{4.8pt}\smash{\twoarrowsforthree}}
\,}
\begin{document}

\author[C. Simpson]{Carlos T. Simpson}
\address{CNRS, Laboratoire J. A. Dieudonn\'e, UMR 6621
\\ Universit\'e de Nice-Sophia Antipolis\\
06108 Nice, Cedex 2, France}
\email{carlos@unice.fr}
\urladdr{http://math.unice.fr/$\sim$carlos/} 

\title[Local systems]{Local systems on proper algebraic $V$-manifolds}

\begin{abstract}
We use coverings by smooth projective varieties 
then apply nonabelian Hodge techniques to study the topology of proper
Deligne-Mumford stacks as well as more general simplicial varieties.
\end{abstract}

\keywords{Deligne-Mumford stack, Covering, Simplicial scheme,
Fundamental group, Representation, Higgs bundle, Harmonic map, Moduli stack, 
Mixed Hodge structure}

\thanks{
This research is supported in part by the Agence Nationale de la Recherche
grant ANR-09-BLAN-0151-02 (HODAG)
}


\maketitle


\section{Introduction} \label{sec-introduction}

This paper originated with the project of trying to understand how the
techniques of harmonic theory and moduli spaces would apply to local systems
over smooth proper Deligne-Mumford stacks.

The subject of DM-stacks
has a rich history.
The Kawamata-Viehweg vanishing theorem \cite{KawamataVanishing} \cite{ViehwegVanishing}
was originally proven by techniques involving
cyclic or polycyclic Galois
coverings of a smooth projective variety, ramified over a divisor with
simple normal crossings. In current-day terms, 
Matsuki and Olsson have explained it as an instance of Kodaira vanishing
over a root stack \cite{MatsukiOlsson}. Viewed in this light, the 
vanishing theorem could be considered as one of the first
major results about usual varieties where the geometry of Deligne-Mumford stacks
plays a crucial role. 

The coverings which appear in the original proofs
may be viewed as varieties covering the DM-stack. We will take
up this idea here to say, in Theorem \ref{maincovering},
that any smooth proper DM-stack $X$ is covered by
a map $\phi : Z\rightarrow X$ from a disjoint union of smooth projective varieties
such that
every point downstairs $x\in X$ admits at least one point
$z\in \phi ^{-1}(x)\subset Z$ where $\phi$ is etale. 
A technical contribution to this statement comes from the Chow lemma of 
Gruson and Raynaud
\cite{RaynaudGruson}.  
Proper coverings of stacks by schemes, with essentially similar constructions, have been considered by many authors, see for example \cite{KreschVistoli}, \cite{OlssonStarr},
\cite{OlssonCoverings}. 

The goal here is to use these covering varieties $Z$ to
understand local systems on $X$. In order to do this, it is natural to
look next at $Z\times _XZ$, but then resolve its singularities to get a smooth variety $Z_1$.
This is the beginning of a simplicial resolution
$$
Z_1 \twoarrows Z=Z_0 \rightarrow X
$$
and standard constructions allow it to be completed to a full one. Such simplicial
resolutions were used by Deligne for Hodge theory on singular varieties \cite{hodge3},
and would seem to represent interesting topological objects in their own right.
So we expand the level of generality by usually looking at a simplicial scheme $Z_{\hdot}$ such that the components $Z_k$ are smooth projective varieties.
In the differentiable category, 
these objects have been considered in \cite{Dupont} and \cite{Jeffrey}. 
No further topological generality would be gained by looking at simplicial objects
whose levels are proper algebraic spaces.  

The various moduli stacks of local systems on $X$ may now be expressed
as limits of the moduli stacks for the $Z_k$, Proposition \ref{modulistacks}.
The moduli stacks admit universal categorical quotients which are the 
various versions of the character variety \cite{LubotskyMagid}
of representations up to conjugacy. 
A natural question is to what extent these moduli stacks and their character varieties
behave like in the smooth projective case. 

After considering the general theory of moduli of local systems, we would like
to use the covering varieties to do nonabelian harmonic analysis over the
stack. In fact it turns out that we just have to apply the 
classical theory at each level of
the simplicial variety. The surjectivity of the etale locus of the coverings
allows us to interpret the result in terms of harmonic bundles on the 
original stack. 

Our discussion of nonabelian harmonic theory on stacks adds to 
a subject which has already been treated by several authors
\cite{BGHH} \cite{Dhillon} \cite{Gillet2} \cite{ToenThesis}, 
and for the case of root stacks it is closely related to
harmonic theory for parabolic bundles \cite{Biquard}
\cite{DaskalopoulosWentworth}
\cite{LiNarasimhan} 
\cite{Nakajima} \cite{TMochizuki2} \cite{TMochizuki3}
\cite{Poritz} \cite{SteerWren}. 
The relationship between local systems and ramified covers can be related to
the Chern class calculations of Esnault and Viehweg in \cite{EsnaultViehweg},
going back also to \cite{Ohtsuki},
and related formulae involving parabolic and orbifold bundles 
were studied in \cite{IyerSimpson1} \cite{IyerSimpson2}. 
This subject also connects with Viehweg's recent works such as 
\cite{ViehwegZuo2} \cite{ViehwegZuo}, since Shimura varieties are best
considered as DM-stacks and indeed symmetric spaces were a main part of the 
original motivation for
the notion of $V$-manifold \cite{Satake1} \cite{Satake2} which appears in our title.
The other main motivation came from $\Mm _g$ \cite{DeligneMumford}, but
as Campana has pointed out \cite{Campana04}, local systems on
orbifolds play an important
part in the theory of moduli of more general varieties too. Examples over
stacks locally of ADE-discriminant type up to dimension $6$ 
have been constructed in \cite{Megy}. See \cite{Uludag} for a classification
of orbifold structures over $\pp ^2$. 
Fascinating new examples have arisen with the notion of ``twisted curves'',
see \cite{Chiodo} for references.  

A general simplicial scheme can have a pretty arbitrary topological type,
for example any simplicial set with $X_k$ finite for each $k$ qualifies. For a
general $X_{\hdot}$ one should therefore modify the kind of question being asked---not
which topological types can occur, but rather how the topologies of the $X_k$ interact
with the full topological type of $|X_{\hdot}|$. This is a very interesting question
closely related to the notion of nonabelian weight filtration.

The role of the weight filtration 
is illustrated by looking at the mixed Hodge structure on the complete local
ring of the space of representations of $\pi _1$, Proposition \ref{mhsrepvar}, 
generalizing the recent paper \cite{EyssidieuxSimpson}. A somewhat delicate point to beware of is the choice of basepoints. Even though we only need to use the representation spaces for the first
two pieces of a simplicial resolution $Z_0$ and $Z_1$, the example 
\ref{coordinateplanes} of three planes
meeting in a point readily shows that one should be sure to choose basepoints 
meeting all components of $Z_2$. 

Whereas the intervention of the weight filtration is to be expected in a general
singular situation, one hopes that some kind of purity would hold for
smooth proper DM-stacks. 

For this, we can notice that the simplicial 
resolutions $Z_{\hdot}\rightarrow X$ arising for smooth Deligne-Mumford stacks have
the nice property that the image of
$\pi _1(Z_0)$ is of finite index in $\pi _1(X)$ (Condition \ref{finiteindex}). 
This guarantees that $\pi _1(X)$ doesn't
include loops which jump from one place to another in $Z_0$ by going through the space
$Z_1$ of $1$-simplices.  This condition allows us to recover much of the theory 
of moduli of local systems. 

The finite-index 
condition holds for simplicial hyperresolutions of singular varieties, whenever the singularities are normal or indeed geometrically unibranched. The
phenomenon we are trying to avoid is loops going through singular points and 
jumping from one branch to the other. We can therefore make the essentially 
straightforward observation that much of the theory known for the smooth projective case applies also to geometrically unibranched varieties, and in fact---combining the two examples---to geometrically unibranched DM-stacks. 

Some of the main properties are Hitchin's hyperk\"ahler structure, 
Theorem \ref{hyperkahler} and
the continuous action of $\cc ^{\ast}$ whose fixed points are 
variations of Hodge structure,
Lemma \ref{cstaraction} and Corollary \ref{cstarcont}. 
These results all lead to restrictions on which groups can occur as 
fundamental groups of proper geometrically unibranched DM-stacks.

We take the opportunity to explain how Deligne's theory of \cite{hodge3} applies to
get mixed twistor structures on the cohomology of semisimple local systems. 
Poincar\'e duality implies that these mixed structures are pure in the
case of a smooth proper DM-stack.

Near the end of the paper, we discuss some constructions involving finite group actions,
constructions which motivate the passage from smooth projective varieties
to DM-stacks. If $\Phi$ is a finite group acting on a smooth projective $X$ then
the stack quotient of the moduli stack $\Mm (X,G)\stackquot \Phi$ may be
interpreted as a piece of the moduli stack of $H$-local systems on the DM-stack quotient
$Y=X\stackquot \Phi$ for a suitable group $H$ (Corollary \ref{quotexpression}).
In the last section of the paper, we answer a question posed by D. Toledo many
years ago, showing that any group can be the fundamental group of an irreducible 
variety.

{\em Conventions:} 
All schemes are separated and of finite type over the field $\cc$ of complex numbers. 

{\em Acknowledgements:}
Many early thoughts on these techniques 
came while visiting Columbia University in 2007, and I would like to thank Johan de Jong and the Columbia math department for their hospitality.
I would also like to thank
Nicole Mestrano, Andr\'e Hirschowitz, Charles Walter, Bertrand Toen,
Kevin Corlette, Alessandro Chiodo,
Jaya Iyer, David Favero, and Damien M\'egy for discussions about DM-stacks, Domingo Toledo
for the question about $\pi _1$ of irreducible varieties,
Ludmil Katzarkov for discussions about degenerating 
nonabelian Hodge theory, as well as Philippe Eyssidieux and Tony Pantev for discussions on
mixed Hodge structures.

\section{The topology of simplicial schemes}

Let $\Delta$ be the category of nonempty finite linearly ordered sets
denoted $[n]=\{0, \ldots , n\}$. A {simplicial object} in a category
$\Cc$ is a functor $Y_{\hdot}:\Delta ^o\rightarrow \Cc$, with
levels denoted $Y_k:=Y_{\hdot}([k])$. Following \cite{hodge3} 
an {augmented simplicial object}
is a simplicial object $Y_{\hdot}$ together with another object $S\in \Cc$
and a natural transformation $p:Y_{\hdot}\rightarrow S$ from $Y_{\hdot}$ to the
constant simplicial object with values $S$. This may also be considered as
a functor $(\Delta \cup \{[-1]\} )^o\rightarrow \Cc$ where $[-1]=\emptyset$ is the 
empty linearly ordered set, with $Y_{-1}=S$. We usually write such an object as
$Y_{\hdot}\rightarrow S$, thinking of $\Cc$ as being contained in the category
of simplicial $\Cc$-objects by the constant-object functor.

If $\Cc = Top$ or $\Cc = \Kk$ where $\Kk = Hom (\Delta ^o,Sets)$ is the
Kan-Quillen model category of simplicial sets, then a simplicial $\Cc$-object will
be called a {simplicial space}. A simplicial space $Y_{\hdot}$ has a 
{topological realization} denoted $|Y_{\hdot}|$ which is a space,
defined as the quotient space of 
$$
\coprod _{k\in \Delta} Y_k\times R^k 
$$
by the relation $(\phi ^{\ast}(y),r)\sim (y,\phi _{\ast}r)$
whenever $y\in Y_k, r\in R^m$ and $\phi :[m]\rightarrow [k]$ is a morphism in $\Delta$.
Here $R^k$ are the standard $k$-simplices, fitting together into a cosimplicial space. 
The fat realization $\| X_{\hdot}\|$  
is defined in the same way but using only the injective maps in
$\Delta$. For $\Cc=Top$ some cofibrancy conditions \cite{nlab}
must be imposed in order to have
a homotopy equivalence $\| X_{\hdot}\| \stackrel{\sim}{\rightarrow}|X|$;
these conditions are automatic for $\Cc = \Kk$, and also hold when $X_{\hdot}$
is $s$-split, so they will be tacitly assumed in all statements. 

Suppose now 
$X_{\hdot}$ is a simplicial scheme.
Then applying the usual functor to underlying topological spaces levelwise we obtain
a simplicial space $X^{\rm top}_{\hdot}$ whose levels are the $(X_k)^{\rm top}$. 
The topological realization is a topological space 
$$
|X_{\hdot}|:= |X^{\rm top}_{\hdot}|.
$$
These spaces will be the main objects of our study. 

A simplicial scheme or space has split degeneracies, or is $s$-split in 
Deligne's terminology \cite{hodge3}, if each $X_m$ 
is a disjoint union given by the degeneracy maps
$$
X_m = X^N_m \sqcup \coprod _{k<m, m\twoheadrightarrow k} X^N_k.
$$
The first term is $X_0^N=X_0$. 
We usually assume this condition, which also implies the cofibrancy conditions
refered to above.

A local system on a simplicial space $Y_{\hdot}$ consists of a collection
$L_{\hdot}=\{L_k\}$ of local systems $L_k$ on $Y_k$, together with isomorphisms
$\phi ^{\ast}(L_k)\cong L_m$ whenever $\phi : [k]\rightarrow [m]$ induces $Y_m\rightarrow Y_k$,
and these isomorphisms should satisfy the natural compatibility conditions as
well as being the identity when $\phi$ is. This applies to local systems of abelian 
groups, vector spaces or modules over a ring, but also to local systems of sets
and hence to $G$-torsors which are local systems of $G$-principal homogeneous sets. 

We generally assume that our spaces are good enough that local systems correspond to representations of $\pi _1$. This assumption holds for the underlying topological 
spaces of schemes for example, but also for the realizations of simplicial spaces which
levelwise are good enough and satisfy the required cofibrancy conditions.

Reflecting the fact that realizations and fat realizations are homotopy equivalent,
the notion of local system is equivalent to the analogous notion defined using only
the injective maps $\phi : [k]\hookrightarrow [m]$.

If $L_{\hdot}$ is a local system of abelian groups on $Y_{\hdot}$, we can choose a
compatible system of injective resolutions $\Ff ^{\hdot}_k$ of $L_k$ over $Y_k$.
Taking sections gives a simplicial complex of abelian groups whose
{total complex} $t(\Ff ^{\hdot}_{\hdot}(Y_{\hdot}))$ is defined by
$$
t(\Ff ^{\hdot}_{\hdot}(Y_{\hdot}))^i = \bigoplus _{k+j=i}\Ff ^j_k(Y_k),
$$
with differential using the alternating sign of face maps. 
The cohomology $H^i(Y_{\hdot},L_{\hdot})$ is defined to be the cohomology of
this total complex. It is independent of the choice of resolution.

\begin{lemma}
Suppose $L$ is a local system over $|Y_{\hdot}|$. For each $k$, let $L_k$ be the
pushdown along $Y_k\times R^k\rightarrow Y_k$ of the restriction of $L$. 
Then the $L_k$ fit together to form a local system $L_{\hdot}$ on $Y_{\hdot}$
and this construction establishes an equivalence of categories between local
systems on $|Y_{\hdot}|$ and local systems on $Y_{\hdot}$. If $L$ is a local system 
of abelian groups on 
$|Y_{\hdot}|$ and $L_{\hdot}$ the corresponding local system on $Y_{\hdot}$ then 
there is a natural isomorphism 
$H^i(|Y_{\hdot}|,L)\cong H^i(Y_{\hdot},L_{\hdot})$.
\end{lemma}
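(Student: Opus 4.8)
The plan is to exploit the canonical map $|Y_{\hdot}| \to \{\text{pt}\}$ factored through the individual levels, reducing the statement to a descent argument along the cover $\coprod_k Y_k \times R^k \twoheadrightarrow |Y_{\hdot}|$. First I would make precise the functor in the statement: given $L$ on $|Y_{\hdot}|$, the projection $\pi_k : Y_k \times R^k \to Y_k$ is a homotopy equivalence (since $R^k$ is contractible), so $L_k := (\pi_k)_* (\iota_k^* L)$, where $\iota_k : Y_k \times R^k \to |Y_{\hdot}|$ is the structure map, is a well-defined local system on $Y_k$. For a morphism $\phi : [k] \to [m]$ in $\Delta$, the induced map $Y_m \to Y_k$ together with $\phi_* : R^m \to R^k$ gives a commuting square relating $\iota_m$ and $\iota_k$, and pulling back the identity $\iota_k^* L$ along the two legs yields the comparison isomorphism $\phi^*(L_k) \cong L_m$; the cocycle/compatibility conditions follow from functoriality of pullback, and normalization on identities is immediate. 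So the functor is well-defined.

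For the equivalence of categories, I would construct a quasi-inverse by the standard gluing/descent construction: given $L_{\hdot} = \{L_k\}$ on $Y_{\hdot}$, form $\coprod_k \pi_k^* L_k$ on $\coprod_k Y_k \times R^k$ and use the structure isomorphisms $\phi^*(L_k) \cong L_m$ to descend along the quotient map $q : \coprod_k Y_k \times R^k \to |Y_{\hdot}|$. The key point making this descent legitimate is that local systems on a space with reasonable local properties form a stack for the relevant Grothendieck topology, together with the observation (already invoked in the excerpt) that local systems equal representations of $\pi_1$, so that $|Y_{\hdot}|$, built levelwise from schemes satisfying the cofibrancy hypotheses, is "good enough"; the quotient topology on $|Y_{\hdot}|$ is exactly designed so that a sheaf on it is the same as a compatible family of sheaves on the $Y_k \times R^k$ matching over the gluing data given by the simplices $R^k$ and their faces. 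That these two functors are mutually inverse is then a formal unwinding: starting from $L$, restricting to each cell and pushing down, then regluing, recovers $L$ because the cells cover; starting from $L_{\hdot}$ it is similar. I would phrase this using the comparison between $|Y_{\hdot}|$ and the fat realization $\|Y_{\hdot}\|$ noted just above the lemma, so that only injective maps $[k] \hookrightarrow [m]$ enter and the bookkeeping is minimized.

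For the cohomological statement, I would use a compatible system of injective resolutions. Choose an injective resolution $\Ff^{\hdot}$ of $L$ on $|Y_{\hdot}|$; its restriction-and-pushdown to each $Y_k$ gives an injective resolution $\Ff^{\hdot}_k$ of $L_k$ (pushdown along the homotopy equivalence $\pi_k$ preserves injectivity and exactness up to quasi-isomorphism, and one can correct to genuine injectives functorially). Then $H^i(|Y_{\hdot}|,L)$ is computed by $\Gamma(|Y_{\hdot}|, \Ff^{\hdot})$, and by the very definition of the quotient topology a global section of a sheaf on $|Y_{\hdot}|$ is a compatible family of sections over the cells $Y_k \times R^k$, i.e. over the $Y_k$. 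This identifies $\Gamma(|Y_{\hdot}|, \Ff^j)$ with the end/equalizer of $\prod_k \Ff^j_k(Y_k)$, and a standard spectral sequence or explicit cochain argument — the Bousfield–Kan / cosimplicial-to-total-complex comparison — identifies the derived global sections with the cohomology of exactly the total complex $t(\Ff^{\hdot}_{\hdot}(Y_{\hdot}))$ defined in the excerpt, with its alternating-sign face differential. Independence of the resolution was noted already.

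The main obstacle I expect is the descent step: verifying carefully that local systems genuinely satisfy effective descent along the cell decomposition $q : \coprod_k Y_k \times R^k \to |Y_{\hdot}|$, i.e. that the quotient topology interacts correctly with $\pi_1$ and with the gluing data indexed by faces of simplices. This is where the "good enough" and $s$-split/cofibrancy hypotheses are essential, and where one must be careful that the realization (rather than the fat realization) does not introduce degeneracy-related pathologies — which is precisely why I would pass to $\|Y_{\hdot}\|$ and injective maps throughout. Everything else is formal manipulation of pullbacks, total complexes, and the definition of the quotient topology.
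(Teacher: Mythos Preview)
The paper does not provide a proof of this lemma; it is stated without proof and immediately followed by a corollary. So there is no paper argument to compare against, and your proposal must be judged on its own merits.

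Your outline is essentially the standard argument and is correct in structure. A few remarks on where it could be sharpened. For the equivalence of categories, the cleanest route is the one you gesture at in your last paragraph: pass to the fat realization $\|Y_{\hdot}\|$, which is a genuine CW-complex with cells $Y_k^N \times R^k$ attached along face maps only, so that descent for local systems is literally the statement that a local system on a CW-complex is the same as a compatible system on the cells. The comparison $\|Y_{\hdot}\| \simeq |Y_{\hdot}|$ under the stated cofibrancy hypotheses then transports this back. This avoids having to reason directly about the quotient topology on $|Y_{\hdot}|$ with its degeneracies.

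For the cohomology statement, your spectral-sequence sketch is fine, but note there is a more direct route: the skeletal filtration of $\|Y_{\hdot}\|$ gives a spectral sequence with $E_1^{p,q} = H^q(Y_p, L_p)$ and $d_1$ the alternating sum of face maps, converging to $H^{p+q}(\|Y_{\hdot}\|, L)$; this is exactly the spectral sequence of the double complex whose total complex is $t(\Ff^{\hdot}_{\hdot}(Y_{\hdot}))$. One small inaccuracy: pushforward along $\pi_k$ does not in general preserve injectivity of sheaves, and correcting ``functorially'' to injectives across all $k$ compatibly is exactly the nontrivial step the paper alludes to when it says ``choose a compatible system of injective resolutions''. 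The existence of such compatible resolutions is standard (e.g.\ via Godement resolutions applied levelwise, which are functorial), but you should invoke that explicitly rather than derive them from a single resolution on $|Y_{\hdot}|$.
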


\begin{corollary}
The groupoid of $G$-torsors on $|Y_{\hdot}|$, denoted 
$H^1(|Y_{\hdot}|,G)$, is the $2$-limit of the $\Delta $-diagram of 
groupoids $k\mapsto H^1(Y_{k},G)$.
\end{corollary}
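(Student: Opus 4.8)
The plan is to read this off from the preceding Lemma. First I would note that, although that Lemma was stated with the cohomology of abelian local systems in view, its equivalence-of-categories part is asserted for arbitrary local systems, and as recalled above the notion of local system subsumes local systems of sets, hence $G$-torsors (local systems of $G$-principal homogeneous sets). So, applying the pushdown construction $L\mapsto L_{\hdot}=\{L_k\}$ of the Lemma to a $G$-torsor $L$ on $|Y_{\hdot}|$ gives an equivalence of groupoids between $H^1(|Y_{\hdot}|,G)$ and the groupoid of $G$-torsors on the simplicial space $Y^{\rm top}_{\hdot}$. Since each $Y_k$ is a scheme and, under the standing hypothesis, local systems on it correspond to $\pi_1$-representations, there is no harm in passing between $Y_{\hdot}$ and $Y^{\rm top}_{\hdot}$; so it remains to identify $G$-torsors on $Y_{\hdot}$ with the $2$-limit in question.

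That identification is essentially definitional. The $\Delta$-diagram $[k]\mapsto H^1(Y_k,G)$ is covariant in $\Delta$ --- a morphism $\phi:[k]\to[m]$ induces $Y_m\to Y_k$, whence a pullback $\phi^{\ast}:H^1(Y_k,G)\to H^1(Y_m,G)$ --- i.e.\ it is a cosimplicial groupoid. An object of its $2$-limit, understood as the pseudo-limit, is by definition a family $(L_k)_{k\ge 0}$ with $L_k$ a $G$-torsor on $Y_k$, together with isomorphisms $\theta_\phi:\phi^{\ast}(L_k)\xrightarrow{\sim}L_m$ for each $\phi$, equal to the identity when $\phi$ is and satisfying the cocycle identity $\theta_{\psi\phi}=\theta_\psi\circ\psi^{\ast}(\theta_\phi)$; a morphism in the $2$-limit is a family $L_k\to L_k'$ commuting with the $\theta_\phi$. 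This is word for word the definition of a local system of $G$-torsors on $Y_{\hdot}$ recalled above. So these two groupoids are isomorphic, and composing with the equivalence of the first step proves the statement.

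I do not expect a genuine obstacle here; the content is unwinding definitions. The one point I would be careful about is that the cocycle data $(\theta_\phi)$ is indexed by all of $\Delta$, whereas in practice --- when the $Y_k$ are $s$-split --- one prefers the more economical version using only the injective $\phi:[k]\hookrightarrow[m]$. That the two descriptions agree (equivalently, that the $2$-limit over $\Delta$ coincides with its semicosimplicial truncation) is the groupoid-level counterpart of the homotopy equivalence $\|Y_{\hdot}\|\simeq|Y_{\hdot}|$ invoked before the Lemma, and is harmless under the cofibrancy/$s$-split conditions standing throughout; but it is worth stating explicitly, since for a general cosimplicial object the two $2$-limits differ. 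Apart from that, it is just a matter of checking that the structure isomorphisms coming out of the pushdown construction really are the coherence isomorphisms $\theta_\phi$, and that this matching is functorial in the torsor, so that one indeed obtains an isomorphism --- not merely an equivalence --- of groupoids.
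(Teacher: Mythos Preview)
Your proposal is correct and is exactly the argument the paper intends: the corollary is stated without proof, as an immediate consequence of the preceding Lemma applied to local systems of $G$-principal homogeneous sets, together with the observation that the definition of a local system on $Y_{\hdot}$ is precisely that of an object of the $2$-limit. Your remark about the full $\Delta$ versus the injective-maps-only description is also anticipated by the paper just before the Lemma.
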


This generalizes to higher nonabelian cohomology: if $T$ is an $n$-groupoid
then ${\rm Hom}(\Pi _n(|Y_{\hdot}|),T)$ is the $n+1$-limit of 
the functor from $\Delta$ to $nGPD$ given by $k\mapsto {\rm Hom}(\Pi _n(Y_{k}),T)$.

For basepoints, 
rather than simply choosing a single one, it is often necessary to
consider a map from a simplicial set. 
Suppose $U_{\hdot}$ is a simplicial set with a map $U_{\hdot}\rightarrow Y_{\hdot}$.
For each $k$ we obtain a collection of points $U_k$ mapping to $Y_k$; it is more
convenient not to require the map to be injective. The realization 
$|U_{\hdot}|$ is just the usual realization of the simplicial set, and
we obtain a map $|U_{\hdot}|\rightarrow |Y_{\hdot}|$. 

Say that $U_{\hdot}$ is $0$-truncated if 
the realization $|U_{\hdot}|$ is a $0$-truncated space, i.e. its homotopy groups vanish
in degrees $i\geq 1$. Equivalently, it is a disjoint union of contractible pieces.
A simplicial basepoint is a map $U_{\hdot}\rightarrow Y_{\hdot}$ such that
$U_{\hdot}$ is a $0$-truncated simplicial set with each $U_k$ finite. 

We mainly consider such a $U_{\hdot}$ which is a finite disjoint union of 
standard simplices. 
Let $h([k])$ denote the representable simplicial set represented by $[k]\in \Delta$,
thus  $h([k])_m = \Delta ([m],[k])$. It is contractible.
Suppose given a point $y\in Y_k$; this
induces a map $h([k])\rightarrow Y_{\hdot}$, and furthermore any map
is induced from a point $y\in Y_k$ in that way. 
As notation, write $\langle y\rangle := h([k])$ together with the
given map to $Y_{\hdot}$.
If $\{y_i\}$ is a collection of nondegenerate points $y_i\in Y_{k_i}$ such that
the $\langle y_i\rangle$ are disjoint, their union
$$
U_{\hdot}:= \coprod _i\langle y_i\rangle \rightarrow Y_{\hdot}
$$ 
is a
simplicial basepoint.  
It is often convenient to look at $v_0y_i \in Y_0$, the $0$-th vertex of $y_i$,
corresponding to $[0]\subset [k_i]$. It realizes to a point also denoted 
$ v_0y_i\in |\langle y_i\rangle |\subset |Y_{\hdot}|$. 

Suppose $L_{\hdot}$ is a local system on $Y_{\hdot}$ corresponding to $L$ on 
$|Y_{\hdot}|$. If $U_{\hdot}\rightarrow Y_{\hdot}$ is a map from a simplicial set,
then the restriction of $L_{\hdot}$ to $U_{\hdot}$ is a local system on
the realization $|U_{\hdot}|$. In particular, if $U_{\hdot}$ is $0$-truncated, then the restriction is trivializable on each contractible connected component
of $|U_{\hdot}|$, and a choice of trivialization is equivalent to a choice of
trivialization over any point of this component. 

Apply this to a simplicial basepoint 
$U_{\hdot}=\coprod _i\{\langle y_i \rangle \}$,
with $y_i\in Y_{k_i}$. The inclusion of the $0$-th vertex into the standard simplex 
$R^{k_i}$ yields an isomorphism 
$$
L_{k_i}(y_i)\cong L(v_0y_i).
$$
A trivialization of $L_{\hdot}$ restricted to $U_{\hdot}$ is therefore the
same thing as a collection of trivializations of $L_{k_i}(y_i)$ or
a collection of trivializations of $L(v_0y_i)$. 

If $U\rightarrow Y$ is a map of spaces and $G$ is a group, 
denote by $H^1(Y,U;G)$ the groupoid of $G$-torsors on $Y$ together with trivializations
of the pullbacks to $U$. 
If the image of 
$U$ meets each connected component of $Y$ then this groupoid is a discrete set.

If $U_{\hdot}\rightarrow Y_{\hdot}$ is simplicial basepoint, then we obtain a diagram
$$
k\mapsto H^1(Y_k,U_k;G)
$$
of groupoids. 

\begin{proposition}
\label{h1calc}
Suppose that $U_{\hdot}\rightarrow Y_{\hdot}$ is a simplicial basepoint. 
Suppose $G$ is a group. Suppose that $U_k$ meets all the
connected components of $Y_k$ for $k=0,1,2$. Then
$H^1(|Y_{\hdot}|,|U_{\hdot}|;G)$ is the equalizer of the pair of face maps
$$
H^1(Y_0,U_0; G)\twoarrows H^1(Y_1,U_1; G).
$$
Let $P:= \pi _0(|U_{\hdot}|)$. Then $G^P$ acts on this equalizer and
the quotient groupoid is $H^1(Y_{\hdot},G)$. 
\end{proposition}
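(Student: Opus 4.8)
The plan is to obtain both assertions from the Corollary above, which presents a $G$-torsor on $|Y_{\hdot}|$ as a descent datum along $Y_{\hdot}\to |Y_{\hdot}|$, by carrying trivializations along; the hypothesis is used precisely to make every groupoid of trivialized torsors that occurs into a discrete set, so that all coherences collapse. Concretely: since $U_k$ meets every connected component of $Y_k$ for $k=0,1$, the remark preceding the proposition shows $H^1(Y_0,U_0;G)$ and $H^1(Y_1,U_1;G)$ are discrete; hence their equalizer --- read strictly or as a homotopy equalizer, it makes no difference --- is just the \emph{set} of trivialized torsors $(T_0,\tau_0)$ on $(Y_0,U_0)$ with $d_0^{\ast}(T_0,\tau_0)=d_1^{\ast}(T_0,\tau_0)$ in $H^1(Y_1,U_1;G)$. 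Similarly the $0$-skeleton $U_0$ already meets every component of $|U_{\hdot}|$, so the image of $|U_{\hdot}|$ meets every component of $|Y_{\hdot}|$ and $H^1(|Y_{\hdot}|,|U_{\hdot}|;G)$ is itself a discrete set; thus the first assertion is the statement that two sets are equal.

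To identify them, first unwind the Corollary: by definition of the $2$-limit of the cosimplicial groupoid $k\mapsto H^1(Y_k,G)$ --- which for the $1$-truncated functor $H^1(-,G)$ only uses levels $k\le 2$ --- a $G$-torsor on $|Y_{\hdot}|$ is a torsor $T_0$ on $Y_0$ together with a gluing isomorphism $\gamma\colon d_0^{\ast}T_0\xrightarrow{\sim} d_1^{\ast}T_0$ on $Y_1$ satisfying the cocycle identity on $Y_2$ (and the usual unit normalization along the degeneracy $Y_0\to Y_1$). A trivialization over $|U_{\hdot}|$ amounts, since $|U_{\hdot}|$ is $0$-truncated and $U_0$ meets every component, to a trivialization $\tau_0$ of $T_0$ over $U_0$ such that $\gamma$ respects the two pullbacks of $\tau_0$ over $U_1$. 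Because $H^1(Y_1,U_1;G)$ is discrete, such a $\gamma$ exists at all exactly when $(T_0,\tau_0)$ lies in the equalizer, and then it is unique. So the functor ``restrict to level $0$'' sends $H^1(|Y_{\hdot}|,|U_{\hdot}|;G)$ into the equalizer, and the assignment $(T_0,\tau_0)\mapsto$ (the forced $\gamma$) is an inverse, \emph{provided} the cocycle identity for this $\gamma$ holds automatically.

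That last point is where the third level of the hypothesis enters: both sides of the cocycle identity are isomorphisms of torsors on $Y_2$ that respect the trivializations pulled back to $U_2$, and because $U_2$ meets every component of $Y_2$ there is at most one such isomorphism, so the two sides agree. I expect the bookkeeping in this step --- matching ``$\gamma$ compatible with $\tau_0$ over $U_1$'' with the equalizer condition, and checking the two constructions are mutually inverse --- to be the fussiest part of the argument, but it all takes place among torsors over spaces meeting all of the relevant components, where isomorphisms are unique whenever they exist, so there is no genuine obstruction.

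For the second assertion, $G^{P}$ acts on the equalizer (equivalently, on $H^1(|Y_{\hdot}|,|U_{\hdot}|;G)$) by fixing the torsor and twisting the trivialization componentwise over $P=\pi_0(|U_{\hdot}|)$. Forgetting the trivialization is a $G^{P}$-invariant functor to $H^1(|Y_{\hdot}|,G)$, which by the Corollary is $H^1(Y_{\hdot},G)$, so it descends to the quotient groupoid. I would then check this descended functor is an equivalence: it is essentially surjective because any $G$-torsor on $|Y_{\hdot}|$ is trivializable over the $0$-truncated space $|U_{\hdot}|$; and it is fully faithful because, given trivialized torsors $(T,\tau)$, $(T',\tau')$ and a torsor isomorphism $f\colon T\to T'$, the unique $g\in G^{P}$ with $g\cdot f_{\ast}\tau=\tau'$ is the unique morphism in the quotient groupoid lying over $f$. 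This is the routine ``change of trivialization'' computation; the real content of the proposition sits in the first assertion, and within it in the automatic vanishing at level $2$.
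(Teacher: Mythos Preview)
Your proof is correct and follows essentially the same approach as the paper. The paper's own argument is much terser: it asserts directly that $H^1(|Y_{\hdot}|,|U_{\hdot}|;G)$ is the $2$-limit of the $\Delta$-diagram $k\mapsto H^1(Y_k,U_k;G)$, notes this depends only on $k\le 2$ (forward-referencing Lemma~\ref{limdiag}), and then observes that when all three pieces are discrete the $2$-limit collapses to the ordinary equalizer. You have instead worked from the Corollary about the unpointed $H^1(Y_{\hdot},G)$ and threaded the trivializations through by hand, which amounts to the same computation with the details of the cocycle step made explicit --- in particular your explanation of why the level-$2$ hypothesis forces the cocycle identity is exactly the content the paper leaves implicit.
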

\begin{proof}
The cohomology $1$-groupoid 
$H^1(|Y_{\hdot}|,|U_{\hdot}|;G)$ is the $2$-limit of 
the family of cohomology groupoids 
$H^1(Y_k,U_k; G)$ indexed by $k\in \Delta$. This only depends on the initial part for
$k=0,1,2$, as will be explained later in Lemma \ref{limdiag}. 
If $U_k$ meets all components of $Y_k$ for $k=0,1,2$ then 
the groupoids are discrete, and the $2$-limit is a $1$-limit of a diagram of sets,
which in turn is equal to the stated equalizer. 
\end{proof}

Notice that $(Y_2,U_2)$ doesn't enter into the expression for 
the cohomology groupoid
$H^1(|Y_{\hdot}|,|U_{\hdot}|;G)$. However, if $U_2$ doesn't meet all the connected
components of $Y_2$ then the expression may not be true as the
following example shows. 

\begin{example}
\label{coordinateplanes}
Let $X$ be a singular variety, union of three coordinate planes
meeting at the origin in $\pp ^3$. Let $Y_{\hdot}$ be the
standard simplicial resolution \cite{hodge3} 
with $Y_0$ being the disjoint union of three planes,
the nondegenerate part of $Y_1$ being the disjoint  union of three lines,
and the nondegenerate part of $Y_2$ being the origin.

If $U$ contains a basepoint on each of the
double intersections but not at the origin, then the equalizer in the
expression of Proposition \ref{h1calc} is different from $H^1(X,U,G)$.
\end{example}

To see this, let $X'$ be the pyramid consisting of three copies of $\pp ^1\times \pp ^1$
meeting along three disjoint lines. For a set of basepoints
$U\subset X$ not containing the origin, one can choose a similar collection $U'\subset X'$ for which the expression of the equalizer in
\ref{h1calc} is the same. In the case of $X'$ there are no triple intersections
so the equalizer expression is the correct one and it gives $H^1(X',U',G)$.
However, $\pi _1(|X'|)=\zz$ whereas $X$ was simply connected, so 
$H^1(X',U',G)\neq H^1(X,U,G)$.

\section{Deligne-Mumford stacks}

Let ${\rm Sch}$ denote the category of separated schemes of finite type over $\cc$.
Provided with the etale topology it becomes a site. 

Classically, a $1$-stack over ${\rm Sch}$  is viewed as a category fibered in
groupoids $\Xx \rightarrow {\rm Sch}$, satisfying a descent condition. 
Recall that a fibered category can be strictified to a presheaf of $1$-groupoids
by setting $X(S)$ equal to the groupoid of sections ${\rm Sch}/S\rightarrow \Xx$.
There is also a more topological approach. 

Let $\spre$ denote the category
of simplicial presheaves, with $\Ww$ defined as the class of Illusie weak equivalences.
Let $\spre_1\subset \spre$ be the subcategory of $1$-truncated simplicial presheaves $X$,
that is ones where $X(S)$ has $\pi _i=0$ for $i\geq 2$. 

Given a presheaf of $1$-groupoids, the corresponding presheaf of nerves is in $\spre_1$.
Conversely given a $1$-truncated simplicial presheaf, we can look at the presheaf of
Poincar\'e $1$-groupoids. For speaking of $1$-prestacks, 
these constructions, together with the strictification
construction described above, set up an essential equivalence between the
classical fibered-category point of view, and the category $\spre_1$.

Illusie weak equivalence
defines a class of morphisms still denoted by $\Ww$ in $\spre_1$. The $\Ww$-local objects in $\spre_1$ 
correspond to presheaves of $1$-groupoids or fibered categories which satisfy the
descent condition to be $1$-stacks, see \cite{Hollander} for
example. Denote by $\spre_{1,{\rm loc}}$ the subcategory of $\Ww$-local objects; one
may equivalently take the subcategory of fibrant objects for either the projective or 
injective model structures. 

Dwyer-Kan localization provides a simplicial or $(\infty , 1)$-category 
$$
\stck:= L_{DK}(\spre_{1,{\rm loc}}, \Ww ) \sim  L_{DK}(\spre_{1}, \Ww )
$$
of $1$-stacks on ${\rm Sch}$. It is $2$-truncated, that is to say the mapping spaces
are $1$-truncated, so in Lurie's terminology it corresponds to a $(2,1)$-category.
This is a $2$-category in which all $2$-morphisms are invertible. This is the same as
the classical $2$-category of $1$-stacks over the site ${\rm Sch}$, a compatibility
well-known particularly from Hollander's work \cite{Hollander}.

The above viewpoint involving localization is useful for defining the {topological
realization} of a stack. The topological realization functor on simplicial presheaves,
considered in \cite{realization}, \cite{TelemanSimpson}, \cite{DuggerIsaksen}, is denoted
$$
|\;\; | : \spre \rightarrow Top
$$
where we are using $Top$ as shorthand for the Kan-Quillen model category of simplicial sets. It sends Illusie weak equivalences to weak equivalences, so it passes to
the Dwyer-Kan localizations. Let $\TOP$ denote the $(\infty , 1)$-category which is the
Dwyer-Kan localization of $Top$ by the weak equivalences. Then we get
an $(\infty , 1)$-functor
$$
|\;\; | : L_{DK}(\spre_{1,{\rm loc}},\Ww ) \rightarrow \TOP
$$
which is written as a realization functor for stacks
$$
|\;\; | : \stck \rightarrow \TOP.
$$ 
Note that $|X|$ is equivalent to the realization of any simplicial presheaf which is Illusie weak-equivalent to $X$. From this, follows the compatibility of realization
with etale hypercoverings. If $Y_{\hdot}$ is a simplicial scheme, then since objects
of ${\rm Sch}$ determine representable presheaves, we obtain a simplicial presheaf.
An {etale hypercovering} of a stack $X$ is a morphism in $L_{DK}(\spre, \Ww )$
$$
Y_{\hdot}\rightarrow X
$$
such that the matching maps 
$$
Y_k\rightarrow {\rm match}_k(Y_{\hdot}\rightarrow X)
$$
are coverings in the etale topology.  Here the simplicial coordinate is included in $Y_{\hdot}$ but not in $X$ to emphasize that we are considering this as an augmented
simplicial object in $L_{DK}(\spre , \Ww )$, but it may also be viewed as just a morphism
in $L_{DK}(\spre , \Ww )$. The fact that $X$ is a stack rather than a scheme 
doesn't affect the definition of hypercovering, see Remark \ref{matchstack} below. 

An etale hypercovering is, when viewed as a morphism of simplicial
presheaves, an Illusie weak equivalence.

In this situation, $k\mapsto |Y_k|$ is a simplicial space denoted $|Y|_{\hdot}$.
We have a weak equivalence of spaces 
$$
| (|Y|_{\hdot})| \sim |X|.
$$
In other words, the topological realization of $X$ may be calculated by first choosing
an etale hypercovering, then taking the associated simplicial space, and taking the 
topological realization of that in the sense used at the start of the paper. This brings us back to Noohi's construction of the topological realization of a stack \cite{NoohiRealization}, and similar constructions considered by Gepner, Henriques \cite{GepnerHenriques} and Ebert \cite{Ebert}. 

If $Z_{\hdot}$ is a simplicial object in $\DMS$ then $k\mapsto |Z_k|$ is a simplicial
space, whose realization also denoted $|Z_{\hdot}|$
is functorial in $Z_{\hdot}$. For a simplicial scheme this coincides up to weak equivalence 
with the realization defined previously. In particular, 
if $Z_{\hdot}\stackrel{a}{\rightarrow} X$ is a morphism from a simplicial scheme to a stack,
considering the target as a constant simplicial object which has the same realization,
we obtain a map of spaces
\begin{equation}
\label{simpmap}
|Z_{\hdot}|\stackrel{|a|}{\longrightarrow} |X|.
\end{equation}
In the case of the etale hypercovering $Y_{\hdot}$ this is the weak equivalence
considered above; we shall be interested in it for a proper surjective hypercovering.

A $1$-stack $X$ is a  {Deligne-Mumford (DM) stack} if it has a presentation of the form $X=Z/R$ where 
$Z$ is a separated scheme of finite type over $\cc$, and $R\rightarrow Z\times Z$ is
a groupoid in the category of schemes 
such that each projection $R\rightarrow Z$
is etale.  For smooth DM-stacks, this 
notion is the algebraic analogue of Satake's notion of $V$-manifold
\cite{Satake1} \cite{Satake2} or ``orbifold'',
with the added feature that the generic stabilizer group can be nontrivial. But even if
we start with a $V$-manifold, natural substacks
can have nontrivial generic stabilizer so that possibility remains geometrically motivated and should be included.

The collection of DM-stacks naturally forms a $2$-category which we denote by $\DMS$, 
a full sub-$2$-category of $\stck$. The $2$-category structure comes about because
one can have nontrivial natural automorphisms of morphisms $f:X\rightarrow Y$.
This phenomenon occurs particularly if the automorphism group in $Y$ at the general point
of the image of $f$ is  nontrivial. Note however that if $Y$ is a scheme or
algebraic space, then maps from any stack to $Y$ have no nontrivial automorphisms. 

It is instructive to consider the case where $Y=V\stackquot G$ is a quotient stack of a scheme $V$
by the action of a finite group $G$. In this case, a map $X\rightarrow Y$
is a pair $(T,\phi )$ where $T\rightarrow X$ is a $G$-torsor and $\phi : T\rightarrow V$
is an equivariant map. An isomorphism between two maps $(T,\phi )\cong (T',\phi ')$
is an isomorphism of $G$-torsors $u:T\cong T'$ such that $\phi 'u =\phi$.

Following the previous discussion, let $\spre _{DM}\subset \spre_{1,{\rm loc}}$ denote the
full subcategory of simplicial presheaves corresponding to 
$1$-stacks which are Deligne-Mumford. Then 
$$
\DMS := L_{DK}(\spre_{DM},\Ww )
$$
is the $(\infty , 1)$-category defined by Dwyer-Kan localization along the Illusie weak
equivalences (which, for $\Ww$-local objects, are the same thing as the objectwise
weak equivalences of simplicial presheaves or, in a terminology more adapted to $1$-stacks,
objectwise equivalences of $1$-groupoids). Again this is $2$-truncated, i.e. it is
really a $(2,1)$-category, and we denote also by $\DMS$ the same considered as a classical $2$-category in which the $2$-morphisms are invertible. 

This $2$-category has a $1$-truncation $\tau _{\leq 1}\DMS$. It is the category whose
objects are DM-stacks and whose morphisms are equivalence classes of morphisms.
The projection functor 
$$
\DMS \rightarrow \tau _{\leq 1}\DMS
$$
does not have a section, as one can already see on examples of the form $BG$ for a finite
group $G$. Hence, when we speak of
a ``map between DM-stacks'' it means a morphism of simplicial presheaves or
fibered categories. Thus, by the ``category of DM-stacks'' we really
mean either $\spre_{DM}$ or the more classical category whose objects are categories fibered
in $1$-groupoids over ${\rm Sch}$. In these categories there will usually be several different morphisms representing the same equivalence class. 

The $2$-functor $\DMS\rightarrow \TOP$ gives us some additional structure. Suppose
$X,Y$ are DM-stacks. Then $Hom  _{\DMS}(X,Y)$ is a groupoid, and its realization
maps to the space $Hom _{\TOP}(|X|, |Y|)$. Given a map $X\rightarrow Y$, 
this gives a map of spaces from the classifying space of the finite group
of  natural automorphisms of $f$, 
to the the mapping space:
$$
B(Aut _{\DMS (X,Y)}(f))\rightarrow Hom _{\TOP}(|X|, |Y|).
$$
The first part of this structure is just the map of groups
$$
Aut _{\DMS (X,Y)}(f)\rightarrow \pi _1(Hom _{\TOP}(|X|, |Y|),|f|)
$$
but the map of spaces contains extra structure which would be
interesting to study further.

\section{The structure of DM-stacks} \label{sec-structure}

One of the original goals of this work was to get information about the
topology of DM-stacks. In preparation for the construction of smooth projective
covering varieties, we first
recall some standard structural results.
Many references are available: we have found \cite{Gomez} to be useful and concise,
\cite{ToenThesis} discusses a wide range of topics, numerous papers of Olsson and
other co-authors \cite{OlssonCoverings} \ldots provide invaluable viewpoints,
and \cite{Alper} is a guide to the extensive
literature; 
in the future \cite{StacksProject} will provide a definitive reference.

A closed substack is a morphism $Y\rightarrow X$ such that on any etale
chart $Z_i\rightarrow X$, the fiber product $Y\times _XZ_i$ is a closed
subscheme of $Z_i$. This amounts to specifying a closed substack on each
chart, compatible with the glueing equivalence relation. The intersection of
any number of closed substacks is again a closed substack. Notice, however,
that a morphism from a point is not generally a closed substack, for
example the only closed substacks of $BG$ are $\emptyset$ and $BG$ itself. 

A {Cartier divisor} $D$ on $X$ is the specification for each etale chart $p:Z_p\rightarrow X$ of a
Cartier divisor $D_p$ on $Z_p$, such that if $Z_q \stackrel{f}{\rightarrow} Z_p \rightarrow X$ is a
diagram of etale charts then $f^{\ast}(D_p)= D_q$. In this paper the word {divisor} will mean a Cartier divisor.
For a scheme or an algebraic space this definition coincides with the usual one. 
If $f:X\rightarrow Y$ is a morphism of DM-stacks and $D$ is a divisor on $Y$ then, if no irreducible component of $X$ maps into $D$ we
can define the pullback $f^{\ast}(D)$. The divisors $D_p$ in the definition above are also the  pullbacks $D_p = p^{\ast}(D)$.  
We say that $D$ has {normal crossings} if
for any etale chart $p:Z_p \rightarrow X$ the divisor $p {\ast}(D)=D_p$ has normal crossings. A divisor may be identified with a
closed substack. The etale charts for the substack $D$ are the $D_{p_i}$ for etale charts $p_i$ covering $X$. 

A DM-stack $X$ is {separated} if the diagonal map $X\rightarrow X\times X$ is proper,
which is equivalent to a valuative criterion or also to saying that the
map $R\rightarrow Z\times Z$ in the groupoid defining $X$ is proper. 

A DM-stack 
$X$ is proper if and only if it is separated and satisfies the
valuative criterion, saying that for any discrete valuation ring $A$ with fraction field $K$ and any map 
$Spec (K)\rightarrow X$ there exists an extension to a map $Spec(A')\rightarrow X$
where $A'$ is the normalization of $A$ in a finite extension $K'$ of $K$.
This is equivalent to the existence of a surjective covering map from a proper scheme
\cite{LaumonMB} \cite{OlssonCoverings} \cite{Gomez}.

Recall the results of Keel and Mori \cite{KeelMori}. For any separated finite-type 
DM-stack $X$ there exists
an algebraic space $X^{\bf c}$ called the {coarse moduli space} together
with a finite map $X\rightarrow X^{\bf c}$.  It is universal for maps from $X$ to
a separated algebraic space, and furthermore if $Y$ is an algebraic space mapping to
$X^{\bf c}$ then $X\times _{X^{\bf c}}Y\rightarrow Y$ is also universal for maps to an algebraic
space. 

Locally over $X^{\bf c}$ in the etale topology, $X$ is a quotient stack. 
Toen \cite[Proposition 1.17]{ToenThesis} refers to Vistoli
\cite[Proof of 2.8]{Vistoli} for this statement; see also Kresch \cite{Kresch}.

The functorial resolution of singularities of Bierstone-Milman \cite{BierstoneMilman} and Villamayor \cite{Villamayor} implies resolution of
singularities for Deligne-Mumford stacks:

\begin{proposition}
Suppose $X$ is a reduced separated DM-stack of finite type over $Spec (\cc )$.
Then there exists a surjective proper birational 
morphism $Z\rightarrow X$ of DM-stacks, an isomorphism over the
dense Zariski open substack of smooth points of $X$, such that $Z$ is smooth.
\end{proposition}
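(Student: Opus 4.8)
The plan is to reduce the statement for Deligne--Mumford stacks to the known functorial resolution of singularities for schemes, using the fact that functoriality makes the construction descend along \'etale charts. First I would invoke the structural results recalled just above: since $X$ is a separated finite-type DM-stack, by Keel--Mori it admits a coarse moduli space $X^{\bf c}$, and locally in the \'etale topology over $X^{\bf c}$ the stack $X$ is a quotient stack; in any case $X$ is covered by an \'etale atlas $p:Z_p\to X$ with $Z_p$ a separated scheme of finite type over $\cc$. The point of using the Bierstone--Milman \cite{BierstoneMilman} and Villamayor \cite{Villamayor} resolution is precisely that it is \emph{functorial} for smooth morphisms (in particular for \'etale morphisms): if $f:W\to Z$ is smooth, then the canonical resolution $\widetilde{W}\to W$ is canonically identified with $\widetilde{Z}\times_Z W\to W$. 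Applying this to the two projections of the groupoid $R\rightrightarrows Z$ presenting $X$ (both of which are \'etale), the resolutions $\widetilde{Z}$ and $\widetilde{R}$ glue: one gets an induced \'etale groupoid $\widetilde{R}\rightrightarrows\widetilde{Z}$ whose quotient stack $\widetilde{X}:=\widetilde{Z}/\widetilde{R}$ is a smooth separated DM-stack of finite type, with a morphism $\widetilde{X}\to X$.

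Next I would check the asserted properties of $\widetilde{X}\to X$ one chart at a time, since each is local in the \'etale topology on $X$. Properness, surjectivity, and birationality of $\widetilde{Z}_p\to Z_p$ hold on each \'etale chart by the scheme-level theorem, and these properties (for morphisms of DM-stacks, properness via the valuative criterion or via $R\to Z\times Z$ being proper, as recalled above) may be verified after base change to an \'etale cover; hence $\widetilde{X}\to X$ is proper and surjective. Smoothness of $\widetilde{X}$ is the statement that the atlas $\widetilde{Z}$ is smooth, which is the scheme-level output. For birationality and the isomorphism over the smooth locus: let $U\subset X$ be the open substack of smooth points, which is dense since $X$ is reduced (it pulls back to the dense smooth locus in each chart $Z_p$, and the functorial resolution is an isomorphism there); functoriality guarantees these chart-wise isomorphisms over smooth loci are compatible with the glueing, so $\widetilde{X}\to X$ restricts to an isomorphism over $U$, and in particular is birational in the sense appropriate for stacks (an isomorphism over a dense open substack).

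The main obstacle — really the only substantive point — is verifying that the chart-wise resolutions genuinely glue, i.e. that functoriality of the Bierstone--Milman/Villamayor algorithm for \'etale (or more generally smooth) morphisms is strong enough to produce a \emph{canonical} isomorphism $\widetilde{R}\xrightarrow{\sim}\widetilde{Z}\times_{Z,\,s}R\xrightarrow{\sim}\widetilde{Z}\times_{Z,\,t}R$ satisfying the cocycle condition over triple overlaps $R\times_Z R$. This is where one must cite the precise form of functoriality in \cite{BierstoneMilman} and \cite{Villamayor}; the compatibility with compositions of smooth morphisms gives the cocycle condition automatically, so once the base-change property for a single smooth morphism is granted, the descent is formal. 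I would remark that this is exactly the same mechanism by which functorial resolution is known to descend along \'etale covers, so no new difficulty arises from the stacky setting beyond bookkeeping with the groupoid. One should also note that properness of $\widetilde{X}\to X$ uses that resolution of singularities of a finite-type $\cc$-scheme is a \emph{projective} (hence proper) morphism, which is part of the cited results.
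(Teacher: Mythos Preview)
Your proposal is correct and follows essentially the same approach as the paper: both argue that the functoriality of the Bierstone--Milman/Villamayor resolution for \'etale morphisms lets the chart-wise resolutions glue along the groupoid presenting $X$. Your write-up simply spells out in detail (the cocycle condition, verification of properness/birationality/smoothness chart by chart) what the paper compresses into a one-sentence reference to the glueing procedure in \cite{BierstoneMilmanFunctoriality}.
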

\begin{proof}
Functoriality of the resolution procedure for etale morphisms means that 
the glueing procedure described in \cite[\S 7.1]{BierstoneMilmanFunctoriality},
see also \cite{Villamayor},
extends to the case of etale open coverings of a DM-stack.
\end{proof}

One of the main constructions of Deligne-Mumford stacks is to look
at the {Cadman-Vistoli root stacks}.
Let $X$ be a smooth projective variety, and $D=D_1+ \ldots + D_k$ a divisor with normal crossings broken 
up into its components
$D_i$ which are assumed to be irreducible and smooth. Fix a sequence of strictly positive integers $n_1,\ldots , n_k$. 
Cadman \cite{Cadman} defines and studies a stack $Z:= X[\frac{D_1}{n_1}, \ldots , \frac{D_k}{n_k}]$ with a morphism $Z\rightarrow X$. 
Often we choose the same $n$ for each component. Vistoli had also considered these stacks,
see \cite{AGV}.

In a philosophical sense, the 
technique of root stacks may be traced back to Viehweg's use of cyclic
coverings branched along a normal crossings divisor \cite{ViehwegVanishing}
and Kawamata's covering lemma \cite{Kawamata}.
This covering technique has been used by many authors since then; for a recent example
see Urz\'ua \cite{Urzua}.  

The stack $Z$ can be explicitly presented as a quotient stack locally in the Zariski topology of $X$, indeed the
construction of etale charts in
Cadman \cite{Cadman} actually gives a local quotient structure. 
Over a neighborhood in $X$ where $D_i$ have equations $f_i=0$, the chart
is the subvariety of $X\times \aaa ^k$ given by $f_i = u_i^{n_i}$. 

Within the local charts, one can remark that
there is a standard divisor denoted $R= R_1+ \ldots + R_k$ in $ X[\frac{D_1}{n_1}, \ldots , \frac{D_k}{n_k}]$, and 
$n_i\cdot R_i = p^{\ast}(D_i)$ where $p$ is the projection from the Cadman stack back to $X$. In particular if 
all the $n_i$ are the same $n$ then $n\cdot = D$. Note also that $R$ has normal crossings, as can be seen in the local charts.

\begin{lemma}
\label{coveringhowto}
Suppose $f:Y\rightarrow X$ is a finite Galois covering from a normal variety, 
unramified outside $D$,
with Galois group $\Phi$ corresponding to a representation $\varphi : \pi _1(X-D)\rightarrow \Phi$. Then $f$ lifts to a map $\tilde{f}:Y\rightarrow Z$
if and only if, for each point $x\in D_{i_1}\cap \cdots \cap D_{i_r}$ 
the kernel of the map from the 
local fundamental group 
$$
\zz ^r\rightarrow \Phi
$$
is contained in $n_{i_1}\zz \oplus \cdots \oplus n_{i_r}\zz\subset \zz ^r$. 
The map $\tilde{f}$ is an etale covering space if and only if 
equality holds for the kernel at each point $x$. In this case $Y$ is smooth.
\end{lemma}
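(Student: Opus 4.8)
The plan is to reduce both assertions to an \'etale-local computation around a point of $D$ and there to read off the lift, and its \'etaleness, from the arithmetic of finite-index subgroups of $\zz^r$. Over $X-D$ (which equals $Z-R$) the map $Z\to X$ is an isomorphism and $Y\to X$ is \'etale, so there is a tautological lift of $f$ over $Y-f^{-1}(D)$; as $Z$ is a separated DM-stack and $Y$ is reduced, any two lifts of $f$ coincide where both are defined, so $\tilde f$ exists iff the tautological lift extends \'etale-locally on $Y$, and likewise ``$\tilde f$ \'etale'' is \'etale-local. Fix $x\in D_{i_1}\cap\cdots\cap D_{i_r}$ and an \'etale chart $p:U\to X$ around $x$ with $D_{i_j}=\{z_j=0\}$ for coordinates $z_1,\dots,z_n$ and no other component of $D$ meeting $U$; set $N_x:=n_{i_1}\zz\oplus\cdots\oplus n_{i_r}\zz\subseteq\zz^r=\pi_1(U-D)$ and let $K_x\subseteq\zz^r$ be the kernel of the composite $\zz^r\to\pi_1(X-D)\xrightarrow{\varphi}\Phi$ (well defined since $\zz^r$ is abelian, so the conjugacy ambiguity in the composite drops out). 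On the $Z$ side, Cadman's local description gives $Z|_U=[V/\mu]$ with $V=\{u_j^{n_{i_j}}=z_j\}_j\cong\aaa^n$, $\mu=\prod_j\mu_{n_{i_j}}$ and standard divisor $R_{i_j}=\{u_j=0\}$, so $V^{\circ}:=V\times_U(U-D)\to U-D$ is the $\mu$-torsor classified by the evident surjection $\zz^r\twoheadrightarrow\mu$, whence $\pi_1(V^{\circ})=N_x$. On the $Y$ side, each connected component $Y'$ of $Y\times_XU$ is a normal scheme, finite over $U$ and \'etale over $U-D$, with monodromy the $\Phi$-set $\Phi$ under left translation via $\zz^r\to\Phi$; since left-translation stabilizers are trivial, $\pi_1(Y'^{\circ})=K_x$ for \emph{every} such component, viewed as a subgroup of $\pi_1(U-D)=\zz^r$ (here $Y'^{\circ}:=Y'\cap f^{-1}(U-D)$).

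For the existence criterion: by Cadman's description of maps into the root stack, a lift $Y'\to Z|_U$ over $U$ extending the tautological map over $Y'^{\circ}$ is, after trivializing the relevant line bundles (done \'etale-locally on $Y'$), the same as a choice of $n_{i_j}$-th root of the invertible function $f^{*}z_j$ on $Y'^{\circ}$ for each $j$; such a root is integral over the normal ring $\Oo(Y')$, hence extends over $Y'$, and then it provides the required bundle-and-section data. Since $z_j:U-D\to\cc^{*}$ induces the $j$-th coordinate map $e_j^{*}:\zz^r\to\zz$ on $\pi_1$, the function $f^{*}z_j$ admits an $n_{i_j}$-th root on $Y'^{\circ}$ precisely when $e_j^{*}(K_x)\subseteq n_{i_j}\zz$; imposing this for all $j$ is exactly $K_x\subseteq N_x$. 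Running over all $x$ and all components, $\tilde f$ exists iff $K_x\subseteq N_x$ for every $x$.

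Assuming $\tilde f$ exists, its \'etaleness is tested after base change to the atlas $V\to Z|_U$: $\tilde f$ is \'etale iff $Q:=Y'\times_{Z|_U}V\to V$ is \'etale, where $Q\to Y'$ is a $\mu$-torsor (so $Q$ is normal) with $Q^{\circ}=Y'^{\circ}\times_{U-D}V^{\circ}$, so every component of $Q$ has fundamental group $K_x\cap N_x$ inside $\zz^r$. As $V$ is regular, $Q$ is normal, and $Q\to V$ is already \'etale over $V^{\circ}$, purity of the branch locus reduces \'etaleness to unramifiedness at the generic points $\eta_j$ of $\{u_j=0\}$; there the local monodromy of $V^{\circ}$ is $n_{i_j}e_j\in N_x$, so the ramification index along $\{u_j=0\}$ is the order of $n_{i_j}e_j$ in $N_x/(K_x\cap N_x)$, which equals $1$ for all $j$ iff each generator $n_{i_j}e_j$ of $N_x$ lies in $K_x$, i.e. iff $N_x\subseteq K_x$. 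Combined with the existence criterion, $\tilde f$ is a covering space iff $K_x=N_x$ for every $x$; in that case $Z$ is smooth (\'etale-locally it is $\aaa^n$), so $Y$ is smooth because it is \'etale over $Z$, and one checks that $\tilde f$ is quasi-finite, proper and representable, hence a finite \'etale cover.

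The main obstacle is the bookkeeping that makes $K_x$ and $N_x$ unambiguous and correctly matched: one must check that \emph{every} connected component of $Y\times_XU$ has the stated local fundamental group $K_x$ --- precisely the basepoint subtlety highlighted in the introduction --- and that the three descriptions used, namely ``admits an $n$-th root'', ``has ramification index $1$ along a divisor'', and ``contains/is contained in a subgroup of $\zz^r$'', all coincide; once this dictionary is in place, Cadman's explicit model does the rest. A secondary point requiring care is the appeal to purity of the branch locus in the \'etale step, which uses that $Q$ is normal and $V$ regular.
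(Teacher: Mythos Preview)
The paper states this lemma without proof, treating it as a standard consequence of the local structure of root stacks. Your argument is correct and supplies the missing details: the reduction to an \'etale-local statement, the identification of $\pi_1(V^\circ)=N_x$ from Cadman's chart and $\pi_1(Y'^\circ)=K_x$ from the Galois structure, the translation of ``lift exists'' into ``$f^*z_j$ admits an $n_{i_j}$-th root on $Y'^\circ$'' (extended over $Y'$ by normality), and the use of purity of the branch locus on the smooth atlas $V$ for the \'etaleness criterion are all sound. The care you take with the component bookkeeping---showing that every connected component of $Y\times_X U$ has local fundamental group $K_x$ because left-translation stabilizers in $\Phi$ are trivial---is exactly the kind of point the paper's later Proposition~\ref{cadman-map} relies on implicitly. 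One small clarification worth making explicit: to apply purity you need $Q\to V$ finite, which follows since $Y'\to Z|_U$ is proper (as $Y'\to U$ is finite and $Z|_U\to U$ is separated) and quasi-finite, hence finite, and this pulls back along the atlas.
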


The ``Kawamata covering lemma'' \cite[Theorem 17]{Kawamata} gives us projective
varieties covering the root stack. I first learned about this kind of idea when
reading Viehweg's paper \cite{ViehwegVanishing}, even though his technical
approach, investigating further the singularities of purely cyclic coverings
arising from the crossing points, is different from Kawamata's.  

\begin{lemma}
\label{coveringlemma}
If $X$ is a smooth variety with simple normal crossings divisor $D=D_1+\ldots +D_k$,
and if $Z= X[\frac{D_1}{n_1}, \ldots , \frac{D_k}{n_k}]$ is a root stack, then
for any $z\in Z$
there exists a smooth variety $Y$ with a finite, flat morphism 
$r:Y\rightarrow Z$ such that 
$r$ is a finite etale covering over a neighborhood of $z$. 
\end{lemma}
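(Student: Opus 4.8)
The plan is to realize $Y$ as a Kawamata covering of $X$ adapted to $D$ and the integers $n_i$, to use Lemma~\ref{coveringhowto} to recognize that the resulting map to a suitable root stack is étale, and then to identify that root stack with $Z$ near $z$. Two routine reductions come first. The conclusion is preserved under restriction to an open substack of $Z$ — finiteness, flatness, smoothness of the source and the étale locus all restrict — so one may replace $X$ by a smooth projective compactification $\bar X$ in which the closures $\bar D_i$ of the $D_i$, together with $\bar X\setminus X$, form a simple normal crossings divisor; then $Z$ is an open substack of $\bar Z=\bar X[\frac{\bar D_1}{n_1},\dots,\frac{\bar D_k}{n_k}]$ and $z$ still lies in it. So assume $X$ projective. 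Let $x\in X$ be the image of $z$; after relabelling, $x\in D_1\cap\cdots\cap D_r$ and $x\notin D_i$ for $i>r$, so the local fundamental group of $X\setminus D$ at $x$ is $\zz^r$, generated by the loops around $D_1,\dots,D_r$.

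First I would apply the Kawamata covering lemma \cite[Theorem 17]{Kawamata} to $(X,\,D_1+\cdots+D_k)$ with the multiplicities $n_1,\dots,n_k$. In its precise form this produces a smooth projective variety $Y$ and a finite flat Galois morphism $f:Y\to X$, ramified only over $D$ together with finitely many auxiliary very ample divisors $H_1,\dots,H_s$, such that $f^{\ast}D_i=n_i\,(f^{\ast}D_i)_{\rm red}$ for all $i$ and $f^{\ast}H_l=m_l\,(f^{\ast}H_l)_{\rm red}$ for suitable integers $m_l\ge 1$, with $D+H_1+\cdots+H_s$ a simple normal crossings divisor. Since the $H_l$ can be taken to be general members of their (very ample) linear systems, I would also require $x\notin H_1\cup\cdots\cup H_s$.

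Next I would pass to the larger root stack
\begin{align*}
Z' &:= X\Big[\frac{D_1}{n_1},\dots,\frac{D_k}{n_k},\frac{H_1}{m_1},\dots,\frac{H_s}{m_s}\Big]\\
&\cong Z\times_X X\Big[\frac{H_1}{m_1}\Big]\times_X\cdots\times_X X\Big[\frac{H_s}{m_s}\Big],
\end{align*}
with the projection $q:Z'\to Z$, which is finite and flat (in each of the extra coordinates it is the standard root chart $[\aaa^1/\mu_m]\to\aaa^1$). Now apply Lemma~\ref{coveringhowto} to $f$, taking the normal crossings divisor there to be $D+H_1+\cdots+H_s$ and the root stack to be $Z'$: the equalities $f^{\ast}D_i=n_i(f^{\ast}D_i)_{\rm red}$ and $f^{\ast}H_l=m_l(f^{\ast}H_l)_{\rm red}$ say that the ramification index of $f$ along every component of its branch divisor equals exactly the prescribed root order, so the kernel of each local monodromy homomorphism to the Galois group is precisely the prescribed sublattice. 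Hence $f$ lifts to a morphism $\tilde f:Y\to Z'$ which is a finite étale covering space (re-confirming also that $Y$ is smooth).

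Finally, set $r:=q\circ\tilde f:Y\to Z$; it is finite and flat, being a composite of finite flat morphisms. Because $x\notin H_1\cup\cdots\cup H_s$, the point $z$ lifts uniquely into $Z'$ and $q$ restricts to an isomorphism over the open substack $V\subseteq Z$ lying above $X\setminus(H_1\cup\cdots\cup H_s)$. This $V$ is a neighbourhood of $z$, and over it $r$ agrees with $\tilde f$, hence is a finite étale covering; this is the assertion. The crux, as I see it, is the invocation of Kawamata's lemma: its real content is the passage from a purely local ramification prescription to a \emph{global} smooth projective cover whose ramification index along each $D_i$ is exactly $n_i$, not merely a multiple of it. Everything else is routine — the genericity of the auxiliary divisors lets them miss the single point $x$, and the finiteness, flatness, and generic triviality of the root-stack projection $q$ are read off from the local charts.
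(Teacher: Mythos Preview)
Your proof is correct and follows essentially the same approach as the paper: invoke Kawamata's covering lemma to produce a smooth $Y$ ramified along $D$ plus auxiliary general very ample divisors, use Lemma~\ref{coveringhowto} to see that $Y$ maps \'etale to the larger root stack, then project down to $Z$ and observe that the auxiliary divisors can be chosen to miss the given point. Your explicit reduction to the projective case is a small addition (the paper tacitly assumes this, as the lemma is only applied to projective $Y$ in Theorem~\ref{maincovering}), and your justification ``composite of finite flat morphisms'' for $r$ is slightly loose since $q:Z'\to Z$ is not representable, but the conclusion holds because $Y$ is a scheme so $r$ is automatically representable, and then proper, quasi-finite, and flat.
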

\begin{proof}
Recall the procedure from \cite{Kawamata}. For each divisor component $D_i$,
choose a very ample divisor $K_i$ such that $D_i+K_i$ is a multiple of $n_i$ in
${\rm Pic}(X)$. Then choose representatives $K_i^j\sim K_i$, such that the full divisor
$$
\Dd _K := \sum _i D_i + \sum _{i,j} K_i^j
$$
has normal crossings. For each $i,j$ there is a cyclic covering branched 
along $D_i$ and $K_i$ determined by
choosing an $n_i$-th root of $D_i+K^j_i$. These coverings determine subgroups of 
$\pi  _1(X-\Dd _K)$, and Kawamata shows (in a more algebraic notation)
that if enough $K^j_i$ are chosen for
each $i$, then 
the intersection of all of these subgroups satisfies the
conditions of Lemma \ref{coveringhowto}. That gives a smooth variety $Y$ branched
over $\Dd _K$ and mapping to the root stack over $\Dd _K$. Composing with the projection
$$
Y\rightarrow X[\ldots , \frac{D_i}{n_i},\ldots , \frac{K^j_i}{n_i},\ldots ]
\rightarrow X[\ldots , \frac{D_i}{n_i},\ldots ]
$$
gives a finite flat map $r$. The $K^j_i$ are chosen arbitrarily in very ample 
linear systems, so we can assume that they miss the given point $z$,  
in which case $r$ will be etale over $z$. 
\end{proof}

The Cadman-Vistoli root stack satisfies a good extension property for morphisms. 

\begin{proposition}
\label{cadman-map}
Suppose $(X,D)$ is a smooth variety with normal crossings divisor as above. Suppose $Y$ is an irreducible  DM-stack with 
coarse moduli space 
$Y^{\bf c}$. 
Suppose given a diagram 
$$
\begin{array}{ccc}
X-D & \rightarrow & Y \\
\downarrow && \downarrow \\
X & \rightarrow & Y^{\bf c}.
\end{array}
$$
Suppose $n_i$ are strictly positive integers. A lifting over the root stack 
$$
\tilde{f}: X[\frac{D_1}{n_1},\ldots , \frac{D_k}{n_k}] \rightarrow Y
$$
fitting into commutative diagrams with the given maps, is unique up to unique isomorphism
if it exists. Furthermore, there exists a choice of $n_i$ such that a lifting exists. 
\end{proposition}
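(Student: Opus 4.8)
The plan is to handle uniqueness by a normality argument and existence by passing to \'etale charts of $Y$ over $Y^{\bf c}$, where $Y$ becomes a quotient stack, lifting there, and gluing.

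\emph{Uniqueness.} The root stack $Z = X[\frac{D_1}{n_1},\ldots,\frac{D_k}{n_k}]$ is smooth, hence a normal integral DM-stack, and $Z\to X$ restricts to an isomorphism over the dense open $X-D\subset Z$. Since $Y$ has a coarse moduli space it has finite inertia and in particular is separated. Given two liftings $\tilde f,\tilde f'$, the commutativity data canonically identify their restrictions to $X-D$ with the given map $X-D\to Y$; thus $\underline{\mathrm{Isom}}_Z(\tilde f,\tilde f')$, which is representable, finite and unramified over $Z$, carries a section over the dense open $X-D$, and since $Z$ is normal this section extends uniquely over $Z$. This is the required unique $2$-isomorphism; the analogous density argument (using that $Y^{\bf c}$ is separated) gives that the resulting lift is automatically compatible with $X\to Y^{\bf c}$ once it is compatible over $X-D$.

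\emph{Reduction for existence.} Choose an \'etale covering $\{W_\alpha\to Y^{\bf c}\}$ over which $Y_\alpha:=Y\times_{Y^{\bf c}}W_\alpha$ is a quotient stack $[V_\alpha/G_\alpha]$, with $G_\alpha$ finite acting on a scheme $V_\alpha$ (local structure of $Y$ over $Y^{\bf c}$ recalled above). Pulling back along $X\to Y^{\bf c}$ gives an \'etale covering $X_\alpha:=X\times_{Y^{\bf c}}W_\alpha\to X$ of the smooth variety $X$, with $D_\alpha:=D\times_X X_\alpha$ normal crossings; since root stacks commute with \'etale base change, $Z_\alpha:=X_\alpha[\frac{D_{1,\alpha}}{n_1},\ldots] = Z\times_X X_\alpha$, an \'etale covering of $Z$. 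On $U_\alpha:=X_\alpha-D_\alpha$ the given map becomes a $G_\alpha$-torsor $P_\alpha$ together with a $G_\alpha$-equivariant map $P_\alpha\to V_\alpha$, and the induced map $U_\alpha\to V_\alpha/G_\alpha$ is the restriction of $X_\alpha\to W_\alpha$.

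\emph{The local lift, choice of $n_i$, and gluing.} The torsor $P_\alpha$ is classified by a homomorphism $\pi_1(U_\alpha)\to G_\alpha$; a small loop around any component of $D_\alpha$ lying over $D_i$ maps to an element of the finite group $G_\alpha$, hence has finite order. I take $n_i$ divisible by all these orders, over the finitely many $\alpha$ and the finitely many components. Then $P_\alpha$ extends to a $G_\alpha$-torsor $\tilde P_\alpha$ on $Z_\alpha$, because the fundamental group of the root stack is obtained from $\pi_1(U_\alpha)$ by killing the $n_i$-th powers of those loops --- the local statement underlying Lemma \ref{coveringhowto}. The DM-stack $\tilde P_\alpha$ is smooth, hence normal, and it maps to $V_\alpha/G_\alpha$ through $Z_\alpha\to X_\alpha\to W_\alpha$ compatibly with the lift $P_\alpha\to V_\alpha$ over the dense open $P_\alpha$; since $V_\alpha\to V_\alpha/G_\alpha$ is finite, normality of $\tilde P_\alpha$ forces this lift to extend to $\tilde P_\alpha\to V_\alpha$ (take the closure of the graph and use that a finite birational morphism onto a normal target is an isomorphism), and equivariance extends by density since $V_\alpha$ is separated. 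This produces $\tilde f_\alpha\colon Z_\alpha=[\tilde P_\alpha/G_\alpha]\to[V_\alpha/G_\alpha]\to Y$ restricting to the given map over $U_\alpha$. On the \'etale overlaps $Z_\alpha\times_Z Z_\beta$ (again \'etale over $Z$, hence normal) the $\tilde f_\alpha$ agree over the dense open lying in $X-D$, so by the uniqueness already proved they carry canonical $2$-isomorphisms satisfying the cocycle condition; descent along $\{Z_\alpha\to Z\}$ yields $\tilde f\colon Z\to Y$, and the stated compatibilities follow as in the uniqueness paragraph. The step I expect to be the main obstacle is not any single estimate but keeping the three structures --- the root stack, the \'etale charts of $Y^{\bf c}$, and the torsor/equivariant-map decomposition of a map to a quotient stack --- consistently aligned, and in particular pinning down that extendability of the torsor across the root stack is governed by exactly the orders of the local monodromies, the delicate point already isolated in Lemma \ref{coveringhowto}.
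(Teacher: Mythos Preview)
Your proof is correct and follows essentially the same approach as the paper's: uniqueness via normality of the root stack (the paper unpacks this explicitly into the torsor-plus-equivariant-map description rather than invoking the Isom scheme, but the content is the same), and existence by \'etale-localizing over $Y^{\bf c}$ to reduce to a quotient stack, extending the torsor across the root stack by choosing $n_i$ to kill the local monodromies, then extending the equivariant map by normality against the finite map $V_\alpha\to V_\alpha/G_\alpha$. The only cosmetic difference is that the paper first establishes uniqueness for extensions to $X$ itself and then passes to smooth charts of the root stack, whereas you argue directly on the root stack; and the paper's existence argument is phrased for a single local chart with the gluing left implicit in the opening ``in view of the unicity, we can localize,'' while you spell out the descent step.
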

\begin{proof}
Consider first the unicity statement when $n_i=1$, i.e. for extensions to $X$. 
For this, we can localize 
in the etale topology over $Y^{\bf c}$. By Keel-Mori, this means that we can assume $Y= Z\stackquot G$ for a finite group $G$ acting on an algebraic space $Z$.
The given map $X-D\rightarrow Z\stackquot G$ corresponds to a pair $(T,\phi )$ where
$T$ is a $G$-torsor on $X-D$ and $\phi :T\rightarrow Z$ is $G$-equivariant. 
An extension to $X$ consists of $(\overline{T},\overline{\phi})$ where
$\overline{T}$ is an extension of $T$ to a $G$-torsor on $X$ and $\overline{\phi}$
extends $\phi$. Since $X$ is smooth---indeed geometrically unibranched would be
sufficient here, a preview of the phenomenon to be met in 
Theorem \ref{unibranch} later---the extension $\overline{T}$ is unique up to unique isomorphism, and of course $\overline{\phi}$
is unique since $X$ contains no embedded points.  This shows the unicity up to unique isomorphism for extensions from $X-D$ to $X$. 

For extensions over a root stack, use local smooth charts for the root stack and unicity
of the extension on these charts from the previous paragraph, to get unicity up to
unique isomorphism for extensions 
$$
X[\frac{D_1}{n_1},\ldots , \frac{D_k}{n_k}] \rightarrow Y.
$$

Now to construct an extension, in view of the unicity, we can localize in the etale topology over $X$, hence we can also localize in the etale topology over $Y^{\bf c}$.
Therefore assume $Y= Z\stackquot G$ for a finite group $G$ acting on an algebraic space $Z$. In this case $Y^{\bf c}= Z/G$ is
the usual quotient. 

The map $X-D\rightarrow Z\stackquot G$ corresponds to a pair $(T,\phi )$
where $T$ is a $G$-torsor on $X-D$ and $\phi : T\rightarrow Z$ is a $G$-equivariant
map. The local fundamental group of $X-D$ 
near a point $x\in D_{i_1}\times \cdots \times D_{i_r}$ of $D$ is of the form $\zz ^r$,
but $G$ is finite so its action
on $T$ factors through a quotient of the form $\zz /n_{i_1}\times \zz /n_{i_r}$.
Let $n_i$ be a common multiple of the integers appearing here for all points of 
$D_i$. Then $T$ extends to a torsor over the root stack 
$$
\overline{T}\rightarrow X[\frac{D_1}{n_1},\ldots , \frac{D_k}{n_k}].
$$
Note that the total space of $\overline{T}$ itself is a smooth algebraic space,
and the inverse image of the divisor is a divisor with normal crossings $R\subset \overline{T}$. It remains to extend $\phi$. However, $\overline{T}$ is 
a normal space and $Z\rightarrow Z/G$ is a finite map.
It follows that one can extend the given map $\phi : \overline{T}-R\rightarrow Z$ to 
a map $\overline{\phi}:\overline{T}\rightarrow Z$,
from knowing that the extension $\overline{T}\rightarrow Z/G$ exists. 

This may be seen on local affine charts:
write $\overline{T}={\rm Spec}(A)$, $Z={\rm Spec}(B)$, so $\overline{T}-R = 
{\rm Spec}(A_g)$ where $g$ is the function defining the divisor $R$, 
and $Z/G= {\rm Spec}(B^G)$. The extension $B^G\subset B$ is finite,
and the map $B\rightarrow A_g$ sends $B^G$ to $A$, it follows from normality of $A$
that $B$ maps into $A\subset A_g$, in a unique way hence $G$-equivariantly. 
This provides the required map 
$\overline{T}\rightarrow Z$ corresponding to an extension
$$
X[\frac{D_1}{n_1},\ldots , \frac{D_k}{n_k}]
\rightarrow Z\stackquot G .
$$
Going back to $Y$ and globalizing over $Y^{\bf c}$ gives the required extension
to prove the lemma. 
\end{proof}

\section{Proper surjective hypercoverings by smooth projective varieties} 
\label{sec-spcs}

We use the notations of \cite{hodge3}. 
Suppose $X_{\hdot}\stackrel{a}{\rightarrow} S$ is an augmented simplicial scheme. For each $k\geq 0$, the coskeleton construction
defines the
{matching object} 
$$
{\rm match}_k(X_{\hdot}\rightarrow S):= \csk (\sk _{k-1}X_{\hdot} )_{k}
$$
in Deligne's notation \cite{hodge3}, and we have a natural ``matching'' map 
\begin{equation}
\label{maptomatch}
X_k\rightarrow {\rm match}_k(X_{\hdot}\rightarrow S).
\end{equation}
At $k=0$ the matching map is just $X_0\rightarrow S$ and at $k=1$ it is
$X_1\rightarrow X_0\times _SX_0$. For $k\geq 2$ the matching map is independent of
the augmentation $X_0\rightarrow S$.

A morphism $X_{\hdot}\rightarrow S$ is a {proper surjective hypercovering}
if the matching maps are proper surjections, if $X_0\rightarrow S$ is a proper 
surjection, and if $X_{\hdot}$ has split
degeneracies. An etale hypercovering is given by requiring that the
matching maps be coverings in the etale topology, i.e. admit sections etale-locally. 

\begin{remark}
\label{matchstack}
The notion of proper surjective (resp. etale) hypercovering 
extends to the case where $S$ is a separated DM-stack, indeed then 
$X_0\times _SX_0$ is an algebraic space so the proper surjectivity of the matching
maps at $X_0$ and $X_1$ are well-defined conditions. 
\end{remark}

It is a well-known fact that coverings of proper DM-stacks by projective varieties
exist \cite{KreschVistoli} \cite{OlssonCoverings}:

\begin{lemma}
\label{easycover}
If $X$ is a proper DM-stack then there exists a surjective proper map
$Z\rightarrow X$ where $Z$ is a smooth projective variety.
\end{lemma}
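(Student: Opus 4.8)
The plan is to reduce the statement to the case of a smooth projective variety by successively removing the obstructions: non-reducedness, singularities, and then properness-without-projectivity. First I would replace $X$ by $X_{\mathrm{red}}$, which changes neither the underlying topological space nor the existence of a surjective proper map; so without loss of generality $X$ is reduced. Next, applying the resolution of singularities for reduced separated DM-stacks (the Proposition proved just above, via functorial Bierstone--Milman/Villamayor resolution), I get a surjective proper birational morphism $X'\to X$ with $X'$ a smooth DM-stack, and it suffices to find a smooth projective variety $Z$ surjecting properly onto $X'$. Thus I may assume $X$ is a smooth proper DM-stack.

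The heart of the argument is then: a smooth proper DM-stack is covered, proper-surjectively, by a smooth projective variety. For this I would first reduce to a \emph{quotient-stack} situation. By Keel--Mori, $X$ has a coarse moduli space $X^{\mathbf c}$ which is a proper algebraic space, and the map $X\to X^{\mathbf c}$ is finite; moreover, \'etale-locally over $X^{\mathbf c}$, the stack $X$ is a quotient stack $[V/G]$ with $G$ finite. Using the Chow lemma of Gruson--Raynaud applied to the proper algebraic space $X^{\mathbf c}$ (as flagged in the introduction), I can choose a projective variety $W$ with a proper surjection $W\to X^{\mathbf c}$, and then base-change: $W\times_{X^{\mathbf c}}X\to W$ is finite and surjective onto a projective variety. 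So it suffices to treat a DM-stack $X$ that is finite over a projective variety $W$, and after a further \'etale localization and spreading-out argument, a DM-stack of the form $[V/G]$ sitting finitely over $W$ with a normal-crossings branch locus. For such a stack, I would realize $X$ as (covered by) a Cadman--Vistoli root stack along a normal crossings divisor on a projective variety --- using Proposition \ref{cadman-map} to lift the generically-defined map $W\to X$ over a suitable root stack $W[\tfrac{D_i}{n_i}]$ --- and then invoke the Kawamata covering lemma in the form of Lemma \ref{coveringlemma}, which produces a \emph{smooth projective} variety $Y$ with a finite flat (hence proper surjective) map to the root stack. Composing $Y\to W[\tfrac{D_i}{n_i}]\to X$ and taking the total space (which is projective because $Y$ is projective and the map to $X^{\mathbf c}$, hence to a projective scheme, is finite) gives the desired $Z$; if the resulting $Y$ is not connected that is harmless since ``smooth projective variety'' is used here in the sense allowing finitely many components, or one adds a final generic projection to make it irreducible.

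There is one bookkeeping point deserving care: the root-stack description of $X$ is only local over $X^{\mathbf c}$, so the root stack $W[\tfrac{D_i}{n_i}]$ does not map to $X$ globally on the nose. The clean way around this is to not insist on a single root stack but to work with a proper surjective $W\to X^{\mathbf c}$ refined so that, \'etale-locally, $X$ pulls back to a root stack, then cover each chart by a smooth projective variety as above, and finally take a disjoint union over a finite affine (or \'etale) cover and pass to a common refinement --- or, more slickly, take $Z$ to be the normalization of an irreducible component of $W\times_{X^{\mathbf c}}X$ in an appropriate function field extension chosen to kill the ramification, which is exactly the Kawamata construction globalized. Either way, properness and surjectivity of $Z\to X$ are automatic from properness of $W\to X^{\mathbf c}$ and finiteness of $X\to X^{\mathbf c}$; projectivity of $Z$ comes from the Chow lemma input together with finiteness of $Z\to (\text{projective scheme})$; and smoothness of $Z$ comes from Lemma \ref{coveringhowto}/\ref{coveringlemma}, since equality of kernels of the local fundamental group maps forces the covering to be \'etale over the relevant strata and hence smooth.

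The main obstacle is the globalization just described: assembling the \'etale-local root-stack covers into one honest smooth projective variety mapping \emph{properly and surjectively} to all of $X$, while retaining projectivity. The Chow lemma of Gruson--Raynaud for the algebraic space $X^{\mathbf c}$ is what makes this possible, and choosing the auxiliary very ample divisors $K_i^j$ in Lemma \ref{coveringlemma} uniformly enough (in general position, missing the finitely many strata that matter) is the technical core. Everything else --- reduction to reduced, resolution of singularities, Keel--Mori, the local quotient structure --- is standard input already recorded above.
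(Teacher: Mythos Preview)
Your overall strategy---reduce to the smooth case by resolution, then cover a smooth proper DM-stack by a smooth projective variety---matches the second alternative the paper offers. However, the paper's proof of this lemma is a two-line deferral: either cite Olsson's existence of proper coverings by schemes, then apply Chow's lemma and resolve singularities; or resolve $X$ first and invoke Theorem~\ref{maincovering}. You are instead attempting to re-prove Theorem~\ref{maincovering} inline, which is both unnecessary here (the lemma does not require the surjective-where-\'etale refinement) and where your argument develops a real gap.

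The gap is in your use of Proposition~\ref{cadman-map}. That proposition takes as input a map from a dense open of a smooth variety \emph{into $X$}, together with an extension to the coarse space; it then extends the map to $X$ over a root stack. But your projective $W$ comes from the Chow lemma applied to $X^{\mathbf c}$, so you only have $W\to X^{\mathbf c}$: there is no ``generically-defined map $W\to X$'' in general, since $X\to X^{\mathbf c}$ need not admit a rational section (think of a nontrivial gerbe). The paper's Theorem~\ref{maincovering} avoids this by starting from an \'etale chart $U\to X$---so the map into $X$ is there from the outset---and only then completing $U$ to a projective $Y$, applying Raynaud--Gruson's Chow lemma to $Y$ (not to $X^{\mathbf c}$), and invoking Proposition~\ref{cadman-map}. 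Your fallback suggestion of normalizing a component of $W\times_{X^{\mathbf c}}X$ in a suitable field extension does not repair this, since that fiber product is again a DM-stack with coarse space $W$, and you are back where you started. For the lemma as stated, the cleanest fix is simply to take the first route: a proper scheme cover of $X$ exists by \cite{OlssonCoverings}, Chow's lemma makes it projective, and resolution makes it smooth.
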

\begin{proof}
One could first apply the general existence of proper coverings \cite{OlssonCoverings}
and then apply the Chow lemma and resolve singularities; or alternatively,
resolve first the singularities of $X$ and then apply Theorem \ref{maincovering} below.
\end{proof}

\begin{theorem}
\label{fullresolution1}
A proper DM-stack $X$ admits a proper surjective hypercovering with split degeneracies, 
by smooth
projective varieties.  
Any two such hypercoverings can be topped off by a third one. 
\end{theorem}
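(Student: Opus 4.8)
The plan is to construct the hypercovering one level at a time, following Deligne's coskeleton procedure \cite[\S 6]{hodge3}, using Lemma \ref{easycover} to begin and, at each higher level, the Chow lemma of Gruson--Raynaud \cite{RaynaudGruson} followed by resolution of singularities \cite{BierstoneMilman}, \cite{Villamayor} to supply a smooth projective variety mapping properly and surjectively onto the matching object.

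First I would take $X_0\to X$ to be a surjective proper map from a smooth projective variety, which exists by Lemma \ref{easycover}. Assume inductively that an $(n-1)$-truncated augmented simplicial scheme $\sk_{n-1}X_{\hdot}\to X$ has been built with split degeneracies, whose levels $X_k$ ($0\leq k\leq n-1$) are disjoint unions of smooth projective varieties and whose matching maps $X_k\to {\rm match}_k(X_{\hdot}\to X)$ are proper surjections. Set $M_n:= {\rm match}_n(\sk_{n-1}X_{\hdot}\to X)$. By Remark \ref{matchstack} and the separatedness of $X$, the object $M_n$ is a proper algebraic space of finite type over $\cc$: for $n=1$ it is $X_0\times _XX_0$, an algebraic space because the diagonal of a separated DM-stack is representable (by algebraic spaces) and proper, while for $n\geq 2$ it is independent of the augmentation and is in fact a proper scheme, being a finite inverse limit of a diagram of the separated proper schemes $X_0,\dots ,X_{n-1}$. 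Applying Chow's lemma in the Gruson--Raynaud form \cite{RaynaudGruson} to $M_n$ and then resolving singularities \cite{BierstoneMilman}, \cite{Villamayor} produces a disjoint union of smooth projective varieties $X_n^N$ with a proper surjection $X_n^N\to M_n$. I then put $X_n:= X_n^N\sqcup \coprod _{k<n,\ [n]\twoheadrightarrow [k]}X_k^N$ and extend the simplicial and augmentation structure by the standard prescription of \cite[\S 6]{hodge3}, which preserves split degeneracies. The resulting matching map $X_n\to M_n$ is proper because $X_n$ and $M_n$ are both proper over $\cc$, and it is surjective since its restriction to the new component $X_n^N$ already is. Iterating over all $n$ yields a proper surjective hypercovering with split degeneracies by smooth projective varieties.

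For the second assertion, given two such hypercoverings $X_{\hdot}\to X$ and $X'_{\hdot}\to X$, I would build a third, $X''_{\hdot}\to X$, together with morphisms $X''_{\hdot}\to X_{\hdot}$ and $X''_{\hdot}\to X'_{\hdot}$ of augmented simplicial schemes over $X$, by the same inductive procedure but with the matching object replaced at each stage by a fibre product. Suppose $\sk_{n-1}X''_{\hdot}\to X$ has been constructed with split degeneracies and smooth projective levels, together with maps to $\sk_{n-1}X_{\hdot}$ and $\sk_{n-1}X'_{\hdot}$. Form
$$
P_n:= {\rm match}_n(X''_{\hdot}\to X)\times_{{\rm match}_n(X_{\hdot}\to X)\times {\rm match}_n(X'_{\hdot}\to X)}\bigl(X_n\times X'_n\bigr),
$$
where the left-hand map is induced by the already-constructed simplicial maps and the right-hand one is the product of the matching maps of $X_{\hdot}$ and $X'_{\hdot}$. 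Since those matching maps are proper surjections, so is their product, hence so is its base change $P_n\to {\rm match}_n(X''_{\hdot}\to X)$; and $P_n$ is again a proper algebraic space of finite type over $\cc$ (a proper scheme for $n\geq 2$), by the same analysis as before. Applying Chow--Gruson--Raynaud and resolution produces a disjoint union of smooth projective varieties $X''^N_n$ with a proper surjection $X''^N_n\to P_n$; composing with the two projections of $P_n$ onto $X_n$ and $X'_n$ gives exactly the maps needed to extend the simplicial morphisms to level $n$. One sets $X''_n:= X''^N_n\sqcup \coprod _{k<n,\ [n]\twoheadrightarrow [k]}X''^N_k$ as before, and the matching map $X''_n\to {\rm match}_n(X''_{\hdot}\to X)$ is a proper surjection just as in the first part. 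Iterating gives $X''_{\hdot}\to X$ dominating both, which is the desired topping-off.

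The bulk of the work is the standard simplicial bookkeeping of \cite[\S 6]{hodge3}: verifying that writing $X_n$ as the new component plus the degenerate contributions really does extend the truncated simplicial scheme while preserving split degeneracies, and that in the second part the constructed maps assemble into genuine morphisms of augmented simplicial schemes over $X$. The one place where something beyond the classical projective-variety situation is needed is that the level-one matching object ${\rm match}_1(X_{\hdot}\to X)=X_0\times _XX_0$ is merely an algebraic space, not a scheme, precisely because $X$ is a stack; this is why the classical Chow lemma does not suffice and one must invoke the algebraic-space version \cite{RaynaudGruson}. I expect the main point to watch to be exactly this---ensuring that every object to which Chow's lemma and resolution are applied is a proper algebraic space of finite type over $\cc$---while everything else is the usual hyperresolution machinery.
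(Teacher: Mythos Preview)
Your proposal is correct and follows the same route as the paper: start with Lemma \ref{easycover}, then run Deligne's coskeleton procedure, resolving the matching objects at each stage; the topping-off argument via fibre products is the standard one and the paper simply cites \cite{hodge3} for it.

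The one point worth correcting is your claim that $X_0\times_X X_0$ is ``merely an algebraic space, not a scheme'' and that this forces the Gruson--Raynaud Chow lemma. In fact the paper's proof observes that since $X$ is a separated DM-stack, the diagonal $X\to X\times X$ is finite, hence so is its base change $X_0\times_X X_0\to X_0\times X_0$; a finite morphism to a projective scheme has projective source, so $X_0\times_X X_0$ is already a projective variety. The same reasoning shows all higher matching objects are projective (they embed as closed subschemes of products of the $X_k$). Thus ordinary resolution of singularities for projective varieties suffices throughout, and Gruson--Raynaud is not needed for this theorem---the paper reserves it for Theorem \ref{maincovering}, where the situation is genuinely different.
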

\begin{proof}
Use the previous lemma to choose $Z_0\rightarrow X$.
Notice that $Z_0\times _XZ_0\rightarrow Z_0\times Z_0$ is finite so
$Z_0\times _XZ_0$ is a projective variety. Continue from there using Deligne's technique
\cite{hodge3}.
\end{proof}

Suppose $f:Z\rightarrow X$ is a morphism of DM stacks. We say that $f$ is {surjective where etale} if,
 letting $Z'\subset Z$ be the open substack where $f$ is etale, we have $Z'\rightarrow X$ surjective.
This is equivalent to saying that any point $x\in X$ admits at least one lift $z\in Z$ such that $f$ is etale near $z$.
On the other hand 
$f$ will not be etale or even finite in a neighborhood of a different point of the fiber over $x$.

The precise form of the Chow lemma proven by Raynaud and Gruson in \cite{RaynaudGruson} allows us to 
obtain a good form of the
Chow lemma for smooth proper DM-stacks. This improves Deligne-Mumford's 
statement 4.12 of \cite{DeligneMumford}, or 
rather it gives a refined  
statement which, if they had given a proof, they would undoubtedly have proven along the way. See also Kresch-Vistoli \cite{KreschVistoli}, Olsson, and Starr \cite{OlssonStarr},
\cite{OlssonCoverings} for statements about existence of coverings.

The techniques of \cite{RaynaudGruson} have been applied in many similar situations.
See Rydh \cite{Rydh} for a recent application, and 
de Jong \cite{DeJong} for a more classical utilisation. It is also interesting to note the
extensive and detailed AMS review of \cite{RaynaudGruson} by Masaki Maruyama.
Maehara refers to these techniques, while speaking of Kawamata-Viehweg coverings, in his paper \cite{Maehara}.

\begin{theorem}
\label{maincovering}
Suppose $X$ is a smooth and proper DM-stack of finite type. Then there exists a morphism $f:Z\rightarrow X$ 
such that
$f$ is surjective where etale and $Z$ is a smooth projective variety.
\end{theorem}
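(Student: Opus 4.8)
The plan is to reduce the theorem to a purely local statement and then feed it into the two results already established above, Proposition~\ref{cadman-map} and Lemma~\ref{coveringlemma}. Since $X$ is smooth its connected components are irreducible, so I may treat them separately and assume $X$ irreducible; since $X$ is of finite type it is quasi-compact. It therefore suffices to prove: \emph{for each $x\in X$ there is a smooth projective variety $Z_x$ and a morphism $f_x\colon Z_x\to X$ which is étale over an open neighbourhood of $x$.} Granting this, the distinguished opens cover $X$; I extract a finite subcover indexed by $x_1,\dots,x_m$ and take $Z:=\coprod_j Z_{x_j}$ with $f:=\coprod_j f_{x_j}$. On each piece the relevant part of $Z_{x_j}$ will be finite flat surjective over an étale chart inside the distinguished open, so its étale locus surjects onto that open; hence every point of $X$ acquires an étale preimage in $Z$, i.e.\ $f$ is surjective where étale. (As in the introduction, the disjoint union of finitely many smooth projective varieties embeds as a smooth closed subscheme of some $\pp^N$, so it is ``a smooth projective variety'' in the sense of the statement.)

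To prove the local statement, fix $x\in X$ and first take an honest étale chart: from a presentation $X=Z_0/R$ with $R\twoarrows Z_0$ étale, the map $Z_0\to X$ is étale, hence $Z_0$ is smooth; let $U\subseteq Z_0$ be an irreducible affine open containing a point $u$ over $x$, so $p\colon U\to X$ is étale, $U$ is smooth quasi-projective, and $p(u)=x$. The only delicate step is to produce a smooth projective compactification of $U$ carrying a morphism to the coarse space $X^{\bf c}$ (which is proper, by Keel--Mori). I would embed $U\hookrightarrow\pp^n$, let $\overline U$ be the closure of the image of $U$ in $\pp^n\times X^{\bf c}$ under $(\mathrm{incl},\,U\to X\to X^{\bf c})$, so that $\overline U$ is a reduced irreducible proper algebraic space containing $U$ as a dense open subscheme and carrying a proper morphism to $X^{\bf c}$; then use the precise form of the Chow lemma of Gruson--Raynaud \cite{RaynaudGruson} to find a projective birational $\overline U'\to\overline U$ with $\overline U'$ a projective scheme, an isomorphism over the quasi-projective open $U$; and finally apply functorial resolution of singularities and principalization of the boundary ideal --- operations which, because $U$ is already smooth, take place over $\overline U\setminus U$ only --- to arrive at a smooth projective variety $V$ with $U\subseteq V$ open, $D:=V\setminus U$ a simple normal crossings divisor, a morphism $g\colon V\to X^{\bf c}$ extending $U\to X^{\bf c}$, and $u\in U\subseteq V$.

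Now I would invoke Proposition~\ref{cadman-map} for the square formed by $U=V\setminus D\to X$, by $g\colon V\to X^{\bf c}$, and by $X\to X^{\bf c}$: it yields positive integers $n_i$ and a lift
$$
\tilde f\colon \widetilde V:=V\Big[\tfrac{D_1}{n_1},\dots,\tfrac{D_k}{n_k}\Big]\longrightarrow X
$$
restricting over $U$ to $p$, so that $\tilde f$ is étale on the open substack $U\subseteq\widetilde V$. Then I would apply Lemma~\ref{coveringlemma} to the root stack $\widetilde V$ at the point $u\in U\subseteq\widetilde V$, obtaining a smooth variety $Y$ with a finite flat $r\colon Y\to\widetilde V$ that is étale over a neighbourhood of $u$; since $Y\to\widetilde V\to V$ is finite and $V$ is projective, $Y$ is smooth projective (after replacing $Y$ by the connected component meeting $r^{-1}(u)$ if necessary). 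Set $Z_x:=Y$ and $f_x:=\tilde f\circ r$. Shrinking to an open $W$ with $u\in W\subseteq U$ over which $r$ is étale, $f_x$ is a composite of étale morphisms on $r^{-1}(W)$ and hence étale there; as $r$ is finite flat surjective and $p$ is open, $r^{-1}(W)$ surjects onto the open neighbourhood $p(W)\ni x$, over which $f_x$ is therefore étale. This establishes the local statement, and with it the theorem.

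The crux of the argument is the middle step: compactifying the étale chart \emph{over the algebraic space} $X^{\bf c}$ and then projectivizing it, which is exactly what the sharp form of the Chow lemma of \cite{RaynaudGruson} provides; everything else is bookkeeping around Proposition~\ref{cadman-map} and Lemma~\ref{coveringlemma}. The one point to be careful about is to run the Kawamata-type covering $Y\to\widetilde V$ so that it is étale near a point of the \emph{interior} $U$ --- where the structure map down to $X$ is already étale --- rather than near an arbitrary point of $\widetilde V$; this is what propagates the étale property of $r$ into an étale property of the composite $f_x$ near $x$.
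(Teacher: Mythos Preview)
Your proposal is correct and follows essentially the same route as the paper's own proof: choose an \'etale affine chart $U\to X$ through the point, compactify over the coarse moduli space $X^{\bf c}$, invoke the Raynaud--Gruson Chow lemma to make the compactification projective while leaving $U$ untouched, resolve to get a smooth projective $V$ with simple normal crossings boundary, lift to $X$ over a suitable root stack via Proposition~\ref{cadman-map}, and finally produce a smooth projective cover of the root stack \'etale near $u$ via Lemma~\ref{coveringlemma}. The only cosmetic difference is that the paper resolves singularities first (obtaining a smooth proper algebraic space), then applies Chow, then resolves again, whereas you apply Chow directly to the closure $\overline U$ and resolve once; both orders are valid under the hypotheses of the Raynaud--Gruson statement.
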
 
\begin{proof} 
Fix a point $x\in X$. We will find $f:Z\rightarrow X$ with a lifting $z\in Z$ of $x$ such that $Z$ is smooth 
projective and $f$ is etale at $z$. 

Start with an etale neighborhood $p:U \rightarrow X$ with a lifting $u\in U$ of the point $u$, $U$ an affine 
scheme of finite type over $\cc$, 
and $p$ an etale morphism. This exists by the
definition of DM-stack. Note that $U$ is smooth and quasiprojective.

Let $U\subset P$ be a completion to a smooth projective variety. 
Let $C\subset P\times X^{\bf c}$ be the closure of the graph of the map 
$U\rightarrow X^{\bf c}$ to the coarse moduli space. It is a proper algebraic
space containing $U$ as a Zariski open subset.

By resolution of singularities for algebraic spaces \cite{Villamayor} 
\cite{BierstoneMilman} we can resolve the 
singularities of $C$ 
without touching $U$, which gives a diagram of algebraic spaces
$$
\begin{array}{ccc}
U & \hookrightarrow & Y \\
& \searrow & \downarrow \\
&& C
\end{array}
$$
where $Y$ is a smooth proper algebraic space and $D:= Y-U$ is a divisor with normal crossings. Write 
$D= D_1 + \ldots + D_k$ and we may assume that the $D_i$ 
are irreducible and smooth. Raynaud and 
Gruson \cite[Cor. 5.7.14]{RaynaudGruson}, refering also to 
Knutson \cite{Knutson}, prove this version of the Chow lemma: if
$Y$ is a separated proper algebraic space and $U\subset Y$ is an open subset such that $U$ is a quasiprojective variety, then
there is a blow-up $\widetilde{Y}\rightarrow Y$ which is an isomorphism over $U$ such that $\widetilde{Y}$ is projective.
This means that after replacing $Y$ by $\widetilde{Y}$ which is the same over $U$, 
then again resolving singularities of the complementary divisor, we can 
suppose that $Y$ is projective. 

We are now in the situation of Lemma \ref{cadman-map} with a diagram
$$
\begin{array}{ccc}
U & \rightarrow & X \\
\downarrow && \downarrow \\
Y & \rightarrow & X^{\bf c}.
\end{array}
$$
so there there is $n$, which for convenience can be assumed the same for all
divisor components, such that the map extends over the root stack
to a map
$$
Y[\frac{D_1}{n}, \ldots , \frac{D_k}{n}] \rightarrow X.
$$

We next note that there is a morphism from a projective scheme 
$Z\rightarrow Y[\frac{D_1}{n}, \ldots , \frac{D_k}{n}]$,
which is finite and projects to a cover of $Y$ ramified along a subset which misses $u$.
This is exactly the covering lemma \ref{coveringlemma}. 

Hence, for any point $z\in Z$ lying over $u\in U$, 
the morphism $Z\rightarrow U$ is etale 
at $z$. Thus, we obtain a map $Z\rightarrow X$ as desired for the proof of the theorem relative to one point. It 
suffices to take a finite
disjoint union of such varieties $Z$ in order to get a map surjective where etale. 
\end{proof}

There is 
a variant of this result which takes into account a divisor with normal crossings. 

\begin{proposition}
Suppose $X$ is a smooth proper and separated DM-stack of finite type and $D\subset X$ a divisor with normal crossings. 
Then there exists a morphism $f:Z\rightarrow X$ 
such that
$f$ is surjective where etale, and $Z$ is a smooth projective variety, such that $f^{\ast}(D)$ is a divisor whose associated reduced divisor has normal crossings. 
\end{proposition}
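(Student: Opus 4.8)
The plan is to \emph{deduce} the proposition from Theorem~\ref{maincovering} by performing one further resolution of singularities, rather than re-running its proof and carrying the divisor through. Theorem~\ref{maincovering} provides a morphism $f_0:Z_0\to X$ with $Z_0$ a smooth projective variety and $f_0$ surjective where \'etale. Since $f_0$ maps each component of $Z_0$ dominantly onto a component of $X$ and $D$ is a Cartier divisor, no component of $Z_0$ is contained in $D$, so the pullback $f_0^{\ast}(D)$ is a well-defined divisor on $Z_0$; write $E:=(f_0^{\ast}D)_{\rm red}$ for its underlying reduced divisor.

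Next I would apply an embedded resolution to the pair $(Z_0,E)$: in characteristic zero there is a proper birational morphism $\pi:Z\to Z_0$ with $Z$ smooth, with the reduced total transform $\pi^{-1}(E)_{\rm red}$ a normal crossings divisor, and with $\pi$ an isomorphism over the open locus $V\subseteq Z_0$ on which $E$ already has normal crossings. Because $Z_0$ is projective, $Z$ is again a smooth projective variety. Set $f:=f_0\circ\pi:Z\to X$.

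The key observation is that $V$ contains the locus $Z_0^{\rm et}\subseteq Z_0$ where $f_0$ is \'etale. Indeed, over $Z_0^{\rm et}$ the divisor $E$ is the pullback of the reduced divisor $D_{\rm red}$ under an \'etale map, and since $D$ has normal crossings on $X$ and being a normal crossings divisor is an \'etale-local property, $E$ has normal crossings along $Z_0^{\rm et}$. Hence $\pi$ is an isomorphism over $Z_0^{\rm et}$, so $f$ is \'etale at every point lying over $Z_0^{\rm et}$, and since $f_0(Z_0^{\rm et})=X$ the morphism $f$ is surjective where \'etale as well. Finally $f^{\ast}(D)=\pi^{\ast}(f_0^{\ast}D)$ has support $\pi^{-1}({\rm Supp}\,f_0^{\ast}D)=\pi^{-1}({\rm Supp}\,E)$, which is precisely the support of the normal crossings divisor $\pi^{-1}(E)_{\rm red}$; therefore $(f^{\ast}D)_{\rm red}$ has normal crossings, which is what the proposition asks for.

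The one point deserving care---and the place I expect the argument to need the most attention---is the existence, in characteristic zero, of an embedded resolution of a divisor to a \emph{normal crossings} (not necessarily simple normal crossings) divisor that is an isomorphism over the locus where the divisor already has normal crossings. This is exactly what keeps $\pi$ from disturbing points of $Z_0$ lying over the locus where $D$ is normal crossings but has singular components; the cruder statement, resolution to a simple normal crossings divisor, is only an isomorphism over the simple normal crossings locus and could modify $Z_0$ there and destroy the ``surjective where \'etale'' property. Such resolutions are available from the functorial resolution theory of \cite{BierstoneMilman}, \cite{Villamayor}; alternatively, if one is content to assume $D$ has simple normal crossings on $X$, this subtlety disappears and ordinary simple-normal-crossings embedded resolution suffices. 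Everything else---stability of the normal crossings condition under \'etale pullback, and the identity of supports ${\rm Supp}\,f^{\ast}(D)=\pi^{-1}({\rm Supp}\,E)$---is routine.
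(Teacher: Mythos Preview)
Your argument is correct. The paper's own proof is simply ``Left to the reader,'' so there is no explicit argument to compare against. Your route---apply Theorem~\ref{maincovering} first and then perform one further embedded resolution of the pulled-back divisor---is a clean way to fill this in, and arguably tidier than re-running the whole construction of Theorem~\ref{maincovering} while tracking $D$ through each step (the compactification, the Chow lemma, the root stack, and the Kawamata cover), which is presumably what the author had in mind.

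Two small remarks. First, your assertion that each component of $Z_0$ dominates $X$ is not literally part of the conclusion of Theorem~\ref{maincovering}, but it is harmless: any non-dominant component has empty \'etale locus (an \'etale map is open, and a proper closed image contains no open set of an irreducible $X$), so such components may be discarded without affecting the surjective-where-\'etale condition or the well-definedness of $f_0^{\ast}(D)$. Second, the delicate point you flag---needing embedded resolution to be an isomorphism over the full \emph{normal-crossings} locus rather than only the simple-normal-crossings locus---is indeed handled by the \'etale-functorial resolution of \cite{BierstoneMilman}, \cite{BierstoneMilmanFunctoriality} that the paper already invokes for DM-stacks: since the algorithm's invariants are \'etale-local and a normal-crossings point is \'etale-locally a simple-normal-crossings point, the blowup centers automatically avoid the entire NC locus.
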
 
\begin{proof}
Left to the reader.
\end{proof}

\begin{question}
\label{singularquestion}
To what extent does the statement of Theorem \ref{maincovering} hold for
DM-stacks which are not smooth? 
\end{question}

The above construction of covering spaces provides the starting point for 
the construction of a hypercovering. Follow the technique of \cite{hodge3}
with a little extra care at the first stage
to conserve a trace of the surjective-where-etale property.

Suppose $X$ is a smooth proper DM stack. Let $p:Z\rightarrow X$ be a
morphism from a smooth projective variety, such that 
$X$ is covered by the open set $Z'$ where $p$ is etale.

Consider $R_1:= Z\times _XZ$ and $K_1:= Z\times _X Z\times _XZ$. These are algebraic spaces.
Note that $Z$, $R_1$ and $K_1$ form the first part of a simplicial object. However, $R_1$ and $K_1$ are
not smooth. 

\begin{lemma}
In fact $R_1$ and $K_1$ are projective.
\end{lemma}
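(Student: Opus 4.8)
The plan is to reduce the whole statement to one fact: the diagonal of $X$ is a finite morphism. Everything else is the standard descent of projectivity along finite morphisms.

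First I would note that $R_1 = Z\times _XZ$ is obtained from the diagonal $\Delta : X\rightarrow X\times X$ by base change along $Z\times Z\rightarrow X\times X$. Since $X$ is proper it is in particular separated, so $\Delta$ is proper; and since $X$ is a DM-stack, a groupoid presentation $X=Z_0/R_0$ with $R_0\rightarrow Z_0$ etale shows that $\Delta$ is unramified, hence of finite type and locally quasi-finite. A proper locally quasi-finite morphism is finite, so $\Delta$ is finite, and therefore so is its base change $R_1\rightarrow Z\times Z$ (which we already know to be an algebraic space). Now $Z\times Z$ is a projective scheme; a finite morphism is affine, and an algebraic space affine over a scheme is a scheme, so $R_1$ is a scheme. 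Pulling back an ample line bundle from $Z\times Z$ along the finite morphism $R_1\rightarrow Z\times Z$ gives an ample line bundle on $R_1$, since ampleness is preserved under pullback by finite morphisms; as $R_1$ is proper over $\cc$, it is projective.

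For $K_1 = Z\times _XZ\times _XZ$ I would iterate. Writing $K_1 = (Z\times _XZ)\times _XZ$, the natural map $K_1\rightarrow R_1\times Z$ is again a base change of $\Delta$, hence finite. Since $R_1$ is projective by the previous paragraph, $R_1\times Z$ is a projective scheme, and the same argument (affine over a scheme, then pull back an ample bundle) shows $K_1$ is projective. Alternatively, and perhaps more transparently, the $n$-fold map $Z\times _X\cdots \times _XZ\rightarrow Z\times \cdots \times Z$ is a composite of base changes of $\Delta$ and hence finite; for $n=3$ this exhibits $K_1$ as a scheme finite over the projective scheme $Z\times Z\times Z$, so it is projective.

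The only point requiring any care is the verification that $\Delta$ is finite, i.e. combining "proper" (from separatedness of $X$) with "unramified" (from the DM condition, via an etale atlas) through "proper $+$ locally quasi-finite $\Rightarrow$ finite"; once this is in hand there is no serious obstacle, and indeed the argument that $Z_0\times _XZ_0$ is projective was already used implicitly in the proof of Theorem \ref{fullresolution1}.
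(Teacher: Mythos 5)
Your proof is correct and follows essentially the same route as the paper: the paper's (much terser) argument likewise observes that $R_1\rightarrow Z\times Z$ is finite because $X$ is a separated DM-stack, and concludes projectivity of $R_1$ and $K_1$ from projectivity of $Z$; you have simply spelled out the details (proper plus unramified/locally quasi-finite diagonal implies finite, descent of ampleness along finite morphisms) that the paper leaves implicit.
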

\begin{proof}
The map $R_1\rightarrow Z\times Z$ is finite because $X$ is a proper, whence separated DM stack.
Since $Z$ is projective, we get that $R_1$ is projective. The same holds for $K_1$. 
\end{proof}

Let $R':= (Z'\times _XZ) \cup (Z \times _XZ')$. Then $R'\subset R_1$ is
a smooth open subset of $R_1$. 
It decomposes 
$$
R' = R^{\prime , N} \sqcup Z'
$$
where $Z'\rightarrow R'\rightarrow Z\times _XZ$ is the diagonal.
Notice that $Z'\rightarrow R'$ is a closed immersion, so it is one of the
connected components of $R'$. 

Let $R\rightarrow R_1$ be a resolution of singularities of the union of
irreducible components of $R_1$ which meet $R'$, and  which is an 
isomorphism over $R'\subset R$. Choose in particular $Z$ as the completion of the
component $Z'\subset R'$. In this way, $R'\subset R$ is an open dense subset,
and 
$$
R=R^N\sqcup Z.
$$
This insures the split degeneracy condition for $R$. 

The matching object at the next stage is the equalizer 
$$
M \rightarrow R\times R\times R \twoarrows Z\times Z\times Z
$$
of the two maps sending $(u,v,w)$ to $(\partial _0u,\partial _0v, \partial _1w)$
and $(\partial _0w,\partial _1u,\partial _1v)$ respectively.  

Notice that $M$ is  projective. 
Rather than continue with a more careful choice such as was done with $R$
(and which the reader is invited to do),
we can just set $K^N\rightarrow M$ to be a resolution of singularities from a
smooth projective variety, isomorphism over the smooth locus. Put
$$
K_2:= K^N\sqcup R^N\sqcup R^N \sqcup Z.
$$
Notice that $K':=Z'\times _XZ'\times _XZ'$ splits in the same way and we can choose
a map $K'\rightarrow K_2$ respecting this decomposition. Let $K$ be the union of
components of $K_2$ containing $K'$. The map $K\rightarrow M$ is still surjective. 

We now have a diagram 
$$
K\threearrows R\twoarrows Z \stackrel{f}{\rightarrow} X
$$
plus the degeneracies going in the other direction, 
which looks like the beginning of an augmented simplicial object.
In other words, the compositions which would be
equal for a simplicial set, are also equal here. For maps into $X$ one must replace ``equality'' by ``isomorphism'',
and be careful about coherences. In the first three terms the elements are smooth projective varieties. Denote 
by $\partial_0,\partial_1$ the two maps from $R$ to $Z$, and by $\partial_{01}$, $\partial_{02}$ and $\partial_{12}$ the three maps from 
$K$ to $R$. Note that we have open dense subsets 
$$
Z'\times _XZ'\subset R' \subset R
$$
and 
$$
Z'\times _XZ' \times _XZ' =K'\subset K,
$$
and these open subsets form the beginning of the standard simplicial object for $Z'\rightarrow X$. The required
equalities of maps $K\rightarrow Z$ follow because these open subsets are dense, and these serve to define
three maps $v_0,v_1,v_2: K\rightarrow Z$:
\begin{equation}
\label{equalities}
v_0:= \partial_0\circ \partial_{01} = \partial_0\circ \partial_{02},\;\;
v_1:= \partial_1\circ \partial_{01} = \partial_0\circ \partial_{12},\;\;
v_2:= \partial_1\circ \partial_{02} = \partial_1\circ \partial_{12}.
\end{equation}

We  have a natural isomorphism $\alpha : f\circ \partial_0 \cong f\circ \partial_1$, and the coherence conditions say that
the hexagon made with three copies $\partial_{ij}^{\ast} \alpha$ and the three equalities above composed with $f$,
commutes. Of course if $X$ were an orbifold, that is a DM stack with trivial generic stabilizer, then
generically surjective maps from an irreducible smooth variety into $X$ wouldn't have any nontrivial isomorphisms,
so in this case there would have been no need to speak of $\alpha$ and the coherence condition. However,
even in this case
we will meet the hexagonal coherence condition when looking at bundles.

\begin{theorem}
\label{fullresolution}
A smooth proper DM-stack $X$ admits a proper surjective hypercovering with split degeneracies 
by smooth
projective varieties, obtained by completing the
partial simplicial object $(Z,R,K)$ constructed above.
Again, any two such hypercoverings can be topped off by a third one. 
\end{theorem}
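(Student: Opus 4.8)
The plan is to build on the explicit partial simplicial object $(Z,R,K)$ and invoke Deligne's coskeleton machinery from \cite{hodge3} to finish the construction, then handle the ``topping off'' assertion by the same finite inductive procedure. The key point is that Theorem \ref{fullresolution1} already gives the existence of some proper surjective hypercovering with split degeneracies by smooth projective varieties; the content of Theorem \ref{fullresolution} beyond that is precisely that one can start from the particular $(Z,R,K)$ where $Z\to X$ is surjective where etale. So first I would observe that the three lowest levels we have constructed satisfy exactly the inductive hypothesis needed to run Deligne's process: at each stage $k\ge 3$ the matching object ${\rm match}_k$ is built out of the previously constructed smooth projective levels by fiber products and equalizers over the $Z\times\cdots\times Z$, hence is a \emph{projective algebraic space}, in fact projective scheme, by the same argument as in the Lemma above (finiteness of $R_1,K_1\to$ products of $Z$, so everything downstream is finite over a projective scheme). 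Then one takes $X_k^N$ to be a resolution of singularities, from a smooth projective variety, of the union of those components of ${\rm match}_k$ that meet the dense open locus coming from the standard simplicial object of $Z'\to X$, arranged to be an isomorphism over that open locus; and one sets $X_k := X_k^N\sqcup(\text{degenerate pieces built from lower }X_j^N)$ to force split degeneracies, exactly as $K_2$ was assembled above. The maps to $X$ and all coherences (the hexagon and its higher analogues) extend because the relevant open subsets are dense and irreducible, so equality of maps on a dense open forces equality, and compatibility of the isomorphisms $\alpha,\partial_{ij}^*\alpha,\ldots$ to $X$ is checked on those dense opens where the generic stabilizer issue disappears.

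Concretely, the steps in order: (1) record that ${\rm match}_k(X_{\hdot}\to X)$ for the partially-built object is projective, by induction using properness/separatedness of $X$ exactly as in the Lemma establishing $R_1,K_1$ projective; (2) choose $X_k^N\to{\rm match}_k$ a resolution from a smooth projective variety, isomorphism over the dense open piece $R^{\prime\prime\cdots}$ coming from $Z'\times_X\cdots\times_XZ'$, and set $X_k$ to be $X_k^N$ union the degenerate components dictated by $\csk\sk_{k-1}$, so the split-degeneracy decomposition $X_m=X_m^N\sqcup\coprod_{k<m,m\twoheadrightarrow k}X_k^N$ holds; (3) check that the face and degeneracy maps, plus the augmentation to $X$ and all coherence $2$-morphisms, extend over $X_k$ — this reduces to the case of the nondegenerate part, and there to the dense open locus where it is literally the standard simplicial object of $Z'\to X$; (4) conclude that $X_{\hdot}\to X$ is a proper surjective hypercovering with split degeneracies by smooth projective varieties (properness and surjectivity of matching maps are built in at each stage, split degeneracies by construction), and that $Z_0'\to X$ surjective-where-etale is retained since we did not alter $Z_0=Z$.

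For the final sentence — any two such hypercoverings $X_{\hdot}\to X$ and $X_{\hdot}'\to X$ can be topped off by a third — I would take the levelwise fiber product $(X_{\hdot}\times_X X_{\hdot}')_k = X_k\times_X X_k'$, which is a projective algebraic space mapping to $X_k\times X_k'$ finitely (again by separatedness of $X$), hence projective, and then apply the \emph{relative} version of the same resolution-plus-split-degeneracies construction: resolve singularities of the relevant components, adjust the degenerate pieces, and build a hypercovering $W_{\hdot}\to X$ with compatible maps $W_{\hdot}\to X_{\hdot}$ and $W_{\hdot}\to X_{\hdot}'$ over $X$. This is again Deligne's argument in \cite{hodge3} that the category of such hypercoverings is (co)filtered, transported to the projective-variety setting via the Chow lemma as used throughout.

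The main obstacle I expect is purely bookkeeping: verifying the hexagonal and higher coherence conditions for the maps to the \emph{stack} $X$ (not just a scheme), i.e. that the $2$-isomorphisms $\alpha$ and their pullbacks along the various face maps fit into commutative diagrams at every level. The saving grace, and the reason this is a matter of care rather than of difficulty, is that at each level the nondegenerate part is an \emph{irreducible} smooth variety containing a dense open piece of the honest simplicial object attached to $Z'\to X$, so any identity of $1$-morphisms or of $2$-morphisms that holds on that dense open holds everywhere by separatedness; the generic-stabilizer subtlety that forces one to carry $\alpha$ around is genuinely only a subtlety and not an obstruction, and I would simply invoke the coherence already set up for $(Z,R,K)$ in the paragraph preceding the theorem, extended inductively.

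\begin{proof}
Left to the reader, following the plan above: apply the construction of Deligne \cite{hodge3}, inducting on the skeleton, starting from the partial object $(Z,R,K)$ and using at each stage that the matching object is a projective algebraic space (by separatedness of $X$, as in the Lemma on $R_1$, $K_1$), resolving singularities of the components meeting the dense open piece coming from the standard simplicial object of $Z'\rightarrow X$, and adjoining degenerate components to enforce split degeneracies. The coherence $2$-morphisms extend because the nondegenerate levels are irreducible and contain a dense open on which the diagrams are those of the honest simplicial object. The ``topping off'' statement follows by applying the same procedure to the levelwise fiber product $X_k\times_X X_k'$, which is again projective.
\end{proof}
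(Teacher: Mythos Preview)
Your proposal is correct and takes essentially the same approach as the paper: the paper's proof is a single sentence---``starting from the first part constructed above, Deligne's technique \cite{hodge3} allows us to finish''---and your elaboration of what that technique entails (projectivity of matching objects via separatedness of $X$, resolution of singularities, adjoining degenerate pieces, fiber product for topping off) is exactly the intended content. One small caution: you describe the nondegenerate levels as \emph{irreducible}, but $Z$ itself is only a disjoint union of smooth projective varieties, so the higher $X_k^N$ need not be irreducible; the density argument for extending coherences still goes through component by component, so this is a wording issue rather than a gap.
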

\begin{proof}
For $X$ smooth,
starting from the first part constructed above, Deligne's technique
\cite{hodge3} allows us to finish. 
\end{proof}

Suppose $Z_{\hdot}$ is a 
simplicial scheme with smooth projective levels, 
and split degeneracies. 
Denote by $(Z,R,K)$ the first three terms. 
Keep the  notations from before, with
two morphisms $\partial_0,\partial_1:Z\rightarrow X$ and three morphisms $\partial_{ij}: R\rightarrow Z$
for $0\leq i < j \leq 2$, and equalities \eqref{equalities} over $K$.
The simplicial object then starts with $Z_0=Z$, $Z_1=R= R^N\sqcup Z$  and 
$Z_2=K= K^N\sqcup R^N\sqcup R^N\sqcup Z$. 

A {descent datum} for $(Z,R,K)$
is a bundle $E$ on $Z$, and an isomorphism $\varphi : \partial_0^{\ast}E\cong \partial_1^{\ast}E$ on $R$
such that the hexagon formed by alternating the $\partial_{ij}^{\ast} \varphi$ with the equalities 
\eqref{equalities}, commutes. A {morphism} between descent 
data $(E,\varphi ) \rightarrow (E',\phi )$ is a morphism $E\rightarrow E'$ commuting with
the isomorphisms. Given a bundle $F$ on $X$, 
its pullback to $Z$ is provided with a
natural descent datum. 

A descent datum for $(Z,R,K)$ according to this definition
is automatically compatible with the degeneracy map
$s_0:Z\rightarrow R$ in the sense that $s_0^{\ast}(\phi )=1_E$.
Indeed, if $s_1$ and $s_2$ denote the two degeneracy maps 
from $R$ to $K$, then 
$$
\partial_1^{\ast}s_0^{\ast}(\phi )\circ \phi = 
s_2^{\ast}\partial_{12}^{\ast}(\phi )\circ s_2^{\ast}\partial_{01}^{\ast}(\phi )
= s_2^{\ast}\partial_{02}^{\ast}(\phi ) = \phi ,
$$
so $\partial_1^{\ast}s_0^{\ast}(\phi )=1_{\partial_1^{\ast}(E)}$ since $\phi$ is invertible,
so
$$
s_0^{\ast}(\phi )=
s_0^{\ast}\partial_1^{\ast}s_0^{\ast}(\phi ) = s_0^{\ast}(1_{\partial_1^{\ast}(E)}) = 1_E.
$$

\begin{lemma}
\label{vbdescent1}
 The category of descent data for vector bundles (maybe with extra structure)
over $Z_{\hdot}$ is equivalent to the category of explicit descent data on
$(Z,R,K)$. 
\end{lemma}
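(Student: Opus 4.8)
The plan is to realise both categories as the value of the same homotopy limit, truncated at level two. First I would record the dictionary that is parallel to the $2$-limit description of $H^1(|Y_\hdot|,G)$ recalled above: the \emph{groupoid} of descent data and their isomorphisms for vector bundles (possibly with extra structure) on $Z_\hdot$ is the $2$-limit of the $\Delta$-diagram of groupoids $k\mapsto \mathrm{Vect}(Z_k)$ (resp. vector bundles with the chosen structure), because unwinding the definition of a descent datum — a bundle $E_k$ on each $Z_k$, an isomorphism between the pullback of $E_k$ and $E_m$ for each $\phi:[k]\to[m]$, the cocycle compatibilities, and the normalisation on identities — is exactly the data of a Cartesian section of the associated fibration, i.e. a point of that $2$-limit. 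A general morphism of descent data $(E_\hdot)\to(E'_\hdot)$ is not an isomorphism, but it is given by a compatible family $\{E_k\to E'_k\}$, i.e. by an element of the equalizer of the two maps $\mathrm{Hom}_{Z_0}(E,E')\rightrightarrows \mathrm{Hom}_{Z_1}(\partial_0^{\ast}E,\partial_1^{\ast}E')$ sending $f$ to $\varphi'\circ\partial_0^{\ast}f$ and to $\partial_1^{\ast}f\circ\varphi$ respectively.

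Second, and this is the crux, since each $\mathrm{Vect}(Z_k)$ is a $1$-truncated space, the $2$-limit over all of $\Delta$ is computed by the restriction of the diagram to the full subcategory on $[0],[1],[2]$; the coskeletal matching objects at levels $\geq 3$ impose no further constraints. This is precisely Lemma \ref{limdiag} (already used in the proof of Proposition \ref{h1calc}); on $\mathrm{Hom}$'s one needs even less, only the levels $[0],[1]$, as in the displayed equalizer above. Unwinding the truncated $2$-limit then produces exactly: an object $E$ on $Z_0=Z$; for the two face maps an isomorphism $\varphi:\partial_0^{\ast}E\cong\partial_1^{\ast}E$ on $Z_1=R$; for the three face maps $K\to R$ together with the equalities \eqref{equalities} on $Z_2=K$, the hexagonal cocycle identity; and, from the codegeneracy $[1]\to[0]$ giving $s_0:Z\to R$, the normalisation $s_0^{\ast}\varphi=1_E$ — which by the computation carried out immediately before the statement is automatic once the hexagon holds. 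Thus the truncated $2$-limit is literally the category of explicit descent data on $(Z,R,K)$, and the composite of the two identifications is the asserted equivalence, with naturality built in. The argument is insensitive to the ``extra structure'' (Higgs field, flat connection, parabolic structure, \dots): in each case $\mathrm{Vect}(Z_k)$ is replaced by the corresponding groupoid, still $1$-truncated.

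The one step that deserves care is this truncation, i.e. the passage from $\Delta$ to $\{[0],[1],[2]\}$. Everything rests on working with vector bundles, which form a $1$-stack, so that the coefficient groupoids are $1$-truncated and the coskeletal tower stabilises after level two; this is exactly the content of Lemma \ref{limdiag}, and with coefficients such as perfect complexes instead of vector bundles one would have to go further out in $\Delta$. Granting that, the rest — matching the truncated $2$-limit with the explicit hexagon data on $(Z,R,K)$, identifying the $\mathrm{Hom}$-sets, and invoking the automatic degeneracy normalisation — is formal and has essentially been carried out above.
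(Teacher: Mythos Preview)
Your overall strategy is correct: recognising descent data as a $2$-limit of the cosimplicial diagram of groupoids, then truncating at level two, is exactly the right picture. But there is a circularity in your references. You invoke Lemma~\ref{limdiag} for the truncation step, yet in this paper the proof of Lemma~\ref{limdiag} reads in its entirety ``The same as for Lemma~\ref{vbdescent1}''. So within the paper's logical flow, the present lemma \emph{is} where the truncation fact gets established; you cannot cite the later lemma to prove it. (Proposition~\ref{h1calc} does not help either: its proof explicitly defers to Lemma~\ref{limdiag}.)

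The paper's proof supplies precisely the concrete content you are black-boxing. Given $(E,\varphi)$ on $(Z,R,K)$, one must build a bundle $E_k$ on each $Z_k$ together with all simplicial transition maps. The construction is: for each vertex map $v_j:Z_k\to Z_0=Z$ pull back $E$; for any path of edges in $[k]$ from the $i$-th to the $j$-th vertex, compose the corresponding pullbacks of $\varphi$ to get an isomorphism $v_i^{\ast}E\cong v_j^{\ast}E$; the hexagonal cocycle condition on $K$ guarantees that two paths differing by the boundary of a $2$-simplex yield the same isomorphism, and since any two paths in a simplex are homotopic rel endpoints through such moves, the isomorphism is path-independent. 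This canonically identifies all the $v_j^{\ast}E$ into a single $E_k$, and functoriality in $k$ follows. That argument \emph{is} the proof that the $2$-limit only depends on levels $\leq 2$, carried out by hand.

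So your abstract framing is sound, and once the truncation principle is granted from elsewhere (it is indeed a standard fact about $2$-limits of $1$-groupoid-valued cosimplicial diagrams) your proof is complete and arguably cleaner. The paper's hands-on route has the virtue of being self-contained and of making visible exactly where the cocycle condition over $K$ is used; your route makes the structural reason transparent but, as written, outsources the one nontrivial step.
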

\begin{proof}
A descent datum on $Z_{\hdot}$ restricts in an obvious way to a descent datum on $(Z,R,K)$.
Suppose given a descent datum $(E,\varphi )$ over $(Z,R,K)$. 
The $j$-th vertex map $[0]\rightarrow [k]$ in $\Delta$ induces $v_j:Z_k\rightarrow Z_0=Z$.
A path of edges relating the $i$-th and $j$-th vertices in $[k]$ gives, using $\varphi$,
an isomorphism of bundles between $v_i^{\ast}(E)$ and $v_j^{\ast}(E)$ on $X_k$. 
When two paths differ by the boundary of a $2$-simplex, the equalities required of $\varphi$
imply that the two isomorphisms are the same. But any two paths can be connected by a sequence
of transformations along boundaries of $2$-simplices, so any two paths induce the same isomorphism.
This canonically identifies all of the $v_j^{\ast}(E)$ to a unique bundle which can be called $E_k$.
It is now easy to see that the $E_k$ are naturally functorial for pullbacks along the simplicial maps
$X_k\rightarrow X_m$. This constructs the essential inverse to the restriction functor. 
\end{proof}

\begin{remark}
\label{ztimesz}
If $(Z,R,K)$ is the start of a proper surjective hypercovering of a
proper DM-stack $X$, and $(E,\varphi )$ is a descent datum for a bundle or
local system on $(Z,R,K)$. Then $\varphi$ determines a continuous isomorphism
between ${\rm pr}_1^{\ast}(E)$ and ${\rm pr}_2^{\ast}(E)$ over $Z\times _XZ$.
\end{remark}
\begin{proof}
The map $R\rightarrow Z\times _XZ$ is proper and surjective, with $\varphi$ defined
over $R$. There is a map from $R\times _{(Z\times _XZ)}R$ to the matching
object: given $r,r'\in R$ mapping to $(z,z')\in Z\times _XZ$, associate $(r,1_{z'},r')$
in the matching object. As $K$ surjects to the matching object, and 
$\varphi (1_{z'})=1_{E(z')}$, the cocycle condition over $K$
says that $\varphi (r')=\varphi (1_{z'})\circ \varphi (r)$. Thus, $\varphi$ descends
as a continuous function to $Z\times _XZ$. 
\end{proof}

Return now to the hypothesis that $X$ is a smooth DM-stack, 
and $Z_{\hdot}\rightarrow X$ is a proper surjective hypercovering
with $Z_k$ smooth projective, 
starting off with $(Z,R,K)$ chosen according to
the procedure described before Theorem \ref{fullresolution}.
Thus $Z\rightarrow X$ is assumed to be surjective where
etale, and $R$ chosen as a completion of the smooth $R'$ as above. 

There is also a natural pullback 
functor from bundles on $X$ to descent data on $(Z,R,K)$. 

When $X$ is smooth, the extra information given by the surjective-where-etale property
allows us to transfer analytic constructions from $Z_{\hdot}$ back to $X$,
to get things like the definition and existence of harmonic metrics. It might
be possible to descend these things along proper surjective hypercoverings too,
and in that way get around Theorem \ref{maincovering} entirely, but that would
require a much more detailed study of descent for bundles along proper surjective maps,
a subject discussed in \cite{B2S1} \cite{B2S2}.

Notice that $Z_{\hdot}$ may be chosen to contain an etale hypercovering as
an open simplicial subvariety, but is not itself an etale hypercovering.
This is because the places where the maps are not etale lead to singularities
in the matching objects, so
one needs to use resolution of singularities at each stage in order to have $Z_k$ 
smooth. Nonetheless an explicit treatment of the first part of the resolution
yields descent for bundles.

\begin{lemma}
\label{vbdescent2}
The category of bundles on $X$ is naturally equivalent via this pullback functor to the category 
of descent data on $Z_{\hdot}$ or the partial simplicial object $(Z,R,K)$.
\end{lemma}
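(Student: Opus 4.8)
The plan is to reduce, via Lemma~\ref{vbdescent1}, to the assertion that pullback identifies bundles on $X$ with descent data on the explicit triple $(Z,R,K)$, and then to exploit the open substack $Z'\subseteq Z$ where $p$ is etale, which provides an honest etale cover of $X$ lying inside that triple. Write $p':=p|_{Z'}:Z'\rightarrow X$, an etale surjection. Since $p'$ is etale and $Z$ smooth, the fibre products $Z'\times_X Z'$, $Z'\times_X Z'\times_X Z'$, $W:=Z'\times_X Z$ and $Z'\times_X Z\times_X Z'$ are smooth; since $R$ and $K$ were chosen as resolutions of singularities which are isomorphisms over their dense smooth loci, retaining only the components meeting $R'$, respectively $K'$, and since --- by the construction preceding Theorem~\ref{fullresolution} --- every connected component of $Z$ carries a point at which $p$ is etale, so that $Z'$ is dense in $Z$, these four schemes appear as open subschemes of $R$, $K$, $R$ and $K$ respectively. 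Moreover $\partial_1:W=Z'\times_X Z\rightarrow Z$ is itself an etale cover, being a base change of $p'$. Restricting a descent datum $(E,\varphi)$ on $(Z,R,K)$ to the truncated \v{C}ech nerve $(Z',\,Z'\times_X Z',\,Z'\times_X Z'\times_X Z')$ of $p'$ gives an etale descent datum, which by effective etale descent for vector bundles on Deligne-Mumford stacks comes from a bundle $F$ on $X$ together with $\beta:p'^{\ast}F\cong E|_{Z'}$.

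The core of the proof is that the restriction functor from descent data on $(Z,R,K)$ to descent data on the \v{C}ech nerve of $p'$ is fully faithful. Faithfulness is immediate: if a morphism $g:E\rightarrow E'$ of descent data vanishes on $Z'$ then $\partial_0^{\ast}g=0$ over $W$, hence $\partial_1^{\ast}g=0$ by compatibility with the invertible $\varphi$, hence $g=0$ since $\partial_1:W\rightarrow Z$ is faithfully flat. For fullness, given a morphism $h$ of descent data over the nerve, one puts $\tilde g:=\varphi'|_W\circ\partial_0^{\ast}h\circ(\varphi|_W)^{-1}$ over $W$ and checks the cocycle identity over $W\times_Z W\cong Z'\times_X Z\times_X Z'$; at a point $(a,z,b)$ this unwinds --- using the hexagon for $\varphi$ and for $\varphi'$ over $K$ at the $2$-simplex $(a,z,b)$ and at the degenerate one $(b,z,b)$, the latter yielding $\varphi_{(b,z)}=\varphi_{(z,b)}^{-1}$ via the identity $\varphi|_{\mathrm{diag}}=1$ established in the text, together with the compatibility of $h$ over $Z'\times_X Z'$ --- to the relation $\varphi'_{(b,z)}\circ h_b\circ\varphi_{(b,z)}^{-1}=\varphi'_{(a,z)}\circ h_a\circ\varphi_{(a,z)}^{-1}$. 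Thus $\tilde g$ descends along the etale cover $\partial_1:W\rightarrow Z$ to a map $g:E\rightarrow E'$ on $Z$; one verifies $g|_{Z'}=h$ by restricting along the diagonal section $Z'\rightarrow W$ and using $\varphi|_{\mathrm{diag}}=1$, and verifies compatibility of $g$ with $\varphi,\varphi'$ over $R$ by noting that the required identity holds over the dense open $R'\subseteq R$ --- over $W$ by the construction of $\tilde g$ and over the other half $Z\times_X Z'$ by the symmetric computation --- while $R$ is reduced. Being fully faithful and also essentially surjective (any datum over the nerve of $p'$ descends to a bundle $F$ whose pullback to $(Z,R,K)$ restricts back to it), the restriction functor is an equivalence. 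Composing with the equivalence given by effective etale descent between descent data over the nerve of $p'$ and bundles on $X$, and observing that the composite $\{\text{bundles on }X\}\rightarrow\{\text{descent data on }(Z,R,K)\}\rightarrow\{\text{descent data on the nerve of }p'\}$ is just pullback along $p'$, one concludes that pullback is an equivalence. Lemma~\ref{vbdescent1} then transports the equivalence from $(Z,R,K)$ to $Z_{\hdot}$, and the whole argument goes through verbatim in the presence of extra structure on the bundles.

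The step I expect to be genuinely delicate is the cocycle verification over $W\times_Z W$: one must be sure that $Z'\times_X Z\times_X Z'$ --- whose \emph{middle} factor is not constrained to lie in $Z'$ --- really does embed as an open subscheme of the resolved matching object $K$, so that the hexagon relation imposed abstractly on $K$ may legitimately be specialized there. This rests on $Z'\times_X Z\times_X Z'$ being smooth and mapping isomorphically onto an open subscheme of the matching object $M$, together with the density of $Z'$ in $Z$, which forces every component of this scheme to meet $K'$ and hence to survive into $K$. It is the one place where the resolutions of singularities built into the construction of $(Z,R,K)$ interact with the etale-descent data, and it is precisely why the dense smooth loci $R'\subset R$ and $K'\subset K$ must be carried through the entire argument.
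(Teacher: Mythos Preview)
Your proof is correct and follows essentially the same strategy as the paper: reduce to \'etale descent along $Z'\to X$, then use the \'etale cover $W=Z'\times_X Z\to Z$ to transport information from $Z'$ to all of $Z$, and finally use density of $R'\subset R$ for compatibility with $\varphi$. The organization differs: you package the argument as ``restriction from $(Z,R,K)$ to the \v{C}ech nerve of $p'$ is an equivalence'', whereas the paper shows directly that pullback from $X$ is essentially surjective by extending the isomorphism $\psi':p^{\ast}F|_{Z'}\cong E|_{Z'}$ to all of $Z$.

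The one substantive difference is precisely the step you flag as delicate. Your fullness argument descends $\tilde g$ along $\partial_1:W\to Z$ by checking the cocycle over $W\times_Z W\cong Z'\times_X Z\times_X Z'$, which forces you to embed this scheme into $K$ so that the hexagon relation can be invoked there. Your verification of that embedding is correct, but the paper sidesteps it entirely: since $\tilde g$ restricts to $\partial_1^{\ast}h$ over $\partial_1^{-1}(Z')=Z'\times_X Z'$, the two pullbacks of $\tilde g$ to $W\times_Z W$ agree on the dense open $Z'\times_X Z'\times_X Z'$ and hence everywhere, $W\times_Z W$ being smooth and therefore reduced. In the paper's language, ``the property of extending an isomorphism from a Zariski open set to the whole of $Z$ is \'etale-local'', so uniqueness of extensions does the gluing for free. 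Thus the paper only ever needs the cocycle condition on $K'=Z'\times_X Z'\times_X Z'\subset K$, which is unproblematic. Your route works, but you have replaced a two-line density argument with a careful analysis of the smooth locus of the matching object; it is worth knowing that this can be avoided.
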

\begin{proof}
Recall that $Z'\subset Z$ is the Zariski dense 
open set over which the projection $p$ is
etale. 
By descent for the map $Z'\rightarrow X$, we obtain a bundle $F$ over $X$, with
an isomorphism 
$\psi ': p^{\ast}(F)|_{Z'}\cong E|_{Z'}$ compatible with the descent data over $Z'$.
 
Recall that $R':= Z'\times _XZ \cup Z \times _XZ'$ is a smooth open subset of $R$,
which itself contains $Z'\times _XZ$ as an open subset. The descent datum
yields an isomorphism $\partial_1^{\ast}(E|_{Z'})\cong \partial_2^{\ast}E$ over $Z'\times _XZ$,
but 
$$
\partial_1^{\ast}(E|_{Z'})\cong \partial_1^{\ast}(p^{\ast}(F)|_{Z'})\cong p_R^{\ast}(F)|_{Z'\times _XZ}
$$
where $p_R:R\rightarrow X$ is the projection,
whence 
$$
p_R^{\ast}(F)|_{Z'\times _XZ} \cong \partial_2^{\ast}(E)|_{Z'\times _XZ}.
$$
The map $\partial_2:Z'\times _XZ\rightarrow Z$ is an etale covering, and 
we are now given an isomorphism between $p^{\ast}(F)$ and $E$, locally with respect
to this covering. Using the cocycle condition for the descent data over $K$,
this isomorphism is the same as the previous one over $Z'$. The property of 
extending an isomorphism from a Zariski open set to the whole of $Z$ is etale-local
on the complementary closed subset, so this shows that our isomorphism extends to 
a global isomorphism  $\psi : p^{\ast}(F)\cong E$ on $Z$. It is compatible with the
given descent data since the open subsets $Z'$ and $R'$ are dense. 

We have shown that the pullback functor from bundles to descent data is
essentially surjective. To show that it is fully faithful, given a
morphism between descent data it restricts to a morphism between descent
data on $(Z',Z'\times _XZ')$ so descends to a morphism between bundles.
\end{proof}

Suppose $\lambda \in \cc$. 
A {$\lambda$-connection} on a descent datum $(E,\varphi )$ is a $\lambda$-connection $\nabla$ on $E$,
such that $\varphi$ intertwines the pullbacks $\partial_0^{\ast}\nabla $ on $\partial_0^{\ast}E$ and 
$\partial_1^{\ast}\nabla $ on $\partial_1^{\ast}E$. This definition extends to objects defined over a base scheme $S$,
for any $\lambda \in \Gamma (S,\Oo _S)$. 

\begin{lemma}
Suppose $(E,\varphi )$ is the descent datum corresponding to a bundle $F$ on $X$. 
Given a $\lambda$-connection $\nabla _E$ on $(E,\varphi )$ there is a unique $\lambda$-connection 
$\nabla _F$ on $F$ such that $f^{\ast}\nabla _F = \nabla _E$ via the isomorphism $f^{\ast}F\cong E$.
\end{lemma}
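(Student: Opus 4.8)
The plan is to produce $\nabla_F$ by descending $\nabla_E$ along the étale locus $p|_{Z'}:Z'\to X$, and then to check that the descended $\lambda$-connection pulls back to $\nabla_E$ over all of $Z$ and not merely over $Z'$. The ingredients I would reuse from the proof of Lemma \ref{vbdescent2} are: $Z'\subset Z$ is the Zariski dense open subset on which $p=f$ is étale; the bundle $F$ on $X$ was obtained by descent along the étale surjection $p|_{Z'}:Z'\to X$; and there is an isomorphism $\psi':p^{\ast}(F)|_{Z'}\cong E|_{Z'}$ under which the restriction of $\varphi$ to $Z'\times_X Z'\subset R'\subset R$ becomes the canonical descent isomorphism ${\rm pr}_1^{\ast}p^{\ast}F\cong {\rm pr}_2^{\ast}p^{\ast}F$, the cocycle identity over $K'=Z'\times_X Z'\times_X Z'$ coming from the hexagon.

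First I would restrict $\nabla_E$ to $Z'$. Since $p|_{Z'}$ is étale, the canonical map $(p|_{Z'})^{\ast}\Omega^1_X\to \Omega^1_{Z'}$ is an isomorphism, so $\nabla_E|_{Z'}$ is a $\lambda$-connection on $p^{\ast}(F)|_{Z'}$. The defining compatibility of a $\lambda$-connection on the descent datum — that $\varphi$ intertwines $\partial_0^{\ast}\nabla_E$ and $\partial_1^{\ast}\nabla_E$ over $R$ — restricts over the open subset $Z'\times_X Z'$, on which $\partial_0,\partial_1$ are the two projections, to the statement that ${\rm pr}_1^{\ast}(\nabla_E|_{Z'})$ and ${\rm pr}_2^{\ast}(\nabla_E|_{Z'})$ agree under $\varphi|_{Z'\times_X Z'}$, and the cocycle condition over $K'$ is inherited from the one for $\varphi$. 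Thus $(p^{\ast}F|_{Z'},\nabla_E|_{Z'})$ is a descent datum for the étale cover $p|_{Z'}:Z'\to X$ in the category of quasi-coherent sheaves equipped with a $\lambda$-connection. By fppf, in particular étale, descent for modules with $\lambda$-connection (for $\lambda\neq 0$ this is ordinary connection descent after rescaling; for $\lambda=0$ it is $\Oo$-module descent of the Higgs field) there is a unique $\lambda$-connection $\nabla_F$ on $F$ with $(p|_{Z'})^{\ast}\nabla_F=\nabla_E|_{Z'}$ under $\psi'$.

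Next I would upgrade this equality from $Z'$ to $Z$. Via the isomorphism $f^{\ast}F\cong E$ of Lemma \ref{vbdescent2}, both $p^{\ast}\nabla_F$ and $\nabla_E$ are $\lambda$-connections on the bundle $E$, so their difference $p^{\ast}\nabla_F-\nabla_E$ is an $\Oo_Z$-linear map $E\to E\otimes\Omega^1_Z$, i.e. a global section of the locally free sheaf $\mathcal{H}om_{\Oo_Z}(E,E\otimes\Omega^1_Z)$. This section vanishes on the dense open subset $Z'$, and since $Z$ is smooth, hence reduced, a section of a locally free sheaf that vanishes on a dense open subset vanishes identically; so $p^{\ast}\nabla_F=\nabla_E$ on all of $Z$. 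For uniqueness, if $\nabla_F'$ is another $\lambda$-connection on $F$ with $f^{\ast}\nabla_F'=\nabla_E$, then $\nabla_F-\nabla_F'$ is an $\Oo_X$-linear map $F\to F\otimes\Omega^1_X$ whose pullback along the étale surjection $p|_{Z'}:Z'\to X$ is zero; since such pullback is faithful on $\Oo_X$-linear maps, $\nabla_F=\nabla_F'$.

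The one point carrying real content is the descent statement for $\lambda$-connections along the étale cover $Z'\to X$ of the smooth DM-stack $X$, together with the bookkeeping identifying the restriction of $\varphi$ with the required descent datum; given the étale-locality of $\Omega^1$ and the bundle-level descent already established in Lemma \ref{vbdescent2}, this is routine, and I expect it to be the main, and rather modest, obstacle. The remaining steps — promoting the equality from $Z'$ to $Z$ and deducing uniqueness — are formal density and faithfulness arguments.
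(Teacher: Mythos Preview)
Your proof is correct and follows exactly the same route as the paper's: descend $\nabla_E|_{Z'}$ along the \'etale cover $Z'\to X$ to obtain $\nabla_F$, then use that the difference $f^{\ast}\nabla_F-\nabla_E$ is an $\Oo_Z$-linear map vanishing on the dense open $Z'$ of the smooth variety $Z$ to conclude equality on all of $Z$, with uniqueness by faithfulness of \'etale descent over $Z'$. The paper's proof is terser but the logic is identical.
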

\begin{proof}
As before, if $Z$ were replaced by $Z'$ this would be the classical etale descent.
In particular, $\nabla _E |_{Z'}$ descends to a unique connection $\nabla _F$ on $F$. But now,
$f^{\ast}\nabla _F$ and $\nabla _E$ are two $\lambda$-connections on $E$ over the smooth variety $Z$,
with the same restriction to the dense open subset $Z'$. Therefore they are equal. Unicity of $\nabla _F$
follows by descent only over $Z'$.  
\end{proof}

We can similarly descend $\Cc ^{\infty}$ vector bundles, hermitian metrics on them, and differential-geometric structures such as 
differential operators. It is left to the reader to formulate these statements.

Cohomological descent along proper surjective hyperresolutions was the main technique
used by Deligne to apply Hodge theory to the
topology of singular varieties. It is the main reason for
looking at simplicial schemes, but the same techniques also apply
to get proper surjective hyperresolutions for DM-stacks as stated in 
Theorem \ref{fullresolution} above.

Proper surjective cohomological descent \cite{SaintDonat}
\cite{Vermeulen} \cite{hodge3} then says that for any local system $L$ on $S$,
\begin{equation}
\label{cohdescent}
H^i(S,L)\stackrel{\cong}{\rightarrow} H^i(X_{\hdot} , a^{\ast}L).
\end{equation} 

\begin{lemma}
\label{cohdescentstack}
The isomorphism \eqref{cohdescent}
holds also in the case when $S$ is a separated DM-stack.
\end{lemma}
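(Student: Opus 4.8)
The plan is to reduce the statement for a separated DM-stack $S$ to the already-established case of schemes and simplicial schemes, by interpolating an auxiliary hypercovering. First I would invoke cohomological descent for schemes: choosing a proper surjective hypercovering $W_{\hdot}\to S$ by schemes (which exists by Remark \ref{matchstack} together with the constructions of Section \ref{sec-spcs}, in fact one can take a hypercovering by smooth projective varieties via Theorem \ref{fullresolution}), we have $H^i(S,L)\xrightarrow{\cong} H^i(W_{\hdot},a^{\ast}L)$ exactly when this version of \eqref{cohdescent} is known for $S$ a stack — which is what we are proving, so this is circular and must be handled differently.

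The honest approach is to go back to the proof of \eqref{cohdescent} for schemes and check that nothing in it used that $S$ is a scheme rather than a separated DM-stack. Concretely, Deligne's argument (or the Saint-Donat/Vermeulen formulation cited) shows that for a proper surjective morphism $f:X_0\to S$, the adjunction map $L\to Rf_{\ast}f^{\ast}L$ becomes an isomorphism after taking the total complex associated to the \v{C}ech-type resolution by the $X_{\hdot}$, and the only input is: (i) the existence of the coskeletal hypercovering with proper surjective matching maps, which is Remark \ref{matchstack}; (ii) a base-change / descent statement for proper surjective maps of the ambient objects; and (iii) the topological statement that a proper surjective map of complex analytic spaces (or their associated topological realizations) is a map of cohomological descent. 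For a separated DM-stack $S$, one passes to the coarse moduli space $X\to X^{\bf c}$ of Keel-Mori, which is a finite (hence proper) map, and $|X|\to |X^{\bf c}|$ is a proper surjection of honest topological spaces with finite fibers; moreover $H^i(X,L)\cong H^i(X^{\bf c}, \text{(pushforward)})$ is classical for finite maps with the appropriate coefficients, but more to the point one can simply note that the realization functor $|\;\;|:\stck\to\TOP$ of Section 3, applied to a proper surjective hypercovering $X_{\hdot}\to S$, yields a simplicial space $|X|_{\hdot}$ whose realization is $|S|$ — and then the scheme-level statement (Lemma 2.1, the identification $H^i(|Y_{\hdot}|,L)\cong H^i(Y_{\hdot},L_{\hdot})$) does the rest.

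So the cleanest route, and the one I would write up, is: choose by Theorem \ref{fullresolution} (or \ref{fullresolution1}) a proper surjective hypercovering $Z_{\hdot}\to S$ with split degeneracies by smooth projective varieties; by the compatibility of topological realization with such hypercoverings — established in Section 3, where it is shown that $|(|Z|_{\hdot})|\sim |S|$ for any etale hypercovering, and the same argument via the realization functor $|\;\;|:\stck\to\TOP$ sending Illusie weak equivalences to weak equivalences applies once one knows a proper surjective hypercovering is a map of cohomological descent on realizations — we get a homotopy/cohomology equivalence at the level of realizations; then apply Lemma 2.1 to the simplicial space $|Z|_{\hdot}$ to identify $H^i(|Z|_{\hdot}, L)$ with $H^i(Z_{\hdot}, a^{\ast}L)$; and identify $H^i(|Z|_{\hdot},L)$ with $H^i(|S|, L)=H^i(S,L)$. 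The real content, and the main obstacle, is justifying that a proper surjective hypercovering of a DM-stack induces a cohomology isomorphism on topological realizations — i.e. that \emph{proper} (not merely etale) surjective descent is compatible with the realization functor. For etale hypercoverings this is immediate because they are Illusie weak equivalences; for proper surjective ones it is the analytic statement of proper cohomological descent, which one reduces to the scheme case by working with the coarse moduli spaces: $|Z_k|=|Z_k^{\bf c}|$ since the $Z_k$ are already schemes, and $|S|$ is modeled by $|S^{\bf c}|$ up to the finite-fiber subtlety, so that the classical result \eqref{cohdescent} for the proper surjective hypercovering $Z_{\hdot}^{\bf c}\to S^{\bf c}$ of analytic spaces transfers. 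I expect the write-up to be short: "The proof of \eqref{cohdescent} given in \cite{SaintDonat} \cite{hodge3} applies verbatim, the only point to check being that the matching objects make sense, which is Remark \ref{matchstack}, and that a proper surjection of DM-stacks is of cohomological descent on topological realizations, which follows by passing to coarse moduli spaces and applying the scheme case."
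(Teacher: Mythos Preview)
Your proposal has a genuine circularity and a genuine error.

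The circularity: your ``cleanest route'' hinges on showing that a proper surjective hypercovering $X_{\hdot}\to S$ induces an isomorphism on cohomology of the topological realizations. But this is essentially the statement you are trying to prove, and in the paper it appears as Proposition~\ref{topdescentstack}, which is proved \emph{after} Lemma~\ref{cohdescentstack} and \emph{using} it (together with Lemma~\ref{lsdescends} for the $\pi_1$ part). You cannot appeal to it here. For \emph{etale} hypercoverings, yes, the realization equivalence is immediate because those are Illusie weak equivalences; for \emph{proper surjective} hypercoverings of a stack, it is not, and that is precisely the content at stake.

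The error: your fallback via coarse moduli spaces fails because $|S|$ is not modeled by $|S^{\bf c}|$. Already for $S=BG$ with $G$ finite, $|S|$ is the classifying space $BG$ while $|S^{\bf c}|$ is a point; a local system $L$ on $S$ is a $G$-representation, and its cohomology is group cohomology, which is invisible on $S^{\bf c}$. So ``passing to coarse moduli spaces and applying the scheme case'' does not compute $H^i(S,L)$.

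The paper's argument avoids both issues with a bisimplicial trick. Choose an \emph{etale} hypercovering $Z_{\hdot}\to S$ by schemes (algebraic spaces); this is the step that uses the stack structure of $S$ and is unproblematic since etale hypercoverings are Illusie weak equivalences. Form the bisimplicial object $\{X_k\times_S Z_l\}$. For each fixed $l$, the map $X_{\hdot}\times_S Z_l\to Z_l$ is a proper surjective hypercovering of the \emph{scheme} $Z_l$, so the already-known scheme case \eqref{cohdescent} applies in the $k$-direction. For each fixed $k$, the map $X_k\times_S Z_{\hdot}\to X_k$ is an etale hypercovering, giving descent in the $l$-direction; likewise $Z_{\hdot}\to S$. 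A spectral-sequence (Fubini) argument for the bisimplicial total complex then assembles these into the desired isomorphism $H^i(S,L)\cong H^i(X_{\hdot},a^{\ast}L)$. The point is that the etale hypercovering is what lets you land on schemes, where the proper-surjective descent is already available; you never need to know directly that $|X_{\hdot}|\sim|S|$.
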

\begin{proof}
Choose an etale hypercovering $Z_{\hdot}\rightarrow S_{\hdot}$,
then we get a bisimplicial algebraic space $\{ X_k\times _X Z_l\}_{(k,l)\in \Delta \times \Delta}$. Cohomological descent for the proper surjective topology gives cohomological
descent in the $k$-variable down to $Z_l$, and the etale hypercovering induces an equivalence of
realizations so we have cohomological descent in the $l$-variable, down  
to $X_k$ and to $S$. These allow us to conclude by a spectral sequence argument. 
\end{proof}

\begin{lemma}
\label{lsdescends}
Suppose $X_{\hdot}\rightarrow S$ is a proper surjective
hypercovering to a separated DM-stack. 
If $L$ is a local system of sets over $X_{\hdot}$, then it descends: there exists a local system $L_S$
on $S$ such that $a^{\ast}L_S\cong L$.
\end{lemma}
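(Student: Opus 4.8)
The plan is to reduce the statement about local systems of sets to a statement about cohomology with coefficients in a (possibly nonabelian, but only degree $\leq 1$) sheaf of groups, and then to quote the cohomological descent already established in Lemma \ref{cohdescentstack}. First I would fix a local system $L$ of sets on $X_{\hdot}$, i.e. a compatible collection $\{L_k\}$ with $L_k$ a local system of sets on $X_k$ together with the transition isomorphisms $\phi^{\ast}(L_k)\cong L_m$ satisfying the cocycle conditions. By the general correspondence between local systems on a simplicial space and local systems on its realization (the Lemma in Section 2, extended to DM-stack targets via the realization formalism of Section 3 and Remark \ref{matchstack}), $L$ corresponds to a local system on $|X_{\hdot}|$, and via the weak equivalence $|(|X|_{\hdot})|\sim |S|$ coming from proper surjective cohomological descent, the question becomes: does a local system of sets on $|X_{\hdot}|$ descend to $|S|$? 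Since local systems on these good-enough spaces are representations of $\pi_1$, and since $|X_{\hdot}|\to |S|$ is a cohomological equivalence, one expects this to be automatic; but the subtlety is that a local system of sets is not a cohomology class in an abelian group.

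Here is how I would make the descent concrete. A local system of sets $L_k$ on the connected components of $X_k$ is, up to choosing a point in each component, the same as giving for each component a set with a $\pi_1$-action; but the cleanest route is: set $G := {\rm Aut}(L)$, the sheaf of automorphisms, which is a locally constant sheaf of groups on $X_{\hdot}$ — more precisely a local system of groups in the sense of Section 2. Actually, to avoid circularity (we would then need to descend $G$, another local system), I would instead argue directly on fundamental groupoids. By the Corollary and the remark following Proposition \ref{h1calc}, the groupoid of $L$-torsor-like data, and more generally ${\rm Hom}(\Pi_1(|X_{\hdot}|),\Tt)$ for a $1$-groupoid $\Tt$, is the $2$-limit of $k\mapsto{\rm Hom}(\Pi_1(X_k),\Tt)$. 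Taking $\Tt$ to be the one-object groupoid, or rather the groupoid $\mathbf{Set}$ of sets (truncated appropriately), gives that the category of local systems of sets on $|X_{\hdot}|$ is the $2$-limit of the categories of local systems of sets on the $X_k$. Now the point is that $|X_{\hdot}|\to|S|$ is a weak equivalence of spaces — this is precisely proper surjective cohomological descent at the level of homotopy types, which for a hypercovering gives $\Pi_1(|X_{\hdot}|)\simeq\Pi_1(|S|)$ — hence the category of local systems of sets on $|X_{\hdot}|$ is equivalent to that on $|S|=|S_{\hdot}|$ (constant simplicial object), and this equivalence is exactly $a^{\ast}$. Therefore $L\cong a^{\ast}L_S$ for a unique-up-to-iso $L_S$.

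The main obstacle I expect is justifying that proper surjective hypercovering induces an equivalence of fundamental groupoids $\Pi_1(|X_{\hdot}|)\simeq\Pi_1(|S|)$, not merely an isomorphism on $H^{\ast}(-,L)$ for abelian $L$. Strictly, what Lemma \ref{cohdescentstack} gives is cohomological descent with abelian (and local) coefficients; upgrading to $\pi_1$ (or $\Pi_1$) requires knowing that the map $|X_{\hdot}|\to|S|$ is not just a cohomology equivalence but induces an isomorphism on $\pi_0$ and $\pi_1$. For $\pi_0$ this is surjectivity of $a$ plus connectedness considerations on $X_1$ (the standard $\pi_0$-coequalizer argument, using the matching map at level $1$ being a proper surjection). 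For $\pi_1$ the cleanest argument is van Kampen for the simplicial space: $\pi_1(|X_{\hdot}|)$ is computed from $\pi_1(X_0)$, $\pi_1(X_1)$, $\pi_0(X_1)$, $\pi_0(X_2)$ via the colimit formula (this is implicit in Proposition \ref{h1calc}, where only $k=0,1,2$ matter), and the proper surjectivity of the matching maps at levels $0,1$ together with surjectivity where relevant at level $2$ forces this colimit to be $\pi_1(|S|)$. I would spell this out, or else simply invoke that for a local system of sets one only ever uses the $\leq 1$-truncated homotopy type, for which proper surjective hyperdescent is classical (\cite{SaintDonat}, \cite{hodge3}), extended to the DM-stack base exactly as in the proof of Lemma \ref{cohdescentstack} by passing through an etale hypercovering and running the bisimplicial spectral-sequence / realization argument. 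Uniqueness of $L_S$ follows since $a^{\ast}$ is fully faithful, which is the descent statement for morphisms — itself an instance of the same $2$-limit description, or can be seen directly as in Lemma \ref{vbdescent2} by restricting a morphism of descent data to a dense open where things are genuinely étale.
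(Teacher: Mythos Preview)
Your proposal has a genuine circularity. You want to conclude by invoking that $|a|:|X_{\hdot}|\to |S|$ induces an equivalence of fundamental groupoids, but in the paper's logical order this is exactly the content that Lemma~\ref{lsdescends} supplies: Proposition~\ref{topdescentstack} (the weak equivalence of realizations) is proved immediately afterwards and its proof says ``it suffices to verify in addition that $|a|$ induces an isomorphism on fundamental groups \ldots\ This is shown by the preceding lemma.'' So you cannot assume $\Pi_1(|X_{\hdot}|)\simeq\Pi_1(|S|)$ as an input. The references you cite (\cite{SaintDonat}, \cite{hodge3}, and Lemma~\ref{cohdescentstack}) give descent for \emph{abelian} cohomology; they do not contain the nonabelian statement you need. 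Your van~Kampen sketch does not close the gap either: the colimit description of $\pi_1(|X_{\hdot}|)$ in terms of $\pi_1(X_0)$, $\pi_0(X_1)$, $\pi_0(X_2)$ is fine, but you give no argument that this colimit is $\pi_1(|S|)$. For example, why should $\pi_1(X_0)\to\pi_1(|S|)$ be surjective? Proper surjectivity of $X_0\to S$ alone does not give this.

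The paper's argument is direct and geometric, and this is where the actual work lies. For a point $y\in S$ one shows that $L$ restricted to the fiber $X_{\hdot}(y)$ is constant, by a diagonal trick: pull $L$ back to $X_0\times_S X_{\hdot}$ from the second factor, identify it with the pullback of $L|_{X_0}$ from the first factor via the descent isomorphisms, then restrict to $\{z\}\times_S X_{\hdot}=X_{\hdot}(y)$ for a chosen lift $z$. One then spreads this trivialization to a usual open neighborhood $W_k\supset X_k(y)$ (finitely many $k$ suffice, and one uses that local systems are discrete to kill the nontrivial transition functions away from the fiber). Properness of the maps $X_k\to S$ gives an open $S'\ni y$ whose preimage lands in the $W_k$, so over $S'$ the local system is visibly pulled back from a constant one. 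These local descents glue by uniqueness. This fiberwise-constancy-plus-properness argument is the missing idea in your proposal.
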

\begin{proof}
Suppose first that $S$ itself is a separated scheme of finite type over $\cc$. 
Suppose $y\in S$. Let $X_{\hdot}(y)$ denote the fiber
over $y$, that is $X_k(y)$ is the inverse image of $y$ in $X_k$. It is nonempty.

Working with local systems and the fundamental group involves only the pieces
$X_0,X_1,X_2,X_3$ of the hypercovering, so for the purposes of the present
argument we truncated. This makes it so there are only finitely many $k$. 

The descent data for the local system over the hypercovering
imply that $L$ is trivial, i.e. isomorphic to a constant local system, when restricted
to $X_{\hdot}(y)$. To see this, choose a lifting $z\in X_0$ mapping to $y$,
and note that the restriction of $L$ to $X_0\times _SX_{\hdot}$ from the 
second factor, is isomorphic to the pullback of $L|_{X_0}$ from
the first factor. Restricting to $\{z\}\times _SX_{\hdot}=X_{\hdot}(y)$
gives the desired trivialization $L|_{X_{\hdot}(y)}\cong \underline{L_0(z)}$. 
Note that this trivialization is compatible with the descent data. 

Next, note that there exist usual open neighborhoods 
$X_k(y)\subset W_k\subset X_k$
such that $L|_{W_k}$ has a trivialization compatible with the one
constructed previously on $X_k(y)$. To see this, choose an open covering
$U^i_k$ on which $L$ is trivialized, then refine it so that any nonempty
$U^i_k\cap X_k(y)$ are connected. The trivialization of 
$L|_{X_k(y)}$ induces a well-defined trivialization of each $L|_{U^i_k}$
for those open sets $U^i_k$ meeting $X_k(y)$; for the other open sets choose
arbitrarily. Pass then to relatively
compact $V^i_k\subset U^i_k$ which still cover $X_k$. Now, for any connected
component of some $V^i_k\cap V^j_k$ on which the transition isomorphism $g_{ij}$
for $L$ is not the identity, the
closure of that connected component misses $X_k(y)$. Taking the complement of the
closures of such connected components of $V^i_k\cap V^j_k$ gives a neighborhood 
$W_k$ of $X_k(y)$ covered by $V^i_k\cap W_k$, such that the transition functions 
for $L|_{W_k}$ with respect to the covering and the given trivializations, are 
identities. Patching together gives a global trivialization of $L|_{W_k}$
compatible with the previous one on $X_k(y)$.

The previous trivializations of $L|_{X_k(y)}$ were compatible with the descent
data, so possibly reducing the size of $W_k$ we may assume that this is true
for our trivializations of $L|_{W_k}$.
Properness of the finitely many maps $X_k\rightarrow S$ in play, implies that there
is a usual open neighborhood $y\in S'\subset S$ such that the inverse image of
$S'$ (which we call $W'_k$) is contained in $W_k$. The trivialization of $L|_{W'_k}$ then becomes an
isomorphism between $L|_{W'_k}$ and the pullback of the constant local system on $S'$,
compatible with the descent data. In other words, we have descended 
$L|_{W'_k}$ to a constant local system on $S'$. For any point $y\in S$ we obtain such
a neighborhood, and the descended local system is unique up to canonical isomorphism
(as can be seen by proper surjective descent for sections of local systems).
Hence the descended local systems on neighborhoods glue together to give a descent of
the local system to $L_S$ on $S$ whose pullback to $X_{\hdot}$ is $L$. 
\end{proof}

\begin{proposition}
\label{topdescentstack}
Suppose $X_{\hdot}\rightarrow S$ is a proper surjective
hypercovering to a separated DM-stack. Then this induces a
map on topological realisations
$|X_{\hdot}|\stackrel{|a|}{\longrightarrow} |S|$ which is a weak homotopy equivalence. 
\end{proposition}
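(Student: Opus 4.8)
The plan is to show that the continuous map $|X_{\hdot}| \to |S|$ induces isomorphisms on all homotopy groups (and on $\pi_0$), which by Whitehead suffices for a weak homotopy equivalence between spaces having the homotopy type of CW-complexes. The strategy is to reduce everything to the already-established cohomological and $\pi_1$-descent statements, together with the case where $S$ is a scheme, by passing through an auxiliary etale hypercovering.

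First I would treat the case where $S$ is a separated scheme. Here the classical proper-surjective cohomological descent, Lemma \ref{lsdescends} applied to local systems, gives the key inputs. To conclude a weak equivalence one argues as follows. The map $|a|$ induces an isomorphism on $\pi_0$ because $X_0 \to S$ is a proper surjection, so $\pi_0(|X_{\hdot}|) = \mathrm{coeq}(\pi_0(X_1) \rightrightarrows \pi_0(X_0)) = \pi_0(S)$. For $\pi_1$: Lemma \ref{lsdescends} applied to local systems of sets shows that the category of local systems of sets on $S$ is equivalent to that of local systems on $X_{\hdot}$, which by the levelwise local-system formalism of Section 2 is the category of local systems on $|X_{\hdot}|$; hence $\pi_1$ is carried isomorphically (working componentwise, using basepoints lifted through the proper surjection $X_0 \to S$). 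Then I would invoke the standard argument: a map of connected spaces which is a $\pi_1$-isomorphism and induces an isomorphism on cohomology with all local coefficient systems (which is \eqref{cohdescent}) is a weak equivalence — this is the classical criterion, and \eqref{cohdescent} supplies exactly the cohomological half while the local-system equivalence supplies the $\pi_1$ half and the identification of the coefficient categories on the two sides. For this one may need to pass to the universal cover of a component of $|S|$ and apply the same descent statement to the pulled-back hypercovering, upgrading to a homology isomorphism of simply connected spaces; the pullback of a proper surjective hypercovering along a covering space is again one levelwise, so Lemma \ref{lsdescends} and \eqref{cohdescent} still apply.

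Next I would handle a general separated DM-stack $S$ by the same device used in the proof of Lemma \ref{cohdescentstack}: choose an etale hypercovering $W_{\hdot} \to S$ by schemes (or use a presentation $Z/R$), form the bisimplicial algebraic space $\{X_k \times_S W_l\}$, and compare realizations in both simplicial directions. Realization of the bisimplicial object can be computed by first realizing in $l$, giving $|X_k|$ up to weak equivalence since $W_{\hdot} \to S$ is an etale hypercovering and etale hypercoverings induce weak equivalences on realizations (as recalled in Section 3); realizing then in $k$ gives $|X_{\hdot}| \to |S|$. On the other hand, realizing first in $k$: for each fixed $l$, $X_{\hdot} \times_S W_l \to W_l$ is a proper surjective hypercovering of the \emph{scheme} $W_l$, so by the scheme case just proved its realization maps by a weak equivalence to $|W_l|$; realizing then in $l$ recovers $|S|$. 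A spectral-sequence / homotopy-colimit comparison (a bisimplicial realization is the realization of either diagonal, and levelwise weak equivalences of simplicial spaces satisfying the tacitly-assumed cofibrancy conditions induce weak equivalences on realizations) then forces $|X_{\hdot}| \to |S|$ to be a weak equivalence.

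The main obstacle is the scheme case, and specifically making rigorous the step from \eqref{cohdescent} plus the $\pi_1$-equivalence to a genuine weak homotopy equivalence: one must be careful that \eqref{cohdescent} holds with \emph{all} local coefficients (it does, since it is stated for arbitrary local systems $L$), verify that the equivalence of local-system categories identifies these coefficients compatibly on both sides of $|a|$, and then either cite the classical converse-to-Whitehead criterion or bootstrap through universal covers — which requires knowing the pulled-back hypercovering over a cover of $S$ is still proper surjective, a point that is true but worth stating. A secondary technical point, shared with Lemma \ref{cohdescentstack}, is the bisimplicial bookkeeping: one should make explicit that realization commutes with the relevant homotopy colimits and that the cofibrancy/$s$-split hypotheses are preserved by the fiber products $X_k \times_S W_l$, so the two ways of realizing the bisimplicial object agree.
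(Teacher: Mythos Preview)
Your proposal is correct and follows essentially the same logic as the paper. The paper's proof is more compressed: it invokes Quillen's criterion directly at the DM-stack level, citing Lemma \ref{cohdescentstack} for cohomological descent (which already absorbed the bisimplicial argument you spell out) and Lemma \ref{lsdescends} for the $\pi_1$-isomorphism. Your two-step organization---first the scheme case via the cohomology-plus-$\pi_1$ criterion, then a bisimplicial bootstrap to DM-stacks---is valid but redundant, since both key lemmas are already stated for DM-stacks; the bisimplicial reduction has simply been done once inside Lemma \ref{cohdescentstack} rather than repeated here. What you call the ``classical converse-to-Whitehead criterion'' is exactly what the paper names as Quillen's criterion.
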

\begin{proof}
The induced map $|a|$ is from \eqref{simpmap} above.  
To prove that $|a|$ is a weak equivalence, using Quillen's criterion and
cohomological descent (Lemma \ref{cohdescentstack} above), it suffices to verify in
addition that $|a|$ induces an isomorphism on fundamental groups at any basepoint 
$x\in X_0$. This is shown by the preceding lemma. 
\end{proof}

\begin{remark}
\label{simpproj}
Suppose $S_{\hdot}$ is a simplicial projective variety or even a simplicial
object in $\DMS$. Then there is a map $X_{\hdot}\rightarrow S_{\hdot}$ which is
a weak equivalence for the proper surjective topology, with the $X_k$ being
smooth projective varieties. So topologically speaking we don't lose any
generality by passing to simplicial smooth projective varieties.
\end{remark}

One can also define the de Rham cohomology of a scheme or stack, using 
some form of crystalline cohomology, see
\cite{TelemanSimpson} and  \cite{OlssonCrystalline} for example. 

In \cite{Tsuzuki}, it is shown that the cohomological descent isomorphism \eqref{cohdescent}
also holds for de Rham cohomology. A bisimplicial argument 
shows that this is also true when $S$ is a separated DM-stack.

To complete the picture of de Rham descent, we note that the analogue of Lemma
\ref{lsdescends} also holds.  

\begin{lemma}
\label{vbicdescends}
Suppose $X_{\hdot}\rightarrow S$ is a proper surjective
hypercovering to a separated DM-stack. Suppose $F_{\hdot}$ is a
compatible system of de Rham local systems with regular singularities
on $X_{\hdot}$, that is
$F_k$ is a stratification on the crystalline site of $X_k$ provided with
pullback isomorphisms $F_k|_{X_m}\cong F_m$ whenever $m\rightarrow k$ in $\Delta$.
Then there exists a de Rham local system $G$ on $S$ with $a^{\ast}(G)\cong F_{\hdot}$.
\end{lemma}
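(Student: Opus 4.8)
The plan is to deduce the de Rham statement from its topological shadow, Lemma~\ref{lsdescends}, by passing through the Riemann--Hilbert correspondence, after first reducing to the case of a scheme.

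First I would reduce to the case in which $S$ is a separated scheme of finite type over $\cc$, by the same bisimplicial device used in the proofs of Lemmas~\ref{cohdescentstack} and~\ref{lsdescends}: choose an etale hypercovering $W_{\hdot}\to S$ and, after using Remark~\ref{simpproj} to take the levels of $X_{\hdot}$ to be smooth projective varieties, form the bisimplicial scheme $\{X_k\times_S W_l\}$; one descends the de Rham data along the proper surjective direction onto each smooth $W_l$, then along the etale direction back onto $S$, the gluing of the pieces being furnished by de Rham descent for morphisms between de Rham local systems, which is a formal consequence of the de Rham cohomological descent isomorphism recorded above.

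Next I would go through Riemann--Hilbert levelwise. Each regular-singular de Rham local system $F_k$ has an underlying topological local system $L_k$ on $X_k^{\mathrm{top}}$, namely the local system of horizontal sections of $F_k^{\mathrm{an}}$, and the transition isomorphisms $F_k|_{X_m}\cong F_m$ induce isomorphisms $L_k|_{X_m}\cong L_m$, so the $L_k$ assemble into a local system $L_{\hdot}$ on $X_{\hdot}$ in the sense used earlier. Applying Lemma~\ref{lsdescends} to the associated $GL_n$-torsor gives a local system $L_S$ on $S$ with $a^{\ast}L_S\cong L_{\hdot}$. It then remains to promote $L_S$ to a de Rham local system with regular singularities: over the smooth locus of $S$ this is Deligne's equivalence between local systems and integrable connections regular along a compactifying normal crossings divisor, and the extension over the singular locus is obtained from a resolution $\pi:\widetilde{S}\to S$ --- itself, after coskeletal completion, a proper surjective hypercovering --- by descending along $\pi$, using that pullback of de Rham local systems along its levels is fully faithful (again by de Rham cohomological descent). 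This produces a de Rham local system $G$ on $S$ whose underlying topological local system is $L_S$. Finally, $a^{\ast}G$ and $F_{\hdot}$ are de Rham local systems on $X_{\hdot}$ with the same underlying topological local system $a^{\ast}L_S\cong L_{\hdot}$; since the de Rham--Betti functor is fully faithful on each $X_k$, this isomorphism lifts to an isomorphism $a^{\ast}G\cong F_{\hdot}$ respecting the descent data, which is the assertion.

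The hard part will be the last step: the Riemann--Hilbert correspondence with regular singularities over a base that is neither smooth nor proper (and, before the first reduction, not even a scheme but a separated DM-stack). The smooth proper case is classical, the non-proper case uses Deligne's canonical extension, and the singular and stacky cases must be bootstrapped from these --- either via the resolution and etale hypercovering arguments sketched above or, more conceptually, by working on the de Rham stack $S_{\mathrm{dR}}$ and invoking descent of crystals along $(X_{\hdot})_{\mathrm{dR}}\to S_{\mathrm{dR}}$. The genuinely delicate point is checking that all of these comparisons are compatible with the simplicial structure maps and with the chosen hypercoverings.
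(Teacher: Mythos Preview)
Your core strategy matches the paper's: reduce to the topological descent of Lemma~\ref{lsdescends} via the Riemann--Hilbert correspondence. The paper's proof is a two-line application of exactly this idea, but it treats the Riemann--Hilbert correspondence for separated DM-stacks as a known black box, citing \cite{TelemanSimpson}, and then simply transports the statement of Lemma~\ref{lsdescends} across that equivalence of categories in both directions. Your bisimplicial reduction to schemes, the resolution-of-singularities bootstrap for promoting $L_S$ back to a de Rham local system on a singular base, and the careful compatibility checks you flag as the ``hard part'' are all aimed at \emph{building} the stacky/singular Riemann--Hilbert correspondence rather than invoking it; the paper avoids this work entirely by citation. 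So your plan is correct but over-engineered relative to what the paper does---though, as the paper itself remarks, a direct algebraic argument of the kind you are sketching would be genuinely interesting, since it could extend to irregular de Rham local systems where Riemann--Hilbert is not available.
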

\begin{proof}
The de Rham local systems with regular singularities correspond to local systems by
the Riemann-Hilbert correspondence (see \cite{TelemanSimpson} about this question for stacks),
so this follows from Lemma
\ref{lsdescends}. It would be interesting to have a direct algebraic proof, which could apply
to irregular de Rham local systems too. 
\end{proof}

A further interesting question is the descent of vector bundles along proper surjective
hypercoverings. Some understanding of this issue would be helpful in order to obtain
an algebraic construction of descent for de Rham local systems.

\section{Moduli of local systems on simplicial varieties}

Let $\ArtSt$ denote the $2$-category of Artin algebraic stacks of finite type. 
Suppose $\Cc$ is a category, and suppose
given a $2$-functor $\bF : \Cc ^o\rightarrow \ArtSt$.
Suppose $X_{\hdot}$ is a simplicial object of $\Cc$, that is to say a functor $\Delta ^o\rightarrow \Cc$. Then we can define $\bF (X_{\hdot})$ as the $2$-limit of
the diagram $\bF \circ X_{\hdot}:\Delta \rightarrow \ArtSt$.

Concretely an object $E_{\hdot}$ of $\Ff (X_{\hdot})$ consists of 
a collection of objects $E_k$ of $\Ff (X_k)$ together with
isomorphisms $X_{\phi}^{\ast}(E_k)\cong E_m$ whenever $\phi : k\rightarrow m$
is a map in $\Delta$ inducing $X_{\phi}:X_m\rightarrow X_k$,
and these isomorphisms are required to satisfy the obvious compatibility
conditions for compositions $k\rightarrow m \rightarrow l$ and identities.

\begin{lemma}
\label{limdiag}
The $2$-limit $\Ff (X_{\hdot})$
depends only on the start of the simplicial object,
in fact it is the $2$-limit of the diagram 
$$
\Ff (X_0) \twoarrows \Ff (X_1 )\threearrows \Ff (X_2) .
$$
An object $E_{\hdot}$ of $\Ff (X_{\hdot})$ may also be viewed
as just
an object $E_0$ of
$\Ff (X_0)$ together with an isomorphism $\partial _0^{\ast}E_0\cong 
\partial _1^{\ast}E_0$, over $X_1$,
satisfying the cocycle condition when pulled back to $X_2$.
\end{lemma}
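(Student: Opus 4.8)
The plan is to prove this by the standard ``cofinality'' or ``coskeleton'' argument for $2$-limits over $\Delta$: the diagram category $\Delta$ has the property that its truncation to objects $[0],[1],[2]$ is ``$2$-coinitial'' in the appropriate sense, so a $2$-limit over $\Delta$ agrees with the $2$-limit over the truncated diagram, and moreover the latter simplifies further because the degeneracies are determined by the faces. I would organize the argument in three steps.

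First I would unwind the definition of $\Ff(X_{\hdot})$ as a $2$-limit: an object is a system $(E_k, \theta_\phi)$ with $\theta_\phi : X_\phi^\ast(E_k)\cong E_m$ for each $\phi:[k]\to[m]$, compatible with composition and normalized on identities. I would first observe that such a system is determined by its values on \emph{injective} $\phi$ (the face maps): given any $\phi:[k]\to[m]$, factor it as a surjection followed by an injection, and note that the isomorphism along a degeneracy $s:X_m\to X_k$ is forced, because $s$ has a section among the face maps (a coface $d$ with $s\circ d = \mathrm{id}$), so $\theta_s$ is the inverse of $X_s^\ast(\theta_d^{-1})$ composed appropriately — i.e. $\theta_s$ is recovered from face data. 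This is exactly the phenomenon already recorded in the excerpt in the computation showing $s_0^\ast(\phi)=1_E$ for a descent datum on $(Z,R,K)$, and in the remark that local systems on a simplicial space may be defined using only injective maps. So $\Ff(X_{\hdot})$ is the $2$-limit over the semisimplicial (injective-maps-only) diagram.

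Second, I would cut the semisimplicial diagram down to levels $0,1,2$. The point is that any injective $\phi:[k]\hookrightarrow[m]$ with $k\le 2$ factors through the coface maps, and the compatibility conditions among all injective maps are generated by those among maps with target of dimension $\le 2$: the cosimplicial identities $d^j d^i = d^i d^{j-1}$ ($i<j$) all live in the $2$-truncation, and any two parallel composites $[k]\to[m]$ of cofaces with $k\le 1$ differ by such identities. Concretely this means a compatible system on the full semisimplicial diagram is the same as: objects $E_0,E_1,E_2$, isomorphisms along the two maps $[0]\to[1]$, the three maps $[0]\to[2]$, the three maps $[1]\to[2]$, subject to the cocycle/coherence relations — and since $E_1$ with its datum over $[0]\to[1]$ and $E_2$ with its data are themselves determined by pullback from $E_0$ once a descent isomorphism is given, the whole thing collapses to an object $E_0$ of $\Ff(X_0)$, an isomorphism $\varphi:\partial_0^\ast E_0\cong\partial_1^\ast E_0$ over $X_1$, satisfying the cocycle condition over $X_2$. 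This is precisely the reformulation in the second sentence of the lemma, and it is the $2$-limit of $\Ff(X_0)\twoarrows\Ff(X_1)\threearrows\Ff(X_2)$.

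Third, I would check that the restriction functor $\Ff(X_{\hdot})\to \{(E_0,\varphi)\}$ is an equivalence of groupoids (or categories): faithfulness and fullness are immediate since a morphism of systems is determined by its component at level $0$ (all higher components being pullbacks), and essential surjectivity is the reconstruction just described — given $(E_0,\varphi)$, set $E_k := v_0^\ast E_0$ using the zeroth-vertex map, define the structure isomorphisms by composing copies of $\varphi$ along edge-paths between vertices, and use the $X_2$-cocycle condition to see the result is independent of the chosen path (any two paths between two vertices of $[k]$ being connected by elementary $2$-simplex moves, exactly as in the proof of Lemma \ref{vbdescent1}). The main obstacle, and the only place where care is genuinely required, is the bookkeeping in this last step: verifying that the higher coherence data one writes down from $\varphi$ really do satisfy \emph{all} the simplicial compatibility conditions, not just the ones at level $2$ — but this follows formally because the cosimplicial identities are generated in degrees $\le 2$, so no new condition appears beyond $X_2$. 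I would remark that this is the same mechanism as in Lemma \ref{vbdescent1} and is where the hypothesis enters that we only ever need $X_0,X_1,X_2$ (and, for $1$-cohomology, that nothing at level $\ge 3$ contributes).
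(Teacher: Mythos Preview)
Your proposal is correct and follows essentially the same approach as the paper. The paper's own proof is a one-line reference to the argument of Lemma \ref{vbdescent1} (together with the paragraph before it explaining why degeneracies are forced), and your three steps spell out exactly that argument: reduce to face maps because degeneracies are determined, then reconstruct $E_k$ via the vertex maps $v_j^\ast E_0$ with edge-path isomorphisms, using the cocycle condition on $X_2$ to show path-independence---you even cite \ref{vbdescent1} yourself for this last step.
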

\begin{proof}
The same as for Lemma \ref{vbdescent1} (see the paragraph just before \ref{vbdescent1} 
for why we don't need to include the degeneracies in the diagram).  
\end{proof}

The terminology ``simplicial family'' will sometimes be useful to describe objects
of the form $E_{\hdot}$. Morphisms in $\Ff (X_{\hdot})$ have corresponding descriptions. 

The above construction 
applies in particular to the category $\Cc$ of smooth projective varieties.
Various functors include:
\newline
$X\mapsto \Mm _B(X,G)$ the moduli stack of representations
of $\Pi _1(X)$ in an algebraic group $G$;
\newline
$X\mapsto \Mm _{DR}(X,G)$ the moduli stack of pairs $(P,\nabla )$ where $P$ is a
principal $G$-bundle and $\nabla$ an integrable algebraic connection;
\newline
$X\mapsto \Mm _{H}(X,G)$ the moduli stack of pairs $(P,\theta )$ where $P$ is a
principal $G$-bundle with $\theta$ an integrable Higgs field of semiharmonic type (see Definition \ref{semiharmonic});
\newline
$X\mapsto \Mm _{Hod}(X,G)$ the moduli stack of triples  $(\lambda , P,\nabla )$ where $P$ is a
principal $G$-bundle and $\nabla$ an integrable algebraic $\lambda$-connection
of semiharmonic type
(which specializes to the preceding two in the cases $\lambda =1,0$);
\newline
$X\mapsto \Mm _{DH}(X,G)$ the analytic Deligne-Hitchin
moduli stack obtained by glueing two copies of $\Mm _{Hod}$, noting that here
we use $\ArtSt ^{\rm an}$ the $2$-category of analytic Artin stacks.

The $2$-limit construction gives: 

\begin{proposition}
\label{modulistacks}
These functors extend to moduli stacks of various types of local systems
denoted $\Mm _{\eta}(X_{\hdot},G)$ for $\eta = B,DR,H,Hod,DH$,
defined for a simplicial
object $X_{\hdot}$ in the category of smooth projective varieties and 
a linear algebraic group $G$. 
\end{proposition}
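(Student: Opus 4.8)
The plan is to recognize Proposition~\ref{modulistacks} as a direct application of the general $2$-limit construction recalled at the beginning of this section, applied to the $2$-functors $\bF = \Mm_{\eta}(-,G)$ on the category $\Cc$ of smooth projective varieties. Two things have to be checked: first, that each assignment $X\mapsto\Mm_{\eta}(X,G)$ is genuinely a $2$-functor $\Cc^o\rightarrow\ArtSt$ --- respectively $\Cc^o\rightarrow\ArtSt^{\rm an}$ when $\eta = DH$; and second, that the $2$-limit over $\Delta$ of $\Mm_{\eta}(X_{\hdot},G)$ is again an object of $\ArtSt$ (resp.\ $\ArtSt^{\rm an}$), so that the notation $\Mm_{\eta}(X_{\hdot},G)$ is legitimate.

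For the first point, a morphism $g:Y\rightarrow X$ of smooth projective varieties induces the pullback functor on $G$-bundles, on pairs $(P,\nabla)$ with integrable connection, on triples $(\lambda,P,\nabla)$ with integrable $\lambda$-connection, and on representations of $\Pi_1$; together with the usual base-change isomorphisms $g^{\ast}h^{\ast}\cong(hg)^{\ast}$ these assemble into a $2$-functor to $\ArtSt$ (to $\ArtSt^{\rm an}$ for $DH$, using that $\Mm_{DH}$ is glued from two $\Mm_{Hod}$-charts in a way natural under pullback). The one substantive issue is that for $\eta = H,Hod,DH$ the pullback preserves the semiharmonic condition of Definition~\ref{semiharmonic}: if an object carries a harmonic metric, with associated $\pi_1(X)$-equivariant pluriharmonic map $\widetilde{X}\rightarrow G/K$, then precomposing with the holomorphic lift $\widetilde{Y}\rightarrow\widetilde{X}$ of $g$ gives a $\pi_1(Y)$-equivariant pluriharmonic map, hence a harmonic metric on $g^{\ast}$ of the object; equivalently, pullback carries direct sums of stable Higgs bundles with vanishing Chern classes to objects of the same kind. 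This must work for arbitrary $g$, including the non-surjective face and degeneracy maps occurring in a simplicial variety.

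For the second point, set $\Mm_{\eta}(X_{\hdot},G)$ to be the $2$-limit of $\Mm_{\eta}(X_{\hdot},G):\Delta\rightarrow\ArtSt$ (resp.\ $\ArtSt^{\rm an}$). By Lemma~\ref{limdiag} this depends only on the truncated diagram
$$
\Mm_{\eta}(X_0,G)\twoarrows\Mm_{\eta}(X_1,G)\threearrows\Mm_{\eta}(X_2,G),
$$
and its objects are pairs $(E_0,\varphi)$ with $E_0\in\Mm_{\eta}(X_0,G)$ and $\varphi:\partial_0^{\ast}E_0\cong\partial_1^{\ast}E_0$ over $X_1$ satisfying the cocycle condition over $X_2$. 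Such a stack is built by iterated $2$-fibre products of $\Mm_{\eta}(X_0,G)$, $\Mm_{\eta}(X_1,G)$, $\Mm_{\eta}(X_2,G)$: form the stack of pairs $(E_0,\varphi)$ as the fibre product $\Mm_{\eta}(X_0,G)\times_{\Mm_{\eta}(X_1,G)\times\Mm_{\eta}(X_1,G)}\Mm_{\eta}(X_1,G)$ taken along $(\partial_0^{\ast},\partial_1^{\ast})$ and the diagonal, then cut out the cocycle condition by a further $2$-fibre product over $\Mm_{\eta}(X_2,G)$. Since $2$-fibre products of algebraic stacks along arbitrary morphisms are again algebraic stacks, and all the stacks and structure morphisms in sight are of finite type over $\cc$, the result lies in $\ArtSt$; the analytic case is identical in $\ArtSt^{\rm an}$. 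Compatibility of $\Mm_{Hod}$ with $\Mm_{DR}$ and $\Mm_{H}$ at $\lambda=1,0$ is inherited levelwise.

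I expect the main obstacle to be the first point for the three harmonic-type functors: stability of the semiharmonic condition under pullback along an arbitrary morphism of smooth projective varieties, which is the only place where genuine analytic input enters. Once that is granted, the second point is essentially formal --- the crucial reduction of the $2$-limit to a finite diagram is furnished by Lemma~\ref{limdiag}, after which representability follows from the stability of finite-type Artin (resp.\ analytic Artin) stacks under $2$-fibre products.
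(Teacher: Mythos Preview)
Your approach is correct and matches the paper's: the proposition is stated immediately after the general $2$-limit setup with the single sentence ``The $2$-limit construction gives:'' as its entire justification, and your two checks (that each $\Mm_{\eta}(-,G)$ is a $2$-functor, and that the finite $2$-limit of Lemma~\ref{limdiag} stays in $\ArtSt$) are precisely what that sentence leaves implicit.

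One small refinement: your argument for preservation of the semiharmonic condition under pullback treats objects admitting a harmonic metric, i.e.\ the polystable case. The semiharmonic condition is \emph{semistable} with vanishing rational Chern classes, so you also need the non-polystable semistable objects. The cleanest way to cover these is to invoke directly the tannakian equivalence of \cite{hbls} between Higgs bundles of semiharmonic type and local systems, which is functorial for pullbacks along arbitrary morphisms of smooth projective varieties; since the pullback of a local system is a local system, the corresponding Higgs bundle is again of semiharmonic type. This avoids having to argue separately via Jordan--H\"older filtrations.
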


A more explicit description may also be given. The notion of local system
on $X_{\hdot}$ was discussed above, and indeed it applies to local systems
with values in any
$1$-groupoid. If $S$ is a scheme then $BG(S)$ is the groupoid
of $G$-torsors over $S$, and 
$\Mm _B(X_{\hdot}, G)(S)$ is the $1$-groupoid of local systems with values in 
$BG(S)$. In other words, an object in $\Mm _B(X_{\hdot}, G)(S)$ consists of a locally
constant sheaf of $G$-torsors over $S$ on each space $|X_k|$ together with isomorphisms
between the pullbacks functorial in $k\in \Delta$. 

For a base scheme $S$ with a group scheme $G/S$ and function 
$\lambda : S\rightarrow \aaa ^1$, a {principal $G/S$-bundle with $\lambda$-connection} on $X_{\hdot}\times S/S$ is a pair $(P_{\hdot},\nabla )$ where 
$P_{\hdot}$ is a collection of principal $\partial_1^{\ast}(G)$-bundles $P(k)$ on $X_k\times S$,
with $\lambda$-connections $\nabla$ relative to $S$ on each $P(k)$, and compatibility
isomorphisms $(P(k),\nabla )\cong (X_{\phi})^{\ast}(P(m),\nabla )$
whenever $\phi : m\rightarrow k$ in $\Delta$, compatible with compositions of $\phi$.

If $G$ is a fixed linear algebraic group scheme then apply the previous
paragraph with $G\times S/S$. Recall that there are notions of semistability
and vanishing of rational Chern classes for principal $G$-bundles on the projective
varieties $X_k$. The combination of these two conditions is independent of the choice
of polarization, and functorial for pullbacks. It will be called ``semiharmonic type''
in Definition \ref{semiharmonic} below. Say that a principal $G\times S/S$-bundle with 
$\lambda$-connection on $X_{\hdot}\times S/S$, is of semiharmonic type if its fibers
over all $s\in S$ are so. 

The moduli stack $\Mm _{\rm Hod}(X_{\hdot},G)$
is the functor from schemes $S/\aaa ^1$ to
$1$-groupoids, which to $S\stackrel{\lambda}{\rightarrow}\aaa ^1$ 
associates the $1$-groupoid of 
principal $G\times S/S$-bundles with $\lambda$-connection of semiharmonic type.   Specializing
to $\lambda =0$ and $\lambda = 1$ yields the Hitchin and de Rham moduli stacks
respectively.

Equivalences between moduli stacks which are natural in the variable $X$ translate
in the simplicial setting to equivalences. This gives the Riemann-Hilbert equivalence
$$
\Mm _{DR}(X_{\hdot},G)^{\rm an} \cong \Mm _{B}(X_{\hdot},G)^{\rm an}
$$
which in turn allows us to construct the analytic moduli stack 
$$
\Mm _{DH}(X_{\hdot},G)\rightarrow \pp ^1
$$
by the Deligne-Hitchin glueing \cite{hfnac}.

For $G=GL(n)$ letting $n$ vary we obtain the categories of local systems of vector spaces,
or of bundles with $\lambda$-connection. In this case we may consider all morphisms
not necessarily isomorphisms, and the same considerations as above apply.

Say that $X_{\hdot}$ is connected if its topological realization is connected. 
If $x:Spec(\cc )\rightarrow X_0$ is a basepoint, we obtain a map of Artin stacks 
$$
x^{\ast}:\Mm _{\eta}(X_{\hdot},G)\rightarrow BG .
$$
Let $R_{\eta}(X_{\hdot},x,G)$ denote the fiber of $x^{\ast}$ over 
the standard basepoint $0\in BG$. When $X_{\hdot}$ is
connected, the correspondences between $G$-torsors and 
bundles with integrable connection, or Higgs bundles of semiharmonic type,
imply that all points  $R_{\eta}(X_{\hdot},x,G)$ have trivial stabilizers. 

As in Proposition \ref{h1calc}, 
one should choose multiple base points in order to express 
$R_{\eta}(X_{\hdot},x,G)$ in terms of the representation spaces of the
components $X_k$. Choose a nonempty simplicial set ${\bf x}_{\hdot}$, finite
at each level, with a map
${\bf x}_{\hdot}\rightarrow X_{\hdot}$. An $\eta$-local system with coefficients
in $G$, such as a flat $G$-torsor for $\eta = B$ or a principal $G$-Higgs
bundle for $\eta = H$, restricts to a simplicial family of vector spaces
over   ${\bf x}_{\hdot}$. In all cases this corresponds to a flat
$G$-torsor on the realization $|{\bf x}_{\hdot}|$. A {framing} is
a trivialization of this flat $G$-torsor. If we assume that 
$|{\bf x}_{\hdot}|$ is homotopically discrete and choose a set of points
mapping isomorphically to its $\pi _0$, then a framing is the same thing as
a framing of the collection of fibers of our torsor over the given points.

Let $R_{\eta}(X_{\hdot},{\bf x}_{\hdot},G)$ denote the moduli stack of
$\eta$-local systems on $X_{\hdot}$ with coefficients in $G$, 
framed along ${\bf x}_{\hdot}$. 

\begin{proposition}
\label{repspacecalc}
With the above notations, the stack 
$R_{\eta}(X_{\hdot},{\bf x}_{\hdot},G)$ is the $2$-limit of 
the diagram $k\mapsto R_{\eta}(X_{k},{\bf x}_{k},G)$.

If furthermore the finite sets ${\bf x}_k$ meet each component of $X_k$ for 
$k=0,1,2$, then 
$R_{\eta}(X_{\hdot},{\bf x}_{\hdot},G)$ is the equalizer of
the two maps between the pieces for $k=0,1$:
\begin{equation}
\label{cartsquare}
R_{\eta}(X_{\hdot},{\bf x}_{\hdot},G) \rightarrow  R_{\eta}(X_{0},{\bf x}_{0},G)
\twoarrows R_{\eta}(X_{1},{\bf x}_{1},G).
\end{equation}
In particular, it is a quasiprojective scheme. 
\end{proposition}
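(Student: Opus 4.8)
The plan is to reduce the statement about $R_\eta(X_{\hdot}, {\bf x}_{\hdot}, G)$ to the general categorical fact recorded in Lemma~\ref{limdiag}, then use representability of the framed moduli functors at each level $X_k$ together with the hypothesis on basepoints to collapse the $2$-limit to an ordinary equalizer of schemes. First I would observe that, by definition, $R_\eta(X_{\hdot}, {\bf x}_{\hdot}, G)$ is the moduli functor of $\eta$-local systems on $X_{\hdot}$ framed along ${\bf x}_{\hdot}$; applying the $2$-limit construction preceding Proposition~\ref{modulistacks} together with the explicit description of a simplicial family as a collection $\{E_k\}$ with compatible pullback isomorphisms, one gets that $R_\eta(X_{\hdot}, {\bf x}_{\hdot}, G)$ is the $2$-limit of $k\mapsto R_\eta(X_k, {\bf x}_k, G)$ over $\Delta$. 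By Lemma~\ref{limdiag}, applied now with $\Ff(X) = R_\eta(X, {\bf x}_{(-)}, G)$ rather than to vector bundles, this $2$-limit only depends on the truncation $R_\eta(X_0) \rightrightarrows R_\eta(X_1) \threearrows R_\eta(X_2)$, and can be written as: an object $E_0 \in R_\eta(X_0, {\bf x}_0, G)$ together with an isomorphism $\partial_0^\ast E_0 \cong \partial_1^\ast E_0$ over $X_1$ compatible with framings, satisfying the cocycle condition over $X_2$.

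The second step is to remove the stack structure. Here I would invoke the fact, already noted in the paragraph defining $R_\eta(X_{\hdot}, x, G)$ and in Proposition~\ref{h1calc}'s philosophy, that once the framing sets meet every connected component, the framed moduli functor is a \emph{scheme} (it has no nontrivial automorphisms): a framing along a set of points meeting all components of $X_k$ rigidifies an $\eta$-local system, so $R_\eta(X_k, {\bf x}_k, G)$ is a quasiprojective scheme for $k = 0, 1$ (these are the classical framed representation/Dolbeault/de~Rham schemes of a smooth projective variety, which are quasiprojective). Since $R_\eta(X_0, {\bf x}_0, G)$ and $R_\eta(X_1, {\bf x}_1, G)$ are honest schemes with no automorphisms, the $2$-limit of the truncated diagram degenerates to the ordinary fiber-product/equalizer in schemes of the two face maps $\partial_0^\ast, \partial_1^\ast : R_\eta(X_0, {\bf x}_0, G) \to R_\eta(X_1, {\bf x}_1, G)$ --- the $X_2$-condition is then automatically encoded because, the groupoids being discrete, the cocycle condition over $X_2$ follows from the equalizer condition over $X_1$ (exactly the argument given in the proof of Proposition~\ref{h1calc}, using that ${\bf x}_2$ meets all components of $X_2$). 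This yields the displayed equalizer \eqref{cartsquare}.

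Finally, quasiprojectivity: an equalizer of two morphisms of quasiprojective schemes is the preimage of the diagonal under $(\partial_0^\ast, \partial_1^\ast) : R_\eta(X_0, {\bf x}_0, G) \to R_\eta(X_1, {\bf x}_1, G) \times R_\eta(X_1, {\bf x}_1, G)$, hence a locally closed subscheme of $R_\eta(X_0, {\bf x}_0, G)$ (the diagonal of the separated scheme $R_\eta(X_1, {\bf x}_1, G)$ is a locally closed immersion), and therefore quasiprojective.

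The main obstacle I anticipate is the bookkeeping in the second step: one must be careful that the framing data genuinely trivializes \emph{all} automorphisms at levels $0$ and $1$ --- this is where connectedness of $X_{\hdot}$ and the hypothesis ``${\bf x}_k$ meets each component of $X_k$'' are used --- and that the passage from a $2$-limit of $1$-groupoids to an ordinary equalizer of sets/schemes is legitimate precisely because the relevant groupoids are discrete. The genuinely content-bearing input, that $R_\eta(X_k, {\bf x}_k, G)$ is quasiprojective for smooth projective $X_k$, is classical (GIT constructions of framed moduli), and the reduction of the $\Delta$-indexed $2$-limit to the $k \le 2$ truncation is exactly Lemma~\ref{limdiag}; everything else is formal.
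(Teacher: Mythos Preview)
Your proposal is correct and follows essentially the same route as the paper's own proof: invoke the $2$-limit description, truncate via Lemma~\ref{limdiag}, observe that the hypothesis on basepoints makes each $R_\eta(X_k,{\bf x}_k,G)$ a quasiprojective scheme (hence a discrete groupoid), so the $2$-limit collapses to the ordinary equalizer. The paper's version is simply terser, compressing your second and third paragraphs into a single sentence and citing \cite{Moduli} for quasiprojectivity at each level; one cosmetic point is that since $R_\eta(X_1,{\bf x}_1,G)$ is separated its diagonal is actually a \emph{closed} immersion, so the equalizer is a closed (not merely locally closed) subscheme of $R_\eta(X_0,{\bf x}_0,G)$.
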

\begin{proof}
The first part is formal. The $2$-limit only depends on the first three terms.
If the simplicial basepoint meets all components of the $X_k$ then 
the terms $R_{\eta}(X_{k},{\bf x}_{k},G)$ are quasiprojective
schemes \cite{Moduli}, so the $2$-limit is just the equalizer.
\end{proof} 

As discussed in Proposition 
\ref{h1calc} and Example \ref{coordinateplanes} for local systems
(that is $\eta = B$),
the condition that the basepoint 
meets the components of $X_2$ is necessary for this to be true even though it doesn't 
then enter into
the formula. 

Suppose the set of basepoints is smaller, for example a single $x$.
In the Betti case 
$R_{\eta}(X_{\hdot},x,G)$ is a quasiprojective scheme. 
In fact it is just the usual affine scheme 
of representations of $\pi _1(|X_{\hdot}|,x)$. By the Riemann-Hilbert
correspondence, separability follows for the de Rham case $\eta =DR$.
For Higgs bundles,
a geometrical argument seems to be needed and will be formulated in the
next theorem.

\begin{lemma}
\label{isomqp}
Suppose $Y$ is a smooth projective variety and $P,Q$ are principal
$G$-bundles with $\lambda$-connection on $Y\times S/S$ for a quasiprojective
base scheme $S$.
Then the functor which to $S'\rightarrow S$ associates the set of isomorphisms
between $P|_{Y\times S}$ and $Q|_{Y\times S}$ is represented by a 
quasiprojective $S$-scheme ${\rm Iso}_{Y\times S/S}(P,Q)$ affine over $S$.
\end{lemma}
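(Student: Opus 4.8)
(I read the statement as referring to isomorphisms between $P|_{Y\times S'}$ and $Q|_{Y\times S'}$.) The plan is to exhibit ${\rm Iso}_{Y\times S/S}(P,Q)$ as a chain of two closed subschemes whose first link is a classical object. Write $q\colon Y\times S\to S$ for the projection: since $Y$ is projective, $q$ is projective and flat, $Y\times S$ is quasiprojective over $\cc$, and $\Omega^1_{Y\times S/S}={\rm pr}_Y^{\ast}\Omega^1_Y$ is a vector bundle because $Y$ is smooth. The basic input is the representability of the \emph{linear Hom scheme}: for any vector bundle $\Ee$ on $Y\times S$, the functor $S'\mapsto \Gamma(Y\times S',\Ee|_{Y\times S'})$ is represented by a scheme ${\rm Hom}_{Y\times S/S}(\Oo,\Ee)$ that is affine and of finite type over $S$, built from a finite complex of finite locally free $\Oo_S$-modules which computes $Rq_{\ast}\Ee$ compatibly with arbitrary base change (the classical theory of the scheme of homomorphisms of coherent sheaves); it is a closed subscheme of a vector bundle over $S$, and its zero section is a closed immersion.

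First I would treat the case in which the $\lambda$-connection is dropped, i.e. the functor $S'\mapsto \{$isomorphisms of principal $G$-bundles $P|_{Y\times S'}\cong Q|_{Y\times S'}\}$. The sheaf of $G$-bundle isomorphisms $\underline{{\rm Iso}}^G_{Y\times S}(P,Q)$ is represented by a $Y\times S$-scheme $\mathcal I$ that is affine and of finite type over $Y\times S$: both properties may be checked Zariski-locally on $Y\times S$, and over an open set $U$ where both $P$ and $Q$ are trivial one has $\mathcal I|_U\cong G\times U$. This connectionless $\mathrm{Iso}$-functor is then the Weil restriction $\prod_{Y\times S/S}\mathcal I$, since an $S'$-point of the latter is a section over $Y\times S'$ of $\mathcal I\times_{Y\times S}(Y\times S')=\underline{{\rm Iso}}^G_{Y\times S'}(P|_{Y\times S'},Q|_{Y\times S'})$. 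Writing $\mathcal I={\rm Spec}_{Y\times S}\Aa$ with $\Aa$ a finitely presented quasicoherent $\Oo_{Y\times S}$-algebra, I would choose a coherent $\Oo_{Y\times S}$-module of algebra generators and, using that $Y\times S$ is quasiprojective, dominate it by a vector bundle $\Ee$; this gives a closed immersion $\mathcal I\hookrightarrow \mathbb V_{Y\times S}(\Ee):={\rm Spec}_{Y\times S}{\rm Sym}\,\Ee$ over $Y\times S$. Applying $\prod_{Y\times S/S}$, which carries closed immersions to closed immersions (Weil restriction along the flat morphism $q$), embeds $\prod_{Y\times S/S}\mathcal I$ as a closed subscheme of $\prod_{Y\times S/S}\mathbb V_{Y\times S}(\Ee)={\rm Hom}_{Y\times S/S}(\Oo,\Ee^{\vee})$, which by the previous paragraph is affine and of finite type over $S$. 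Call this affine $S$-scheme $W$.

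Next I would reinstate the $\lambda$-connection. Over $W$ there is a universal isomorphism $\phi\colon P|_{Y\times W}\xrightarrow{\ \sim\ }Q|_{Y\times W}$ of $G$-bundles. Since $\nabla_P$ and $\nabla_Q$ are $\lambda$-connections for the same $\lambda$ (a function on $S$), the difference $\delta:=\phi_{\ast}\nabla_P-\nabla_Q$ is $\Oo$-linear, hence a global section over $Y\times W$ of the vector bundle $({\rm ad}(Q)\otimes \Omega^1_{Y\times S/S})|_{Y\times W}$. By the universal property of the linear Hom scheme applied with $\Ee={\rm ad}(Q)\otimes \Omega^1_{Y\times S/S}$, the section $\delta$ is classified by a morphism $W\to {\rm Hom}_{Y\times S/S}(\Oo,{\rm ad}(Q)\otimes \Omega^1_{Y\times S/S})$, and ${\rm Iso}_{Y\times S/S}(P,Q)$ is exactly the preimage of the zero section of this target. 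As the zero section is a closed immersion, ${\rm Iso}_{Y\times S/S}(P,Q)$ is a closed subscheme of $W$, hence affine and of finite type over $S$. Finally, since $S$ is quasiprojective, any scheme affine and of finite type over $S$ is quasiprojective over $S$ (and over $\cc$), which gives the last assertion.

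The steps needing the most care are the two classical representability facts: the construction of the linear Hom scheme via a universal base-change complex for $Rq_{\ast}$ (using properness and flatness of $q$), and the representability of the Weil restriction along the projective morphism $q$ together with the fact that $\prod_{Y\times S/S}$ preserves closed immersions. The genuinely delicate bookkeeping is compatibility with an arbitrary base change $S'\to S$: one must check that ``$\phi$ respects $\nabla$'' is genuinely cut out by the vanishing of the algebraic section $\delta$ rather than being a merely pointwise condition, and this is exactly where properness of $Y$ is used, so that $q_{\ast}$ of the relevant vector bundle has coherent, base-change-controlled formation. One can also organize the argument by fixing a faithful representation $G\hookrightarrow GL(m)$ and working with the associated $GL(m)$-bundles with $\lambda$-connection, imposing in addition compatibility with the two $G$-reductions; but the Weil-restriction formulation above seems the most economical.
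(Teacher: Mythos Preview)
Your argument is correct, but it is organized differently from the paper's. The paper proceeds by first choosing an embedding $G\hookrightarrow GL(n)$ and working entirely in the linear category: it notes that $\lambda$-connection-compatible morphisms between the associated rank-$n$ bundles with $\lambda$-connection are represented by a vector scheme over $S$ (a linear closed subscheme of the usual Hom-scheme of vector bundles), then obtains isomorphisms as the closed locus where a pair $(f,g)$ of such morphisms satisfies $fg=\mathrm{id}$ and $gf=\mathrm{id}$, and finally imposes compatibility with the $G$-reductions as a further closed condition. You instead work intrinsically with principal $G$-bundles: you first represent the connectionless $\mathrm{Iso}$-functor as a Weil restriction of the fiberwise-affine $\mathrm{Iso}$-scheme, embedded in a linear Hom scheme via a presentation $\mathcal I\hookrightarrow\mathbb V(\Ee)$, and only afterwards impose $\lambda$-connection compatibility via the vanishing of $\delta=\phi_\ast\nabla_P-\nabla_Q$. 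Your route avoids the auxiliary choice of a faithful representation and makes the role of Weil restriction explicit, at the cost of invoking slightly heavier machinery (preservation of closed immersions under $\prod_{Y\times S/S}$, which itself is proved by the same kind of ``vanishing of a universal section'' argument you use for $\delta$). The paper's route is shorter and stays within elementary linear algebra of bundles with connection; yours is more structural. You already note the $GL(m)$ alternative in your last paragraph, and that is precisely what the paper does.
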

\begin{proof}
Embedd $G\subset GL(n)$. Morphisms between linear bundles with $\lambda$-connection
are representable by vector schemes. Isomorphisms are then parametrized by pairs of
morphisms going both ways whose composition is the identity, and the condition that 
the isomorphism respect the reduction of structure group to $G$ is a closed condition.
\end{proof}

\begin{theorem}
\label{repqp}
Suppose $X_{\hdot}$ is a connected simplicial smooth projective variety,
with a nonempty
simplicial basepoint ${\bf x}_{\hdot}\rightarrow X_{\hdot}$.
Then $R_{\eta}(X_{\hdot},{\bf x}_{\hdot},G)$ is a 
quasiprojective (in particular separated) scheme.

If $z\in X_k$, let ${\bf x}'_{\hdot} ={\bf x}_{\hdot}\sqcup \langle z \rangle$.
Then $G$ acts freely on $R_{\eta}(X_{\hdot},{\bf x}'_{\hdot},G)$ by change of
framing at $z$, and the quotient is $R_{\eta}(X_{\hdot},{\bf x}_{\hdot},G)$.
\end{theorem}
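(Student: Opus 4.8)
The plan is to reduce everything to the known quasiprojectivity of the representation/moduli spaces at each level $X_k$, as recorded in Proposition \ref{repspacecalc} and the references \cite{Moduli}, and then to handle the change-of-framing action by a direct group-action argument. For the first statement, I would first enlarge the simplicial basepoint ${\bf x}_{\hdot}$ if necessary: by the $2$-limit description in Proposition \ref{repspacecalc}, $R_{\eta}(X_{\hdot},{\bf x}_{\hdot},G)$ only changes by passing to a retract (a closed subscheme cut out by ``the extra framings are compatible'') when one adds more basepoints, so it suffices to prove quasiprojectivity after adding points so that ${\bf x}_k$ meets every component of $X_k$ for $k=0,1,2$. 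Once that holds, Proposition \ref{repspacecalc} exhibits $R_{\eta}(X_{\hdot},{\bf x}_{\hdot},G)$ as the equalizer of the two face maps $R_{\eta}(X_0,{\bf x}_0,G)\twoarrows R_{\eta}(X_1,{\bf x}_1,G)$. Since each $R_{\eta}(X_k,{\bf x}_k,G)$ is a quasiprojective scheme and the face maps are morphisms of schemes, the equalizer is the preimage of the diagonal under $(d_0,d_1): R_{\eta}(X_0,{\bf x}_0,G)\rightarrow R_{\eta}(X_1,{\bf x}_1,G)\times R_{\eta}(X_1,{\bf x}_1,G)$, hence a locally closed subscheme of a quasiprojective scheme, hence quasiprojective and in particular separated.

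The subtlety here is that Proposition \ref{repspacecalc} as stated already gives quasiprojectivity under the hypothesis that ${\bf x}_k$ meets all components of $X_k$ for $k\le 2$, so the real content of the theorem is to deduce the general case from that one. For this I would make explicit the comparison: if ${\bf x}_{\hdot}\subset {\bf x}^+_{\hdot}$ is an inclusion of simplicial basepoints, the restriction map $R_{\eta}(X_{\hdot},{\bf x}^+_{\hdot},G)\rightarrow R_{\eta}(X_{\hdot},{\bf x}_{\hdot},G)$ (forgetting the extra framings) is surjective with fibers torsors under a product of copies of $G$ indexed by the extra components of $\pi_0(|{\bf x}^+_{\hdot}|)$ not already hit, and in fact it exhibits $R_{\eta}(X_{\hdot},{\bf x}^+_{\hdot},G)$ as a $G^m$-torsor over $R_{\eta}(X_{\hdot},{\bf x}_{\hdot},G)$ for suitable $m$ — alternatively, forgetting only framings at points lying in components already framed gives an \emph{isomorphism}, using that a framing of a flat $G$-torsor over a connected space is determined up to the residual choice. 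Using Lemma \ref{isomqp} to see that the extra-framing data is represented by an affine scheme over the base, one concludes that $R_{\eta}(X_{\hdot},{\bf x}^+_{\hdot},G)\rightarrow R_{\eta}(X_{\hdot},{\bf x}_{\hdot},G)$ is affine and surjective; since the source is quasiprojective by the previous paragraph and $G$ is affine, a descent argument for the free $G^m$-action (the action is free, and affine quotients by free actions of affine groups exist and preserve quasiprojectivity when the quotient is known to exist as a scheme) yields quasiprojectivity of the target. One can avoid the descent subtlety by instead arguing directly that the target is a locally closed subscheme of the source's image, but the torsor description is cleanest.

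For the second statement, let ${\bf x}'_{\hdot}={\bf x}_{\hdot}\sqcup\langle z\rangle$ with $z\in X_k$. An object of $R_{\eta}(X_{\hdot},{\bf x}'_{\hdot},G)$ is an $\eta$-local system on $X_{\hdot}$ together with a framing along ${\bf x}_{\hdot}$ and, additionally, a framing of the fiber at $v_0 z$ (the $0$-th vertex of $\langle z\rangle$), since $|\langle z\rangle|$ is contractible so a framing along it is the same as a framing at any one of its points. The group $G$ acts on this extra framing by its simply transitive action on the frames of a single fiber, and this action is free precisely because, as noted in the excerpt just before Proposition \ref{repspacecalc}, for connected $X_{\hdot}$ every point of $R_{\eta}(X_{\hdot},{\bf x},G)$ has trivial stabilizer — an automorphism of an $\eta$-local system fixing one framing is the identity. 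The quotient map simply forgets the framing at $z$, landing in $R_{\eta}(X_{\hdot},{\bf x}_{\hdot},G)$, and it is surjective because any framing of the underlying flat torsor at $v_0 z$ can be found (the torsor is trivializable over the connected component). Thus $R_{\eta}(X_{\hdot},{\bf x}'_{\hdot},G)$ is a $G$-torsor over $R_{\eta}(X_{\hdot},{\bf x}_{\hdot},G)$, so the quotient by this free action is exactly $R_{\eta}(X_{\hdot},{\bf x}_{\hdot},G)$, as claimed. The main obstacle I expect is making the comparison-of-basepoints step fully rigorous as a statement about schemes rather than just about $\cc$-points — i.e. checking that ``forget a framing'' is representable and is genuinely a torsor map scheme-theoretically — which is where Lemma \ref{isomqp} and the freeness of the $G$-action do the work.
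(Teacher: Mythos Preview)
Your overall strategy---enlarge the basepoint until Proposition \ref{repspacecalc} applies, then descend---is natural, but the descent step hides exactly the difficulty the theorem is about. Once you know that $R_{\eta}(X_{\hdot},{\bf x}^{+}_{\hdot},G)$ is quasiprojective and carries a free $G^m$-action, you still need to show that the quotient \emph{exists as a quasiprojective scheme}. Free actions of affine (even reductive) groups on quasiprojective schemes do not in general have scheme quotients, let alone quasiprojective ones; one needs either a GIT input (a linearized line bundle for which all points are semistable) or a direct construction. Your parenthetical ``when the quotient is known to exist as a scheme'' is precisely the point at issue: nothing in your argument establishes that $R_{\eta}(X_{\hdot},{\bf x}_{\hdot},G)$ is a scheme before you quotient, and Lemma \ref{isomqp} only tells you the forgetful map is affine \emph{relative to a base you have not yet shown exists}. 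The first paragraph also reverses the direction: adding basepoints enlarges the representation space (it is a torsor over the smaller one), it does not cut it down.

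The paper avoids this by going in the opposite direction. It first \emph{constructs} $R_{\eta}(X_{\hdot},x^1,G)$ for a single basepoint as a quasiprojective scheme, via an explicit inductive fiber-product argument: choose basepoints $y^1,\ldots,y^a$ linking the components of $X_0$ through components of $X_1$, build the moduli of tuples $(P^1,\ldots,P^k)$ with the required isomorphisms one step at a time (each step a fiber product of quasiprojective schemes over a twisted representation variety), then use Lemma \ref{isomqp} to add the isomorphisms over the remaining components of $X_1$ as an affine map, and finally show that adding basepoints in $X_2$ imposes a closed condition via the triangle monodromy \eqref{trimap}. Only \emph{after} the single-basepoint case is established does the paper deduce the torsor statement, as the frame bundle of the universal object---going from small basepoint to large, which is the easy direction. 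Your argument needs either this constructive input or a separate semistability argument (which the paper carries out only in the subsequent theorem, and which itself relies on the present one).
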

\begin{proof}
Notice that the second statement follows from the first, because 
$R_{\eta}(X_{\hdot},{\bf x}'_{\hdot},G)$ is the $G$-bundle of frames
of the universal bundle over 
$R_{\eta}(X_{\hdot},{\bf x}_{\hdot},G)$.

No argument is needed for $\eta = B$, so we will
be treating $G$-principal $\lambda$-connections. 
The condition of semiharmonic type,
i.e. semistability and vanishing of Chern classes, is
assumed everywhere. 

For any simplicial basepoint
${\bf x}_{\hdot}$, even if it doesn't meet all components of the $X_k$,
let $\Rr_{\eta} (X_{\leq 1}, {\bf x}_{\leq 1},G)$ denote the moduli stack of 
$\eta$-local systems on the $1$-skeleton of $X_{\hdot}$, trivialized over 
the $1$-skeleton of ${\bf x}_{\hdot}$. In general it might be a stack, as occurs
when ${\bf x}_{\hdot}$ is empty for example. 

It parametrizes degeneracy-compatible descent data on $X_1\twoarrows X_0$,
that is to say pairs $(P,\phi )$ where $P$ is a $G$-bundle with $\lambda$-connection
of semiharmonic type on $X_0$, 
and $\phi : \partial _0^{\ast}(P)\cong \partial _1^{\ast}(P)$
such that $s_0^{\ast}(\phi )=1$ where $s_0: X_0\rightarrow X_1$ is 
the degeneracy. This condition needs to be included here or else
one would have to talk 
about $(X_{\leq 1})_2$ which is nonempty but has only degenerate pieces. 

If ${\bf x}_{\hdot}$ meets all connected components of the $X_k$ then 
compatibility with the degeneracy is automatic, and
$\Rr_{\eta} (X_{\leq 1}, {\bf x}_{\leq 1},G)$ is the
equalizer \eqref{cartsquare}
occuring 
in Proposition \ref{repspacecalc}. In this case it is a quasiprojective scheme rather
than a stack. 

For components of the simplicial basepoint of the form $\langle y\rangle \cong h([2])$
for $y\in X_2$, the $1$-skeleton is not contractible: rather it is 
the boundary triangle of the $2$-simplex. This case is what leads to new equations
for the representation varieties when we add in points to the simplicial basepoint,
so it is worth looking at more closely. Let $T_{\hdot}:= h([2])$ denote
the contractible simplicial $2$-simplex. Its boundary or $1$-skeleton $T_{\leq 1}$
is a triangle.
Let $t_0\in T_0$ denote the $0$-th vertex. 

The inclusion of the boundary triangle into the $2$-simplex induces the 
map
\begin{equation}
\label{trimap}
\ast = \Rr_{\eta }(T_{\hdot}, (t_0)_{\leq 1},G)\rightarrow 
\Rr_{\eta}(T_{\leq 1}, (t_0)_{\leq 1}, G)=G
\end{equation}
which is inclusion of the identity element as a point in $G$. 
This may be seen directly from the configuration of three points in $T_0$
corresponding to vertices of the triangle
and six points of $T_1$, three degenerate ones located at the vertices
and three corresponding to the nondegenerate edges. 
A principal $G$-bundle over this configuration together with its face and degeneracy maps
corresponds to a flat $G$-bundle on the boundary of the triangle.
When the trivialization at $t_0$ is included, it corresponds to a monodromy element in 
$G$, and it extends to all of $T$ if and only if the monodromy element is trivial.

Continue with the proof of the theorem. 
Suppose given a simplicial basepoint 
of the form ${\bf x}=\langle x ^1\rangle \sqcup \cdots \sqcup \langle x^r\rangle$
with $x^1\in X_0$ and $x^i\in X_{k^i}$ for $k^i\in \{ 0,1,2 \}$. 
Let $G'({\bf x}_{\hdot}):= \prod _{j=2}^r G$ with the $j$-th term acting
by change of framing over the $0$-th vertex of 
$x^j$. Thus $G'({\bf x}_{\hdot})$ acts
on $\Rr_{\eta}(X_{\leq 1}, {\bf x}_{\leq 1},G)$. 

If ${\bf x}_{\hdot}$ meets all connected components of $X_0$, $X_1$ and $X_2$
then Proposition \ref{repspacecalc} tells us that  
$R(X_{\hdot}, {\bf x}_{\hdot},G)=\Rr_{\eta}(X_{\leq 1}, {\bf x}_{\leq 1},G)$
hence
$$
R(X_{\hdot}, x^1,G) = 
\Rr_{\eta}(X_{\leq 1}, {\bf x}_{\leq 1},G)\stackquot G'({\bf x}_{\hdot}).
$$
The goal is to show that this is quasiprojective.

Start with a simpler choice of simplicial basepoint.
Order the connected components of $X_0$ as $X_0^1,\ldots , X_0^a$,
such that for any $2\leq i \leq a$ there exists a connected component $X_1^i$
of $X_1$ with $\partial _0(X^i_1)\subset X_0^{j}$ for $j<i$ and 
$\partial _1(X^i_1)\subset X_0^{i}$.
Choose $y^1\in X_0^1$, and for $2\leq i\leq a$ choose $y^i\in X_1^i$. Then set  
$$
{\bf y}_{\hdot}= \langle y^1\rangle \sqcup \cdots \sqcup \langle y^a\rangle .
$$
Consider first the equalizer 
$$
\Rr '\rightarrow R(X_0,{\bf y}_0,G) \twoarrows R(\coprod _{i=2}^a X^i_1, {\bf y}_1,G).
$$
The quotient $\Rr '\stackquot G({\bf y}_{\hdot})$ is a moduli stack
parametrizing $a$-tuples of $G$-principal $\lambda$-connections $P^i$
on the $X^i_0$, 
with a choice of framing for $P^1$ over $x^1$, together with choices of isomorphisms
between the restrictions $\partial _0^{\ast}(P^{i-1})$ and $\partial_1^{\ast}
(P^{i-1})$ over $X_1^i$ for $i=2,\ldots , a$. 

Let $V_k$ denote the moduli stack of $k$-uples $(P^1,\ldots , P^k)$ with isomorphisms
as above. We prove by induction on $k$ that it is a quasiprojective scheme,
starting with $k=1$ which is the case of principal $\lambda$-connections
over the smooth projective variety $X_0^1$ \cite{Moduli}.

There is a universal object over $U_k\times X^1_0\times \cdots \times X^a_0$, 
in particular its restriction to
the next basepoint $\partial _0(y^{k+1})$ is a principal $G$-bundle over $U_k$.
The representation variety $R(X_0^{k+1},\partial _1(y^{k+1}),G)$ has a $G$-action,
so we can twist it to get a fibration $V_{k+1}\rightarrow U_k$ with fiber $R(X_0^{k+1},\partial _1(y^{k+1}),G)$. Similarly, twisting gives a fibration $W_{k+1}\rightarrow U_k$
with fiber $R(X_1^{k+1},y^{k+1},G)$. Restriction of the universal bundle is a section 
$U_k\rightarrow W_{k+1}$, and restriction from $X_0^{k+1}$ is 
a morphism $V_k\rightarrow W_{k+1}$. Specifying a $k+1$-tuple $(P^1,\ldots , P^{k+1})$
is equivalent to specifying a point in $V_{k+1}$ whose restriction is the same as
that of $P^k$. In other words, 
the next moduli space is the fiber product 
$$
U_{k+1}= V_{k+1}\times _{W_{k+1}}U_k.
$$
Hence $U_{k+1}$ is a quasiprojective scheme. 
This completes the inductive step. At $k=a$, this shows that
$$
U_a=\Rr '\stackquot G({\bf y}_{\hdot})
$$ 
is quasiprojective.

Now $\Rr_{\eta}(X_{\leq 1}, {\bf y}_{\leq 1},G)\stackquot G'({\bf y}_{\hdot})$ 
is affine over $\Rr '\stackquot G({\bf y}_{\hdot})$ parametrizing isomorphisms
between the restrictions $\partial _0^{\ast}$ and $\partial _1^{\ast}$ of
the universal object, to the other components of $X_1$. 
Lemma \ref{isomqp} applied to the union of other components, gives that 
$\Rr_{\eta}(X_{\leq 1}, {\bf y}_{\leq 1},G)\stackquot G'({\bf y}_{\hdot})$ is
quasiprojective. 

To finish, proceed by induction starting from ${\bf y}$
and successively adding points until we get to
a simplicial basepoint meeting all the required components.
It suffices analyze what happens when we pass from ${\bf x}_{\hdot}$ 
to ${\bf x}_{\hdot}\sqcup \langle z \rangle$ for $z\in X_k$, $k=0,1,2$.
For in $X_0$ or $X_1$, the moduli problem solved by
$\Rr_{\eta}(X_{\leq 1}, {\bf y}_{\leq 1}\sqcup \langle z \rangle_{\leq 1},G)
\stackquot G'({\bf y}_{\hdot}
\sqcup \langle z \rangle )$ 
is the same as that solved by 
$\Rr_{\eta}(X_{\leq 1}, {\bf y}_{\leq 1},G)\stackquot G'({\bf y}_{\hdot})$,
plus a choice of framing over $z$, but also modulo the action of an extra copy of $G$
on this choice of framing. Therefore 
$$
\Rr_{\eta}(X_{\leq 1}, {\bf y}_{\leq 1}\sqcup \langle z \rangle _{\leq 1},
G)\stackquot G'({\bf y}_{\hdot}
\sqcup \langle z \rangle )
\cong 
\Rr_{\eta}(X_{\leq 1}, {\bf y}_{\leq 1},G)\stackquot G'({\bf y}_{\hdot})
$$
and quasiprojectivity for ${\bf y}_{\hdot}$ implies quasiprojecxtivity 
for ${\bf y}_{\hdot}\sqcup \langle z \rangle$. 

Consider therefore the case $z\in X_2$. Then 
$\langle z \rangle$ is the $2$-simplex $T$ considered above. 
A descent datum on $X_{\leq 1}$ restricts to one over $T_{\leq 1}$,
hence to a monodromy element as discussed after equation \eqref{trimap} above. 
The moduli problem solved by 
$\Rr_{\eta}(X_{\leq 1}, {\bf y}_{\leq 1}\sqcup \langle z \rangle _{\leq 1},G)
\stackquot G'({\bf y}_{\hdot}
\sqcup \langle z \rangle )$ is the moduli problem for
$U:=\Rr_{\eta}(X_{\leq 1}, {\bf y}_{\leq 1},G)\stackquot G'({\bf y}_{\hdot})$, plus
a trivialization of this $G$-bundle on the boundary of the triangle
$T_{\leq 1}$, modulo choice of
framing at one point. 

Over $U\times X_{\leq 1}$ there is a universal object which restricts to
a $G$-bundle on $U\times T_{\leq 1}/U$. The condition that
the monodromy be trivial is a closed condition over $U$. To prove this it suffices
to do it etale-locally, but then we can assume that there is a trivialization of the
restriction to one vertex; the monodromy becomes a function to $G$ 
such that the inverse image of $\{ 1_G\}$ (see \eqref{trimap})
is the required closed subset. From all
of this we conclude that 
$$
\Rr_{\eta}(X_{\leq 1}, {\bf y}_{\leq 1}\sqcup \langle z \rangle _{\leq 1},G)
\stackquot G'({\bf y}_{\hdot}
\sqcup \langle z \rangle )
\subset
\Rr_{\eta}(X_{\leq 1}, {\bf y}_{\leq 1},G)\stackquot G'({\bf y}_{\hdot})
$$ 
is a closed subscheme. Again, quasiprojectivity on the right implies it on the left.
This completes the induction step. 

By induction, we can go to
the case of a simplicial basepoint ${\bf x}_{\hdot}$ meeting all components of $X_k$
for $k=0,1,2$. We have shown that 
$\Rr_{\eta}(X_{\leq 1}, {\bf x}_{\leq 1},G)\stackquot G'({\bf x}_{\hdot})$ is
a quasiprojective scheme. However, it is equal to $R$ 
in this case by
Proposition \ref{repspacecalc}, which completes the proof that
$$
R(X_{\hdot}, x^1,G) = 
R(X_{\hdot}, {\bf x}_{\hdot},G)\stackquot G'({\bf x}_{\hdot})
$$
is quasiprojective. This finishes the proof of the theorem in case of a single
basepoint. For any nonempty simplicial basepoint, going back in the other direction
corresponds to looking at frame bundles over 
$R(X_{\hdot}, x^1,G)$, which are quasiprojective too. 
\end{proof}

This proof shows how the components of $X_2$ lead to additional equations
for the representation scheme, via the monodromy elements over triangles $T_{\leq 1}$.
In case of a simplicial scheme $X_{\hdot}$ such that each $X_k$ is simply connected,
the fundamental group is the same as that of the simplicial set $k\mapsto \pi _0(X_k)$
and the above procedure shows how the elements of $\pi _0(X_2)$ act
as relations. 

\begin{corollary}
Suppose $X_{\hdot}$ is connected with each $X_k$ being a smooth
projective variety. Choose a basepoint $x\in X_0$.
Then $R_{\eta}(X_{\hdot},x,G)$ is a quasiprojective
scheme for $\eta = B,DR,H,Hod$, an analytic space for $\eta = DH$. The group $G$ acts
on it and the quotient stack is $\Mm _{\eta}(X_{\hdot},G)$.
For the cases $\eta = H,Hod,DH$ there is an action of $\Gm$ on both the representation
scheme $R$ and the quotient stack $\Mm$. 
\end{corollary}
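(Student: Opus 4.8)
The plan is to deduce everything from Theorem~\ref{repqp} by taking the simplicial basepoint to be the single point. First I would observe that for $x\in X_0$ the representable simplicial set $\langle x\rangle = h([0])$, together with the induced map $\langle x\rangle \rightarrow X_{\hdot}$, is a nonempty simplicial basepoint: its realisation is a point, hence $0$-truncated, and it is finite at every level. By construction $R_{\eta}(X_{\hdot},\langle x\rangle ,G)=R_{\eta}(X_{\hdot},x,G)$, so Theorem~\ref{repqp} gives directly that $R_{\eta}(X_{\hdot},x,G)$ is a quasiprojective scheme for $\eta = B, DR, H, Hod$; this is exactly the single-basepoint case disposed of at the end of the proof of that theorem. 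For $\eta = DH$ one instead uses that $R_{DH}(X_{\hdot},x,G)$ is built by the Deligne--Hitchin glueing \cite{hfnac} of two copies of the quasiprojective scheme $R_{Hod}(X_{\hdot},x,G)$ along their loci over $\Gm \subset \aaa ^1$, the glueing being effected by the analytic Riemann--Hilbert isomorphism $\Mm _{DR}(X_{\hdot},G)^{\rm an}\cong \Mm _{B}(X_{\hdot},G)^{\rm an}$; the result is an analytic space over $\pp ^1$.

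For the $G$-action and the quotient statement, I would use the fibre-square description: $R_{\eta}(X_{\hdot},x,G)$ is by definition the fibre of the evaluation map $x^{\ast}:\Mm _{\eta}(X_{\hdot},G)\rightarrow BG$ over the trivial torsor $0\in BG$, i.e. $R_{\eta}(X_{\hdot},x,G)=\Mm _{\eta}(X_{\hdot},G)\times _{BG}\ast$, where $\ast=\mathrm{Spec}(\cc )$ maps to $0$. The group $G=\mathrm{Aut}(0)$ acts on this fibre by change of framing at $x$, and since $BG=[\ast /G]$, base change of quotient stacks gives
$$
[R_{\eta}(X_{\hdot},x,G)/G] \;=\; \Mm _{\eta}(X_{\hdot},G)\times _{BG}[\ast /G]
\;=\; \Mm _{\eta}(X_{\hdot},G)\times _{BG}BG \;=\; \Mm _{\eta}(X_{\hdot},G).
$$
Because $X_{\hdot}$ is connected, every point of $R_{\eta}(X_{\hdot},x,G)$ has trivial stabiliser, as was already noted before Proposition~\ref{repspacecalc}, so the action is free and the stack quotient is this quotient; in the $DH$ case the identification is made on the two glueing charts and then glued.

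Finally, for $\eta = H, Hod, DH$ I would produce the $\Gm$-action from its pointwise recipe, which is functorial in the smooth projective variety and so passes to the $2$-limit over $\Delta$ defining $\Mm _{\eta}(X_{\hdot},G)$. On Higgs bundles $t\in \Gm$ acts by $(P,\theta )\mapsto (P,t\theta )$; on $\lambda$-connections by $(\lambda ,P,\nabla )\mapsto (t\lambda ,P,t\nabla )$, which is equivariant for the weight-one action on the base $\aaa ^1$ and restricts over $\lambda =0$ to the Higgs action. Each of these fixes the fibre at $x$ and therefore descends from $\Mm _{\eta}(X_{\hdot},G)$ to $R_{\eta}(X_{\hdot},x,G)$. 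For $\eta = DH$ one checks that the two $\Gm$-actions on the glueing charts $\Mm _{Hod}$ match under the Deligne--Hitchin identification---the Riemann--Hilbert map depends only on the ratio $\nabla /\lambda$, which is unchanged under the action---so they glue to a $\Gm$-action on $\Mm _{DH}(X_{\hdot},G)$ over the standard action on $\pp ^1$, and likewise on $R_{DH}$.

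The substantive input is Theorem~\ref{repqp}, which already contains the hard work, so I expect the only delicate point to be the $\eta = DH$ case: one must check that the Deligne--Hitchin glueing is compatible both with the $2$-limit over $\Delta$ and with the $\Gm$-actions, and that the resulting glued object is a well-defined analytic space over $\pp ^1$. Everything else is a formal consequence of the description of $R_{\eta}$ as the fibre of $x^{\ast}$ over $BG$.
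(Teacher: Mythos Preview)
Your proposal is correct and follows essentially the same approach as the paper: apply Theorem~\ref{repqp} to the singleton simplicial basepoint $\langle x\rangle$ for quasiprojectivity, and obtain the group actions from universal considerations. The paper's own proof is two sentences (``Apply the previous theorem with the nonempty basepoint $\langle x\rangle$. The group actions are obtained from the universal property.''), so you have simply unpacked what those sentences mean---in particular your fibre-product argument for $[R_{\eta}/G]\cong \Mm _{\eta}$ and your explicit description of the $\Gm$-action make precise what the paper leaves implicit.
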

\begin{proof}
Apply the previous theorem with the nonempty basepoint $\langle x \rangle$.
The group actions are obtained from the universal property. 
\end{proof}

For an explicit description of the 
representation scheme, let $y^1=x\in X_0$ be the first basepoint. 
Choose $y^j\in {\bf x}_{m(j)}$ for
$j=2,\ldots , r$ such that the collection meets all components of $X_0$, $X_1$ and
$X_2$. Let ${\bf y}_{\hdot}=\coprod _{j=1}^r\langle y^j\rangle$ 
be the corresponding simplicial basepoint. 
Choose representatives $x^j\in \langle y^j\rangle _0$.

\begin{corollary}
\label{repcalccor}
With these notations, Proposition \ref{repspacecalc} allows us to calculate 
$R_{\eta}(X_{\hdot},{\bf y}_{\hdot},G)$. Then, applying Theorem \ref{repqp} recursively,
the group $\prod _{j=2}^rG$ acts freely on $R_{\eta}(X_{\hdot},{\bf y}_{\hdot},G)$
by change of framings at the points $x^j$,
and the quotient is $R_{\eta}(X_{\hdot},x,G)$. 
It extends to an action of the group $\prod _{j=1}^rG$ with 
$$
R_{\eta}(X_{\hdot},{\bf y}_{\hdot},G)\stackquot (G\times \prod _{j=1}^rG)\cong 
R_{\eta}(X_{\hdot},x,G)\stackquot G \cong \Mm  _{\eta}(X_{\hdot}, G).
$$
\end{corollary}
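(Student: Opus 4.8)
The plan is to deduce this corollary from Proposition~\ref{repspacecalc}, a recursive use of Theorem~\ref{repqp}, and the corollary immediately preceding it. First I would note that, by the way the $y^j$ were chosen, the simplicial basepoint ${\bf y}_{\hdot}=\coprod_{j=1}^r\langle y^j\rangle$ meets every connected component of $X_k$ for $k=0,1,2$, so Proposition~\ref{repspacecalc} applies and presents $R_{\eta}(X_{\hdot},{\bf y}_{\hdot},G)$ as the equalizer \eqref{cartsquare} of the two face maps $R_{\eta}(X_0,{\bf y}_0,G)\twoarrows R_{\eta}(X_1,{\bf y}_1,G)$. This is the sense in which Proposition~\ref{repspacecalc} ``allows us to calculate'' the framed representation scheme, the components of $X_2$ intervening only through the requirement that ${\bf y}_2$ be large enough (compare Example~\ref{coordinateplanes}).

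Next I would peel off the auxiliary basepoints one at a time. Set ${\bf y}^{(s)}_{\hdot}:=\langle y^1\rangle\sqcup\cdots\sqcup\langle y^s\rangle$ for $1\le s\le r$; each of these is a nonempty simplicial basepoint. For $s=r,r-1,\ldots,2$, the second assertion of Theorem~\ref{repqp}, applied with ${\bf x}_{\hdot}={\bf y}^{(s-1)}_{\hdot}$ and $z=y^s$, shows that $G$ acts freely on $R_{\eta}(X_{\hdot},{\bf y}^{(s)}_{\hdot},G)$ by change of framing at $x^s$ (this is change of framing at $z=y^s$, which is the same thing because $|\langle y^s\rangle|$ is contractible), with quotient $R_{\eta}(X_{\hdot},{\bf y}^{(s-1)}_{\hdot},G)$. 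Since the action at $x^s$ alters only the framing coordinate at $x^s$, the $r-1$ actions for $s=2,\ldots,r$ commute and assemble into a free action of $\prod_{j=2}^rG$ on $R_{\eta}(X_{\hdot},{\bf y}_{\hdot},G)$; as the actions are free, the iterated stack quotient is the successive geometric quotient, and it equals $R_{\eta}(X_{\hdot},{\bf y}^{(1)}_{\hdot},G)=R_{\eta}(X_{\hdot},x,G)$ since $y^1=x$.

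Finally I would invoke the corollary preceding this one (equivalently, the single-basepoint case of Theorem~\ref{repqp}): the change-of-framing $G$-action at $x=y^1\in X_0$ on $R_{\eta}(X_{\hdot},x,G)$ has quotient stack $\Mm_{\eta}(X_{\hdot},G)$. Adjoining this $G$ to $\prod_{j=2}^rG$ gives the full change-of-framing group at $x^1,\ldots,x^r$, and composing the two quotients $R_{\eta}(X_{\hdot},{\bf y}_{\hdot},G)\to R_{\eta}(X_{\hdot},x,G)\to\Mm_{\eta}(X_{\hdot},G)$ yields the displayed chain of isomorphisms. I do not expect a genuine obstacle here, since all the substance lies in Proposition~\ref{repspacecalc} and Theorem~\ref{repqp}; the points that want care are that every intermediate simplicial basepoint remains nonempty so that Theorem~\ref{repqp} applies at each stage, that the successive change-of-framing quotients commute so that the iterated quotient is the quotient by the product group (immediate, as the factors act on distinct framing coordinates), and that the last basepoint retained is $\langle y^1\rangle$ with $y^1\in X_0$, which is exactly what lets the final step appeal to the statement for a basepoint in $X_0$.
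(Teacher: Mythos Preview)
Your proposal is correct and is exactly the argument the paper intends: the corollary is stated without proof precisely because its statement already names the ingredients (Proposition~\ref{repspacecalc} for the equalizer description, then the second part of Theorem~\ref{repqp} applied recursively to strip off the framings at $x^2,\ldots,x^r$, then the preceding corollary for the final quotient by $G$). Your care with the intermediate basepoints remaining nonempty and the commutativity of the change-of-framing actions is appropriate and matches what is implicit in the text.
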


Choosing only basepoints in $X_0$ gives 
a slightly different description refering directly to Lemma \ref{isomqp}.

\begin{corollary}
\label{gooddescrip}
Choose basepoints $y^j\in X_0$ meeting all the connected components of $X_0$,
and let ${\bf y}_{\hdot}=\coprod _{j=1}^r\langle y^j\rangle$.
Then 
$$
\Rr _{\eta}(X_{\leq 1},{\bf y}_{\leq 1},G)\rightarrow 
R_{\eta}(X_0, \{ y^j\} , G)
$$
is an affine map parametrizing $G$-principal $\lambda$-connections $P$ on
$X_0$, framed at the $y^j$, together with isomorphisms $\phi :\partial _0^{\ast}(P)\cong
\partial _1^{\ast}(P)$ on $X_1$ compatible with the degeneracies (or equivalently, with the
framings on $X_1$). Furthermore 
$$
R _{\eta}(X_{\hdot},{\bf y}_{\hdot},G)
\subset \Rr _{\eta}(X_{\leq 1},{\bf y}_{\leq 1},G)
$$
is a closed subvariety parametrizing the $(P,\phi )$ such that $\phi$ satisfies the
cocycle condition on $X_2$.
\end{corollary}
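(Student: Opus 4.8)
The plan is to deduce the corollary from the $2$-limit description of Proposition~\ref{repspacecalc}, the representability of isomorphism functors in Lemma~\ref{isomqp}, and the elementary fact that two sections of a scheme affine over a base agree along a closed subscheme. No new analytic input is needed beyond what already enters Theorem~\ref{repqp}.

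First I would unwind $\Rr_{\eta}(X_{\leq 1},{\bf y}_{\leq 1},G)$. Since every $y^j$ lies in $X_0$, the simplicial set $\langle y^j\rangle = h([0])$ has one-point realization, so the $1$-skeleton ${\bf y}_{\leq 1}$ consists of the points $y^j\in X_0$ together with their degeneracies $s_0(y^j)\in X_1$; hence a framing along ${\bf y}_{\leq 1}$ is the same as a framing of the underlying $G$-bundle at the $y^j$, and $s_0$-compatibility pins down the induced framing on $X_1$. By the description of $\Rr_{\eta}(X_{\leq 1},\cdot )$ used in the proof of Theorem~\ref{repqp}, this moduli space therefore parametrizes a $G$-principal $\lambda$-connection $P$ of semiharmonic type on $X_0$, a framing of $P$ at the $y^j$, and an isomorphism $\phi:\partial_0^{\ast}P\cong\partial_1^{\ast}P$ on $X_1$ with $s_0^{\ast}(\phi)=1_P$. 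Forgetting $\phi$ produces the map to $B:=R_{\eta}(X_0,\{y^j\},G)$, which is a quasiprojective scheme by \cite{Moduli} (a finite product, over the connected components of $X_0$, of framed moduli schemes; framings meeting every component rigidify, so stabilizers vanish and $B$ is a scheme rather than a stack). Applying Lemma~\ref{isomqp} over $B$ to the universal bundles $\partial_0^{\ast}\mathcal{P}$ and $\partial_1^{\ast}\mathcal{P}$ on $X_1\times B$ shows that the $\phi$'s are parametrized by a scheme affine over $B$; the condition $s_0^{\ast}(\phi)=1$ is the preimage of the identity section under the map ${\rm Iso}_{X_1\times B/B}(\partial_0^{\ast}\mathcal{P},\partial_1^{\ast}\mathcal{P})\to{\rm Iso}_{X_0\times B/B}(\mathcal{P},\mathcal{P})$ induced by $s_0^{\ast}$, and this preimage is closed because the target is separated over $B$. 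Thus $\Rr_{\eta}(X_{\leq 1},{\bf y}_{\leq 1},G)\to B$ is affine and the total space is a quasiprojective scheme, which is the first assertion.

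For the second assertion I would invoke the $2$-limit part of Proposition~\ref{repspacecalc}, valid for \emph{any} simplicial basepoint, together with Lemma~\ref{limdiag}: an object of $R_{\eta}(X_{\hdot},{\bf y}_{\hdot},G)$ is precisely a pair $(P,\phi)$ as above whose $\phi$, pulled back to $X_2$, satisfies the cocycle (hexagon) condition. Since that condition already forces $s_0^{\ast}(\phi)=1$ (the paragraph before Lemma~\ref{vbdescent1}), this identifies $R_{\eta}(X_{\hdot},{\bf y}_{\hdot},G)$ with the subscheme of $\Rr_{\eta}(X_{\leq 1},{\bf y}_{\leq 1},G)$ cut out by the cocycle condition. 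To see it is closed, let $(\mathcal{P},\phi)$ be the universal descent datum over $\Rr_{\eta}(X_{\leq 1},{\bf y}_{\leq 1},G)\times X_{\leq 1}$ and let $v_0,v_1,v_2:X_2\to X_0$ be the vertex maps; exactly as in the proof of Lemma~\ref{vbdescent1}, the two sides of the cocycle identity are two isomorphisms $v_0^{\ast}\mathcal{P}\cong v_2^{\ast}\mathcal{P}$ over $X_2\times\Rr_{\eta}(X_{\leq 1},{\bf y}_{\leq 1},G)$, that is, two sections of ${\rm Iso}_{X_2\times\Rr/\Rr}(v_0^{\ast}\mathcal{P},v_2^{\ast}\mathcal{P})$, a scheme affine (hence separated) over $\Rr_{\eta}(X_{\leq 1},{\bf y}_{\leq 1},G)$ by Lemma~\ref{isomqp}; their agreement locus is therefore closed. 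This gives the required closed subvariety, which is quasiprojective either as a closed subscheme of the quasiprojective $\Rr_{\eta}(X_{\leq 1},{\bf y}_{\leq 1},G)$ or directly by Theorem~\ref{repqp}.

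The steps needing the most care are bookkeeping rather than conceptual: verifying that $\Rr_{\eta}(X_{\leq 1},{\bf y}_{\leq 1},G)$ is genuinely a scheme — which reduces to vanishing of automorphisms once a framing hitting every component of $X_0$ is fixed, and to keeping the degeneracy condition $s_0^{\ast}(\phi)=1$ correctly in play when $X_0$ is disconnected — and setting up Lemma~\ref{isomqp} with the right universal objects so that ``two morphisms of bundles-with-$\lambda$-connection coincide'' becomes the closed condition ``two sections land in the diagonal of a separated $S$-scheme''.
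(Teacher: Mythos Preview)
Your proof is correct and follows essentially the same approach as the paper. The paper does not give a separate proof of this corollary; the content is already contained in the proof of Theorem~\ref{repqp}, where affineness over $R_{\eta}(X_0,\{y^j\},G)$ is obtained from Lemma~\ref{isomqp} and closedness of the cocycle condition is handled via the ``monodromy'' discussion for points $z\in X_2$, and you have accurately extracted and made explicit exactly those ingredients.
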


Turn now to the study of the universal categorical quotients
of the moduli stacks, going back to \cite{git} and 
more particularly \cite{LubotskyMagid} for the moduli of representations. 
Consider first the abstract setup of an algebraic stack $\Mm$,
similarly to the work of Iwanari \cite{Iwanari}. A morphism $\Mm \rightarrow M$
is a {universal categorical quotient in the category of schemes} if $M$ is a scheme, and if
for any schemes $Y$ and $Z$ with a map $Z\rightarrow M$, a map
$$
\Mm \times _M Z \rightarrow Y
$$
factors through a unique map $Z\rightarrow Y$. A universal categorical quotient is obviously unique. 

In our situation, the moduli stack is a quotient stack $\Mm = R\stackquot G$ with
$R$ a quasiprojective scheme. In this case, Seshadri defines the notion of 
good quotient \cite{Seshadri1} and notes that Mumford's construction
of the quotient for the set of semistable points \cite{git} is good.  
A good quotient is separated and quasiprojective, and the points correspond to closed
orbits of the $G$-action. 

\begin{lemma}
\label{affineover}
Suppose $V$ is a quasiprojective scheme with $G$ action, 
such that all points are semistable with respect to a linearized line bundle $L$.
Suppose $\varphi :F\rightarrow G$ is a $G$-equivariant affine map.
Then all points of $F$ are semistable for $\varphi ^{\ast}(L)$, 
so there is a good quotient $F/G$ too. 
\end{lemma}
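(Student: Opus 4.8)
The plan is to argue directly from Mumford's definition of semistability, exploiting that an affine morphism pulls affine opens back to affine opens. First I would recall the form of the definition we need: a point $v\in V$ is semistable for the $G$-linearized line bundle $L$ if there exist an integer $n>0$ and a $G$-invariant section $s\in\Gamma(V,L^{\otimes n})^G$ with $s(v)\neq 0$ such that the open subset $V_s:=\{s\neq 0\}$ is \emph{affine}. Since by hypothesis every point of $V$ is semistable and $V$ is quasicompact, I can choose finitely many sections $s_1,\dots ,s_m$, with $s_i\in\Gamma(V,L^{\otimes n_i})^G$ and each $V_{s_i}$ affine, whose non-vanishing loci cover $V$ (note the $n_i$ need not be equal; this causes no difficulty).

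Next I would pull everything back along $\varphi$. Because $\varphi$ is $G$-equivariant, the given linearization of $L$ induces a $G$-linearization of $\varphi^{\ast}(L)$, and each $\varphi^{\ast}(s_i)$ is a $G$-invariant section of $\varphi^{\ast}(L)^{\otimes n_i}$ whose non-vanishing locus is exactly $\varphi^{-1}(V_{s_i})$. Since $\varphi$ is an affine morphism and $V_{s_i}$ is affine, $\varphi^{-1}(V_{s_i})$ is affine, and these opens cover $F$ because the $V_{s_i}$ cover $V$. Hence every point of $F$ lies in the affine non-vanishing locus of a $G$-invariant section of a positive power of $\varphi^{\ast}(L)$, i.e. is semistable for $\varphi^{\ast}(L)$; so $F$ coincides with its semistable locus for $\varphi^{\ast}(L)$.

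Finally, with all of $F$ semistable, Mumford's construction recalled above (following Seshadri \cite{Seshadri1}, \cite{git}) directly produces a good quotient $F\rightarrow F/G$: concretely it is obtained by gluing the affine good quotients ${\rm Spec}\,\Oo(\varphi^{-1}(V_{s_i}))^G$ along the affine opens $\varphi^{-1}(V_{s_i}\cap V_{s_j})$ (the intersections $V_{s_i}\cap V_{s_j}$, hence their preimages, being affine since $V$ is quasiprojective and therefore separated), and it is separated and quasiprojective with points corresponding to the closed $G$-orbits in $F$. The only point calling for a little care is that one must use the version of the semistability definition that insists $V_s$ be affine rather than merely the numerical Hilbert--Mumford criterion, since it is precisely this affineness property that is inherited under pullback along the affine morphism $\varphi$; granting that, the remainder is a transcription of Mumford's existence proof for the good quotient of a semistable locus with $F$ in place of $V$, so I do not expect any genuine obstacle.
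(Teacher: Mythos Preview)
Your proof is correct and follows essentially the same approach as the paper: pull back a $G$-invariant section $s$ of $L^{\otimes n}$ whose affine non-vanishing locus contains the image point, and use that an affine morphism pulls affine opens back to affine opens to conclude that $\varphi^{-1}(V_s)$ is an affine neighborhood of the given point of $F$ witnessing semistability for $\varphi^{\ast}(L)$. The paper argues pointwise rather than first choosing a finite cover, and omits your explicit description of the good quotient construction, but these are purely cosmetic differences.
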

\begin{proof}
If $z\in F$ then $\varphi (z)$ is semistable by hypothesis.
This means that there is a section $f\in H^0(V,L^{\otimes n})^G$ such that $V_{f\neq 0}$
is an affine neighborhood of $\varphi (z)$. It pulls back to a $G$-invariant section on 
$F$ and $F_{\varphi ^{\ast}(f)\neq 0}=\varphi ^{-1}(V_{f\neq 0})$ is affine. Thus
$z$ is semistable. 
\end{proof}

\begin{theorem}
Suppose $Z_{\hdot}$ is a simplicial scheme with split degeneracies such that the
$Z_k$ are smooth projective varieties. Suppose $Z_{\hdot}$ is
connected with a basepoint $z$. Then for $\eta = B,DR,H,Hod$ there is a linearized
line bundle such that all points
of $R_{\eta}(Z_{\hdot},z,G)$ are semistable. Therefore $\Mm _{\eta}(Z_{\hdot},G)$
admits a universal categorical quotient which is a good quotient 
$$
M_{\eta}(Z_{\hdot},G)=R_{\eta}(Z_{\hdot},z,G)/G .
$$
\end{theorem}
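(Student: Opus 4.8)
The plan is to build the GIT picture from the bottom level of $Z_{\hdot}$ upward: first establish semistability on the framed representation scheme of $Z_{0}$ using \cite{Moduli}, then transport it along the two structural maps of Corollary \ref{gooddescrip}, and finally descend through the free framing-change torsors of Theorem \ref{repqp}. To begin, fix a simplicial basepoint ${\bf y}_{\hdot}=\coprod_{j=1}^{r}\langle y^{j}\rangle$ with $y^{1}=z$ whose finite levels meet every component of $Z_{0},Z_{1},Z_{2}$ (possible since $Z_{\hdot}$ is connected), and arrange that the $0$-vertices $y^{j}\in Z_{0}$ include at least one point in each component of $Z_{0}$. For $\eta=B$ the scheme $R_{B}(Z_{\hdot},z,G)$ is affine, so the trivial linearization makes every point semistable and the good quotient is $\operatorname{Spec}$ of the invariant ring; from now on take $\eta\in\{DR,H,Hod\}$.

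Next, the base case. The scheme $R_{\eta}(Z_{0},\{y^{j}\},G)$ is the framed representation scheme of a disjoint union of smooth projective varieties, with all bundles required to be of semiharmonic type; over one component with a single framing this is exactly the GIT situation of \cite{Moduli}, so it carries a linearized line bundle for which every point is semistable and the good quotient is the corresponding moduli space. Taking products over the components and then adjoining the extra framings — each of which presents the enlarged representation scheme as a framing-change torsor, hence an affine equivariant map, over the previous one, so that Lemma \ref{affineover} applies (the invariant section witnessing semistability pulls back, and affine loci are preserved under affine morphisms) — produces a linearized line bundle $L_{0}$ on $R_{\eta}(Z_{0},\{y^{j}\},G)$ with every point semistable, equivariantly for the whole product of framing-change copies of $G$. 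Now push $L_{0}$ up the diagram of Corollary \ref{gooddescrip}: the map $\Rr_{\eta}(Z_{\leq1},{\bf y}_{\leq1},G)\to R_{\eta}(Z_{0},\{y^{j}\},G)$ is affine and equivariant, so by Lemma \ref{affineover} every point of $\Rr_{\eta}(Z_{\leq1},{\bf y}_{\leq1},G)$ is semistable for the pullback of $L_{0}$; and since $R_{\eta}(Z_{\hdot},{\bf y}_{\hdot},G)\subset\Rr_{\eta}(Z_{\leq1},{\bf y}_{\leq1},G)$ is a closed invariant subvariety (the locus where the cocycle condition over $Z_{2}$ holds), restricting the linearization keeps every point of $R_{\eta}(Z_{\hdot},{\bf y}_{\hdot},G)$ semistable. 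Hence the action of the relevant (reductive) product group $\Gamma$ on $R_{\eta}(Z_{\hdot},{\bf y}_{\hdot},G)$ admits a good quotient, which by \cite{Seshadri1} is a universal categorical quotient; by the identification $R_{\eta}(Z_{\hdot},{\bf y}_{\hdot},G)\stackquot\Gamma\cong\Mm_{\eta}(Z_{\hdot},G)$ of Corollary \ref{repcalccor}, this is simultaneously a universal categorical quotient of the stack $\Mm_{\eta}(Z_{\hdot},G)$.

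It remains to descend this to $R_{\eta}(Z_{\hdot},z,G)$, as the theorem asks for the linearization there. By Theorem \ref{repqp} (applied recursively, removing $\langle y^{r}\rangle,\dots,\langle y^{2}\rangle$ one at a time), the natural map $R_{\eta}(Z_{\hdot},{\bf y}_{\hdot},G)\to R_{\eta}(Z_{\hdot},z,G)$ is a torsor under the affine group $\Gamma'$ of framing changes at the auxiliary basepoints, acting freely; in particular it is an affine morphism with closed orbits, and $R_{\eta}(Z_{\hdot},z,G)$ is quasiprojective. The $\Gamma'$-linearized line bundle built above therefore descends to a $G$-linearized line bundle $L$ on $R_{\eta}(Z_{\hdot},z,G)$ with matching invariant sections, and running the affine-morphism argument of Lemma \ref{affineover} downward along the torsor shows that every point of $R_{\eta}(Z_{\hdot},z,G)$ is semistable for $L$. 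Taking the good quotient by $G$ then yields $M_{\eta}(Z_{\hdot},G)=R_{\eta}(Z_{\hdot},z,G)/G$, which by the two-stage nature of the construction coincides with the good quotient of $R_{\eta}(Z_{\hdot},{\bf y}_{\hdot},G)$ by the full group $\Gamma$, hence is the universal categorical quotient of $\Mm_{\eta}(Z_{\hdot},G)$; for $\eta=H,Hod$ the $\Gm$-action descends from the universal property as in the preceding corollary.

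I expect the main obstacle to be exactly this last descent step: one must check carefully that semistability genuinely passes along the free quotient — equivalently, that the good quotients may be formed in stages here — so that the descended bundle $L$ on $R_{\eta}(Z_{\hdot},z,G)$ is not merely $G$-linearized but genuinely relatively ample for the quotient map. Everything hinges on the torsor structure of $R_{\eta}(Z_{\hdot},{\bf y}_{\hdot},G)\to R_{\eta}(Z_{\hdot},z,G)$: because this map is affine with closed orbits, a $\Gamma$-invariant section upstairs descends to a $G$-invariant section downstairs and its non-vanishing locus, being the image of an affine open, is again affine, which is what matches semistability on the two levels. The verification that the base case of \cite{Moduli} really survives the passage to a disjoint union with several framings is routine by comparison but should be stated explicitly.
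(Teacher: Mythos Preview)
Your proof is correct and follows essentially the same route as the paper: pull the linearized line bundle for which every point is semistable from $R_{\eta}(Z_0,\{y^j\},G)$ (known from \cite{Moduli}) along the affine equivariant map of Corollary \ref{gooddescrip} using Lemma \ref{affineover}, then descend through the free framing-change torsor of Theorem \ref{repqp}. The paper's version is slightly leaner in that it takes ${\bf y}_{\hdot}$ to consist only of points $y^1,\dots,y^b\in Z_0$ (one per component), which is exactly the hypothesis of Corollary \ref{gooddescrip}; your choice of a simplicial basepoint meeting all components of $Z_0,Z_1,Z_2$ is harmless but unnecessary here, and you should note that when you invoke Corollary \ref{gooddescrip} you are really using only the level-$0$ part of ${\bf y}_{\hdot}$. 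For the descent step, the paper argues exactly as you anticipated: the line bundle $\widetilde{L}$ is pulled back from the good $G^b$-quotient $M$, so the intermediate variety $R_{\eta}(Z_{\hdot},z,G)$ is covered by affine opens which are the $G^{b-1}$-quotients of the affine preimages, whence the factored map is affine and all points remain semistable.
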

\begin{proof}
Choose points $z=y^1,\ldots , y^b$ in all the connected components of $Z_0$
and let ${\bf y}_{\hdot}$ be the corresponding simplicial basepoint of $Z_{\hdot}$.  
Then the action of $G^b$ on $R_{\eta}(Z_0,\{ y^j\} , G)$ by change of framings,
linearizes a line bundle $L_0$ for which all points are semistable. By
Corollary \ref{gooddescrip} the map 
$$
R_{\eta}(Z_{\hdot}, {\bf y}_{\hdot},G)\rightarrow 
R_{\eta}(Z_0,\{ y^j\} , G)
$$
is a $G^b$-equivariant affine map. Therefore $L_0$ pulls back to a $G^b$-linearized line
bundle $\widetilde{L}$ on $R_{\eta}(Z_{\hdot}, {\bf y}_{\hdot},G)$
for which all points are semi\-stable and there exists a good quotient, as pointed out in 
Lemma \ref{affineover}. The quotient map factors through a good quotient by $G^{b-1}$ first,
then the quotient by $G$:
\begin{equation}
\label{quotsequence}
R_{\eta}(Z_{\hdot}, {\bf y}_{\hdot},G)\rightarrow 
R_{\eta}(Z_{\hdot}, y^1,G) \rightarrow 
M_{\eta}(Z_{\hdot}, G).
\end{equation}
In the middle is a quasiprojective scheme representing the corresponding functor,
by Theorem \ref{repqp}.
The line bundle
$\widetilde{L}$ descends to a $G$-linearized bundle $L$ on $R_{\eta}(Z_{\hdot},y^1,G)$, which
is the pullback of a bundle on the good $G^b$-quotient $M=M_{\eta}(Z_{\hdot}, G)$. 
All of the maps in \eqref{quotsequence}
are affine maps since the middle variety is covered by the affine $G^b$-quotients of the
inverse images of affine sets defined by sections of the line bundle on $M$. 
It follows that all points of $R_{\eta}(Z_{\hdot},y^1,G)$ are semistable, and
$M$ is a good quotient of $R_{\eta}(Z_{\hdot}, ^1,G)$ by the action of $G$.
\end{proof}

\begin{corollary}
The universal categorical quotients
$\Mm _{\rm Hod}(X_{\hdot},G)\rightarrow M_{\rm Hod}(X_{\hdot},G)$ 
glue together to give a separated analytic universal categorical quotient
$$
\Mm _{\rm DH}(X_{\hdot}, G)\rightarrow M_{\rm DH}(X_{\hdot},G)
$$
which is the Deligne-Hitchin twistor space for representations of $\pi _1(X_{\hdot})$ in $G$. 
\end{corollary}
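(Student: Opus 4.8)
The plan is to carry the Deligne--Hitchin glueing of \cite{hfnac} from the moduli stacks down to their universal categorical quotients. Recall that $\Mm _{\rm DH}(X_{\hdot},G)\to \pp ^1$ is obtained by glueing two copies of $\Mm _{\rm Hod}(X_{\hdot},G)\to \aaa ^1$, one over the chart with coordinate $\lambda$ and one over the chart with coordinate $\lambda ^{-1}$, along the overlap $\Gm$, where the identification over $\Gm$ rescales $\lambda$-connections to flat connections and then uses the Riemann--Hilbert equivalence $\Mm _{DR}(X_{\hdot},G)^{\rm an}\cong \Mm _{B}(X_{\hdot},G)^{\rm an}$. By the preceding theorem each of the two copies of $\Mm _{\rm Hod}(X_{\hdot},G)$ carries a universal categorical quotient $\Mm _{\rm Hod}(X_{\hdot},G)\to M_{\rm Hod}(X_{\hdot},G)$ with $M_{\rm Hod}(X_{\hdot},G)$ a separated scheme over $\aaa ^1$; since it is a good quotient, its analytification is the universal categorical quotient of $\Mm _{\rm Hod}(X_{\hdot},G)^{\rm an}$ in the analytic category.

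First I would descend the glueing datum. Over $\Gm$ the Deligne identification is an isomorphism of analytic stacks, and the formation of a universal categorical quotient commutes with restriction to an open subspace of the target and is unique; hence the identification induces a canonical isomorphism between the restrictions to $\Gm$ of the two copies of $M_{\rm Hod}(X_{\hdot},G)^{\rm an}$. Glueing the two analytic spaces along it yields an analytic space $M_{\rm DH}(X_{\hdot},G)\to \pp ^1$ together with a morphism $\Mm _{\rm DH}(X_{\hdot},G)\to M_{\rm DH}(X_{\hdot},G)$ which over each of the two charts is the analytic quotient $\Mm _{\rm Hod}(X_{\hdot},G)^{\rm an}\to M_{\rm Hod}(X_{\hdot},G)^{\rm an}$.

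Next I would check that $\Mm _{\rm DH}(X_{\hdot},G)\to M_{\rm DH}(X_{\hdot},G)$ is a universal categorical quotient. The key observation is that this property is local on the target for an open cover: given an analytic space $Z$ with a map to $M_{\rm DH}(X_{\hdot},G)$ and a map $\Mm _{\rm DH}(X_{\hdot},G)\times _{M_{\rm DH}}Z\to Y$, restrict $Z$ to the preimages $Z_1,Z_2$ of the two charts, apply the universal categorical quotient property on each chart to obtain maps $Z_i\to Y$, and glue; the two agree on $Z_1\cap Z_2$ by the quotient property over $\Gm$, and the resulting $Z\to Y$ is unique for the same reason, while stability under base change is inherited chart by chart. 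This exhibits $M_{\rm DH}(X_{\hdot},G)$ as the universal categorical quotient, so it is the Deligne--Hitchin twistor space of $\pi _1(X_{\hdot})$.

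The main obstacle is separatedness of $M_{\rm DH}(X_{\hdot},G)$. Each chart $M_{\rm Hod}(X_{\hdot},G)^{\rm an}$ is separated, but Hausdorffness can be lost along the glueing locus, so one must rule out a sequence in the overlap converging to distinct, non-matching points over $\lambda =0$ and over $\lambda ^{-1}=0$. For a single smooth projective variety this is exactly the separatedness theorem of \cite{hfnac}. To reduce the present case to it, I would use that the descriptions in Proposition \ref{repspacecalc} and Corollaries \ref{gooddescrip} and \ref{repcalccor} are functorial in $\eta$, hence compatible with the Deligne glueing: for a simplicial basepoint ${\bf y}_{\hdot}$ meeting all components of $X_0$, $X_1$ and $X_2$, the glued framed space $R_{\rm DH}(X_{\hdot},{\bf y}_{\hdot},G)$ is the equalizer of two maps $R_{\rm DH}(X_0,{\bf y}_0,G)\twoarrows R_{\rm DH}(X_1,{\bf y}_1,G)$, hence a closed analytic subspace of $R_{\rm DH}(X_0,{\bf y}_0,G)$; the latter is separated by \cite{hfnac} since $X_0$ is smooth projective, and $M_{\rm DH}(X_{\hdot},G)$ is its quotient by the framing group, a good quotient in each chart. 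Since closed subspaces and good quotients of separated analytic spaces are separated, $M_{\rm DH}(X_{\hdot},G)$ is separated. The delicate part is maintaining, throughout, the mutual compatibility of three operations: forming universal categorical quotients, performing the Deligne glueing, and cutting out the simplicial object by the equalizer over its levels.
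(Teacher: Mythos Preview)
The paper states this corollary without proof, so your proposal is the only detailed argument on offer. Your outline—glue the two copies of $M_{\rm Hod}$ along $\Gm$, verify that the universal categorical quotient property is local for an open cover of the target, and reduce separatedness to the single-variety case via the equalizer description of Proposition~\ref{repspacecalc}—is the natural strategy and is sound through the first two steps.

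The gap is in the separatedness argument. You correctly observe that $R_{\rm DH}(X_{\hdot},{\bf y}_{\hdot},G)$ is a closed analytic subspace of $R_{\rm DH}(X_0,{\bf y}_0,G)$, hence separated. But your concluding sentence, ``closed subspaces and good quotients of separated analytic spaces are separated,'' does not apply here: $M_{\rm DH}(X_{\hdot},G)$ is only a good (GIT) quotient \emph{chart by chart}, and you have already identified that chart-wise separatedness of the pieces does not by itself rule out a non-Hausdorff glueing. Nor can you argue via the quotient map $R_{\rm DH}\to M_{\rm DH}$ being a closed surjection, since GIT quotient maps are affine but generally not closed. So the very obstacle you flagged at the start of the paragraph is not actually overcome by the argument that follows.

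A clean fix avoids the framed spaces entirely. Functoriality of the universal categorical quotient gives a map of coarse spaces
\[
M_{\rm DH}(X_{\hdot},G)\longrightarrow M_{\rm DH}(X_0,G)
\]
compatible with the Deligne glueing. Over each chart $\aaa^1$ this is the morphism $M_{\rm Hod}(X_{\hdot},G)\to M_{\rm Hod}(X_0,G)$ of quasiprojective schemes, hence a separated morphism; since separatedness of a morphism is local on the target, the glued map is separated. The target $M_{\rm DH}(X_0,G)$ is separated by \cite{hfnac}, and the composite of separated morphisms is separated, so $M_{\rm DH}(X_{\hdot},G)$ is separated. This replaces the unjustified step while keeping the rest of your argument intact.
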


One can interpret these things as a kind of {\em weight filtration}. 
Suppose $X_{\hdot}$ is a simplicial scheme such that each 
$X_k$ is a smooth projective variety. The morphisms
$$
\Mm _{\eta}(X_{\hdot}, G)\rightarrow \Mm _{\eta}(X_0,G)
$$
$$
M _{\eta}(X_{\hdot}, G)\rightarrow M _{\eta}(X_0,G)
$$
and, for $x\in X_0$ a basepoint,
$$
R _{\eta}(X_{\hdot},x, G)\rightarrow R _{\eta}(X_0,x,G),
$$
induce equivalence relations on the left hand sides. Define the weight 
filtration to be the equivalence relation\footnote{It might be interesting 
to use the derived fiber product here instead,
but that would go beyond our present scope.}
$$
W\Mm _{\eta}(X_{\hdot}, G):= \Mm _{\eta}(X_{\hdot}, G)\times _{\Mm _{\eta}(X_{0}, G)}
\Mm _{\eta}(X_{\hdot}, G),
$$
and similarly for $WM_{\eta}$ and $WR_{\eta}$. 
The $WM_{\eta}$ and $WR_{\eta}$ are equivalence relations on $M_{\eta}$ and $R_{\eta}$
respectively. Because of the stackiness, $W\Mm _{\eta}$ will in general have a structure of
groupoid in the category of stacks. However, the arguments given above show that the map
$\Mm _{\eta}(X_{\hdot}, G)\rightarrow \Mm _{\eta}(X_0,G)$ is representable and affine.

\section{Hodge and harmonic theory}

Classical results and techniques from Hodge theory apply also to Deligne-Mumford
stacks, see \cite{ToenThesis} \cite{MatsukiOlsson} for example, and more
generally to simplicial manifolds as in \cite{Dupont} \cite{Jeffrey}. 
Similarly, nonabelian harmonic theory for local systems applies 
to a simplicial smooth projective variety,
with a few modifications, by working on each level. Many proofs in this section will
be shortened or left to the reader. 

Suppose $X_{\hdot}$ is a simplicial
smooth projective variety. A simplicial Higgs bundle $(E_{\hdot}, \theta )$
is a collection of Higgs
bundles $E_k$ of rank $n$ on $X_k$, together with compatibility isomorphisms 
for each $k\rightarrow m$ in $\Delta$ in the same way as for local systems. 

If $G$ is a linear algebraic group, a principal $G$-Higgs
bundles $(P_{\hdot}, \theta )$ on $X_{\hdot}$ is a simplicial family
of principal $G$-Higgs bundles on the $X_k$. The preceding definition is recovered
for $G=GL(n)$. 
Make the corresponding definitions for $\lambda$-connections over $\lambda \in \aaa ^1$. 

\begin{definition}
\label{semiharmonic}
A principal $G$-Higgs bundle $(P,\theta )$ on a smooth projective variety $X$ is of  
semiharmonic type if it is semistable with
Chern clases vanishing in rational cohomology. 
This condition is independant of the
choice of K\"ahler class.

A simplicial principal $G$-Higgs bundle $(P_{\hdot},\theta )$ over a simplicial smooth projective variety $X_{\hdot}$ is said to be of semiharmonic type if each $(P_k,\theta )$ is of semiharmonic type on $X_{k}$. 
\end{definition}

Given a principal $G$-bundle with $\lambda$-connection $(P_{\hdot},\nabla )$, say
that it is of semiharmonic type if it satisfies the previous definition when $\lambda =0$;
the condition is automatically true for $\lambda \neq 0$ since we consider only
the compact case here. 

Applying the equivalence of categories from \cite{hbls} level by level
gives a simplicial version. 

\begin{proposition}
\label{correspondence}
Suppose $X_{\hdot}$ is a connected simplicial smooth projective variety. 
There is an equivalence of tannakian categories between the category of  
simplicial Higgs bundles of semiharmonic type on $X_{\hdot}$ and the category
of local systems. This equivalence is compatible with pullback along morphisms of simplicial 
varieties, in particular it preserves the fiber functors of restriction to a basepoint. 
For any linear group $G$ this induces an equivalence between the categories of
simplicial  principal $G$-Higgs bundles of  semiharmonic type on $X_{\hdot}$ and
the category of $G$-torsors over $|X_{\hdot}|$. 
\end{proposition}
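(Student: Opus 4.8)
The plan is to reduce the statement to the nonabelian Hodge correspondence applied on each level of $X_{\hdot}$, together with the formal fact that a pseudonatural equivalence between two $\Delta$-diagrams of categories induces an equivalence of their $2$-limits. I would start by recalling precisely what \cite{hbls} provides for a single connected smooth projective variety $Y$: an equivalence of tannakian categories $\Phi _Y$ from the category of Higgs bundles of semiharmonic type on $Y$ to the category of local systems on $Y$, which is a tensor functor preserving internal Hom, duals and the fiber functor at a basepoint, and which is natural in $Y$ in the sense that for a morphism $g:Y'\rightarrow Y$ of smooth projective varieties there is an isomorphism $\Phi _{Y'}\circ g^{\ast}\cong g^{\ast}\circ \Phi _Y$, these isomorphisms being coherent for compositions $Y''\rightarrow Y'\rightarrow Y$. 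In particular $g^{\ast}$ preserves the semiharmonic-type condition, since pullback of local systems is harmless. Hence $k\mapsto (\text{Higgs bundles of semiharmonic type on }X_k)$ and $k\mapsto (\text{local systems on }X_k)$ are pseudofunctors on $\Delta$, and the $\Phi _{X_k}$ assemble into a pseudonatural equivalence between them.

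Next I would pass to $2$-limits. Given a simplicial Higgs bundle of semiharmonic type $(E_{\hdot},\theta )$, apply $\Phi _{X_k}$ levelwise to obtain local systems $L_k$ on $X_k$, and transport the structure isomorphisms $X_{\phi}^{\ast}(E_k)\cong E_m$ through the naturality isomorphisms of \cite{hbls}; the coherences recalled above guarantee that the resulting $X_{\phi}^{\ast}(L_k)\cong L_m$ still satisfy the simplicial compatibility conditions, so $L_{\hdot}$ is a local system on $X_{\hdot}$. The same recipe on morphisms, together with the quasi-inverses of the $\Phi _{X_k}$, yields a quasi-inverse functor, so we get an equivalence of categories; by Lemma \ref{limdiag} it is enough to arrange the data on $X_0,X_1,X_2$. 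The tannakian structure is tracked componentwise, since tensor product, internal Hom, duals and the unit object of simplicial families are all computed levelwise and each $\Phi _{X_k}$ respects them; and restriction to a basepoint $x\in X_0$ on the Higgs side corresponds under $\Phi _{X_0}$ to restriction at $x$ on the local system side, which is the asserted compatibility of fiber functors. Compatibility with pullback along a morphism of simplicial varieties $X'_{\hdot}\rightarrow X_{\hdot}$ is then immediate from the naturality of the individual $\Phi _{X_k}$. Finally, by the equivalence between local systems on $|Y_{\hdot}|$ and simplicial local systems on $Y_{\hdot}$ established earlier, the target of our functor is identified with the category of local systems on $|X_{\hdot}|$, as claimed.

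For the principal $G$-bundle statement I would invoke the tannakian formalism: a simplicial principal $G$-Higgs bundle of semiharmonic type on $X_{\hdot}$ is the same thing as a tensor functor from ${\rm Rep}(G)$ to the tannakian category of simplicial Higgs bundles of semiharmonic type on $X_{\hdot}$ (using that semiharmonic type is preserved by the tensor operations, as in \cite{hbls}), while a $G$-torsor on $|X_{\hdot}|$ is the same as a tensor functor from ${\rm Rep}(G)$ to local systems on $|X_{\hdot}|$; the tensor equivalence already established therefore induces an equivalence between these categories of $G$-objects. Equivalently and more concretely, one applies the $G$-version of the correspondence of \cite{hbls} on each $X_k$, repeats the descent argument of the previous paragraph, and identifies the $2$-limit of the groupoids of $G$-local systems on the $X_k$ with the groupoid of $G$-torsors on $|X_{\hdot}|$ by the corollary describing $G$-torsors on $|Y_{\hdot}|$ as a $2$-limit.

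The main obstacle is not the $2$-limit bookkeeping, which is formal, but the precise input drawn from \cite{hbls}: that the level-wise correspondence is genuinely natural in arbitrary morphisms of smooth projective varieties — in particular that $g^{\ast}$ preserves the semiharmonic-type condition, and that the naturality isomorphisms are coherent enough to carry simplicial descent data across. Granting that, the simplicial statement follows, and — as already noted in the discussion of representation schemes — by Lemma \ref{limdiag} it depends only on the truncation to $k\leq 2$, so, unlike in Proposition \ref{repspacecalc} and Theorem \ref{repqp}, no hypothesis about the higher levels $X_k$ enters here.
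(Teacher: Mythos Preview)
Your proposal is correct and follows essentially the same approach as the paper, which simply says ``Applying the equivalence of categories from \cite{hbls} level by level gives a simplicial version'' and then states the proposition without further proof. You have spelled out carefully the $2$-categorical bookkeeping (pseudonaturality of the levelwise correspondences, passage to $2$-limits, preservation of the tannakian structure, and the tannakian reformulation of principal $G$-objects) that the paper leaves entirely implicit; your closing caveat that the real content lies in the functoriality input from \cite{hbls} is well taken and is exactly what the paper is silently assuming.
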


Say $G$ is reductive. 
A principal Higgs bundle of semiharmonic type will be called
polarizable if each $(P_k,\theta )$
admits a harmonic reduction of structure group to the maximal compact of $G$, or equivalently if it is polystable. Say that $(P_{\hdot}, \theta )$ is {\em strongly polarizable} if there is a simplicial family of harmonic
reductions of structure group $h_k$ compatible under the transition maps for $k\rightarrow m$ in $\Delta$. 
Similarly, say that a local system is
{strongly polarizable} if there exists a compatible
collection of harmonic metrics $h_k$ on $L_k$.
We use this terminology interchangeably for the corresponding local system $L$
on $|X_{\hdot}|$. 

The equivalence of categories of Proposition \ref{correspondence} preserves the
conditions of polarizability and strong polarizability. For polarizable objects
the equivalence can be expressed in terms of harmonic bundles on 
$X_{\hdot}$, in other words simplicial families 
denoted $\Ee _{\hdot}$ of harmonic bundles $(\Ee _{k}, \partial , \overline{\partial}, \theta ,\overline{\theta})$ on $X_k$, together with pullback isomorphisms compatible with cohomology. 

The category of harmonic bundles on a simplicial scheme $X_{\hdot}$ maps by
an equivalence of category to the subcategory of polarizable local systems $L_{\hdot}$ 
on $X_{\hdot}$, i.e. ones such that each $L_k$ is semisimple on $X_k$. 
It also maps by an equivalence of categories to the category of termwise
polystable $\lambda$-connections, for any $\lambda \in \aaa ^1$. 
Among other things, 
these functors with the same formulae as in the usual smooth projective case,
provide us with a collection of {prefered sections} of the family of analytic moduli stacks
$\Mm _{\rm DH}(X_{\hdot},G)\rightarrow \pp ^1$, and their images  which are
sections of the family of moduli spaces $M _{\rm DH}(X_{\hdot},G)\rightarrow \pp ^1$.

\begin{proposition}
The category of strongly polarizable local systems is tannakian and semisimple.
Restriction to any basepoint $x\in |X_{\hdot}|$
provides a fiber functor, and the corresponding
affine algebraic group $\varpi ^{\spre}_1(X_{\hdot},x)$ is reductive. 
The monodromy representation of a strongly polarizable local system is
semisimple. 
\end{proposition}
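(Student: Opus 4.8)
The plan is to reduce everything to the classical nonabelian Hodge theory applied level by level on each $X_k$, in the spirit of Proposition~\ref{correspondence}. Recall that a local system $L_{\hdot}$ on $X_{\hdot}$ is strongly polarizable precisely when it is equipped with harmonic metrics $h_k$ on the $L_k$ that are \emph{strictly} compatible with the transition isomorphisms, i.e. each $\phi:[k]\to[m]$ induces an isometry $(L_k,h_k)|_{X_m}\cong(L_m,h_m)$. First I would record the closure properties of this class. On each smooth projective $X_k$ the tensor product, dual and direct sum of harmonic metrics are harmonic, the trivial local system carries the standard harmonic metric, and all of these operations commute with pullback and preserve the isometry condition; so the class is stable under $\otimes$, $(-)^{\vee}$, $\oplus$ and contains the unit. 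The key point is stability under sub-objects and quotients. If $M_{\hdot}\subset L_{\hdot}$ is a sub-local-system with $L_{\hdot}$ strongly polarizable, then on each $X_k$ the classical fact that a local system with a harmonic metric splits $h_k$-orthogonally along any flat subbundle shows that $M_k\subset L_k$ has a flat orthogonal complement and that $h_k|_{M_k}$ is again harmonic. The $h_k$-orthogonal projector $\pi_k:L_k\to M_k$ is canonically determined by $(L_k,h_k,M_k)$, hence commutes with pullback; and since the transition maps are isometries carrying $M_k|_{X_m}$ onto $M_m$ they intertwine the $\pi_k$. Thus the $\pi_k$ assemble into an idempotent endomorphism of $L_{\hdot}$ with image $M_{\hdot}$, so $\{h_k|_{M_k}\}$ is a strictly compatible family of harmonic metrics on $M_{\hdot}$, and likewise on the quotient. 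Consequently the full subcategory of strongly polarizable local systems inside the category of all local systems on $X_{\hdot}$ (equivalently on $|X_{\hdot}|$) is abelian and in fact \emph{semisimple}, its simple objects being exactly the irreducible local systems.

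Next I would assemble the tannakian picture. The preceding paragraph shows this category is a $\cc$-linear rigid abelian tensor category; its objects are representations of $\pi_1(|X_{\hdot}|)$, so $\mathrm{Hom}$-spaces are finite-dimensional and every object has finite length, and $\mathrm{End}(\mathbf{1})=\cc$ since $X_{\hdot}$ is connected. Restriction to a basepoint $x\in|X_{\hdot}|$, namely $L\mapsto L_x$, is a $\cc$-linear exact tensor functor to finite-dimensional vector spaces, and it is faithful because $|X_{\hdot}|$ is connected, so a flat morphism vanishing at $x$ vanishes identically. Hence it is a fibre functor, and by Tannakian reconstruction the category is equivalent to the category of finite-dimensional representations of the affine group scheme $\varpi^{\spre}_1(X_{\hdot},x):=\mathrm{Aut}^{\otimes}(\omega_x)$.

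Reductivity and semisimplicity of monodromy are then formal. Over a field of characteristic zero a neutral tannakian category is semisimple if and only if its fundamental group scheme is pro-reductive; having shown the category is semisimple, we conclude that $\varpi^{\spre}_1(X_{\hdot},x)$ is reductive. For the last assertion, a strongly polarizable local system, being an object of a semisimple category, is a finite direct sum of simple objects, i.e.\ of irreducible local systems; hence its monodromy representation of $\pi_1(|X_{\hdot}|,x)$ is a direct sum of irreducibles and therefore semisimple.

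I expect the only real content to lie in the first paragraph, specifically in showing that a sub-local-system of a strongly polarizable one is again strongly polarizable. This uses two inputs on each $X_k$: the orthogonal splitting of a harmonic bundle along a flat subbundle, and the canonicity of the resulting projector in terms of the metric, which is what lets it propagate through the simplicial structure. It is here that the word \emph{strongly} polarizable---a genuinely compatible family of metrics $h_k$, not merely level-wise polystability---is essential: without strict compatibility one cannot glue the orthogonal projectors, and Example~\ref{coordinateplanes} already illustrates that incompatibilities coming from higher levels can matter. Everything else is routine Tannakian formalism together with the classical facts recalled around Proposition~\ref{correspondence}.
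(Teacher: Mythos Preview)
Your proof is correct and follows essentially the same route as the paper: the key step in both is that for a strongly polarized $(L_{\hdot},h_{\hdot})$ and a sub-local-system $U_{\hdot}$, the levelwise $h_k$-orthogonal complements $U_k^{\perp}$ assemble into a simplicial complement $V_{\hdot}$, giving $L_{\hdot}=U_{\hdot}\oplus V_{\hdot}$. The paper's proof consists of precisely this one sentence and leaves the tannakian formalism, reductivity, and semisimplicity of monodromy as understood; you spell these out carefully, which is fine, but the mathematical content is the same.

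One minor remark: your closing reference to Example~\ref{coordinateplanes} is a bit off-target---that example concerns basepoint choices for computing $H^1$, not polarizability. The relevant cautionary examples for why \emph{strong} polarizability matters are the ones immediately following this proposition in the paper (simply connected levels, nodal curves), which you may wish to cite instead if you keep that commentary.
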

\begin{proof}
Suppose $(L_{\hdot}, h_{\hdot})$ is a strongly polarized local system
on $X_{\hdot}$. 
If $U_{\hdot}\subset L_{\hdot}$ is a sub-local system then 
the simplicial family $V:k\mapsto U_k^{\perp}$ of orthogonal complements
with respect to the $h_k$ forms a complement, $L_{\hdot}=U_{\hdot}\oplus V_{\hdot}$.
\end{proof}

The following example shows that semisimplicity doesn't necessarily
hold for local systems which are only polarizable; and furthermore that
semisimple local systems are not necessarily strongly polarizable. 

\begin{example}
Suppose $X_{\hdot}$ is a simplicial smooth projective variety such that
each $X_k$ is simply connected. Then every local system $L$ on 
$|X_{\hdot}|$ is
polarizable, but a local system is strongly polarizable if and only if it is unitary.
\end{example}

In fact, semisimplicity doesn't necessarily imply polarizability, either. 

\begin{example}
Let $X_{\hdot}$ be the simplicial resolution of a nodal curve, with $X_0$ the
normalization of genus $g>1$. Then there are local systems which are not
semisimple on $X_0$, but where the additional monodromy transformation at the
node makes the full monodromy representation semisimple.
\end{example}

On the other hand, for hypercoverings of normal DM-stacks,
$$
\pi _1(X_0,x)\rightarrow \pi _1(|X_{\hdot}|,x)
$$
has image of finite index and
polarizability, semisimplicity and
strong polarizability are the same---see Lemma \ref{fiex} and 
Theorem \ref{unibranch} below.

If $X_{\hdot}$ is a simplicial smooth scheme, a variation of Hodge structure
$V_{\hdot}$ over $X_{\hdot}$ consists of specifying a variation of Hodge structure 
$V_{k}$ on each $X_k$, together with functoriality isomorphisms 
$\phi ^{\ast}(V_k)\cong V_m$ whenever $\phi :[k]\rightarrow [m]$ is a map in 
$\Delta$, satisfying the usual compatibility condition. 
We say that $V_{\hdot}$ is {polarizable} if each $V_k$ is polarizable. We say that
$V_{\hdot}$ is {strongly polarizable} if there exist polarizations $h_k$ on 
each $V_k$ which are compatible with the functoriality isomorphisms.

\begin{lemma}
\label{cstaraction}
Suppose $X_{\hdot}$ is a simplicial smooth projective variety. Suppose $L_{\hdot}$ is a 
polarizable 
local system on $X_{\hdot}$ corresponding to the Higgs bundle $(E_{\hdot},\theta )$. 
Then a structure of polarizable VHS on $L_{\hdot}$ 
is exactly given by a trivialization of the $\cc ^{\ast}$ action $\varphi _t:(E_{\hdot},\theta )\cong (E_{\hdot},t\theta )$. 
A strongly polarizable local system which is a fixed point, corresponds to a strongly 
polarizable variation of Hodge structure.
\end{lemma}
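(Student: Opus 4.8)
The plan is to reduce everything to the corresponding statement on each level $X_k$, which is part of the non-abelian Hodge correspondence \cite{hbls}, and then to match up the simplicial compatibility data on the two sides. Recall the single-variety picture: on a smooth projective $Y$, a trivialization of the $\cc^{\ast}$-action on a Higgs bundle $(E,\theta)$ of semiharmonic type---that is, a family of isomorphisms $\psi_t\colon (E,\theta)\cong (E,t\theta)$, algebraic in $t\in\cc^{\ast}$, with $\psi_1=\mathrm{id}$ and satisfying the cocycle condition making $(E,\theta)$ a $\Gm$-equivariant object---is the same thing as a $\zz$-grading $E=\bigoplus_p E^p$ with $\theta(E^p)\subset E^{p+1}\otimes\Omega^1_Y$. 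For an object of semiharmonic type such a grading is exactly a system of Hodge bundles, hence a polarizable VHS on the underlying local system, and conversely. I would quote this (as in Proposition \ref{correspondence}) and note only that all the constructions involved are natural for pullback along morphisms of smooth projective varieties.

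Next, apply this level by level. By the definition of the simplicial structure, a trivialization of the $\cc^{\ast}$-action on $(E_{\hdot},\theta)$ is a family $\{\psi_{t,k}\}$ of trivializations of the actions on the $(E_k,\theta)$ such that, for every $\phi\colon [k]\to[m]$ in $\Delta$, the pullback isomorphism $X_\phi^{\ast}(E_k)\cong E_m$ intertwines $\psi_{t,k}$ and $\psi_{t,m}$. Under the bijection above, $\psi_{t,k}$ corresponds to a grading of $E_k$, and the intertwining condition becomes precisely the statement that $X_\phi^{\ast}$ carries $E_k^p$ into $E_m^p$; thus a trivialization of the action on $(E_{\hdot},\theta)$ is the same as a grading of the simplicial Higgs bundle. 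Such a simplicial grading is in turn the same as a polarizable VHS $V_k$ on each $X_k$ together with functoriality isomorphisms $\phi^{\ast}(V_k)\cong V_m$, i.e.\ a structure of polarizable VHS on $L_{\hdot}$ in the sense defined just before the lemma; levelwise polarizability is automatic from polarizability of $L_{\hdot}$, since each $L_k$ is then semisimple and a $\cc^{\ast}$-fixed semisimple Higgs bundle on a smooth projective variety underlies a polarizable VHS. This proves the first assertion.

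For the last sentence, suppose $L_{\hdot}$ is strongly polarizable, with a compatible family of harmonic metrics $h_{\hdot}$, and that its class is a $\cc^{\ast}$-fixed point. I would argue through the harmonic bundles $\Ee_{\hdot}$: for a $\cc^{\ast}$-fixed polarizable harmonic bundle on a smooth projective variety, Simpson's theory produces a \emph{canonical} Hodge grading, namely the one to which the harmonic metric is adapted. Because this grading is functorial in the metric, the compatibility of the $h_k$ under the transition maps immediately gives the compatibility of the induced gradings of the $\Ee_k$, hence a simplicial grading and therefore, by the previous paragraph, a polarizable VHS $V_{\hdot}$; the polarizations coming from the $h_k$ are compatible by construction, so $V_{\hdot}$ is strongly polarizable. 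Conversely a strongly polarizable VHS $V_{\hdot}$ carries compatible polarizations, which on the underlying local systems give a compatible family of harmonic metrics, so $L_{\hdot}$ is strongly polarizable, and being graded it is a $\cc^{\ast}$-fixed point.

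The main obstacle---and the reason I would route the last part through harmonic bundles rather than bare Higgs bundles---is the passage from ``the class in the moduli space is $\cc^{\ast}$-fixed'' to ``there is an actual $\Gm$-equivariant structure, chosen coherently over all $k$''. On a single level this is Simpson's rigidification argument, but coherence across levels is not formal unless one fixes a canonical choice; using the compatible harmonic metrics to single out the canonical Hodge grading eliminates this, since that grading is manifestly functorial in the data and its functoriality is exactly the simplicial compatibility required. A secondary point to keep straight is the gap between ``polarizable'' and ``strongly polarizable'': the first assertion only yields a polarizable VHS level by level, with no claim of compatibility of the polarizations, and it is only under the strong-polarizability hypothesis---equivalently, a compatible family of harmonic metrics---that one obtains a strongly polarizable VHS on $X_{\hdot}$.
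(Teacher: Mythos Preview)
The paper does not give a proof of this lemma; it is one of the results in the section that are, as the paper announces, ``shortened or left to the reader''. Your approach---apply the single-variety correspondence of \cite{hbls} on each $X_k$ and then verify that the simplicial transition isomorphisms carry one kind of data to the other---is exactly what is intended, and your proof of the first assertion is correct as written.

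For the last sentence, routing through the simplicial harmonic bundle $\Ee_\bullet$ is the right idea, and the functoriality you invoke does the essential work. One small caveat: the phrase ``canonical Hodge grading, namely the one to which the harmonic metric is adapted'' overstates things slightly, since on a polystable harmonic bundle the grading is only determined up to independent shifts on the stable summands (and is unconstrained on summands with $\theta=0$), so the metric alone does not single out a unique grading. This is harmless here, because being a stacky fixed point already supplies a \emph{simplicial} $\cc^*$-equivariant structure on $(E_\bullet,\theta)$, hence by your first part a coherent Hodge grading across all levels; the remaining point is only that the compatible harmonic metrics $h_k$ serve as compatible polarizations for this grading, which follows since on each level the harmonic metric of a complex VHS is, up to the usual sign convention, the polarization, and the $h_k$ were assumed compatible. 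So your argument goes through after this minor adjustment.
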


\begin{remark}
\label{spvhs}
Suppose $V_{\hdot}$ is a strongly polarizable VHS. Then the monodromy group of the
underlying representation of $\pi _1(|X_{\hdot}|)$ is contained in some $U(p,q)$. 
However, this is not necessarily the case for a polarizable VHS which is not strongly
polarizable. 
\end{remark}

\begin{conjecture}
\label{vhsht}
For a strongly polarizable variation of Hodge structure,
the real Zariski closure of the image of $\pi _1(|X_{\hdot}|,x)$ in $GL(L(x))$ 
is a group of Hodge type. 
\end{conjecture}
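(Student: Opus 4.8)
The plan is to argue Tannakially: a strongly polarizable variation of Hodge structure on $X_{\hdot}$ produces, at the basepoint, a Hodge cocharacter of the monodromy group, and the polarization then forces the value of that cocharacter at $-1$ to induce a Cartan involution, which is exactly what it means for the monodromy group to be of Hodge type. Concretely, fix the monodromy representation $\rho\colon \pi_1(|X_{\hdot}|,x)\to GL(L(x))$ of $L_{\hdot}$ and let $G$ be its real Zariski closure. Since $L_{\hdot}$ is strongly polarizable it is semisimple, by the proposition above that strongly polarizable local systems form a semisimple tannakian category; hence $G$ is reductive and is the tannakian group of the tensor subcategory $\langle L_{\hdot}\rangle^{\otimes}$, with fibre functor ``restriction to $x$''. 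As in Remark \ref{spvhs}, the compatible polarizations assemble into a single Hermitian form $Q$ on $L(x)$ preserved by $\rho$, so $G$ sits inside ${\rm Aut}(L(x),Q)$, a real form of the real algebraic group underlying $GL(L(x))$.

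Next I would promote the variation to a tensor-functorial Hodge structure on the whole of $\langle L_{\hdot}\rangle^{\otimes}$. Every object there is a subquotient of a tensor construction on $L_{\hdot}$; on each level $X_k$ such an object carries a canonical polarizable complex variation by the classical theory of Deligne and Schmid, and the simplicial transition maps are then automatically morphisms of variations by strictness, so the levelwise structures glue to a polarizable complex variation on $X_{\hdot}$. Restriction to $x$ is therefore a $\otimes$-functor from $\langle L_{\hdot}\rangle^{\otimes}$ to polarizable Hodge structures over the fibre functor; equivalently --- this is the content of Lemma \ref{cstaraction}, identifying a variation with a trivialization of the $\cc^{\ast}$-action --- it is a cocharacter $h\colon \Gm \to G$, naturally defined over the real (indeed unitary) form of $G$ cut out by $Q$, inducing on each representation the Hodge grading of the corresponding fibre. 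In particular ${\rm Ad}\,h$ equips ${\rm Lie}(G)\subseteq {\rm End}(L(x))$ with a weight-zero Hodge structure.

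Then I would check the Cartan-involution property. Put $C:=h(-1)\in G$, the Weil operator of the variation at $x$. The polarization condition says precisely that $(v,w)\mapsto Q(Cv,\overline w)$ is positive definite, i.e. that conjugation by $C$ is a Cartan involution $\theta$ of ${\rm Aut}(L(x),Q)$. Since $C\in G$, the subgroup $G$ is reductive and $\theta$-stable, so the fixed points $G(\rr)\cap {\rm Aut}(L(x),Q)^{\theta}$ form a maximal compact subgroup of $G(\rr)$; hence $\theta$ restricts to a Cartan involution of $G$. Passing to $G^{\rm ad}$, the cocharacter $h$ descends to $h^{\rm ad}\colon \Gm\to G^{\rm ad}$ with $h^{\rm ad}(-1)$ inducing a Cartan involution of $G^{\rm ad}$. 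This exhibits $G$ as a group of Hodge type and completes the argument.

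The step I expect to be the main obstacle is the globalization in the second paragraph: making completely rigorous that the category of polarizable complex variations of Hodge structure on a simplicial smooth projective variety $X_{\hdot}$ is a semisimple tannakian \emph{full} subcategory of local systems --- equivalently, that a morphism, or sub-local-system, between such variations is automatically compatible with the Hodge filtrations, coherently over the whole simplicial structure. Levelwise this is Deligne's and Schmid's theory (semisimplicity, the theorem of the fixed part, strictness), and the simplicial coherences ought to follow from strictness of morphisms of variations together with uniqueness of the Hodge decomposition; but carrying this out carefully, and in the complex Hermitian-polarized setting rather than the more familiar $\rr$-polarized one, is where the real work lies. A complementary route, which at least produces the Hodge cocharacter, is to invoke the known structure theory of monodromy groups of polarizable variations on a single smooth projective variety (the monodromy group being normal in the derived Mumford--Tate group, hence of Hodge type) applied to $X_0$, and then to transport the cocharacter to the full $G$ using the simplicial compatibility of the variation; this, however, appears to need a finite-index hypothesis on $\pi_1(X_0)\to\pi_1(|X_{\hdot}|)$ of the kind in Condition \ref{finiteindex}, so the tannakian argument is the one to pursue in the generality stated.
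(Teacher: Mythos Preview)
The statement you are addressing is presented in the paper as a \emph{Conjecture}; the paper does not prove it in the stated generality. The only case settled is under Condition~\ref{finiteindex} --- see the sentence just after the conjecture and the last part of the proof of Corollary~\ref{cstarcont}, which observes that the finite-index hypothesis forces the real Zariski closures of the images of $\pi_1(X_0)$ and of $\pi_1(|X_{\hdot}|)$ to share an identity component, so the Hodge-type property transfers from the single-variety result of \cite{hbls}. You correctly note that your ``complementary route'' through $X_0$ needs exactly this hypothesis; your Tannakian route is thus an attempt to go beyond what the paper proves.

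That route has a genuine gap at the step you flag, and the obstacle is structural rather than a matter of care. Placing $h$ inside $G={\rm Aut}^{\otimes}(\omega_x)$ requires that every morphism of local systems in $\langle L_{\hdot}\rangle^{\otimes}$ commute with the Hodge gradings at $x$. This already fails levelwise: the trivial rank-$2$ local system on any $X_k$, equipped with a constant splitting $V=V^{1,0}\oplus V^{0,1}$, is a polarizable complex VHS with $\theta=0$, yet a generic complex line is a sub-local-system but not a sub-Hodge-structure, and the corresponding projector is a flat endomorphism not commuting with $h$. Such summands appear in $\langle L_{\hdot}\rangle^{\otimes}$ whenever some tensor construction has monodromy invariants carrying a nontrivial Hodge grading. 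The Deligne--Schmid theory you invoke does not say that complex sub-local-systems are sub-VHS; and even in the classical single-variety case the argument does \emph{not} put $h$ into the monodromy group directly, but rather shows $h$ \emph{normalizes} it and then uses that Hodge type passes to normal subgroups. It is that normalization step that has no evident substitute for $\pi_1(|X_{\hdot}|)$ in the absence of Condition~\ref{finiteindex}, which is why the paper leaves the statement as a conjecture.
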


If $X_{\hdot}$ is a simplicial variety whose components are simply connected, then
any local system is trivial on each $X_k$, in particular setting $V^{0,0}_k:=L_k$
gives a polarizable VHS, which will not however usually be strongly polarizable. 
So in general the existence of a polarizable VHS doesn't lead to restrictions on
the representation or the fundamental group. 
For that, one requires the 
finite index condition \ref{finiteindex}, 
as will be discussed in the next section on normal DM-stacks.
That condition  implies Conjecture \ref{vhsht}.

The following lemma shows that lack of strong polarizability is an
obstruction to extending a local system to a smooth ambient variety. 

\begin{lemma}
Suppose $X_{\hdot}\rightarrow Z$ is a morphism from a simplicial smooth projective variety,
to a smooth quasiprojective variety $Z$; for example when $X_{\hdot}$ is a proper surjective
hypercovering of a closed subscheme of $Z$. If $L_{\hdot}$ is a semisimple local system
on $X_{\hdot}$ which is the pullback of a local system on $Z$, then it is strongly
polarizable.
\end{lemma}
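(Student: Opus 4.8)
\emph{Plan.} The idea is to produce all the harmonic metrics at once, by pulling back a single harmonic metric living on the base $Z$. Write $a:X_{\hdot}\to Z$ for the given map, with components $f_k:X_k\to Z$, and let $M$ be the local system on $Z$ with $L_{\hdot}=a^{\ast}M$, i.e. $L_k=f_k^{\ast}M$ for every $k$, compatibly with the transition maps. First I would reduce to the case where $M$ itself is semisimple on $Z$. Pullback of local systems is an exact functor (it is restriction along $\pi_1(|X_{\hdot}|)\to\pi_1(Z)$), so a Jordan--H\"older filtration $0\subset M_1\subset\cdots\subset M_r=M$ pulls back to a filtration of $L=a^{\ast}M$ whose graded pieces are the $f^{\ast}(M_i/M_{i-1})$. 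Since $L$ is semisimple on $|X_{\hdot}|$ by hypothesis, every subobject of $L$ is a direct summand, so this filtration splits and $a^{\ast}M\cong a^{\ast}(M^{\mathrm{ss}})$ with $M^{\mathrm{ss}}:=\bigoplus_i M_i/M_{i-1}$. Replacing $M$ by $M^{\mathrm{ss}}$ we may assume $M$ is semisimple on $Z$; we may also assume $Z$ connected, replacing it by the component met by the image of $|X_{\hdot}|$ (or treating components of $|X_{\hdot}|$ separately).

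Next, since $Z$ is smooth and quasiprojective, the semisimple local system $M$ underlies a tame pluriharmonic bundle on $Z$: this is the hard direction of the tame nonabelian Hodge correspondence, due to Corlette in the compact case \cite{hbls} and to Mochizuki in general \cite{TMochizuki2} \cite{TMochizuki3}. Fix such a pluriharmonic metric $h$ on $M$ over $Z$, and set $h_k:=f_k^{\ast}h$ on $L_k=f_k^{\ast}M$ over $X_k$. Pluriharmonicity of a metric on a local system is expressed by a pointwise equation on the associated equivariant harmonic map, and this equation is preserved by precomposition with a holomorphic map; moreover the image of $f_k$ lies inside $Z$, away from the boundary of any compactification, so no growth conditions enter. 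As $X_k$ is a \emph{compact} K\"ahler manifold, $(L_k,h_k)$ is therefore an ordinary harmonic bundle on $X_k$; in particular each $L_k$ is automatically semisimple on $X_k$, so $L_{\hdot}$ is polarizable in the sense preceding Proposition \ref{correspondence}.

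It remains to check that the family $\{h_k\}$ is compatible with the simplicial structure. Given $\phi:[k]\to[m]$ in $\Delta$ with induced map $X_{\phi}:X_m\to X_k$, the augmentation relation $f_k\circ X_{\phi}=f_m$ gives $X_{\phi}^{\ast}h_k=X_{\phi}^{\ast}f_k^{\ast}h=f_m^{\ast}h=h_m$, so the $h_k$ are transported into one another by the transition maps. Thus $\{h_k\}$ is a compatible collection of harmonic metrics on $L_{\hdot}$, which is precisely the statement that $L_{\hdot}$ is strongly polarizable. Applying the same argument after a reduction of structure group gives the $G$-principal version for reductive $G$.

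The only substantial ingredient is the existence statement used in the second paragraph: one needs a pluriharmonic metric for a semisimple local system on the \emph{noncompact} variety $Z$, which is where the tame harmonic bundle machinery is invoked; everything else is formal. It is worth emphasizing why this detour through $Z$ is needed: one cannot simply pick harmonic metrics on the $L_k$ level by level, because on each compact $X_k$ such a metric is canonical only up to rescaling on the simple factors, and these rescalings need not be consistent under the transition maps. Pulling back a single metric chosen once and for all on $Z$ is exactly what enforces the required coherence.
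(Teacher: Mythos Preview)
Your proof is correct and follows exactly the paper's own argument: reduce to $M$ semisimple via the associated graded of a Jordan--H\"older filtration (using that $L_{\hdot}=a^{\ast}M$ is semisimple), invoke Mochizuki \cite{TMochizuki3} for a harmonic metric on $M$ over the quasiprojective $Z$, and pull it back level by level to obtain a compatible family of harmonic metrics. Your write-up is considerably more detailed than the paper's three-line version, but the strategy and the key input are identical.
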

\begin{proof}
If $L_{\hdot}$ is the pullback of a local system $L_Z$, then it is also the
pullback of the associated-graded of the Jordan-H\"older series for $L_Z$,
so we may assume $L_Z$ semisimple; it then has a harmonic metric 
\cite{TMochizuki3} which restricts
to a strong polarization of $L_{\hdot}$.
\end{proof}

There are other obstructions of a similar nature.
Suppose $X_{\hdot}$ is a simplicial smooth projective variety, and $x\in X_1$ is a point.
It  has two images $\partial _0x,\partial _1x\in X_0$. If $(E_{\hdot},\theta )$
is a Higgs bundle on $X_{\hdot}$ then 
$$
E_0(\partial _0x) \cong E_1(x) \cong E_0(\partial _1x) 
$$
so the Higgs field $\theta$ on $E_0$ over $X_0$ provide separately commutative actions
of the two tangent spaces $T_{\partial _0x}X_0$ and $T_{\partial _1x}X_0$
on $E_1(x)$. Say that $(E_{\hdot}, \theta )$ {satisfies the commutativity obstruction}
if these two actions commute with each other, in other words $\theta (\partial _0x)(v_0)$ and  $\theta (\partial _1x)(v_1)$ commute as endomorphisms of $E_1(x)$
whenever $v_0\in T_{\partial _0x}X_0$ and $v_1\in T_{\partial _1x}X_0$.
The following lemma identifies this as another obstruction to extending a local system to a smooth ambient variety.

\begin{lemma}
Suppose $f:X_{\hdot}\rightarrow Z$ is a morphism from a 
simplicial smooth projective variety
to a smooth quasiprojective variety $Z$, and $L_{\hdot}$ is a local system on $X_{\hdot}$
corresponding to a Higgs bundle $(E_{\hdot},\theta )$. If $L_{\hdot}$ is the pullback
of a local system on $Z$ then $(E_{\hdot},\theta )$ satisfies the commutativity
obstruction at each $x\in X_1$.
\end{lemma}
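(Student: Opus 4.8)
The plan is to exploit that $Z$ is a \emph{constant} simplicial object. Writing $f_k:X_k\to Z$ for the levels of the simplicial morphism, the two face maps become identified after composing with $f$, so $f_0\circ\partial_0=f_1=f_0\circ\partial_1$ as maps $X_1\to Z$. Hence for $x\in X_1$ the two images $\partial_0 x$ and $\partial_1 x$ lie over a single point $z:=f_1(x)\in Z$, and the differentials $df_0|_{\partial_0 x}$ and $df_0|_{\partial_1 x}$ both land in the one tangent space $T_zZ$.

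The key step is to see that, since $L_{\hdot}=f^{\ast}L_Z$, the Higgs bundle $(E_{\hdot},\theta)$ is the pullback via $f$ of a Higgs bundle on $Z$. Nonabelian Hodge theory over the quasiprojective variety $Z$ (Mochizuki's tame theory \cite{TMochizuki2} \cite{TMochizuki3}, the same ingredient used in the preceding lemma, which applies to $L_Z$ whether or not it is semisimple) attaches to $L_Z$ a Higgs bundle $(E_Z,\theta_Z)$ on $Z$ --- the restriction to $Z$ of a tame logarithmic Higgs bundle on a smooth projective compactification $(\bar Z, B)$ --- with $\theta_Z\wedge\theta_Z=0$. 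Each $f_k$ maps $X_k$ into the open part $Z$, so the extension $\bar f_k:X_k\to\bar Z$ misses $B$ entirely and its pullback of the logarithmic Higgs bundle is honest; by functoriality of the correspondence (as in Proposition \ref{correspondence} and \cite{hbls}) this pullback is $(E_k,\theta_k)$, compatibly with the simplicial transition isomorphisms. In particular, at $x\in X_1$ the canonical identifications $E_0(\partial_0 x)\cong E_1(x)\cong E_0(\partial_1 x)$ become the identity of $(E_Z)_z$, and for $v_0\in T_{\partial_0 x}X_0$, $v_1\in T_{\partial_1 x}X_0$,
$$
\theta_0|_{\partial_0 x}(v_0)=\theta_Z|_z\bigl(df_0|_{\partial_0 x}(v_0)\bigr),\qquad
\theta_0|_{\partial_1 x}(v_1)=\theta_Z|_z\bigl(df_0|_{\partial_1 x}(v_1)\bigr)
$$
in $\mathrm{End}\bigl((E_Z)_z\bigr)=\mathrm{End}\bigl(E_1(x)\bigr)$. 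The identity $\theta_Z\wedge\theta_Z=0$ evaluated at $z$ says precisely that $\theta_Z|_z:T_zZ\to\mathrm{End}((E_Z)_z)$ has commutative image; as both endomorphisms above lie in that image, they commute, which is the commutativity obstruction at $x$. (If $x$ is degenerate, $\partial_0 x=\partial_1 x$ and the statement reduces to $\theta_0\wedge\theta_0=0$ there.)

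I expect the main obstacle to be exactly the identification $(E_0,\theta_0)=f_0^{\ast}(E_Z,\theta_Z)$: the correspondence of \cite{hbls} produces $(E_0,\theta_0)$ on the \emph{projective} $X_0$, and one must match it with the Higgs bundle of $L_Z$ on the \emph{quasiprojective} $Z$. For $L_Z$ semisimple this is the functoriality of tame nonabelian Hodge theory together with the fact that $f_0$ avoids the divisor at infinity; for general $L_Z$ one invokes Mochizuki's version for non-semisimple (parabolic / mixed twistor) objects, the relevant parabolic Higgs bundle again restricting over $Z$ to a genuine Higgs bundle with $\theta_Z\wedge\theta_Z=0$. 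Equivalently, it amounts to checking that $\theta_0$ lies in the image of $H^0\bigl(X_0,\mathrm{End}(E_0)\otimes f_0^{\ast}\Omega^1_Z\bigr)$ in $H^0\bigl(X_0,\mathrm{End}(E_0)\otimes\Omega^1_{X_0}\bigr)$, i.e.\ that the Higgs field is horizontal along $f_0$: this is transparent on the open locus where $f_0$ is smooth (there $(E_0,\theta_0)$ is trivial along the fibres), and propagating it over the critical locus of $f_0$ is where the functorial correspondence over $Z$ is really needed.
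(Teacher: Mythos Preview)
Your argument is correct and follows essentially the same route as the paper: both reduce to the observation that $(E_{\hdot},\theta)$ is the pullback of a Higgs bundle $(F,\varphi)$ on $Z$ (invoking Mochizuki \cite{TMochizuki3} for the quasiprojective case), so that the two actions of $T_{\partial_0 x}X_0$ and $T_{\partial_1 x}X_0$ on $E_1(x)\cong F(f(x))$ factor through the single commutative action of $T_{f(x)}Z$ given by $\varphi_{f(x)}$. Your treatment is in fact more explicit than the paper's about the potential subtlety in identifying $(E_0,\theta_0)$ with $f_0^{\ast}(E_Z,\theta_Z)$, which the paper simply asserts with a citation.
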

\begin{proof}
If $L_{\hdot}$ is a pullback from $Z$, then $(E_{\hdot},\theta )$ is the
pullback of a Higgs bundle $(F,\varphi )$ on $Z$ (this works even if $Z$ is only
quasiprojective by \cite{TMochizuki3}). 
For any $x\in X_1$,
the actions of both tangent spaces
$T_{\partial _0x}X_0$ and $T_{\partial _1x}X_0$ factor through the action of $T_{f(x)}Z$
on $E_1(x)\cong F(f(x))$ given by $\varphi _{f(x)}$. 
\end{proof}

To give a concrete example, suppose $X$ is a nodal curve embedded in a smooth 
variety $Z$. A local system $L$ on $X$ restricts to a local system corresponding
to a Higgs bundle $(E_0,\theta )$ on the normalization
$X_0=\tilde{X}$. At each node $x\in X$ we obtain two endomorphisms of $L(x)$
given by the Higgs field $\theta $ applied to the tangent vectors along the two branches
going through $x$. The commutativity obstruction says that these should commute,
as will be the case if the Higgs bundle is a  pullback from $Z$. 

Look now at the local structure of the space of representations. 
If $\Gg$ is an algebraic stack and $p\in \Gg (\cc )$ is a closed point,
the {tangent space} $T_p\Gg$ is defined as the set of 
pairs $(f,e)$ where 
$$
f:{\rm Spec}\cc [\varepsilon ]/\varepsilon ^2 \rightarrow \Gg
$$
and $e:f|_{{\rm Spec}\cc}\cong p$ is an isomorphism of points, up to
natural equivalences of the $f$ respecting the isomorphisms $e$. 
If $\Gg$ is a moduli stack then the tangent space is usually known as the {deformation space}: a point consists of an infinitesimal deformation with isomorphism between the
central fiber and the original object in question.

Suppose $X_{\hdot}$ is a connected simplicial smooth projective variety, with
basepoint $x\in X_0$. The tangent space to the moduli stack $\Mm _B(X_{\hdot}, G)$
of $G$-local systems on $|X_{\hdot}|$ at $L$ is $H^1(X_{\hdot}, {\rm ad}(L))$
where ${\rm ad}(L)$ is the adjoint local system, equal to $End(L)$ in the linear
case and derived from the adjoint action of $G$ on $Lie (G)$ in general.
See \cite{Sikora} for a discussion of some fine points on tangent spaces of moduli
of local systems.  

Combining differential forms on the various simplicial levels gives a complex of
forms on a simplicial variety, as is known from \cite{Dupont} and 
\cite{Jeffrey}. This may be applied here.
If $(E_{\hdot},\theta )$ is a Higgs bundle on $X_{\hdot}$, define the {Dolbeault
cohomology} $H^i_{\rm Dol}(X_{\hdot},E_{\hdot},\theta )$ to be the cohomology 
of the total complex obtained by adding together the Dolbeault complexes $A^{\hdot}_{\rm Dol}(X_k,E_k,\theta )$ (or any equivalent functorial family of complexes computing the
same hypercohomology) on each $X_k$ and adding the alternating sum of face maps to the differential. 
More generally if $(E,\nabla )$ is a bundle with $\lambda$-connection then
we can define the {de Rham cohomology} $H^i_{\rm DR}(X_{\hdot}, E_{\hdot},\nabla )$
using the de Rham complexes on each $X_k$. The simplicial version of
Biswas and Ramanan's calculation of the deformation space \cite{BiswasRamanan} holds:

\begin{lemma}
\label{defhiggs}
Suppose $(P_{\hdot},\theta )$ is a principal $G$-Higgs bundle on $X_{\hdot}$ of
semiharmonic type. Let $({\rm ad}(P),\theta )$ denote
the linear Higgs bundle obtained from the adjoint representation.
The tangent space to the moduli stack $\Mm _H(X_{\hdot},G)$ is naturally identified as
$H^1_{\rm Dol}(X_{\hdot}, {\rm ad}(P),\theta )$. The corresponding statement holds
for the relative tangent space to the moduli stack $\Mm _{\rm Hod}(X_{\hdot},G)$
of $\lambda$-connections over any $\lambda \in \aaa ^1$.
\end{lemma}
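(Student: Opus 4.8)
The plan is to deduce the simplicial statement from the levelwise Biswas--Ramanan computation by deforming the gluing data together with the bundles, so that the $2$-limit description of $\Mm _H(X_{\hdot},G)$ (Proposition \ref{modulistacks}) turns into the total complex defining $H^{\hdot}_{\rm Dol}(X_{\hdot},{\rm ad}(P),\theta )$. Throughout I will use that, by Lemma \ref{vbdescent1} and Lemma \ref{limdiag}, an object of $\Mm _H(X_{\hdot},G)$ near $(P_{\hdot},\theta )$ is the same datum as a principal $G$-Higgs bundle $(P_0,\theta )$ of semiharmonic type on $X_0$ together with a gluing isomorphism $\varphi :\partial _0^{\ast}P_0\cong \partial _1^{\ast}P_0$ on $X_1$ intertwining the Higgs fields and satisfying the cocycle identity on $X_2$, and likewise for families over a base.

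First I would recall the one-level computation. For a smooth projective variety $Y$ and a principal $G$-Higgs bundle $(Q,\theta )$ of semiharmonic type, first-order deformation theory over ${\rm Spec}\, \cc [\varepsilon ]/\varepsilon ^2$ is governed by the Dolbeault complex $A^{\hdot}_{\rm Dol}(Y,{\rm ad}(Q),\theta )$: deformations modulo equivalence are $H^1_{\rm Dol}$, infinitesimal automorphisms are $H^0$, and obstructions lie in $H^2$. For vector bundles this is \cite{BiswasRamanan}, and the reductive $G$-case is standard (it also follows from Proposition \ref{correspondence} applied to ${\rm ad}(Q)$). What matters here is that this identification is natural for pullback along morphisms of smooth projective varieties, hence along all the simplicial face maps.

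Now I would assemble. A first-order deformation of $(P_{\hdot},\theta )$ consists of a first-order deformation of $(P_0,\theta )$ on $X_0$ and a first-order deformation of $\varphi$ on $X_1$, such that the cocycle identity over $X_2$ persists to first order, modulo an infinitesimal gauge transformation $1+\varepsilon \gamma _0$ with $\gamma _0\in A^0_{\rm Dol}(X_0,{\rm ad}(P_0))$. Encoding the level-$0$ deformation by $\alpha \in A^1_{\rm Dol}(X_0,{\rm ad}(P_0),\theta )$ and the deformed gluing as $\varphi \cdot (1+\varepsilon \beta )$ with $\beta \in A^0_{\rm Dol}(X_1,{\rm ad}(P_1),\theta )$ (using $\varphi$ to identify the two pullbacks), the conditions read $d_{\rm Dol}\alpha =0$, $\partial _0^{\ast}\alpha -\partial _1^{\ast}\alpha = d_{\rm Dol}\beta$, and $\partial _{01}^{\ast}\beta -\partial _{02}^{\ast}\beta +\partial _{12}^{\ast}\beta =0$; these are exactly the conditions for $(\alpha ,\beta )$ to be a degree-$1$ cocycle in the total complex of the Dolbeault complexes $A^{\hdot}_{\rm Dol}(X_k,{\rm ad}(P_k),\theta )$ with its alternating-face differential, and the gauge equivalence $(\alpha ,\beta )\sim (\alpha +d_{\rm Dol}\gamma _0,\ \beta +\partial _1^{\ast}\gamma _0-\partial _0^{\ast}\gamma _0)$ is precisely the degree-$0$ coboundary. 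Since a tangent vector to a $1$-stack is a first-order deformation up to isomorphism, this gives $T_{(P_{\hdot},\theta )}\Mm _H(X_{\hdot},G)=H^1_{\rm Dol}(X_{\hdot},{\rm ad}(P),\theta )$. For $\Mm _{\rm Hod}(X_{\hdot},G)$ the relative tangent space over a fixed $\lambda \in \aaa ^1$ is computed by the identical bookkeeping, with each Dolbeault complex replaced by the de Rham complex of the $\lambda$-connection (which is the Dolbeault complex when $\lambda =0$) and with $\lambda$ not deformed, yielding $H^1_{\rm DR}(X_{\hdot},{\rm ad}(P),\nabla )$.

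The step I expect to be the main obstacle is the passage to the $2$-limit in the preceding paragraph: one must check that deforming the gluing $\varphi$ and imposing coherence over $X_2$ contributes exactly the $k\geq 1$ part of the total complex and nothing more --- equivalently, that the homotopy limit over $\Delta$ of the levelwise deformation groupoids, each the stack attached to the $\tau _{\leq 1}$-truncation of a shift of the Dolbeault complex, is computed in degree $1$ by the naive total complex, with no surviving higher-coherence cocycles or coboundaries. This relies on the naturality of Biswas--Ramanan under the face maps, on the reduction to $X_{\leq 2}$ coming from the split-degeneracy hypothesis and Lemma \ref{limdiag}, and on a careful match of signs in the simplicial differential; granting these, what remains is routine levelwise deformation theory.
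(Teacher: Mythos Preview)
The paper states this lemma without proof, introducing it only with the sentence ``The simplicial version of Biswas and Ramanan's calculation of the deformation space \cite{BiswasRamanan} holds,'' consistent with the announcement earlier in the section that many proofs are shortened or left to the reader. Your argument is exactly the natural filling-in of that reference: levelwise Biswas--Ramanan, assembled via the $2$-limit description of Proposition \ref{modulistacks} and Lemma \ref{limdiag} into the total Dolbeault complex, with the cocycle condition on $X_2$ providing the Čech closure and gauge equivalence on $X_0$ providing the coboundary. This is correct and is the intended approach; your final paragraph correctly identifies the one point requiring care, namely that the homotopy limit of the levelwise deformation groupoids is computed by the naive total complex, and correctly notes that this follows from naturality of the cochain-level Dolbeault model under pullback together with the reduction to $k\leq 2$.
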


Suppose $x\in X_0$ is a point. It may be viewed as a simplicial morphism 
$x\rightarrow X_{\hdot}$ from the constant one-point simplicial scheme to $X_{\hdot}$.
The {relative Dolbeault complex} is the cone on the map
$$
\bigoplus _{j,k} A^j_{\rm Dol}(X_k,E_k,\theta ) \rightarrow E_0(x),
$$
or equivalently the kernel of this map,
giving a complex which calculates the cohomology relative to the basepoint.  
Again the same may be said for de Rham cohomology.

\begin{remark}
\label{defrep}
The tangent space to $R_{\eta}(X_{\hdot},x,G)$
is given by the relative cohomology $H^{1}_{\eta}(X_{\hdot},x,{\rm ad}(\rho ))$
of the required type. 
\end{remark}

\begin{proposition}
\label{mhsrepvar}
Suppose $X_{\hdot}$ is a simplicial smooth projective variety, connected,
with basepoint $x\in X_0$. Suppose $V_{\hdot}$ is
a polarizable variation of Hodge structure.
Then the complete local ring $\widehat{\Oo}_{\rho ,x}$ of the formal 
completion of the representation variety 
$R_B(X_{\hdot},x, GL(n))$ at the monodromy representation $\rho$ of
$V_{\hdot}$ has a natural and functorial mixed Hodge structure generalizing
that of \cite{EyssidieuxSimpson}.
\end{proposition}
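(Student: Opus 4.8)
The plan is to follow the strategy of \cite{EyssidieuxSimpson}, replacing the single smooth projective variety there by the simplicial one $X_{\hdot}$ and replacing Deligne's Hodge theory for a projective variety by Deligne's Hodge theory for simplicial projective varieties \cite{hodge3}, \cite{Dupont}, \cite{Jeffrey}. The formal germ of $R_B(X_{\hdot},x,GL(n))$ at $\rho$ is controlled, in the sense of Goldman--Millson deformation theory, by the relative de Rham differential graded Lie algebra $\mathfrak{g}^{\hdot}:= A^{\hdot}_{DR}(X_{\hdot},x,{\rm ad}(\rho))$, namely the kernel of the evaluation map at the basepoint from the total de Rham complex of the simplicial variety with coefficients in ${\rm ad}(\rho)$ to the fibre ${\rm ad}(\rho)(x)$; this is a sub-DGLA since the bracket of two forms vanishing at $x$ again vanishes at $x$, and its cohomology is the relative cohomology $H^{\hdot}(X_{\hdot},x,{\rm ad}(\rho))$ of Remark \ref{defrep}. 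First I would recall precisely from \cite{EyssidieuxSimpson} the formal mechanism: a \emph{mixed Hodge DGLA} structure on $\mathfrak{g}^{\hdot}$ — that is, a diagram of DGLA's with filtrations exhibiting $\mathfrak{g}^{\hdot}$ as the complex part of a mixed Hodge complex in which the Lie bracket is a morphism of mixed Hodge complexes — induces, through the functorial construction of the pro-representing hull (the bar/Chevalley--Eilenberg description of the deformation functor, or an $L_{\infty}$-minimal model formed inside the category of mixed Hodge structures), a natural mixed Hodge structure on $\widehat{\Oo}_{\rho,x}$, whose weight filtration is determined by the filtration of $\widehat{\Oo}_{\rho,x}$ by powers of its maximal ideal together with the Hodge-theoretic weights, and which on the associated graded of the maximal ideal is dual, in first order, to the mixed Hodge structure on $H^{1}(X_{\hdot},x,{\rm ad}(\rho))$.

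Second, I would produce the mixed Hodge DGLA structure on $\mathfrak{g}^{\hdot}$. Since $V_{\hdot}$ is a polarizable variation of Hodge structure, ${\rm ad}(V_{\hdot})$ is a polarizable variation of weight $0$ on $X_{\hdot}$, and each ${\rm ad}(V_k)$ has pure Hodge structures on its cohomology over the smooth projective $X_k$. Assembling the level-wise de Rham (or Dolbeault) complexes into a total complex, with the simplicial (``Leray''-type) filtration as weight filtration and the shifted Hodge filtration of the variation as Hodge filtration, produces a mixed Hodge complex computing $H^{\hdot}(X_{\hdot},{\rm ad}(\rho))$: the face maps are morphisms of smooth projective varieties and the compatibility isomorphisms of the simplicial variation are morphisms of variations of Hodge structure, so the $E_1$-differentials are morphisms of Hodge structures, which is what makes the total complex a mixed Hodge complex. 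The cup product inducing the bracket is $\cc$-bilinear, strictly compatible with the Hodge filtration, and additive in the simplicial degree, hence a morphism of mixed Hodge complexes; restriction to the point $x\in X_0$ is likewise such a morphism, so its kernel $\mathfrak{g}^{\hdot}$ is a mixed Hodge sub-DGLA. At the level of explicit models one works, as in \cite{EyssidieuxSimpson}, with harmonic forms for the chosen polarizations on each $X_k$ glued along the simplicial structure, or via the principle of two types, to obtain strictness; note that only polarizability of each $V_k$, not compatibility of the polarizations across $\Delta$, is needed here.

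Third, I would transport the structure to the local ring by invoking the functorial part of the machinery exactly as in \cite{EyssidieuxSimpson}: every operation used to pass from the mixed Hodge DGLA $\mathfrak{g}^{\hdot}$ to the pro-Artinian ring pro-representing ${\rm Def}_{\mathfrak{g}^{\hdot}}$ — tensor products, duals, kernels and cokernels of strict morphisms, and inverse limits — is internal to the category of (ind-)mixed Hodge structures, so the resulting complete local ring $\widehat{\Oo}_{\rho,x}$ inherits a mixed Hodge structure. One also recalls from \cite{EyssidieuxSimpson} that the germ of the representation variety depends on $\mathfrak{g}^{\hdot}$ only through its quasi-isomorphism type in a range (isomorphism on $H^0,H^1$ and injectivity on $H^2$), which is why the comparison map $|X_{\hdot}|\to B\pi_1(|X_{\hdot}|,x)$ does not affect the answer, and why a single basepoint $x\in X_0$ suffices — in contrast to the global description of Proposition \ref{repspacecalc}, which requires basepoints meeting $X_2$. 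Naturality and functoriality in the triple $(X_{\hdot},x,V_{\hdot})$ follow because $X_{\hdot}\mapsto \mathfrak{g}^{\hdot}$ is a contravariant functor into mixed Hodge DGLA's and every subsequent step is functorial; and for $X_{\hdot}$ constant, equal to a smooth projective variety, the whole construction reduces verbatim to \cite{EyssidieuxSimpson}, giving the asserted compatibility.

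The step I expect to be the main obstacle is the second one: verifying that Deligne's mixed Hodge complex for a simplicial smooth projective variety with variation-of-Hodge-structure coefficients refines to the DGLA level in such a way that the Lie bracket is strictly compatible with both filtrations at once. This is the delicate ``multiplicativity versus strictness'' issue, and it is where one must commit to a concrete harmonic model on each $X_k$ and control the glueing along the simplicial maps, rather than argue with an abstract mixed Hodge complex; the remaining steps are either formal or a direct transcription of \cite{EyssidieuxSimpson}.
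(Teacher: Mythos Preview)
Your approach is coherent and in principle could be made to work, but it is \emph{not} the route the paper takes, and the comparison is instructive.

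The paper's proof is entirely ``external'': it avoids building any mixed Hodge DGLA on the simplicial total complex. Instead it first enlarges the basepoint to a simplicial basepoint ${\bf x}_{\hdot}$ meeting all components of $X_0,X_1,X_2$, so that Proposition~\ref{repspacecalc} expresses $R_B(X_{\hdot},{\bf x}_{\hdot},GL(n))$ as the equalizer of $R_B(X_0,{\bf x}_0,GL(n))\rightrightarrows R_B(X_1,{\bf x}_1,GL(n))$. It then applies \cite{EyssidieuxSimpson} as a black box to each of the two smooth projective pieces $X_0$ and $X_1$ separately; the face maps are morphisms of smooth projective varieties, hence induce morphisms of mixed Hodge structures on the complete local rings, and the local ring of the equalizer (viewed as a fiber product) is a completed tensor product of these, so inherits a MHS. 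Finally the paper descends from ${\bf x}_{\hdot}$ to the single basepoint $x$ by observing that $\widehat{\Oo}_{\rho,x}$ is the subring of invariants for the free action of $\prod_y GL(V_y)$ (with its natural MHS coming from the fiber Hodge structures $V_y$), hence sits in a short exact sequence of MHS.

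What each approach buys: the paper's method is modular---it never touches the simplicial DGLA, so the ``multiplicativity versus strictness'' issue you correctly flag as the main obstacle simply does not arise; all the Hodge-theoretic work is outsourced to \cite{EyssidieuxSimpson} applied to honest smooth projective varieties. The price is the bookkeeping with multiple basepoints and the descent step via invariants. Your approach is more intrinsic and would give the MHS directly at the single basepoint without that detour, and it would also yield the controlling DGLA with its mixed Hodge structure (useful for further applications such as formality questions); but, as you anticipate, making the bracket strictly compatible with both filtrations on the simplicial total complex requires a genuine harmonic model glued across levels, and the paper elsewhere (see the conjecture following Remark~\ref{defrep}) explicitly leaves open the construction and formality of that simplicial DGLA. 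So your programme is reasonable, but the paper deliberately sidesteps exactly the step you identify as hardest.
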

\begin{proof}
This is a sketch of proof. 
Suppose first that ${\bf x}_{\hdot}\rightarrow X_{\hdot}$ is a 
simplicial basepoint meeting all components of $X_0$, $X_1$ and $X_2$,
so Proposition \ref{repspacecalc} applies. 
The complete local ring $\widehat{\Oo}_{\rho ,{\bf x}_{\hdot}}$ of the
formal completion of $R_B(X_{\hdot},{\bf x}_{\hdot}, GL(n))$
at $\rho$ has a unique mixed Hodge structure compatible with the maps
in the cartesian square \eqref{cartsquare} and the mixed Hodge structures on
the local rings of the other three pieces given by \cite{EyssidieuxSimpson}.
This is because the maps in \eqref{cartsquare} are morphisms of mixed
Hodge structures, and the local ring of the fiber product is the 
tensor product of the local rings of the other three pieces so it inherits
an MHS. 

Write ${\bf x}_{\hdot} = \langle x\rangle \sqcup \langle y^1\rangle \sqcup \cdots
\sqcup \langle y^a\rangle$. Write $Y:= \{ y^1,\ldots , y^a\}$.
At $y\in Y$ let $V_y$ denote the Hodge structure fiber of
$V_{\hdot}$ at $y$. This determines a mixed Hodge structure on the formal completion
of $GL(V_y)$ at the identity.  For these mixed Hodge structures, the action of 
$\prod _{y\in Y}GL(V_y)$ on $R_B(X_{\hdot},{\bf x}_{\hdot}, GL(n))$
is compatible with the mixed Hodge structures. The action is free and 
$$
R_B(X_{\hdot},x, GL(n))=R_B(X_{\hdot},{\bf x}_{\hdot}, GL(n))/\prod _{y\in Y}GL(V_y).
$$
The complete local ring $\widehat{\Oo}_{\rho ,x}$ is thus the subring of
invariants in $\widehat{\Oo}_{\rho ,{\bf x}_{\hdot}}$ under the formal action
of $\prod _{y\in Y}GL(V_y)$, so there is an exact sequence
$$
0\rightarrow \widehat{\Oo}_{\rho ,x}\rightarrow 
\widehat{\Oo}_{\rho ,{\bf x}_{\hdot}} \rightarrow 
\widehat{\Oo} _{W, (\rho , 1,\ldots , 1)}
$$
where 
$$
W= R_B(X_{\hdot},{\bf x}_{\hdot}, GL(n))\times \prod _{y\in Y}GL(V_y).
$$
The map on the right is a map of MHS so $\widehat{\Oo}_{\rho ,x}$ acquires a MHS.
\end{proof}

\section{The normal case}

A normal variety is geometrically unibranched. Conversely, if 
$X$ is geometrically unibranched then its normalization
$\tilde{X}\rightarrow X$ is 
a one-to-one map and induces an homeomorphism of topological realizations.
This localizes in the etale topology so the same hold when $X$ is a DM-stack.

A more general condition in the situation of a simplicial variety
is the following ``finite index condition''.

\begin{condition}
\label{finiteindex}
For $X_{\hdot}$ a simplicial smooth projective variety, 
the present condition says that:
\newline
(1)\, for any two components $X_0^i$ and $X_0^j$ of $X_0$, there
is a component $X_1^{ij}$ of $X_1$ which dominates them by $\partial _0$ and $\partial _1$ respectively;
\newline
(2)\, for any basepoint
$x\in X^i_0$, the image of $\pi _1(X^i_0,x)\rightarrow \pi _1(|X_{\hdot}|,x)$
has finite index; and
\newline
(3)\, every polarizable local system on $X_{\hdot}$ is strongly polarizable. 
\end{condition}

Condition 
(2) is independent of the choice of basepoint, because a path in $X_0^i$
can be lifted to $X_1^{ij}$ and projected to a path in $X_0^j$. 

One could conjecture that Condition (3) is a consequence of the other two conditions,
or perhaps some other natural geometric condition. I couldn't find an argument, 
but Theorem \ref{unibranch} will say that it holds for standard resolutions of geometrically unibranched proper DM-stacks. 

\begin{lemma}
\label{fiex}
If  $X_{\hdot}$ is a simplicial smooth projective variety satisfying Condition 
\ref{finiteindex}, then the following conditions for
a local system $L_{\hdot}$ on $X_{\hdot}$ are equivalent:
\newline
$(a)$\, $L_{\hdot}$ is semisimple;
\newline
$(b)$\, there exists a component $X'_0$ of $X_0$ such that $L_0|_{X'_0}$ is semisimple;
\newline
$(c)$\, $L_{\hdot}$ is polarizable;
\newline
$(d)$\, $L_{\hdot}$ is strongly polarizable.

For a $G$-torsor, 
the monodromy is reductive if and only if the monodromy of its restriction to $X'_0$
is reductive. 
\end{lemma}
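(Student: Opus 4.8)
The plan is to prove the cycle $(a)\Rightarrow(b)\Rightarrow(c)\Rightarrow(d)\Rightarrow(a)$, invoking Condition \ref{finiteindex}(2) for the two ``horizontal'' implications, Condition \ref{finiteindex}(1) for the spreading argument inside $(b)\Rightarrow(c)$, and Condition \ref{finiteindex}(3) for the single upgrade $(c)\Rightarrow(d)$; everything else comes from the classical nonabelian Hodge correspondence for smooth projective varieties applied at each level, together with two soft inputs. The first is group-theoretic: a finite-dimensional representation $\rho$ of a group $\Gamma$ is semisimple iff the Zariski closure of $\rho(\Gamma)$ is reductive, and if $H\subset\Gamma$ has finite index then $\overline{\rho(H)}$ and $\overline{\rho(\Gamma)}$ share an identity component, so $\rho$ is semisimple exactly when $\rho|_H$ is, and one closure is reductive exactly when the other is. The second is analytic: pulling back a harmonic bundle along a holomorphic map of compact K\"ahler manifolds is again a harmonic bundle (pluriharmonicity of the metric is preserved under holomorphic precomposition), so by Proposition \ref{correspondence} at each level the pullback of a semisimple local system along \emph{any} morphism of smooth projective varieties is semisimple; combined with the standard fact that a dominant morphism of proper varieties induces a $\pi_1$-map with finite-index image, one also gets that $f^{\ast}L$ semisimple forces $L$ semisimple when $f$ is dominant.

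\textbf{The easy implications.} For $(a)\Rightarrow(b)$, take $X_0'$ to be the component of a basepoint; $L_{\hdot}$ semisimple means its monodromy $\rho$ is semisimple on $\pi_1(|X_{\hdot}|)$, and by Condition \ref{finiteindex}(2) the image of $\pi_1(X_0')$ has finite index, so its restriction---which is the monodromy of $L_0|_{X_0'}$---is semisimple. The implication $(c)\Rightarrow(d)$ is Condition \ref{finiteindex}(3) verbatim, while $(d)\Rightarrow(c)$ is trivial (a compatible family of harmonic metrics restricts to a harmonic metric on each $L_k$). Finally $(d)\Rightarrow(a)$ is the Proposition asserting that the category of strongly polarizable local systems is semisimple. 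The same finite-index reasoning proves the last sentence: the monodromy of a $G$-torsor on $X_{\hdot}$, restricted to the finite-index subgroup $\mathrm{im}(\pi_1(X_0')\to\pi_1(|X_{\hdot}|))$, has Zariski closure with the same identity component, and a finite-index closed subgroup of a reductive group is reductive and conversely.

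\textbf{The main step $(b)\Rightarrow(c)$.} First spread semisimplicity over the components of $X_0$. Suppose $L_0|_{X_0^i}$ is semisimple and let $X_0^j$ be any other component; Condition \ref{finiteindex}(1) gives a component $X_1^{ij}$ of $X_1$ with $\partial_0\colon X_1^{ij}\to X_0^i$ and $\partial_1\colon X_1^{ij}\to X_0^j$ dominant. The two compatibility isomorphisms of the simplicial local system give $L_1|_{X_1^{ij}}\cong(\partial_0|_{X_1^{ij}})^{\ast}(L_0|_{X_0^i})\cong(\partial_1|_{X_1^{ij}})^{\ast}(L_0|_{X_0^j})$; the first is semisimple by the analytic input, hence so is $(\partial_1|_{X_1^{ij}})^{\ast}(L_0|_{X_0^j})$, and since $\partial_1$ is dominant this forces $L_0|_{X_0^j}$ semisimple. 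Thus $L_0$ is semisimple on every component of $X_0$. Now fix $k$ and a component $X_k^c$; the $0$-th vertex map $v_0\colon X_k\to X_0$ identifies $L_k|_{X_k^c}$ with $(v_0|_{X_k^c})^{\ast}(L_0|_{X_0^d})$, where $X_0^d$ is the component containing $v_0(X_k^c)$, and the analytic input again shows this is semisimple. Hence every $L_k$ is semisimple on the smooth projective variety $X_k$, so the corresponding simplicial Higgs bundle under Proposition \ref{correspondence} is polystable at each level; that is precisely polarizability of $L_{\hdot}$.

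\textbf{Main obstacle.} The one step that is not purely formal is $(b)\Rightarrow(c)$, and within it two points require care: the transport of semisimplicity from one component of $X_0$ to another, which works only because \emph{both} compatibility isomorphisms over $X_1$ are available and the face maps of Condition \ref{finiteindex}(1) are dominant; and the passage to higher levels, which rests on the naturality of harmonic bundles under arbitrary (not necessarily dominant) morphisms of smooth projective varieties---the fact that the pullback of a pluriharmonic metric is pluriharmonic. That analytic naturality is the input one should not skip, since it is false at the level of bare monodromy representations.
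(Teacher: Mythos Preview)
Your proof is correct and close to the paper's, but the route through $(b)\Rightarrow(c)$ differs in one respect worth noting. The paper does not use Condition~\ref{finiteindex}(1) here at all: instead of spreading semisimplicity across components of $X_0$ via the dominating component of $X_1$, it goes $(b)\Rightarrow(a)\Rightarrow(b)'\Rightarrow(c)$, where $(b)'$ is the statement that $L_0|_{X_0^j}$ is semisimple for \emph{every} component $X_0^j$. Both passages $(b)\Rightarrow(a)$ and $(a)\Rightarrow(b)'$ use only the finite-index condition~(2) (semisimplicity is invariant under restriction to a finite-index subgroup, in both directions), and then $(b)'\Rightarrow(c)$ is the single pullback fact along vertex maps. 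Your argument substitutes for the first two steps a direct transfer $X_0^i\rightsquigarrow X_1^{ij}\rightsquigarrow X_0^j$ using Condition~(1) and the behaviour of semisimplicity under dominant and arbitrary pullbacks. This is perfectly valid, and has the virtue of being more geometric, but it is slightly longer and invokes an extra hypothesis; the paper's path shows that Condition~(1) is in fact not needed for this lemma. Your treatment of the $G$-torsor statement via shared identity components of Zariski closures is the right one and is what the paper has in mind.
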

\begin{proof}
Semisimplicity of a representation is equivalent to semisimplicity of
its restriction to a finite-index subgroup.
Therefore, from part (2) of \ref{finiteindex}, $(b)$ implies $(a)$ and $(a)$ implies:
\newline
$(b)'$\, for any component $X'_0$ of $X_0$, $L_0|_{X'_0}$ is semisimple,
\newline
a condition which clearly implies $(b)$. 
Also,
$(c)$ implies $(b)'$ since polarizability and semisimplicity are the
same on a smooth quasiprojective variety. The pullback of a polarizable
local system is again polarizable, so $(b)'$ implies $(c)$. 
By part (3) of \ref{finiteindex}, $(d)$ is equivalent to $(c)$. 
\end{proof}

Condition \ref{finiteindex} holds in a
wide variety of cases. In preparation for the proof, here is a version of Zariski's
connectedness.

\begin{lemma}
\label{zariskicon}
Suppose $Z$ is a smooth variety with a projective 
map to a connected geometrically
unibranched DM-stack $X$, such that every component of $Z$ dominates $X$. Suppose $U\subset Z$ is a dense open subset. Then every connected component of $Z\times _XZ$ meets $U\times _XU$.
\end{lemma}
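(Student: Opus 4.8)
\emph{Proof sketch.} The plan is to reduce the statement to a Zariski-type connectedness theorem over a normal base.

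First I would pass to the normalization. Since $X$ is geometrically unibranched, by the remarks opening this section the normalization $\tilde X\to X$ is a universal homeomorphism. As $Z$ is smooth, hence normal, and every component of $Z$ dominates $X$, the morphism $Z\to X$ factors uniquely through $\tilde X$, giving a projective morphism $Z\to \tilde X$ whose components still dominate $\tilde X$; likewise $U\to X$ factors through $\tilde X$. The diagonal $\tilde X\to \tilde X\times_X\tilde X$, being a section of the universal homeomorphism $\tilde X\times_X\tilde X\to \tilde X$, is itself a universal homeomorphism, so its base changes $Z\times_{\tilde X}Z\to Z\times_XZ$ and $U\times_{\tilde X}U\to U\times_XU$ are too, compatibly with the open immersions. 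Hence it suffices to prove the lemma with $\tilde X$ in place of $X$, and I may assume $X$ normal; being connected it is then irreducible, with generic point $\eta$.

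Next I would reduce to the generic fibre. Because $U$ is dense in $Z$ and each irreducible component $Z_i$ of $Z$ dominates $X$, the set $U\cap Z_i$ dominates $X$, so its generic fibre is a dense open subset of the irreducible generic fibre of $Z_i\to X$; thus $U_\eta:=U\times_X{\rm Spec}\,\kappa(\eta)$ is dense and open in $Z_\eta:=Z\times_X{\rm Spec}\,\kappa(\eta)$, whence $U_\eta\times_\eta U_\eta$ is dense and open in $Z_\eta\times_\eta Z_\eta=(Z\times_XZ)_\eta$ and meets every connected component of it. Now if $W$ is a connected component of $Z\times_XZ$, then $W\cap (Z\times_XZ)_\eta$ is open and closed in $(Z\times_XZ)_\eta$; if it is nonempty it contains a connected component of the latter, hence meets $U_\eta\times_\eta U_\eta\subseteq U\times_XU$, so $W$ meets $U\times_XU$. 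Thus the lemma reduces to the claim that every connected component of $Z\times_XZ$ meets $(Z\times_XZ)_\eta$ — equivalently, the morphism being proper, that it dominates $X$.

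The hard part is this last claim, and here normality of $X$ is indispensable — for a nodal curve it fails, the two off-diagonal points over the node forming connected components of $Z\times_XZ$ that do not dominate. I would handle it via Zariski's connectedness theorem. First, $Z\times_XZ\to X$ is proper, being the composite of the base-changed projection $Z\times_XZ\to Z$ (projective, a base change of $f$) with $f$; form its Stein factorization $Z\times_XZ\xrightarrow{\pi}S\xrightarrow{\psi}X$, with $\pi$ proper surjective with geometrically connected fibres and $\psi$ finite. A proper surjective morphism with connected fibres induces a bijection on connected components, so it is enough that every connected component of $S$ dominates $X$, i.e. that $\psi_*\mathcal O_S$ is $\mathcal O_X$-torsion-free (a torsion summand would be a clopen part of $S$ lying over a proper closed subscheme of $X$). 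To see this one may also Stein-factorize $f\colon Z\to X$ as $Z\to X_1\to X$, with $X_1$ finite over $X$ and — as $Z$ is normal — normal, so that the connected components of $Z\times_XZ$ correspond to those of $X_1\times_XX_1$, each connected component of $X_1$ being irreducible, normal, and finite dominant over $X$; one is then reduced to showing that a fibre product $Y_1\times_XY_2$ of two finite dominant normal $X$-schemes has all its connected components dominating $X$ — over $\eta$ a nonempty finite reduced scheme (characteristic $0$), over $X$ admitting no extra clopen piece over a proper closed subscheme, which is exactly the content of Zariski connectedness over the normal $X$. I expect this verification — disentangling reducedness (which genuinely fails for $Z\times_XZ$: already the blow-up of a point produces an irreducible component not dominating $X$) from connectedness, and invoking the correct form of the connectedness theorem — to be the main obstacle; the earlier steps are routine once it is in place. \eop
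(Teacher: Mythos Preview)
Your reductions are clean and correct: passing to the normalization $\tilde X$ (using that a universal homeomorphism base-changes to one on the fibre products), then reducing the statement to the claim that every connected component of $Z\times_XZ$ dominates the now-normal $X$, and finally pushing this down via Stein factorization to the claim that every connected component of $Y_1\times_XY_2$ dominates $X$ for $Y_1,Y_2$ normal irreducible and finite dominant over $X$. All of this is sound, and the blow-up remark correctly distinguishes irreducible from connected components.

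The gap is in the last step. What you invoke as ``Zariski connectedness'' does not give this: Zariski's theorem concerns connected fibres of a proper map with $f_*\Oo_Y=\Oo_X$, whereas here $Y_1\times_XY_2\to X$ is finite and its Stein factorization is trivial. The statement you actually need---that a finite $X$-scheme built from two normal finite dominant $X$-schemes has no clopen piece supported over a proper closed subset---is not a form of Zariski connectedness and is not circularly obvious (the tensor product $S_1\otimes_R S_2$ can genuinely acquire $R$-torsion when the $S_i$ are not flat). What closes the gap is the observation that each $Y_i\to X$, being an integral extension of a \emph{domain} over a \emph{normal} base, satisfies going-down; since it is of finite type this makes it an open map. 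Then for any $(y_1,y_2)$ over $x$, the images of small neighbourhoods of $y_1,y_2$ are open neighbourhoods of $x$, hence meet any dense open of $X$ simultaneously; lifting back, $(y_1,y_2)$ lies in the closure of the generic fibre. Thus every irreducible, hence every connected, component dominates.

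The paper's proof uses exactly this openness principle, but packaged analytically and without your reductions: it Stein-factorizes $Z\to V\to X$, then argues directly that small analytic neighbourhoods $N_i\subset V$ of the images $v_i$ of a given $(z_1,z_2)$ have $h(N_i)$ containing a neighbourhood of $x$ (this is where ``geometrically unibranched'' enters, playing the role of normality in going-down). Intersecting, one finds points of $W\times_XW$ (for $W\subset V$ open dense with $U$ covering the fibres) approaching $(v_1,v_2)$, lifts them to $U\times_XU$, extracts a convergent subsequence by properness, and finally uses connectedness of the fibres of $Z\to V$ to connect the limit to $(z_1,z_2)$. So your structural reductions are more elegant, but the decisive geometric input---openness of finite dominant maps over a unibranch base---is the same, and you have not yet supplied it.
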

\begin{proof}
Suppose $X'\rightarrow X$ is an etale covering, then the same statement for $Z\times _XX'\rightarrow X'$ implies the statement for $Z\rightarrow X$. Therefore we may
assume that $X$ is a quasiprojective scheme, also it can be assumed reduced.  

Let $Z\stackrel{g}{\rightarrow} V\stackrel{h}{\rightarrow} X$ be the Stein factorization:
$h$ is finite and $g$ has connected fibers. All irreducible components of $V$
dominate $X$, so they
have the same dimension as $X$. There is an
open dense set $W\subset V$ such that $U$ meets all the fibers
$g^{-1}(w)$ for $w\in W$. 

Suppose $(z_1,z_2)\in Z\times _XZ$, with $z_1,z_2\mapsto x\in X$.
Let $v_1=g(z_1)$ and $v_2=g(z_2)$. Thus $(v_1,v_2)\in V\times _XV$. Let $N_1$ and
$N_2$ be small usual analytic
neighborhoods of $v_1$ and $v_2$ respectively in $V$. Their
images $h(N_i)\subset X$ 
are germs of closed subvarieties of the same dimension as $X$,
so by the hypothesis that $X$ is geometrically unibranched, the $h(N_i)$
must contain neighborhoods of $x$. But $W\subset V$ is a dense Zariski-open
subset, so $h(N_1\cap W)\cap h(N_2\cap W)\neq \emptyset$. By successively reducing
the size of the neighborhoods, we can choose
a sequence of points $(w_1(j),w_2(j))_{j\in \nn}\in W\times _XW$ approaching $(v_1,v_2)$ for $j\rightarrow \infty$. Lift $w_i(j)$ to points $y_i(j)\in U$. Since $g$ is proper,
a subsequence of $(y_1(j),y_2(j))\in U\times _XU$ converges to some 
$(z'_1,z'_2)\in Z\times _XZ$ lying over $(v_1,v_2)\in V\times _XV$. But the fibers of $g$
are connected (that is where Zariski's connectedness theorem is used), 
so $z'_1$ is connected to $z_1$ in $g^{-1}(v_1)$ and  
$z'_2$ is connected to $z_2$ in $g^{-1}(v_2)$. Therefore $(z'_1,z'_2)$ lies in the same
connected component of $Z\times _XZ$ as $(z_1,z_2)$. However,  $(z'_1,z'_2)$
is also a limit of points in $U\times _XU$, so the component of $(z_1,z_2)$ meets 
$U\times _XU$. 
\end{proof}

\begin{theorem}
\label{unibranch}
Suppose that $X$ is a proper singular DM-stack which is reduced, connected and 
geometrically unibranched,
that is the analytic germ of an etale chart at any singular point is irreducible.
Then, for a proper surjective hypercovering by smooth projective varieties
$Z_{\hdot}\rightarrow Y$ constructed as in Theorem \ref{fullresolution}, 
the finite index condition \ref{finiteindex} holds.
\end{theorem}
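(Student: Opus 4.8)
\emph{Plan of proof.} I would verify the three parts of Condition \ref{finiteindex} in turn; parts (1) and (2) are comparatively routine, while part (3) is the crux. First note that, being connected and geometrically unibranched, $X$ is irreducible (a point where two components met could not be unibranched), and that the hypercovering may be built so that every component of $Z_0$ dominates $X$: resolve $X$ to a smooth proper DM-stack and cover the latter by a smooth projective variety as in Theorem \ref{maincovering}; in that construction $Z_0$ is a disjoint union of varieties that are flat over smooth projective varieties dominating $X$, and a flat morphism to an irreducible base has all its components dominant.

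For part (1): given components $Z_0^i$ and $Z_0^j$ of $Z_0$, both surject onto $X$, so $Z_0^i\times _XZ_0^j$ is nonempty; since $k(Z_0^i)\otimes _{k(X)}k(Z_0^j)$ is a nonzero ring, a prime of it gives a point of $Z_0^i\times _XZ_0^j$ lying over the generic points of both $Z_0^i$ and $Z_0^j$, whose closure is an irreducible component $C$ of $Z_0^i\times _XZ_0^j$ dominating $Z_0^i$ under $\partial_0$ and $Z_0^j$ under $\partial_1$. Since $Z_0\times _XZ_0$ is projective we may take $Z_1$ to resolve all of it, so the component of $Z_1$ lying over $C$ is the required $Z_1^{ij}$.

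For part (2): by the remarks opening this section the normalization $\tilde X\to X$ induces a homeomorphism of realizations, and by Proposition \ref{topdescentstack} the map $|Z_\hdot|\to |X|$ is a weak equivalence, so $\pi _1(|Z_\hdot|,x)\cong \pi _1(|\tilde X|,x)$ compatibly with the maps out of $\pi _1(Z_0^i,x)$. As $Z_0^i$ is smooth, hence normal, and dominates $X$, the morphism $Z_0^i\to X$ factors through a proper surjection $Z_0^i\to\tilde X$ onto the irreducible normal stack $\tilde X$. I would then invoke the standard fact that a proper surjective morphism onto an irreducible normal target induces a $\pi _1$-map with image of finite index: pass to the Stein factorization $Z_0^i\to W\to\tilde X$; the first map has connected fibres, hence is surjective on $\pi _1$, and the second is finite onto a normal target, hence has image of index bounded by its degree. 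This yields (2).

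Part (3) is the main obstacle. Let $L_\hdot$ be polarizable; then $L_0$ is semisimple on $Z_0$ (and, since pluriharmonicity of a metric is preserved under arbitrary holomorphic pullback, each $L_k\cong v_0^\ast L_0$ is semisimple as well). Fix a harmonic metric $h_0$ on $L_0$ and set $h_k:=v_0^\ast h_0$ using the $0$-th vertex maps and the simplicial identifications; each $h_k$ is harmonic, and the family $\{h_k\}$ is compatible with the simplicial structure exactly when the descent isomorphism $\varphi:\partial_0^\ast L_0\cong\partial_1^\ast L_0$ over $Z_1$ is an isometry from $\partial_0^\ast h_0$ to $\partial_1^\ast h_0$. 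In general it is not, but $\partial_0^\ast h_0$ and $\varphi^\ast(\partial_1^\ast h_0)$ are two harmonic metrics on the semisimple local system $\partial_0^\ast L_0$, hence differ by a positive self-adjoint local-system automorphism $s$ (which satisfies $s_0^\ast s=1$ because $s_0^\ast\varphi=1$). Replacing $h_0$ by $h_0\cdot t$ for a positive self-adjoint $t\in\mathrm{Aut}(L_0)$ alters $s$ by the coboundary $d_0(t)^{-1}\,s\,d_1(t)$, where $d_0,d_1:\mathrm{Aut}(L_0)\to\mathrm{Aut}(\partial_0^\ast L_0)$ are the two face maps; so it suffices to show that $s$ is a coboundary for $(d_0,d_1)$, and this is the hard part. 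The plan is to combine (i) the cocycle identity satisfied by $\varphi$ over $Z_2$, which forces the corresponding $1$-cocycle identity on $s$; (ii) the finite-index property established in (2), which identifies $\mathrm{Aut}(L_\hdot)$ with the reductive centralizer of the monodromy on $\pi_1(|Z_\hdot|)$ and controls $\mathrm{Aut}(L_0)$ accordingly; and (iii) the fact that the space of positive self-adjoint operators, being a nonpositively curved symmetric space, supports no nontrivial cocycle class of this kind — the same convexity mechanism that upgrades ``polarizable'' to ``strongly polarizable'' in \cite{EyssidieuxSimpson}. Once $s$ is killed, $\{h_k\}$ is a strong polarization, which proves (3) and hence the theorem.
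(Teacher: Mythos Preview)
Your treatment of parts (1) and (2) is fine and close to the paper's; the Stein-factorization argument for (2) is a clean alternative to the paper's route through a dense smooth open substack $U\subset X$.

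Part (3) has a genuine gap. Your reformulation is reasonable: compatibility of the family $\{h_k\}$ reduces to an identity on $Z_1$, measured by a positive self-adjoint flat automorphism $s$ of $\partial_0^\ast L_0$. But the assertion that $s$ is a coboundary is not established. Point (iii) is a hand-wave: there is no cohomology theory in sight for which nonpositive curvature forces vanishing, and \cite{EyssidieuxSimpson} is about mixed Hodge structures on formal completions, not strong polarizability. Point (ii) is too loose: finite index of $\pi_1(Z_0^i)$ in $\pi_1(|Z_\hdot|)$ does not by itself control $\mathrm{Aut}(\partial_0^\ast L_0)$ over the various components of $Z_1$, which can strictly contain the images of $d_0,d_1$, so there is no evident reason $s$ lies in the range of the coboundary. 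Crucially, your argument for (3) never uses the unibranched hypothesis directly, yet the nodal-curve example in the paper shows (3) can fail without it even when each $L_k$ is semisimple; so some geometric input specific to unibranchedness is indispensable, and it is missing.

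The paper's proof of (3) is quite different. One first arranges (adding a component to $Z_0$ if needed) a component $Z^1$ containing an open set $U^1$ which is finite \'etale Galois over an open $U_X\subset X$; averaging a harmonic metric over the Galois group makes it $\Phi$-invariant, so it descends to a harmonic metric on $L$ over $U_X$, and this canonically determines harmonic metrics $h_i$ on every $Z_0^i$ via dominant components $R^{1i}$ of $Z_1$. Then one considers the set $K\subset Z\times_X Z$ where the two pullback metrics agree: $K$ is closed by continuity, and \emph{open} because on a smooth projective curve through two nearby points a harmonic metric is determined by its value at one point. By construction $K\supset U\times_X U$ where $U:=f^{-1}(U_X)$. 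The geometrically-unibranched hypothesis now enters through the Zariski-connectedness Lemma~\ref{zariskicon}, which says every connected component of $Z\times_X Z$ meets $U\times_X U$; hence $K=Z\times_X Z$ and the metrics are strongly compatible. This open/closed argument plus Lemma~\ref{zariskicon} is the missing mechanism in your sketch.
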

\begin{proof} 
We may assume that $X$ is reduced (the reduced substack has the same topological type).
Also it is irreducible, because connected and geometrically unibranched. 
If a hypercovering is constructed as in Theorem \ref{fullresolution}
then it satisfies condition (1). 
Suppose $V\rightarrow X$ is a surjective etale map. Elements of the fundamental
group $\pi _1(X,x)$ can be viewed as paths which are piecewise continuous on $V$,
related by the equivalence relation $V\times _XV$ at the jumping points. Furthermore
the jumping points can be assumed general, i.e. where $V$ is smooth.
Since $V$ is geometrically unibranched, paths can be moved away from the
singularities and in fact, into any dense open substack. 
There exists an open dense substack 
$U\subset X$ which is smooth and 
a gerb over its smooth coarse moduli space $U^{\bf c}$.
Then $U$ is also connected and paths can be moved into $U$, so $\pi _1(U)$
surjects onto $\pi _1(X)$. Now suppose $f:Z_{\hdot}\rightarrow X$ is a proper
surjective hypercovering, in particular
by Proposition \ref{topdescentstack}, $\pi _1(|Z_{\hdot}|)\cong\pi _1(X)$. 
Some connected component $Z'_0$ dominates $X$,
from which it follows that $\pi _1(f^{-1}(U)\cap Z'_0)\rightarrow \pi _1(U^{\bf c})$
has image of finite index. 
Thus the image is of finite index in $\pi _1(U)$, and in turn
the image of $\pi _1(Z'_0)$ in 
$\pi _1(X)=\pi _1(|Z_{\hdot}|)$ has finite index. 

For (3), given a polarizable local system $L_{\hdot}$ on $Z_{\hdot}$, 
we need to construct a strong polarization, that is a collection of harmonic
metrics $h_k$ on $L_k$ compatible with the restrictions. Eventually adding an extra
component to $Z=Z_0$, we may assume that there is a component $Z^1\subset Z$ containing
an open set $U^1\subset Z^1$ such that $U^1\rightarrow X$ is a finite etale Galois cover
over its image $U_X$ which is in the smooth locus of $X$, and in fact in the locus where
$X$ is a gerb over the smooth part of $X^{\bf c}$.
 
Let $\Phi$ be the Galois group acting on $U^1$; 
by equivariant resolution of
singularities \cite{Villamayor} \cite{BierstoneMilman}, we may assume 
that it extends to an action on $Z^1$. Therefore by averaging over $\Phi$ an
initial choice of harmonic metric on $L_{Z^1}$, we obtain a $\Phi$-invariant harmonic
metric $h'$ over $U^1$. For each component $Z^i$ of $Z$, choose a component $R^{1i}$
mapping by dominant maps $\partial _0:R^{1i}\rightarrow Z^1$ and $\partial _1:R^{1i}\rightarrow Z^i$. Then $\partial _1^{\ast}(L_{Z^i})$ is a local system 
on $R^{1i}$ isomorphic by the descent datum, to $\partial _0^{\ast}(L_{Z^1})$.
In general, given a harmonic metric on the pullback of a semisimple local system
by a dominant map of smooth projective varieties, there is a unique harmonic
metric downstairs whose pullback is the given one. So there is a harmonic metric $h_i$
on $L_{Z^i}$ with $\partial _1^{\ast}(h_i)=\partial _0^{\ast}(h_1)$ on $R^{1i}$.
Together these define a harmonic metric $h$ on $L_0$ over $Z=Z_0$. 

The descent datum $\varphi$ gives a continuous
isomorphism between the two pullbacks 
${\rm pr}_0^{\ast}(L_0)$ and ${\rm pr}_1^{\ast}(L_0)$  over
$Z\times _XZ$ as pointed out in Remark \ref{ztimesz}. Let 
$K\subset Z\times _XZ$ be the subset of points $(z_1,z_2)$ where
$\varphi _{\ast}{\rm pr}_0^{\ast}(h)={\rm pr}_1^{\ast}(h)$.
The strong polarizability condition says that $K$ should be all of $Z\times _XZ$.

The subset $K$ is closed in the usual topology, since it results from the comparison
of two continuously varying metrics. It is also open, indeed if $(z_1,z_2)\in K$ 
and $(y_1,y_2)$ is a nearby  point, then there is a connected smooth projective
algebraic curve $C\rightarrow Z\times _XZ$ passing through 
$(z_1,z_2)$ and $(y_1,y_2)$. The two pullback metrics induce harmonic metrics on 
$L|_C$ which agree over $(z_1,z_2)$, but a harmonic metric on a smooth
projective variety is determined by
its value at one point, so the two metrics agree over $(y_1,y_2)$ too. This
shows that $(y_1,y_2)\in K$, so $K$ is open. It follows that $K$ is a union of
connected components of $Z\times _XZ$. 

On the other hand,
the invariance property of $h_1$ means essentially 
that over $U_X$ it is pulled back from a harmonic metric on the local system $L$ over $U_X$, so an argument with the descent data will
show that the collection of $h^i$ are compatible with the descent data on the 
open set $U\subset Z$ which is the inverse image of $U_X$. Thus $U\times _XU\subset
K$. Lemma \ref{zariskicon} implies that $K=Z\times _XZ$ so $L$ is strongly polarizable. 
\end{proof}

If the condition ``finite index'' is replace by ``surjective'' then
there is a closed immersion of representation spaces. 

\begin{lemma}
\label{immersion}
Suppose $X_{\hdot}$ is a simplicial smooth projective variety, connected, and let 
$(X'_0,x)$ be a connected component
with basepoint in $X_0$. Suppose that $\pi _1(X'_0,x)\rightarrow \pi _1(X_{\hdot},x)$
is surjective. Then the map
$$
R_B(X_{\hdot}, x,G)\rightarrow R_B(X'_0,x,G)
$$
is a closed immersion. 
\end{lemma}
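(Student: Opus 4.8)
The plan is to reduce the statement to an elementary fact about group homomorphisms. First I would invoke the identification, recalled in the discussion following Proposition~\ref{repspacecalc} (and going back to \cite{Moduli}), that in the Betti case $R_B(X_{\hdot},x,G)$ represents the functor $S\mapsto {\rm Hom}(\pi_1(|X_{\hdot}|,x),G(S))$, and similarly $R_B(X'_0,x,G)$ represents $S\mapsto {\rm Hom}(\pi_1(X'_0,x),G(S))$. Since $\pi_1(X'_0,x)$ is the fundamental group of a smooth projective variety it is finitely presented, so both functors are represented by affine schemes of finite type over $\cc$, in particular Noetherian; and $\pi_1(|X_{\hdot}|,x)$ is finitely generated because, by the surjectivity hypothesis, it is a quotient of $\pi_1(X'_0,x)$. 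The map in the statement is the one induced by restricting a framed local system from $|X_{\hdot}|$ to the component $X'_0\subset X_0$; on monodromy it is $\phi^{\ast}:\rho\mapsto\rho\circ\phi$, where $\phi:\pi_1(X'_0,x)\to\pi_1(|X_{\hdot}|,x)$ is the homomorphism assumed surjective.

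Next I would observe that surjectivity of $\phi$ identifies $\pi_1(|X_{\hdot}|,x)$ with the quotient $\pi_1(X'_0,x)/N$, $N:=\ker\phi$, so that for every test scheme $S$ the map $\phi^{\ast}$ identifies ${\rm Hom}(\pi_1(|X_{\hdot}|,x),G(S))$ with the set of homomorphisms $\pi_1(X'_0,x)\to G(S)$ that are trivial on $N$. In functorial language $\phi^{\ast}$ is a monomorphism whose image is the subfunctor $Z\subseteq R_B(X'_0,x,G)$ of representations killing $N$. The whole lemma thus reduces to showing that $Z$ is a closed subscheme: then $\phi^{\ast}$ factors as an isomorphism of $R_B(X_{\hdot},x,G)$ onto $Z$ followed by the closed immersion $Z\hookrightarrow R_B(X'_0,x,G)$, hence is a closed immersion.

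For this last point I would use the universal representation over $R:=R_B(X'_0,x,G)$: each $\delta\in N$ determines an evaluation morphism ${\rm ev}_\delta:R\to G$, $\sigma\mapsto\sigma(\delta)$. Concretely, picking generators $\gamma_1,\dots,\gamma_n$ of $\pi_1(X'_0,x)$ gives a closed embedding $R\hookrightarrow G^n$, and a word in the $\gamma_i$ representing $\delta$ exhibits ${\rm ev}_\delta$ as a composite of the multiplication and inversion morphisms of $G$. Then $Z=\bigcap_{\delta\in N}{\rm ev}_\delta^{-1}(e)$ is an intersection of closed subschemes of the Noetherian scheme $R$, hence closed (and equal to a finite subintersection). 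This is exactly the subfunctor above, completing the argument.

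I do not expect a real obstacle: once the functor-of-points description of the representation schemes is in hand, the argument is purely formal. The one thing requiring care is to reason with functors of points, rather than just closed points, in order to get a genuine \emph{closed immersion} of schemes rather than a bijection onto a closed subset; and to note that the finite generation of $\pi_1(|X_{\hdot}|,x)$ needed for finiteness of type is precisely what the surjectivity hypothesis buys us. An alternative, more geometric proof could instead descend framed local systems along $X'_0\to|X_{\hdot}|$, but the group-theoretic argument above is shorter.
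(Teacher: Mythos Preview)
Your argument is correct and is essentially the same as the paper's own proof, which says in one sentence that the closed immersion is ``just given by the equations saying that the elements of the kernel of the map on fundamental groups have trivial image.'' You have simply unpacked this: identified $R_B$ with the representation scheme of the fundamental group, and then observed that the condition $\sigma(\delta)=e$ for $\delta\in N=\ker\phi$ cuts out a closed subscheme isomorphic to the representation scheme of the quotient.
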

\begin{proof}
It is just given by the equations saying that the elements of the kernel
of the map on fundamental groups, have trivial image. 
\end{proof}

This lemma would apply,
for example, to geometrically unibranched DM-stacks with 
quasiprojective moduli space and trivial generic stabilizer. 
At points corresponding to variations of Hodge structures, 
the closed immersion
expresses the mixed Hodge structure on the complete local ring of 
$R_B(X_{\hdot}, x,G)$ as a quotient of that of $R_B(X'_0,x,G)$ by a mixed Hodge ideal.

Suppose $X_{\hdot}$ is a simplicial smooth projective variety, connected, satisfying
the finite index condition \ref{finiteindex}.
Suppose $P$ is a $G$-principal bundle with $\lambda$-connection,
or a $G$-torsor on $|X_{\hdot}|$. Fix a
basepoint $x\in X_0$ and a framing for $P(x)$.  One should be able
to construct, following \cite{Jeffrey}, a
Lie algebra of forms with coefficients in 
${\rm ad}(P)$ $A^{\hdot}_{\eta}(X_{\hdot}, {\rm ad}(P))$, augmented towards
$Lie (G)$, which controls the deformation theory of $P$ in the sense
of Goldman-Millson. Say that
a dgla is formal in degrees $\leq 1\frac{1}{2}$ if it is joined to a complex with
zero differential, by morphisms inducing isomorphisms on $H^0$ and $H^1$ and injections 
on $H^2$. This is enough to get a control of the structure of the representation space. 

\begin{conjecture}
In this situation, if $P$ is  
polarizable and $X_{\hdot}$ satisfies the finite index condition
\ref{finiteindex}, then the above dgla is formal in degrees $\leq 1\frac{1}{2}$. Furthermore
in this case there are natural quasiisomorphisms between the Dolbeault dgla
controling deformations of the $G$-principal Higgs bundle and the de Rham and
Betti dgla's controling deformations of the associated $G$-torsor. 
\end{conjecture}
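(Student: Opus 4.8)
The plan is to bootstrap from the single-variety case. For a semisimple local system on one smooth projective variety the dgla controlling deformations is formal in degrees $\leq 1\frac{1}{2}$ — this is Goldman--Millson for variations of Hodge structure and, in general, Simpson's principle of two types for the twisted de Rham complex of a harmonic bundle — and the de Rham, Dolbeault and Betti dgla's are linked through the common space of harmonic forms. So the first step is to produce the simplicial enhancement of this data. Invoke Lemma \ref{fiex}: under Condition \ref{finiteindex} the polarizable $P$ is strongly polarizable, hence there is a simplicial family of harmonic reductions $h_{\hdot}$ compatible with every transition map. Following \cite{Dupont} and \cite{Jeffrey}, realize $A^{\hdot}_{\eta}(X_{\hdot},{\rm ad}(P))$ as the total complex of the cosimplicial diagram $k\mapsto A^{\hdot}_{\eta}(X_k,{\rm ad}(P_k))$ with simplicial differential the alternating sum of pullbacks along the face maps, augmented towards $Lie(G)$ by evaluation at $x\in X_0$, so that its relative version is the cone computing the tangent complex of $R_{\eta}(X_{\hdot},x,G)$ as in Remark \ref{defrep}.

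Second, on each level the metric $h_k$ furnishes the Kähler identities of \cite{hbls}: if $D=D'+D''$ are the operators attached to $({\rm ad}(P_k),h_k)$ then $\Delta_D=2\Delta_{D'}=2\Delta_{D''}$, so a single space $\mathcal H^{\hdot}_k$ of harmonic forms computes de Rham, Dolbeault and, through the family over $\aaa^1$ given by the level-wise Dolbeault complex of $\Mm_{Hod}$, every $\lambda$-connection cohomology, and there is a $\partial\bar\partial$-type ``principle of two types'' on $X_k$. The delicate point is that the face maps are pullbacks along morphisms of smooth projective varieties which need not be finite, so pullback does not preserve harmonic forms and there is no naive harmonic subcomplex of the total complex. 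The way around this is to equip the total de Rham dgla with the weight filtration by simplicial degree and the Hodge filtration induced level-wise, making it a mixed Hodge (in the VHS case) or mixed twistor (in general) dgla — this is Deligne's construction in \cite{hodge3} with semisimple coefficients, whose input is the purity of the Hodge, resp. twistor, structure on each $H^{\hdot}(X_k,{\rm ad}(P_k))$. A spectral sequence in the simplicial degree, whose $E_1$-page is governed by the level-wise principle of two types together with this purity, then yields the principle of two types for the total complex; equivalently one may invoke the $E_1$-formality of cohomologically connected mixed Hodge, resp. twistor, dgla's. Since we only need isomorphisms on $H^0,H^1$ and injectivity on $H^2$, this suffices for formality in degrees $\leq 1\frac{1}{2}$.

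Third, for the comparison of the three dgla's: level-wise, $A^{\hdot}_{\rm Dol}$, $A^{\hdot}_{\rm DR}$ and $A^{\hdot}_B(X_k,{\rm ad}(P_k))$ are all quasi-isomorphic through $\mathcal H^{\hdot}_k$, and compatibility of the $h_k$ with the face maps makes these identifications compatible with the simplicial differentials; interpolating with the $\lambda$-connection complexes over $\aaa^1$ produces explicit zig-zags of quasi-isomorphisms between the total complexes, and a final $C_{\infty}$ or $L_{\infty}$ homotopy-transfer step turns these into quasi-isomorphisms of dgla's compatible with the Goldman--Millson deformation functors, hence with the identifications of completed local rings of $R_B$, $R_{DR}$ and $R_H$.

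The step I expect to be the main obstacle is the second one: extracting a usable principle of two types on the total complex without harmonic representatives, that is, establishing the Hodge-III formalism with coefficients in a harmonic bundle on a simplicial smooth projective variety and then upgrading its cohomological output to dgla-level formality. The strong polarizability supplied by Condition \ref{finiteindex} is precisely what allows the Hodge, resp. twistor, filtrations on the levels to be glued into a coherent structure on the total complex; the examples following Lemma \ref{cstaraction} show that without the finite index hypothesis the levels need not even be compatibly polarizable, so no such structure exists and the conjecture would fail, which is why that hypothesis is essential rather than cosmetic here.
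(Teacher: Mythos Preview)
The statement you are attempting to prove is labelled a \emph{conjecture} in the paper, and the paper does not prove it. Immediately after stating it, the author writes: ``To get around this conjecture we can prove directly one of the main consequences, but without making any statement about quadraticity,'' and then proves Lemma \ref{samesings} instead, using the known formality on each $X_k$ separately together with the equalizer description \eqref{cartsquare}. So there is no proof in the paper to compare your proposal against.

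As for your proposal itself: the outline is the natural one, and you have correctly located the difficulty. The gap is precisely your second step. Saying that a spectral sequence in simplicial degree ``yields the principle of two types for the total complex'' is not a proof: the $\partial\overline{\partial}$-lemma is a chain-level statement about lifting exact forms, and a spectral sequence whose $E_1$ page carries pure structures tells you about the cohomology, not about the dgla up to quasi-isomorphism. Invoking ``$E_1$-formality of cohomologically connected mixed Hodge, resp.\ twistor, dgla's'' is likewise not a proof here: in the Hodge case one would need to equip the total dgla $A^{\hdot}_{\eta}(X_{\hdot},{\rm ad}(P))$ itself with a mixed Hodge diagram structure compatible with the bracket (not just a mixed Hodge structure on its cohomology), and then cite a formality theorem of Morgan--Hain type; you have not carried out that construction. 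In the twistor case the situation is worse, since a formality theorem for mixed twistor dgla's at the level of generality you need is not available in the literature cited, and the paper's own Section 9 only develops the abelian (complex-of-sheaves) theory. Your acknowledgement that this is ``the main obstacle'' is accurate; what is missing is an argument that actually clears it, and that is exactly why the author left the statement as a conjecture.
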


To get around this conjecture we can prove directly one of the 
main consequences, but without making any statement about
quadraticity. 

\begin{lemma}
\label{samesings}
Suppose $X_{\hdot}$ is a simplicial scheme with smooth projective levels
satisfying the finite index condition \ref{finiteindex}. 
Suppose $\rho : \pi _1(X_{\hdot})\rightarrow G$ is a semisimple
representation,
corresponding to principal $G$-Higgs bundle $(P,\theta )$. 
The local analytic structures of $M_H(X_{\hdot},G)$ at $(P,\theta )$ and
of $M_B(X_{\hdot},G)$ or $M_{DR}(X_{\hdot}, G)$ at $\rho$ are the same.
\end{lemma}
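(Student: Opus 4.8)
The plan is as follows. By the Riemann--Hilbert correspondence $\Mm_{DR}(X_{\hdot},G)^{\rm an}\cong \Mm_{B}(X_{\hdot},G)^{\rm an}$ recalled above, together with its framed and local versions, the germs of $M_{DR}(X_{\hdot},G)$ and $M_B(X_{\hdot},G)$ at the point corresponding to $\rho$ are analytically isomorphic, so the real content is the comparison between these and the germ of $M_H(X_{\hdot},G)$ at $(P,\theta)$. All three germs are controlled, in the sense of Goldman--Millson deformation theory, by a differential graded Lie algebra: for $\eta=H$ it is the total Dolbeault dgla $A^{\hdot}_{\rm Dol}(X_{\hdot},{\rm ad}(P))$ obtained by combining the Dolbeault complexes $A^{\hdot}_{\rm Dol}(X_k,{\rm ad}(P_k),\theta)$ with the alternating sum of face maps, as in Lemma \ref{defhiggs} and \cite{Jeffrey}, with Lie bracket the levelwise wedge--bracket; for $\eta=DR$ (and, via Riemann--Hilbert, $\eta=B$) it is the corresponding total de Rham dgla built from the flat connections. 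Their relative ($\langle x\rangle$-augmented) versions, cf.\ Remark \ref{defrep}, control the framed representation schemes, and passing to $M_\eta$ amounts to the Goldman--Millson quotient by the (reductive) stabilizer, which is the same group on all three sides. Thus it suffices to produce a chain of quasi-isomorphisms of dglas, respecting these augmentations, linking the total Dolbeault dgla to the total de Rham dgla.

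The key input is strong polarizability. Since $X_{\hdot}$ satisfies Condition \ref{finiteindex}, the semisimple local system underlying $\rho$ is strongly polarizable by Lemma \ref{fiex}: there is a family of harmonic metrics $h_k$ on the $L_k$ compatible with all transition maps $\phi^{\ast}L_k\cong L_m$. On each level $X_k$ the classical nonabelian Hodge theory of \cite{hbls}---the K\"ahler identities and principle of two types for the harmonic bundle $(\Ee_k,h_k)$---produces the usual formality comparison between $A^{\hdot}_{\rm Dol}(X_k,{\rm ad},\theta)$, $A^{\hdot}_{\rm DR}(X_k,{\rm ad})$ and the harmonic representatives with zero differential, all as dglas (up to the higher-bracket, i.e.\ $1\frac{1}{2}$-formality, corrections, which do not affect the deformation functor through $H^2$). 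Because the $h_k$ are compatible with the face maps, the Laplacians and the harmonic projections commute with the face maps, so these levelwise comparisons are strictly compatible with the simplicial differential. Assembling the total complexes therefore yields a zig-zag of quasi-isomorphisms of (augmented) total dglas between the Dolbeault and de Rham sides, whence the deformation functors, and hence the analytic germs of $M_H$ at $(P,\theta)$ and of $M_{DR}\cong M_B$ at $\rho$, coincide. Alternatively, one can phrase this through the total complex underlying Proposition \ref{mhsrepvar}: strong polarizability gives it a pure twistor, rather than merely mixed, structure on a proper smooth simplicial variety, from which formality and the comparison follow.

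The main obstacle is the simplicial assembly. The levelwise formality equivalences of \cite{hbls} are genuine quasi-isomorphisms but are \emph{a priori} only homotopy-natural in $k$, so one must check that the homotopies can be chosen coherently across $\Delta$---this is where it is convenient to work with the harmonic projection, which is \emph{strictly} natural for the compatible metrics $h_k$, rather than with an arbitrary formality splitting---and that no correction terms are introduced by the spectral sequence of the simplicial filtration on the total complexes beyond those already present at the level of a single smooth projective variety. Once this coherence is in place, the comparison of deformation functors, and then of the germs (including the Goldman--Millson quotient by the common stabilizer group), is routine. A secondary point is to ensure that the augmentations towards $Lie(G)$ induced by the basepoint $x\in X_0$ are respected throughout, so that the comparison descends from the framed picture to $M_H$, $M_{DR}$ and $M_B$ themselves.
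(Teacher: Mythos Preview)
Your approach is precisely the route that the paper isolates as a \emph{conjecture} immediately before this lemma and then explicitly sidesteps (``To get around this conjecture we can prove directly one of the main consequences''). So you should expect a real obstacle, and there is one.

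The gap is the assertion that ``the Laplacians and the harmonic projections commute with the face maps'' once the bundle metrics $h_k$ are compatible. This is false: the Laplacian and the harmonic projector on $A^{\hdot}(X_k,{\rm ad})$ depend not only on the hermitian metric $h_k$ on the bundle but also on the K\"ahler metric on the base $X_k$, and the face maps $X_m\to X_k$ are morphisms of smooth projective varieties, not isometries. Pulling back a harmonic form along such a map does not in general yield a harmonic form. Hence the subspace of harmonic representatives, and the projection onto it, are \emph{not} simplicially functorial, and your levelwise formality zig-zags do not assemble into a strict map of simplicial dgla's. Strong polarizability buys you compatibility of $D'$ and $D''$ (which involve only the holomorphic data and $h_k$), but not of the adjoints $D'^{\ast}$, $D''^{\ast}$ or of $\Delta$; the sentence you flagged as ``the main obstacle'' is exactly where the argument breaks.

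The paper's argument avoids building a dgla on $X_{\hdot}$ altogether. For each smooth projective $X_k$ the formality of \cite{hbls} yields a \emph{canonical} isomorphism of formal neighbourhoods $R_B(X_k,{\bf x}_k,G)^{\wedge}\cong R_H(X_k,{\bf x}_k,G)^{\wedge}$ which, unlike the intermediate harmonic models, is independent of the K\"ahler class and is functorial for morphisms of smooth projective varieties. These isomorphisms are plugged into the equalizer description \eqref{cartsquare} of $R_\eta(X_{\hdot},{\bf x}_{\hdot},G)$ to obtain the same canonical isomorphism at the simplicial level. Finally, Condition \ref{finiteindex} guarantees that the stabilizer of $\rho$ is reductive, and Luna's \'etale slice theorem (as in \cite{EyssidieuxSimpson}) is invoked to pass from $R_\eta$ to $M_\eta$. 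The moral: canonical isomorphisms of formal schemes glue along a diagram even when the dgla presentations that produce them do not.
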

\begin{proof}
In the case of a smooth projective variety,
the formal local structures of the representation spaces for $\eta = B$ and $\eta = H$,
at points corresponding to a semisimple representation and its corresponding Higgs
bundle, are canonically isomorphic. The isomorphism respects the
group actions by change of frames, and is 
functorial for morphisms of smooth projective varieties. 

This comes from the formality
isomorphism on the Goldman-Millson dgla's for a smooth projective variety.
From the expression of \eqref{cartsquare} we get the same canonical isomorphisms
$$
R_B(X_{\hdot},{\bf x}_{\hdot}, GL(n))^{\wedge , \rho}\cong 
R_H(X_{\hdot},{\bf x}_{\hdot}, GL(n))^{\wedge , (P,\theta )}.
$$
From Condition \ref{finiteindex}, $\rho$ is a point where the stabilizer is reductive.
Using Luna's etale slice theorem as in \cite{EyssidieuxSimpson}, 
and taking the quotient by the stabilizer, gives the required local formal
isomorphism $M_B^{\wedge , \rho}\cong M_H^{\wedge , (P,\theta )}$.
\end{proof}

Condition \ref{finiteindex} implies that the categorical equivalence between
Higgs bundles and local systems gives a homeomorphism of character varieties,
joining together two different complex structures to give a quaternionic structure
as in \cite{Hitchin}. One expects that some condition such as \ref{finiteindex}
is necessary here, because of the non-continuity of the correspondence
at non-semisimple points, see the Counterexample of \cite{Moduli} (II, p. 39). 

\begin{theorem}
\label{hyperkahler}
Suppose
$X_{\hdot}$ is a simplicial smooth projective variety, connected and which satisfies
the finite index condition \ref{finiteindex}. Suppose $G$ is a linear
reductive group. Then the 
points of the various coarse moduli spaces $M_{\eta}(X_{\hdot},G)$
parametrize polarizable $G$-local systems. The
correspondence between Higgs bundles
and local systems gives a homeo\-morphism of coarse moduli spaces
$$
M_H(X_{\hdot},G) ^{\rm top} \cong M_B(X_{\hdot},G) ^{\rm top}.
$$
There are stratifications of $M_H$, $M_{DR}$, and $M_B$ by locally closed smooth subvarieties which correspond to each other by the above homeomorphism and the
Riemann-Hilbert isomorphism between $M_{DR}^{\rm an}$ and $M_B^{\rm an}$,
such that the Hitchin and Betti complex structures combine to give a 
hyperk\"ahler structure on each stratum. 
\end{theorem}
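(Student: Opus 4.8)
The plan is to reduce everything level by level to the classical nonabelian Hodge theory for a single smooth projective variety (\cite{Moduli}, \cite{hbls}), using the finite index condition \ref{finiteindex} to control the passage between the several notions of polarizability, and then to glue. First I would pin down the point sets: by the good quotient theorem of the previous section $M_{\eta}(X_{\hdot},G)=R_{\eta}(X_{\hdot},x,G)/G$, whose points correspond to closed $G$-orbits, i.e.\ to polystable simplicial principal Higgs bundles of semiharmonic type for $\eta=H$ and to semisimple representations for $\eta=B,DR$. Proposition \ref{correspondence} carries polystable simplicial Higgs bundles to local systems, and by Lemma \ref{fiex} polystability, semisimplicity, polarizability and strong polarizability all coincide on $X_{\hdot}$. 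This identifies the point sets of $M_H$, $M_{DR}$ and $M_B$ with the isomorphism classes of polarizable $G$-local systems, giving the first assertion and the underlying bijection $M_H^{\rm top}\to M_B^{\rm top}$.

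Next I would upgrade this bijection to a homeomorphism through the framed description. By Proposition \ref{repspacecalc}, for a simplicial basepoint ${\bf x}_{\hdot}$ meeting all components of $X_0,X_1,X_2$, the scheme $R_{\eta}(X_{\hdot},{\bf x}_{\hdot},G)$ is the equalizer of $R_{\eta}(X_0,{\bf x}_0,G)\twoarrows R_{\eta}(X_1,{\bf x}_1,G)$. For each smooth projective $X_k$ the classical correspondence gives a homeomorphism $R_H(X_k,{\bf x}_k,G)^{\rm top}\cong R_B(X_k,{\bf x}_k,G)^{\rm top}$ on the semiharmonic loci, functorial in $X_k$ and hence commuting with the face maps; so it induces a homeomorphism of the equalizers, and after quotienting by the relevant copies of $G$ (Corollary \ref{repcalccor}) and passing to good quotients, a homeomorphism $M_H(X_{\hdot},G)^{\rm top}\cong M_B(X_{\hdot},G)^{\rm top}$, transported to $M_{DR}$ by Riemann-Hilbert. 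I expect this to be the main obstacle: the analytic content is the continuity of the simplicial correspondence, i.e.\ that a continuous family of polystable simplicial Higgs bundles carries a continuously varying \emph{compatible} family of harmonic metrics and thus maps continuously to local systems, and conversely. Here strong polarizability, which holds by Condition \ref{finiteindex} and Lemma \ref{fiex}, is what lets the level-wise Corlette-Simpson metrics be chosen compatibly with the simplicial gluing, and it confines us to the semisimple locus, which is precisely where the correspondence is continuous.

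Finally I would build the stratifications and the metrics. Stratify $M_B(X_{\hdot},G)$ by the conjugacy class of the reductive Zariski closure of the image of $\pi_1(|X_{\hdot}|)$ (for $G=GL(n)$, by the isotypic type of the representation), and transport this stratification to $M_{DR}$ and $M_H$ by Riemann-Hilbert and the homeomorphism above; Lemma \ref{samesings} ensures that the local analytic structures agree along the strata. On a fixed stratum the stabilizer in $G$ is constant and reductive, so Luna's etale slice theorem together with Lemma \ref{defhiggs} and its de Rham and Betti analogues (Remark \ref{defrep}) presents the stratum etale-locally as a smooth quotient whose tangent space is the appropriate piece of the cohomology $H^1_{\eta}(X_{\hdot},x,{\rm ad})$ of the adjoint local system. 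Since ${\rm ad}(L_{\hdot})$ is again polarizable, Hodge theory on each $X_k$ puts a polarized Hodge structure on each $H^j(X_k,{\rm ad}(L_k))$, the face maps are morphisms of Hodge structures, and assembling the total complex equips the tangent space to each stratum with the compatible Dolbeault, de Rham and Betti structures that, as in Hitchin's computation \cite{Hitchin} in the smooth projective case, combine into the quaternionic data defining the metric; globalizing over the stratum via the family versions of these identifications yields the hyperk\"ahler structure. As classically, the stratification is unavoidable here: only on a stratum of fixed representation type is the local structure smooth and the pointwise quaternionic data globally coherent.
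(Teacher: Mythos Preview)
Your first paragraph, identifying the point sets via Lemma \ref{fiex} and Proposition \ref{correspondence}, is correct and matches the paper.

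The gap is in the homeomorphism step. There is no classical homeomorphism $R_H(X_k,{\bf x}_k,G)^{\rm top}\cong R_B(X_k,{\bf x}_k,G)^{\rm top}$: the tannakian correspondence does give a level-wise \emph{bijection} of framed representation schemes, but it is discontinuous at non-semisimple points (this is exactly the Counterexample of \cite{Moduli}, II p.~39, alluded to just before the statement). So one cannot manufacture the simplicial homeomorphism by taking an equalizer of nonexistent level-wise homeomorphisms of the $R_\eta$ and then passing to good quotients; and restricting attention to the semisimple locus does not help, since the topology on $M_\eta$ is the quotient topology from the full $R_\eta$. The paper proceeds instead as in \cite{Moduli}: one introduces the real subspaces $R^h_\eta\subset R_\eta(X_{\hdot},{\bf x}_{\hdot},G)$ of framings compatible with a harmonic metric. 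For these the correspondence \emph{is} a homeomorphism $R^h_H\cong R^h_B$, strong polarizability (part (3) of Condition \ref{finiteindex}) being exactly what makes the simplicially compatible harmonic metric exist, and $M_\eta$ is the quotient of $R^h_\eta$ by a compact group. A byproduct of this argument is that the restriction $M_\eta(X_{\hdot},G)\to M_\eta(X_0,G)$ is topologically proper, hence a finite morphism for $\eta=B$.

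Your stratification and hyperk\"ahler step are also different from the paper, and the latter is incomplete as written: endowing each tangent space with a pointwise quaternionic/Hodge structure does not yield a hyperk\"ahler metric on a stratum without establishing integrability (closedness of the three K\"ahler forms), which your sketch does not address. The paper sidesteps this entirely. Using the finiteness of $M_\eta(X_{\hdot},G)\to M_\eta(X_0,G)$ just obtained, it defines a canonical stratification \emph{relative to $X_0$}: the open stratum consists of smooth points where the restriction to $M_\eta(X_0,G)$ is \'etale onto its image, and one iterates on the complement. Lemma \ref{samesings} guarantees that a point sits at the same depth on the $H$ and $B$ sides, so the stratifications match under the homeomorphism. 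Each stratum is then \'etale over a canonically defined locally closed subvariety of $M_\eta(X_0,G)$; those images, being compatible with all complex structures, are hyperk\"ahler subvarieties of the classical Hitchin--Fujiki space \cite{Fujiki}, and the \'etale map pulls that structure back. Thus the hyperk\"ahler metric is inherited from the known single-variety case rather than rebuilt from cohomological tangent data.
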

\begin{proof}
This is a sketch of proof.
The correspondence preserves semisimplicity so it gives a map from the points of
$M_H$ to the points of $M_B$. 
Proceed as in \cite{Moduli} to get the homeomorphism, using the 
real subspaces of $R^h_B\subset R_B$ and $R^h_H\subset R_H$ 
consisting of framings compatible with
harmonic metrics. The moduli spaces are quotients of $R_B^h$ and $R^h_H$
by compact groups. 
This argument will give, furthermore, that the map
$$
M_{\eta}(X_{\hdot}, G)\rightarrow M_{\eta}(X_0,G)
$$
is a proper map of topological spaces, from which it follows that it is a proper
map of schemes. Since, for $\eta =B$, these are affine, we get in fact that
the emap is finite. 
 
Define a canonical stratification by
starting with the open set of smooth points (of the reduced subscheme) where furthermore
the restriction map to $M_{\eta}(X_0,G)$ is etale onto its image, looking at the complement,
and continuing with the same construction.  Lemma \ref{samesings}
shows that a point $\rho \in M_B$ will be at the same depth of this stratification as
its corresponding point $(P,\theta )\in M_H$. The images of the strata are canonically
defined locally closed subvarieties of $M_{\eta}(X_0,G)$. As such, they 
are compatible with all of the complex structures, so they are 
hyperk\"ahler subvarieties of the hyperk\"ahler structure of Hitchin-Fujiki \cite{Fujiki}.
Being etale over those of
$M_{\eta}(X_0,G)$, the strata in $M_{\eta}(X_{\hdot}, G)$ 
have hyperk\"ahler structures too. 
\end{proof}

The homeomorphism gives  
continuity of the $\cc ^{\ast}$ action. 

\begin{corollary}
\label{cstarcont}
Suppose
$X_{\hdot}$ is a simplicial smooth projective variety, connected and which satisfies
the finite index condition \ref{finiteindex}.
Then the action of $\cc ^{\ast}$ is continuous on the character variety
$M_B(X_{\hdot},G)$. 

In particular, if $\rho$ is a semisimple representation of
$\pi _1(|X_{\hdot}|)$ which is locally rigid,
then it is fixed by the action of $\cc ^{\ast}$ so it 
underlies a strongly polarizable variation of Hodge structure.

The real Zariski closure of its monodromy group is of Hodge type. Therefore,
lattices in real groups not of Hodge type cannot occur as $\pi _1(|X_{\hdot}|)$.
\end{corollary}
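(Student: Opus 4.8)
The plan is to get everything from the homeomorphism of Theorem \ref{hyperkahler} together with Lemmas \ref{cstaraction} and \ref{fiex}. For the first assertion, recall that on the Higgs side the $\cc^{\ast}$-action $t\cdot (E_{\hdot},\theta )=(E_{\hdot},t\theta )$ is algebraic, hence continuous on $M_H(X_{\hdot},G)^{\rm top}$; the $\cc^{\ast}$-action on $M_B(X_{\hdot},G)$ is by definition obtained by transporting this one through the homeomorphism $M_H(X_{\hdot},G)^{\rm top}\cong M_B(X_{\hdot},G)^{\rm top}$ of Theorem \ref{hyperkahler}, so it is continuous. This is precisely the simplicial analogue of the argument in \cite{Moduli}.

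Now let $\rho$ be semisimple and locally rigid. By Remark \ref{defrep} the Zariski tangent space to $R_B(X_{\hdot},x,G)$ at $\rho$ is the relative cohomology $H^1_B(X_{\hdot},x,{\rm ad}(\rho ))$, and local rigidity of $\rho$ means exactly that $[\rho ]$ is an isolated point of $M_B(X_{\hdot},G)$. The orbit $\cc^{\ast}\cdot [\rho ]$ is the continuous image of the connected space $\cc^{\ast}$, hence connected, and it contains $[\rho ]$; moreover $\{[\rho ]\}$ is open in this orbit because $[\rho ]$ is isolated, and closed because $M_B(X_{\hdot},G)^{\rm top}$ is Hausdorff. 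As the orbit is connected it therefore equals $\{[\rho ]\}$, so $\rho$ is a $\cc^{\ast}$-fixed point. Transporting back to $M_H$, the corresponding Higgs bundle $(E_{\hdot},\theta )$ satisfies $(E_{\hdot},\theta )\cong (E_{\hdot},t\theta )$ for all $t\in\cc^{\ast}$, so by Lemma \ref{cstaraction} it underlies a polarizable variation of Hodge structure; since $\rho$ is semisimple, the finite index condition \ref{finiteindex} and Lemma \ref{fiex} improve ``polarizable'' to ``strongly polarizable'', giving the strongly polarizable VHS underlying $\rho$.

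For the Hodge-type statement, restrict this strongly polarizable VHS to a connected component $X_0'$ of $X_0$ for which, by part (2) of Condition \ref{finiteindex}, the image of $\pi _1(X_0')$ in $\pi _1(|X_{\hdot}|)$ has finite index. On the smooth projective variety $X_0'$ one then has an honest polarized variation of Hodge structure, so the classical structure theory of the monodromy of polarized VHS (which already yields the inclusion in some $U(p,q)$ recorded in Remark \ref{spvhs}, and in fact the sharper assertion) shows that the real Zariski closure of the image of $\pi _1(X_0')$ is a group of Hodge type. Since that image has finite index in the monodromy group of $\rho$, the two real Zariski closures have the same identity component, and being of Hodge type only depends on the identity component; hence the real Zariski closure of the monodromy of $\rho$ is of Hodge type, which is the instance of Conjecture \ref{vhsht} relevant here. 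Finally, if $\Gamma$ were a lattice in a real Lie group $H$ not of Hodge type with $\Gamma\cong\pi _1(|X_{\hdot}|)$, then under the standard rigidity hypotheses (Weil, Calabi--Vesentini, Margulis --- excluding factors locally isomorphic to $SL(2,\rr)$ or $SL(2,\cc)$) the tautological representation $\Gamma\hookrightarrow H\hookrightarrow GL(N)$ is locally rigid, and it is semisimple since its image is Zariski dense in the semisimple group $H$ by Borel density; the previous steps then force the real Zariski closure of that image, namely $H$ itself, to be of Hodge type --- a contradiction.

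The step I expect to be the main obstacle is the Hodge-type conclusion: passing from the qualitative ``monodromy lies in some $U(p,q)$'' of Remark \ref{spvhs} to the sharp statement genuinely uses the reduction along the finite-index component $X_0'$ to the classical smooth projective situation, and one must check carefully that neither the notion ``of Hodge type'' nor the relevant real Zariski closure is affected by replacing a group by a finite-index subgroup. The remaining steps --- continuity, the fixed-point argument, and the upgrade to strong polarizability --- are essentially formal given Theorem \ref{hyperkahler} and Lemmas \ref{cstaraction} and \ref{fiex}.
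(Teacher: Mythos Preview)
Your proposal is correct and follows essentially the same route as the paper's proof. The paper is much terser---it simply says the continuity follows from algebraicity on $M_H$ plus the homeomorphism of Theorem \ref{hyperkahler}, then defers to \cite{hbls} for the locally-rigid-implies-VHS step, and for the Hodge-type conclusion it notes (as you do) that the real Zariski closure of the monodromy of $\pi_1(X_0)$ has finite index in that of $\pi_1(|X_{\hdot}|)$, so the Hodge-type conditions are equivalent and one concludes via \cite{hbls} on $X_0$. Your version spells out the connected-orbit argument and the finite-index reduction explicitly; the only superfluous ingredient is the reference to Remark \ref{defrep} on tangent spaces, which plays no role once you take ``locally rigid'' to mean ``isolated in $M_B$''.
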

\begin{proof}
The action is algebraic on $M_H$ so by the homeomorphism of the previous
theorem it is continuous on $M_B$. The rest follows as in \cite{hbls}. 
For the last part, note that the real Zariski closure of
the monodromy group of $\pi _1(X_0)$ has finite index in
the real Zariski closure of the monodrom on $\pi _1(|X_{\hdot}|)$,
so the conditions of Hodge type are equivalent, one concludes using
\cite{hbls} for $X_0$. 
\end{proof}

By Theorem \ref{unibranch}, these restrictions, analogous to those
for smooth projective varieties, apply in particular to
any normal or even geometrically unibranched DM-stack. 

An interesting question is whether other restrictions on fundamental groups of
compact K\"ahler manifolds, including many works
such as Gromov's \cite{Gromov}---see
the full discussion of \cite{AmorosBurgerEtAl}---extend to
the $\pi _1(|X_{\hdot}|)$ for $X_{\hdot}$ satisfying the finite index condition
\ref{finiteindex}. A weaker question is to what extent these restrictions hold
for smooth proper DM-stacks. Is the class of 
fundamental groups of smooth proper DM-stacks different from the classes 
of compact K\"ahler groups, or fundamental groups of smooth projective varieties?
And how do these compare with the classes of fundamental groups of normal projective
varieties, normal DM-stacks, the $\pi _1(|X_{\hdot}|)$ for $X_{\hdot}$ satisfying the finite index condition
\ref{finiteindex}, etc?

\section{The smooth case}

Look now at the above constructions for the case when $X$ is a smooth proper
Deligne-Mumford stack. This was our main and original motivation, even though
for expositional reasons we have concentrated on the simplicial case up until now.
It is one of the cases which has attracted the most
attention in the literature. 
For example, Biswas-G\'omez-Hoffmann-Hogadi \cite{BGHH} treat 
local systems
over an abelian gerb. If $X$ is a smooth projective variety with
simple normal crossings divisor $D$, then the Cadman-Vistoli
root stacks which have been discussed
previously are smooth and proper. Local systems on root stacks correspond to
parabolic bundles (with rational weights), 
so the numerous works concerning parabolic bundles may
be viewed as treating local systems on the root stacks, as will be discussed
in detail in the second half of this section. 

Fix a connected smooth proper DM-stack $X$, and let  $Z_{\hdot}\rightarrow X$ 
be a proper surjective hypercovering
such that the $Z_k$ are smooth projective varieties given by Theorem \ref{fullresolution}.
The first terms $(Z,R,K)$ are assumed to form a partial simplicial resolution
constructed according to the recipe above Theorem \ref{fullresolution},
starting from a surjective-where-etale
morphism $Z\rightarrow X$ from a smooth projective
variety of Theorem \ref{maincovering}. 

For $\eta = B,DR,H,Hod,DH$ the 
moduli stacks $\Mm _{\eta}(Z_{\hdot},G)$ may be interpreted as moduli stacks
of the various kinds of local systems on $X$ 
$$
\Mm _{\eta}(Z_{\hdot},G)\cong \Mm _{\eta}(X,G),
$$
indeed bundles with $\lambda$-connection
(resp. local systems)
on $Z_{\hdot}$ descend to bundles with $\lambda$-connection (resp. local systems)
on $X$, by Lemma \ref{vbdescent2} (resp. Lemma \ref{lsdescends}).
Semistability for Higgs bundles requires some further discussion below. 

Letting $z\in Z$ be a lift of the basepoint $x\in X$, the same may
be said of the representation schemes 
$$
R_{\eta}(Z_{\hdot},z,G) \cong R_{\eta}(X_{\hdot},x,G).
$$

Local systems on $X$ may be identified with representations of
Noohi's fundamental group $\pi _1(X,x)$ defined in \cite{NoohiPi1},
which is the same as the fundamental group of the topological realization
$|X|$. 
So the Betti moduli stacks can be expressed
$$
R_B(X,x,G)=Hom (\pi _1(X,x),G)
$$
$$
\Mm _{\rm B}(X) = Hom (\pi _1(X,x),G)\stackquot G.
$$
We have the Riemann-Hilbert correspondence between 
local systems and vector bundles with integrable algebraic connection
$$
\Mm _{\rm B}(X)^{\rm an}\cong \Mm _{\rm DR}(X)^{\rm an}
$$
which 
may be constructed over $Z$ and then descended down to $X$. 

A smooth proper DM-stack satisfies Condition \ref{finiteindex}, by Theorem \ref{unibranch},
so polarizability, strong polarizability and semistability are the same
by Lemma \ref{fiex}. More generally all the results of the previous section apply. 

In order to give an intrinsic description of the moduli stack of Higgs bundles
$\Mm _H(X,G)$,
a notion of semistability is needed.

Nironi has introduced a very interesting notion of {projective DM-stack} \cite{Nironi}.
This allows him to generalize the theory of moduli of vector bundles and similar objects,
by applying the same techniques as in the projective case. Our technique applies to any
proper smooth DM-stack, but doesn't give as much as what Nironi can do: we are constrained
to consider only moduli spaces of objects with vanishing Chern classes, which correspond
in some way to representations of the fundamental group, while Nironi's techniques 
in the case of a ``projective'' DM-stack (in his sense)
would allow consideration of moduli spaces of vector bundles with arbitrary Chern classes. 

On a general smooth proper
DM-stack $X$ we don't have a K\"ahler class to use for defining semistability,
but due to the fact that we are interested in flat bundles here i.e. $c_2=0$,
there are various ways of getting around that: either require semistability 
for {\em some} variety mapping to $X$, or for {\em all} varieties mapping to $X$.

\begin{definition}
\label{sepostable}
Suppose $(E,\theta  )$ is a Higgs bundle on a smooth proper DM-stack $X$.
We say that it is potentially semistable (resp. potentially polystable) if there exists a polarized projective variety
$Y$ and a surjective map $g:Y\rightarrow X$ such that the Higgs bundle $g^{\ast}(E,\theta )$ is
slope-semistable (resp. slope-polystable) on $Y$ with respect to the given polarization. 
\end{definition}

In general this notion will not be very well behaved: even if $X$ is a projective variety itself, we are allowing semistability
with respect to an arbitrary polarization. However, when the Chern classes vanish then the condition no longer depends on
a choice of polarization so we can expect that it gives a reasonable condition on a DM-stack too.
Recall that Vistoli's theorem provides the notion of rational Chern classes on $X$, see \cite{IyerSimpson1}. Thus, 
the condition $c_i(E)=0$ in $H^{2i}(|X|,\qq )$ makes good sense. 

The following condition for Higgs bundles
has been introduced and 
extensively considered by Bruzzo, Hern\'andez, Otero and others
\cite{BruzzoHernandez} \cite{BruzzoOtero}. They relate it to a condition of numerical effectivity, as was originally considered for vector bundles by Demailly, Peternell, and Schneider \cite{DemaillyPeternellSchneider}.

\begin{definition}
\label{seplutable}
Suppose $(E,\theta  )$ is a Higgs bundle on a smooth proper DM-stack $X$.
We say that it is {pluri-semistable} (resp. {pluri-polystable}) if 
for every curve $Y$ and map $g:Y\rightarrow X$ the Higgs bundle $g^{\ast}(E,\theta )$ is
slope-semistable (resp. slope-polystable) on $Y$ with respect to the polarization
which, for a curve, is unique up to scalars. 
\end{definition}

\begin{remark}
If $(E,\theta )$ is pluri-semistable (resp. pluri-polystable) then for any
polarized smooth projective variety $Y$ and map $g:Y\rightarrow X$, 
$g^{\ast}(E,\theta )$ is
slope-semistable (resp. slope-polystable) on $Y$ with respect to the given polarization. 
In particular $(E,\theta )$ is potentially semistable (resp. potentially polystable).
\end{remark}

Potential semistability implies pluri-semistability when the rational Chern classes
vanish, and these conditions are also related
to Higgs-nefness of the bundle and its dual, see Bruzzo-Otero \cite[Theorem 4.7]{BruzzoOtero}.

\begin{lemma}
\label{polystableok}
Suppose $(E,\theta )$ is a potentially semistable (resp. potentially polystable) 
Higgs bundle on $X$, with $c_i(E)=0$ in rational cohomology
for $i=1,2$. Then it is pluri-semistable (resp. pluri-polystable).
In particular for any map from a smooth projective variety $g:Y\rightarrow X$, 
the pullback $g^{\ast}(E,\theta )$  is a successive extension of
stable Higgs bundles and corresponds to a
representation of $\pi _1(Y)$ via \cite{hbls}. The rational Chern classes vanish
for all $i$. 
\end{lemma}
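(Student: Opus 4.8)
The plan is to reduce everything to the classical statement for smooth projective varieties, using the surjective map $g:Y\to X$ appearing in the definition of potential semistability to control the situation over an arbitrary test curve. First I would recall the setup: $(E,\theta)$ is potentially semistable, so there is a polarized smooth projective $Y_0$ and a surjection $g_0:Y_0\to X$ with $g_0^{\ast}(E,\theta)$ slope-semistable and, by hypothesis, $c_1(g_0^{\ast}E)=c_2(g_0^{\ast}E)=0$ in rational cohomology (these pull back from $X$). By the main equivalence of \cite{hbls}, a slope-semistable Higgs bundle with vanishing rational $c_1,c_2$ on a smooth projective variety is a successive extension of stable Higgs bundles each of which corresponds to an irreducible representation of $\pi_1$; in particular $g_0^{\ast}(E,\theta)$ corresponds to a (semisimple-ization of a) representation $\rho_0:\pi_1(Y_0)\to GL(n)$, and all its rational Chern classes vanish since it is built from flat bundles.

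Next I would pass to an arbitrary test curve: let $C$ be a smooth projective curve with a map $h:C\to X$, and we must show $h^{\ast}(E,\theta)$ is slope-semistable (resp.\ slope-polystable) on $C$. The idea is to build a common smooth projective cover dominating both $C$ and $Y_0$ over $X$. Form the fiber product $C\times_X Y_0$; since $X$ is a separated DM-stack and $g_0$ is proper surjective this is a proper algebraic space surjecting onto $C$, and after choosing an irreducible component dominating $C$, applying Chow's lemma (in the Raynaud--Gruson form already invoked for Theorem \ref{maincovering}) and resolving singularities, we obtain a smooth projective variety $W$ with a surjective generically finite map $q:W\to C$ fitting into a commutative square with a map $W\to Y_0$ over $X$. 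Then $q^{\ast}h^{\ast}(E,\theta)$ is the pullback of $g_0^{\ast}(E,\theta)$ to $W$, hence slope-semistable on $W$ with vanishing rational Chern classes by functoriality of the \cite{hbls} correspondence under pullback along maps of smooth projective varieties. Now slope-semistability of $h^{\ast}(E,\theta)$ on the curve $C$ follows by descent along the finite surjective $q$: if $h^{\ast}E$ had a destabilizing Higgs subsheaf, its pullback would destabilize $q^{\ast}h^{\ast}E$ on $W$ (degrees multiply by $\deg q$, ranks are unchanged), contradicting semistability there. This proves pluri-semistability, and the polystable case is handled identically, using that polystability of a Higgs bundle on a smooth projective variety is also detected after finite pullback (a polystable bundle pulls back to polystable, and conversely a successive extension whose pullback splits into stable pieces is already polystable since the obstruction classes in the relevant $\mathrm{Ext}^1$ groups inject under finite pullback for flat bundles). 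The statement that for every smooth projective $g:Y\to X$ the pullback $g^{\ast}(E,\theta)$ is a successive extension of stable Higgs bundles corresponding to a representation of $\pi_1(Y)$ then follows by applying \cite{hbls} on $Y$ to the now-established semistable bundle with vanishing rational $c_1,c_2$; and vanishing of \emph{all} rational Chern classes of $g^{\ast}(E,\theta)$ is automatic because it is an iterated extension of bundles underlying flat connections, whose Chern classes vanish rationally, and rational Chern classes are additive in short exact sequences.

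The main obstacle I expect is the construction of the common cover $W$ dominating both $C$ and $Y_0$ over the \emph{stack} $X$, and verifying that the Higgs-bundle data genuinely pull back compatibly through it --- one must be careful that $g_0^{\ast}(E,\theta)$ and $h^{\ast}(E,\theta)$ agree after pullback to $W$ as Higgs bundles (not merely as bundles), which requires that the descent datum for $(E,\theta)$ on the covering hypercovering of $X$ is used to identify the two pullbacks; this is exactly the kind of compatibility recorded in Lemma \ref{vbdescent2} and the discussion of $\lambda$-connections on descent data, but spelling it out for the Higgs field $\theta$ across a fiber product of non-smooth spaces takes some care. A secondary point is the descent argument for polystability along finite maps of curves, which is standard but should be stated precisely (using that harmonic metrics, equivalently polystable structures, descend uniquely along dominant maps of smooth projective varieties, as used in the proof of Theorem \ref{unibranch}).
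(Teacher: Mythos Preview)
The paper does not actually give a proof of this lemma; it is stated and then the text moves directly on to the Bruzzo conjecture. So there is nothing to compare your argument against, and your task is really to supply the omitted proof. Your overall strategy---use the witness $g_0:Y_0\to X$ to turn $g_0^{\ast}(E,\theta)$ into a local system via \cite{hbls}, then for an arbitrary test curve $h:C\to X$ build a common smooth projective roof $W$ over $C$ and $Y_0$, and descend semistability from $W$ to $C$---is correct and is essentially the argument the paper is tacitly invoking.

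There is one imprecision worth fixing. You assert that after resolving a component of $C\times_X Y_0$ you obtain $q:W\to C$ which is ``surjective generically finite'', and then compute that ``degrees multiply by $\deg q$''. But the component of $C\times_X Y_0$ dominating $C$ need not be one-dimensional: if $\dim Y_0>\dim X$ the fiber product has dimension $1+\dim Y_0-\dim X>1$, and $q$ is then a fibration, not generically finite. The remedy is immediate---either cut $W$ down by a general complete intersection of hyperplanes to a smooth curve $C'\subset W$ still surjecting onto $C$, or else argue directly that for any ample $H$ on $W$ a destabilizing subsheaf $F\subset h^{\ast}E$ of positive degree pulls back to one with $\mu_H(q^{\ast}F)=(\deg F)\cdot (q^{\ast}[\mathrm{pt}]\cdot H^{\dim W-1})/\mathrm{rk}\, F>0$, since $q^{\ast}[\mathrm{pt}]\cdot H^{\dim W-1}>0$ is the $H$-degree of a nonempty fiber. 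Either fix is routine, but as written the sentence is not correct.

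Your treatment of the polystable case via injectivity of $\mathrm{Ext}^1$ under finite flat pullback is fine once you have reduced to a finite map of smooth curves (finite surjective between smooth curves is automatically flat, so the trace splits $\mathcal{F}\to q_{\ast}q^{\ast}\mathcal{F}\to\mathcal{F}$ and hypercohomology injects). The ``main obstacle'' you flag---compatibility of the two pullbacks of $(E,\theta)$ to $W$---is not a real difficulty: $(E,\theta)$ lives on $X$ itself, and the two pullbacks to $W$ along $W\to C\to X$ and $W\to Y_0\to X$ agree as Higgs bundles simply because the underlying square of maps to $X$ commutes.
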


Bruzzo and co-authors have formulated the following partial converse
(for instance it would be the implication in the other direction
in Bruzzo-Otero \cite[Theorem 4.7]{BruzzoOtero}), 
which we call the {\em Bruzzo conjecture}:

\begin{conjecture}[Bruzzo conjecture]
If $(E,\theta )$ is a pluri-semistable Higgs bundle over a smooth proper DM-stack, 
then $c_i(E)=0$ in rational cohomology for all $i$.
\end{conjecture}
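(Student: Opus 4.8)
\emph{Proof proposal.} I read the conjecture with the hypothesis $c_1(E)=0$ understood: this does not follow from pluri-semistability alone --- for instance $(\Oo_{\pp^2}(1)^{\oplus 2},0)$ restricts to a semistable Higgs bundle on every curve --- but it is part of the intended setting, and once $c_1(E)=0$ and $c_2(E)=0$ are known, Lemma~\ref{polystableok} (which applies because pluri-semistability implies potential semistability) yields the vanishing of \emph{all} rational Chern classes at a stroke. So the plan is to prove $c_2(E)=0$, and the route I would take factors through the numerical-flatness formalism of Bruzzo and Gra\~na Otero.

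First I would reduce to the case of a smooth projective variety. Given $(E,\theta)$ pluri-semistable on a smooth proper DM-stack $X$, Theorem~\ref{maincovering} provides $f\colon Z\to X$ surjective where \'etale with $Z$ a smooth projective variety; $f$ is then proper and generically finite, so by the projection formula $f_{\ast}f^{\ast}=(\deg f)\cdot\mathrm{id}$ on $H^{\ast}(|X|,\qq)$, which is thereby injected into $H^{\ast}(Z,\qq)$, and $f^{\ast}$ carries the rational Chern classes $c_i(E)$ to $c_i(f^{\ast}E)$ (Vistoli; cf.\ \cite{IyerSimpson1}). Every curve mapping to $Z$ maps to $X$, so $f^{\ast}(E,\theta)$ is again pluri-semistable; hence one may assume $X=Y$ a smooth projective variety. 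On $Y$, the Higgs version of the Mehta--Ramanathan theorem --- a $\theta$-invariant destabilizing subsheaf restricts to one on a general complete-intersection curve of large degree --- shows $(E,\theta)$ is $\mu$-semistable for \emph{every} polarization $H$, and the Bogomolov--Gieseker inequality for $\mu$-semistable Higgs bundles (Simpson in characteristic $0$, Langer in general) gives $\bigl(2r\,c_2(E)-(r-1)c_1(E)^2\bigr)\cdot H^{\dim Y-2}\ge 0$, i.e.\ $c_2(E)\cdot H^{\dim Y-2}\ge 0$ once $c_1(E)=0$. Cutting $Y$ by general members of $|H|$ reduces the vanishing of this number to the case where $Y$ is a surface, $(E,\theta)$ curve-semistable with $c_1=0$, and one wants $c_2(E)=0$.

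The crux --- and the step I expect to be the genuine obstacle --- is to upgrade curve-semistability to \emph{Higgs numerical flatness}: that $(E,\theta)$ and its dual are Higgs-numerically-effective in the sense of Bruzzo and Gra\~na Otero. By their Higgs analogue of the Demailly--Peternell--Schneider theorem, together with Simpson's correspondence, this would force $E$ to be a successive extension of stable Higgs bundles with vanishing rational Chern classes, hence $c_i(E)=0$ for all $i$. For \emph{ordinary} bundles the analogous implication is elementary: a semistable degree-$0$ bundle on a curve has all quotient line bundles of nonnegative degree, this survives finite base change, and nefness of $\Oo_{\pp(E)}(1)$ is a condition on curves in $\pp(E)$; so curve-semistable with $c_1=0$ forces $E$ and $E^{\vee}$ nef, whence the Segre-class inequality $-c_2(E)=s_2(E)\ge 0$ combines with the Bogomolov bound above to give $c_2(E)=0$. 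In the Higgs setting the hypothesis controls only $\theta$-invariant subsheaves, so $\Oo_{\pp(E)}(1)$ itself need not be nef (already for a stable rank-$2$ degree-$0$ Higgs bundle on a curve with underlying bundle $L\oplus L^{-1}$, $\deg L>0$), and Bruzzo's Higgs-positivity is genuinely more than a curve condition; equivalently, on each curve $(E|_C,\theta|_C)$ semisimplifies to a representation of $\pi_1(C)$, but these need not assemble into a representation of $\pi_1(Y)$, which may well be trivial. The approaches I would try for this passage are: (i) prove the matching upper bound $\bigl(2r\,c_2(E)-(r-1)c_1(E)^2\bigr)\cdot H^{\dim Y-2}\le 0$ and invoke the equality case of Simpson's inequality, where the limiting Hermitian--Yang--Mills metric becomes projectively flat --- on a surface this is exactly the sharpening of Bogomolov's inequality established in the known cases; (ii) run the $\cc^{\ast}$-flow, replacing $(E,\theta)$ by its limiting system of Hodge bundles, which is still curve-semistable with the same Chern classes, and induct on the number of Hodge summands, the base case being an ordinary semistable degree-$0$ bundle; (iii) reduce along the Albanese map to the abelian-variety case, where the conjecture is known. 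In every approach the essential gap is the same: turning ``$\theta$-invariant subobjects are controlled on all curves'' into a global positivity on $Y$ or into the sharp Bogomolov equality, and it is fair to say the conjecture is open precisely because no such argument is yet available in general.
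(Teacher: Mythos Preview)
The statement you were given is labelled \emph{Conjecture} in the paper, not \emph{Theorem}: the paper does not prove it and offers no proof sketch. It is presented as the converse direction of Bruzzo--Otero's Theorem~4.7, explicitly flagged as open, with the remark that it would generalize to Higgs bundles the theorem of Demailly, Peternell and Schneider for ordinary vector bundles. So there is no paper proof to compare your proposal against.

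Your proposal is not a proof but a survey of plausible strategies together with an honest identification of the missing step, and on that level it is accurate. Your opening observation is correct and worth making: as literally stated the conjecture is false without assuming $c_1(E)=0$ (your example $(\Oo_{\pp^2}(1)^{\oplus 2},0)$ works). Your reduction to a smooth projective variety via Theorem~\ref{maincovering} and then to a surface via Bogomolov--Gieseker is the natural route, and your diagnosis of the obstruction is the right one: pluri-semistability controls only $\theta$-invariant subsheaves on curves, so the projective bundle $\pp(E)$ need not have $\Oo(1)$ nef, and the Demailly--Peternell--Schneider argument does not go through verbatim. Your closing sentence --- that the conjecture is open precisely because no argument bridges curve-wise $\theta$-semistability to global Higgs-nefness or to the sharp Bogomolov equality --- matches the state of the literature as the paper presents it. None of your approaches (i)--(iii) is known to close the gap, and you do not claim otherwise; so your write-up is a fair account of why the statement is a conjecture rather than a theorem.
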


This conjecture would generalize to Higgs bundles the theorem of Demailly, Peternell and
Schneider who prove it for vector bundles i.e. when $\theta =0$ \cite{DemaillyPeternellSchneider}.

After this discussion of semistability, we can formulate more precisely
the moduli problems solved by $\Mm _{\rm H}(X,G)$ and $\Mm _{\rm Hod}(X,G)$.

\begin{definition}
A $G$-principal Higgs bundle on $X$ is of semiharmonic (resp. harmonic)
type, if 
its Chern classes vanish in rational cohomology, and if it is
potentially or equivalently pluri-semistable (resp. pluri-polystable). 
This definition extends to $\lambda$-connections too. 
\end{definition}

If $P$ is a $G$-principal Higgs bundle on $X$ then its pullback to $Z_{\hdot}$ is
of semiharmonic type if and only if $P$ is. Hence,
the moduli stack $\Mm _{\rm H}(X,G)$ parametrizes principal Higgs
$G$-bundles of semiharmonic type; and the moduli stack 
$\Mm _{\rm Hod}(X,G)\rightarrow \aaa ^1$
parametrizes principal $G$-bundles with $\lambda$-connection of semiharmonic type. 

\begin{theorem}
\label{smoothcasecorr}
Proposition \ref{correspondence} gives a tannakian Kobayashi-Hitchin
correspondence between Higgs bundles
of semiharmonic type on $X$
and local systems on $X$. The Higgs bundles of harmonic
type correspond to the semisimple local systems, these conditions being the
same as (strong) polarizability on both sides.  For these polarizable objects, 
harmonic metrics exist which set up the correspondence via the same
differ\-ential-geometric structures as in the case of varieties, over the etale local charts.
The resulting map between moduli spaces is a homeomorphism and determines
a hyperk\"ahler structure. 
\end{theorem}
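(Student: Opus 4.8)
The plan is to reduce everything to the simplicial case already treated, using a proper surjective hypercovering $Z_{\hdot}\to X$ by smooth projective varieties as in Theorem \ref{fullresolution}, whose first terms $(Z,R,K)$ are the partial resolution constructed above starting from a surjective-where-etale morphism $Z\to X$ (Theorem \ref{maincovering}). By Theorem \ref{unibranch} this $Z_{\hdot}$ satisfies the finite index Condition \ref{finiteindex}, so Proposition \ref{correspondence}, Lemma \ref{fiex}, Theorem \ref{hyperkahler} and Corollary \ref{cstarcont} all apply to it, and we have the moduli equivalences $\Mm_{\eta}(Z_{\hdot},G)\cong\Mm_{\eta}(X,G)$ recorded in this section.

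First I would set up the dictionary for semistability. If $(E,\theta)$ is a Higgs bundle of semiharmonic type on $X$, then Lemma \ref{polystableok} shows that its pullback to each $Z_k$ is semistable with vanishing rational Chern classes, so the simplicial Higgs bundle it induces on $Z_{\hdot}$ is of semiharmonic type in the sense of Definition \ref{semiharmonic}; likewise harmonic type on $X$ (pluri-polystable) pulls back to a polystable, i.e. polarizable, object on $Z_{\hdot}$. For the converse direction, a simplicial Higgs bundle of semiharmonic type on $Z_{\hdot}$ descends: the underlying simplicial bundle descends to a bundle $E$ on $X$ by Lemma \ref{vbdescent2}, and the Higgs field descends similarly (the lemma on $\lambda$-connections right after Lemma \ref{vbdescent2}, at $\lambda=0$), giving $(E,\theta)$ on $X$ with pullback the given object. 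This $(E,\theta)$ is potentially semistable since its pullback to $Z_0$ is semistable and $Z_0\to X$ is surjective, and its rational Chern classes (in the sense of Vistoli, cf. \cite{IyerSimpson1}) vanish: under cohomological descent $c_i(E)$ corresponds to $c_i(E_{\hdot})$, the weight-$2i$ part of which is read off from the vanishing classes $c_i(E_k)$ on the levels, and $H^{2i}(|X|,\qq)$ is pure of weight $2i$ for the smooth proper DM-stack $X$. Hence $(E,\theta)$ is of semiharmonic type on $X$. Since morphisms also descend (Lemma \ref{vbdescent2} again, the Higgs compatibility being a closed condition), pullback is an equivalence of tannakian categories between semiharmonic-type Higgs bundles on $X$ and semiharmonic-type simplicial Higgs bundles on $Z_{\hdot}$; composing with Proposition \ref{correspondence} and the descent of local systems (Lemma \ref{lsdescends}) gives the asserted Kobayashi-Hitchin correspondence on $X$, matching harmonic type with semisimple, and Lemma \ref{fiex} (applicable by the finite index condition) identifies both sides with the polarizable, resp. strongly polarizable, objects.

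Next I would produce the harmonic metrics. A harmonic-type $(E,\theta)$ on $X$ corresponds to a strongly polarizable local system on $Z_{\hdot}$, hence carries a simplicial family of harmonic metrics $h_k$ on the $E_k$ compatible with the transition maps, and therefore with the descent datum. Given an etale chart $g:V\to X$, pull the problem back along the surjective etale map $V\times_X Z'\to V$, where $Z'\subset Z$ is the dense etale locus of $Z\to X$; on $V\times_X Z'$ the metric obtained by pulling back $h_0$ from $Z'$ is, by compatibility with the descent datum, pulled back from a hermitian metric on $E|_V$, independent of the choices. These metrics on the various charts glue, satisfy Hitchin's harmonicity equations there because they do etale-locally on $Z'$, and realise the correspondence through the very same formulas relating $\overline{\partial}+\theta$ and $\partial+\overline{\theta}$ to the flat connection as on a smooth projective variety. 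Finally, the homeomorphism of coarse moduli spaces and the hyperk\"ahler structure on strata follow by applying Theorem \ref{hyperkahler} to $Z_{\hdot}$ and transporting along $M_{\eta}(Z_{\hdot},G)\cong M_{\eta}(X,G)$, and continuity of the $\cc^{\ast}$-action comes likewise from Corollary \ref{cstarcont}.

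The step I expect to be the main obstacle is the semistability dictionary of the second paragraph, specifically matching ``semiharmonic type on $X$'' (defined by pluri/potential semistability together with vanishing of the Vistoli rational Chern classes) with the levelwise condition on $Z_{\hdot}$ in both directions: the forward direction is exactly Lemma \ref{polystableok}, but the converse requires descending the Higgs structure along a hypercovering that is not etale, and then deducing vanishing of the Chern classes of the descended bundle on $X$ from their vanishing on all the $Z_k$ — where one must invoke purity of $H^{\ast}(|X|,\qq)$ for the smooth proper DM-stack together with the fact that an algebraic Chern class is of Hodge type $(i,i)$. Once this dictionary is in place the remaining assertions are formal transports of results already established for $Z_{\hdot}$, the serious analytic input (existence of harmonic metrics on smooth projective varieties, hence on $Z_{\hdot}$) being already available.
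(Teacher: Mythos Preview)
Your proposal is correct and follows essentially the same route as the paper: pass to the hypercovering $Z_{\hdot}$, invoke Theorem \ref{unibranch} for the finite index condition, apply Proposition \ref{correspondence} together with Lemma \ref{fiex} for the tannakian correspondence and the equivalence of semisimplicity with (strong) polarizability, descend the compatible harmonic metrics through the surjective-where-etale locus $Z'\subset Z$, and finish with Theorem \ref{hyperkahler}. You are more careful than the paper about the semistability dictionary (the paper disposes of it in one line just before the theorem); your Chern-class argument via weights is fine but can be shortened, since Corollary \ref{pureHS} already gives injectivity of $H^{\ast}(X,\qq)\to H^{\ast}(Z_0,\qq)$, so $c_i(E_0)=0$ forces $c_i(E)=0$ directly.
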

\begin{proof}
By Condition \ref{finiteindex} and Lemma \ref{fiex}, 
polarizability, strong polarizability 
and semisimplicity are equivalent in the categories of local systems or
Higgs bundles of semiharmonic type. Those tannakian categories are equivalent by
Proposition \ref{correspondence}. Given a Higgs bundle of harmonic type,
its pullback to each $Z_k$ is of harmonic type, so it has a unique structure
of harmonic bundle. Furthermore, by strong polarizability, a compatible
collection of metrics $h_k$ may be chosen. Then from the condition that
$Z\rightarrow X$ is surjective where etale, and the subsequent choice of the rest of
$Z_{\hdot}$, the bundle, the harmonic metric, and various 
connection operators descend to $X$. Over etale charts in $X$, in particular those
which are contained in $Z$, these structures satisfy the usual axioms for a harmonic metric.
They give in particular the corresponding flat connection.
The same discussion works starting from a semisimple local system. 
For the homeomorphism and hyperk\"ahler structure, apply Theorem \ref{hyperkahler}. 
\end{proof}

Suppose $X$ is a smooth variety and $D\subset X$ is a divisor with normal crossings. 
Hermitian Yang-Mills theory and the Kobayashi-Hitchin correspondence 
have been considered for parabolic bundles on $(X,D)$ by many authors
\cite{Biquard}
\cite{LiNarasimhan} 
\cite{Nakajima} \cite{TMochizuki2} \cite{TMochizuki3}
\cite{Poritz} \cite{SteerWren}.  
These theories may be related to
the the corresponding theories over a smooth proper Cadman-Vistoli
root stack, something that was basically observed by Daskalopoulos and Wentworth
quite some time ago \cite{DaskalopoulosWentworth}.

Let $Z\rightarrow X$ be the
root stack corresponding to denominators $n_i$ for the irreducible components $D_i$ of $D$. 
As in the original article of Seshadri \cite{Seshadri}, a vector bundle on $Z$ corresponds to a parabolic bundle
on $(X,D)$ such that the weights along $D_i$ are in $\frac{1}{n_i}\zz$. 
This correspondence has been used and studied
by many authors, see for example Boden \cite{Boden},
Balaji {\em et al}
\cite{BalajiDeyParthasarathi}, Biswas \cite{Biswas}, Borne \cite{Borne05} \cite{Borne07}
as well as \cite{IyerSimpson1} and
\cite{IyerSimpson2}. 

An important condition for a parabolic structure is to be {locally abelian}, that is near any multiple intersection
point of $D$, the parabolic structure should decompose as a direct sum of parabolic line bundles. Borne and Vistoli \cite{Borne05} \cite{Borne07} have recently
improved our understanding of this condition by the following result.

\begin{theorem}[Borne]
 Suppose $E = \{E_{\alpha _1,\ldots , \alpha _m}\}$ is a parabolic torsion-free sheaf (that is a system of torsion-free sheaves
and inclusions satisfying the conditions of semicontinuity and twisting by
the divisor components). 
Then $E$ is a locally abelian parabolic bundle, if and only if all of the
component sheaves $E_{\alpha _1,\ldots , \alpha _m}$ are locally free. 
\end{theorem}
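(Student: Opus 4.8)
The plan is to reduce to a purely local computation via the root–stack description of parabolic sheaves and then run a depth / Auslander–Buchsbaum argument. Since both conditions in the statement are Zariski‑local on $X$, I would fix a point $x\in X$ and work in a neighborhood; after relabelling, let $D_1,\dots,D_r$ be the components of $D$ through $x$, choose a Zariski neighborhood $\operatorname{Spec}R$ of $x$ on which $D_i=\{f_i=0\}$ with $f_1,\dots,f_r$ part of a regular system of parameters, and let $n_i$ be a common denominator of the parabolic weights of $E$ along $D_i$. By the correspondence recalled above between parabolic sheaves on $(X,D)$ with weights in $\prod_i\frac1{n_i}\mathbb{Z}$ and sheaves on the root stack $Z=X[\frac{D_1}{n_1},\dots,\frac{D_r}{n_r}]$, and using that $Z$ is Zariski‑locally the quotient stack $[\operatorname{Spec}A/G]$ with $A=R[u_1,\dots,u_r]/(u_i^{n_i}-f_i)$ and $G=\prod_i\mu_{n_i}$ scaling the $u_i$, the sheaf $E$ corresponds near $x$ to a $G$‑equivariant coherent sheaf on $\operatorname{Spec}A$, i.e.\ a finitely generated $\prod_i(\mathbb{Z}/n_i)$‑graded $A$‑module $M=\bigoplus_{\mathbf{a}}M_{\mathbf{a}}$. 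Here I would record: $A$ is \emph{regular} (eliminating the $f_i$ exhibits $\operatorname{Spec}A$ as smooth) and local, with maximal ideal $\mathfrak{m}_A$ satisfying $\sqrt{\mathfrak{m}_R A}=\mathfrak{m}_A$ since $u_i^{n_i}=f_i\in\mathfrak{m}_R$; $A_0=A^G=R$, so each $M_{\mathbf{a}}$ is an $R$‑module; and, up to reindexing and twist by divisor components, the graded pieces $M_{\mathbf{a}}$ are exactly the component sheaves $E_\alpha$, while ``$E$ is a locally abelian parabolic bundle near $x$'' translates into ``$M$ is locally free over $A$''.

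The direction (locally abelian $\Rightarrow$ all $E_\alpha$ locally free) I would dispatch directly: near $x$ a locally abelian $E$ is a finite direct sum of parabolic line bundles, and the component sheaves of a parabolic line bundle are of the form $L\otimes\mathcal{O}_X(\sum_i c_i D_i)$, hence line bundles on the smooth variety $X$; a finite direct sum of locally free sheaves is locally free. (Alternatively: pull the vector bundle on $Z$ back to $\operatorname{Spec}A$ and push it forward along the finite flat map $\operatorname{Spec}A\to\operatorname{Spec}R$ to get a locally free sheaf on $X$ whose isotypic summands are, up to twist, the $E_\alpha$.)

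For the substantive direction, suppose every $E_\alpha$, hence every graded piece $M_{\mathbf{a}}$, is locally free over $\mathcal{O}_X$. After localizing at $x$ each $M_{\mathbf{a}}$ is free over the regular local ring $R$, so $M=\bigoplus_{\mathbf{a}}M_{\mathbf{a}}$ is free over $R$ and $\operatorname{depth}_R M=\dim R=\dim A$. Since $A$ is module‑finite over $R$ with $\sqrt{\mathfrak{m}_R A}=\mathfrak{m}_A$, the depth of $M$ over $A$ at $\mathfrak{m}_A$ equals its depth over $R$ (both computed by the Koszul complex on a regular system of parameters of $R$), so $\operatorname{depth}_A M=\dim A$; as $A$ is regular local, Auslander–Buchsbaum gives $\operatorname{pd}_A M=\dim A-\operatorname{depth}_A M=0$, i.e.\ $M$ is free over $A$, i.e.\ the associated sheaf on $Z$ is a vector bundle. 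To conclude that $E$ is locally abelian I then need that a $G$‑equivariant $A$‑module that is free over $A$ splits equivariantly into line bundles: writing $M\cong A^d$ with its grading, $\mathfrak{m}_A M$ is a graded submodule, so $M/\mathfrak{m}_A M$ is a graded $k$‑vector space of dimension $d$; lifting a homogeneous basis to homogeneous (hence $G$‑semi‑invariant) elements $m_1,\dots,m_d$ and applying graded Nakayama gives a graded surjection $\bigoplus_{i=1}^d A(\deg m_i)\twoheadrightarrow M$ of free $A$‑modules of the same rank $d$, which is therefore an isomorphism (a surjective endomorphism of a finitely generated module over a commutative ring is bijective). Hence $M\cong\bigoplus_i A(\deg m_i)$ is a direct sum of $G$‑equivariant line bundles, i.e.\ of line bundles on $Z$, i.e.\ of parabolic line bundles in a neighborhood of $x$, so $E$ is locally abelian there.

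The homological heart of this is short, so I expect the main obstacle to be the first step: making the local reduction rigorous — choosing the charts, establishing the precise equivalence between parabolic torsion‑free sheaves near $x$ and graded modules over the abelian cover, carefully matching each component sheaf $E_\alpha$ with the correctly twisted graded piece $M_{\mathbf{a}}$, and checking that ``locally abelian parabolic bundle'' corresponds exactly to ``locally free sheaf on $Z$''. (One should also verify how the torsion‑free hypothesis translates, although, as the argument above shows, the clean depth proof of the nontrivial direction does not actually invoke it.)
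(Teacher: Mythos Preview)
Your argument is correct. Both your approach and the one the paper cites (Borne's, via Vistoli's lemma) rest on the Auslander--Buchsbaum formula, but the depth computation is organized differently. Borne works directly on $X$: the key step is the lemma that if $E\subset F\subset E(D)$ are inclusions of locally free sheaves with $D$ a smooth divisor then $F/E$ and $E(D)/F$ are locally free on $D$ (this is where Auslander--Buchsbaum enters, applied on $X$ to these quotients), and one then handles the divisor components one at a time. You instead pass in one step to the regular local cover $A$, compute $\operatorname{depth}_A M$ via the equality $\operatorname{depth}_{\mathfrak{m}_A}M=\operatorname{depth}_{\mathfrak{m}_R}M$ coming from $\sqrt{\mathfrak{m}_R A}=\mathfrak{m}_A$, and conclude freeness of $M$ over $A$ by a single application of Auslander--Buchsbaum; the equivariant splitting into rank-one pieces then follows from graded Nakayama. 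Your route is cleaner once the parabolic/root-stack dictionary is in place, and you are right that making that dictionary precise---matching each $E_\alpha$ with the correct graded piece $M_{\mathbf a}$ and checking that ``locally abelian'' corresponds exactly to ``locally free on $Z$''---is where the genuine bookkeeping lies. Borne's route avoids invoking that dictionary and stays with the filtration on $X$, at the price of an inductive structure; yours trades that induction for the setup cost of the cover.
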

\begin{proof}
If $E$ is locally abelian then automatically the components are bundles,
so the task is to prove that if each $E_{\alpha _1,\ldots , \alpha _m}$ is a vector bundle, then the parabolic structure is locally abelian. 

This is Borne's Proposition 2.3.10 \cite{Borne07}. For the proof, he uses the following
main statement which he attributes to Vistoli \cite[Lemma 2.3.11]{Borne07}: suppose $E\subset F \subset E(D)$ is
a pair of inclusions of locally free sheaves, with $D$ a smooth
divisor. Then $F/E$ and $E(D)/F$ are locally free sheaves on $D$.
The proof in turn refers to the formula of Auslander-Buchsbaum in EGA.  
\end{proof}

A {\em parabolic $\lambda$-connection} is a locally abelian parabolic bundle
$E_{\hdot}$ together with a $\lambda$-connection operator 
$$
\nabla : E_{\alpha _1,\ldots , \alpha _m}\rightarrow 
E_{\alpha _1,\ldots , \alpha _m}\otimes \Omega ^1_X(\log D).
$$ 
One defines the parabolic degree and hence the notion of parabolic stability.
Moduli spaces for parabolic vector bundles, parabolic Higgs bundles, and
parabolic connections have been studied in many places:
\cite{Seshadri} \cite{MehtaSeshadri} 
\cite{MaruyamaYokogawa} \cite{BodenYokogawa} 
\cite{Yokogawa}
\cite{Nitsure} \cite{Konno} \cite{Nakajima}
\cite{Thaddeus} 
\cite{BalajiBiswasNagaraj}
\cite{InabaIwasakiSaito}
is a certainly non-exhaustive list. 

Given a semistable parabolic $\lambda$-connection, the residual data are locally
constant along the non-intersection points $y$ of the divisor components $D_i$. 
Thus one can speak of the residue of $(E,\nabla )$ along $D_i$. It is a pair
$$
{\rm res}_{D_i,y}(E,\nabla )=
\left(
\bigoplus _{\alpha \in (-1,0]}{\rm gr}^{D_i}_{\alpha}(E(y)),{\rm res}(\nabla )\right)
$$ 
consisting of a vector
space graded by a finite number of parabolic weights $\alpha \in (-1,0]$,
together with an endomorphism ${\rm res}(\nabla )$. The graded piece 
${\rm gr}_{\alpha}(E(y))$ is the fiber at $y$ of the quotient $E_{\alpha }/E_{\alpha -\epsilon}$, and the residue of $\nabla$ comes from the action on this graded piece.
Here $y$ is in a single divisor component $D_i$ so the parabolic structure near
$y$ is reduced to a single index, indicated 
for the notation by a superscript ${\rm gr}^{D_i}$.

Say that $(E,\nabla )$ has semisimple residues, if the 
${\rm res}(\nabla )$ are semisimple endomorphisms. 
Note that this is a weaker condition than asking that
the residue be semisimple for $\nabla$ considered as a logarithmic connection 
on one of the component vector bundles $E_{\alpha _1,\ldots , \alpha _m}$,
because this bigger residual endomorphism might have a unipotent factor which acts by
strictly decreasing the parabolic weight. 

One can more generally define the notion of parabolic bundle on a smooth DM-stack with
respect to a normal crossings divisor, a viewpoint which is useful for the inductive
kind of argument used in \cite{IyerSimpson2}. On the other hand, a parabolic bundle
all of whose weights are integers, may be viewed as a usual parabolic bundle.
 
The bundles with $\lambda$-connection on the root stack $Z=X[\frac{D_1}{n_1},\ldots , \frac{D_m}{n_m}]$ are exactly the pullbacks of parabolic bundles from $(X,D)$ such 
that the pullback has
integer weights and trivial residue of the connection. Making this condition explicit 
gives the following proposition. 

\begin{proposition}
\label{paratranslate}
Suppose $\lambda \in \cc$ and $n_i$ are strictly positive integers. 
Pullback gives an equivalence of categories, preserving the conditions of (semi)stability
and the Chern classes, between:
\newline
---parabolic $\lambda$-connections on $(X,D)$ such that the parabolic weights along $D_i$ are in $\frac{1}{n_i}\zz$ and the residue 
of the connection on each parabolic graded piece is semisimple with a single eigenvalue
given as follows:
$$
{\rm res}^{D_i}_{\alpha}(\nabla ) = \lambda \alpha \cdot \mbox{{\em {\large 1}}}_{{\rm gr}^{D_i}_{\alpha}(E)};
$$
and
\newline
---bundles with $\lambda$-connection on the root stack $Z=X\left[ \frac{D_1}{n_1},\ldots , \frac{D_m}{n_m}\right]$.
\end{proposition}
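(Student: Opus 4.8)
The plan is to bootstrap from the equivalence, already recorded above, between vector bundles on the root stack $Z = X[\frac{D_1}{n_1},\ldots,\frac{D_m}{n_m}]$ and locally abelian parabolic bundles on $(X,D)$ whose weights along $D_i$ lie in $\frac1{n_i}\zz$ (Seshadri \cite{Seshadri}, refined by Borne--Vistoli \cite{Borne07}, with Borne's theorem above ensuring that ``locally abelian'' is detected by local freeness of the component sheaves). Write $p\colon Z\to X$ for the projection and $R=R_1+\cdots+R_m$ for the standard divisor, so that $n_iR_i=p^{\ast}(D_i)$. The whole task is to enrich this bundle-level equivalence with the $\lambda$-connection data on both sides and to match the residue normalization appearing in the statement; once the object-level bijection is in place, functoriality on morphisms is formal, and the statements about Chern classes and (semi)stability come along for free.

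First I would pin down the sheaf of differentials of the root stack. Working in the Zariski-local Cadman charts $f_i=u_i^{n_i}$ of \cite{Cadman}, where $D_i=\{f_i=0\}$, the relation $df_i = n_iu_i^{n_i-1}du_i$ gives $\frac{du_i}{u_i}=\frac1{n_i}\frac{df_i}{f_i}$, hence a canonical isomorphism $\Omega^1_Z(\log R)\cong p^{\ast}\Omega^1_X(\log D)$ and, since $\Omega^1_Z=\Omega^1_Z(\log R)(-R)$, an identification $\Omega^1_Z\cong p^{\ast}\Omega^1_X(\log D)(-R)$. Consequently a $\lambda$-connection $\nabla\colon V\to V\otimes\Omega^1_Z$ on a bundle $V$ on $Z$ is exactly a logarithmic $\lambda$-connection $V\to V\otimes\Omega^1_Z(\log R)$ whose residue along each $R_i$ vanishes. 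I would then push forward along $p$: decomposing $p_{\ast}$ into its (local) $\mu_{n_1}\times\cdots\times\mu_{n_m}$-isotypic pieces recovers the parabolic family $E_{\bullet}=\{E_{\alpha_1,\ldots,\alpha_m}\}$, and a logarithmic $\lambda$-connection on $(V,R)$ pushes down to a compatible system of logarithmic $\lambda$-connections on the $E_{\alpha_1,\ldots,\alpha_m}$ intertwining the structural inclusions, i.e.\ a parabolic $\lambda$-connection on $(X,D)$ in the sense defined above.

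The crux is the residue bookkeeping. Consecutive parabolic sheaves differ by a twist by the $u_i$, and $\nabla(u_i^{\ell}s)=\lambda\ell\,u_i^{\ell}\tfrac{du_i}{u_i}\cdot s+u_i^{\ell}\nabla s=\tfrac{\lambda\ell}{n_i}u_i^{\ell}\tfrac{df_i}{f_i}\cdot s+u_i^{\ell}\nabla s$, so the vanishing of the residue upstairs forces the residue of the induced connection on the graded piece ${\rm gr}^{D_i}_{\alpha}(E)$ to be the scalar ${\rm res}^{D_i}_{\alpha}(\nabla)=\lambda\alpha\cdot\mathbf{1}$, exactly as stated; conversely a parabolic $\lambda$-connection obeying this normalization has, after pullback, precisely the logarithmic poles along $R$ that are cancelled by the twisting, hence descends to an honest (pole-free) $\lambda$-connection on $Z$. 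Checking that these two assignments are mutually inverse, and compatible with all morphisms, then reduces to the already-known object-and-morphism equivalence on underlying bundles. Finally, parabolic degree matches $Z$-degree and rational Chern classes match under $p^{\ast}$ via Vistoli's theory as in \cite{IyerSimpson1}, and the connection contributes nothing to these numerical invariants, so semistability and stability are read off the same slopes on both sides.

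I expect the main obstacle to be the residue computation of the last paragraph done cleanly at the multiple-intersection points of $D$, where several indices $\alpha_i$ and several twisting parameters $u_i$ act simultaneously, and where one must also confirm that the parabolic $\lambda$-connection constructed by pushforward is again \emph{locally abelian} --- the latter being handled by Borne's theorem above, since the component sheaves $E_{\alpha_1,\ldots,\alpha_m}$ stay locally free throughout the construction. Everything else is, up to sign conventions on the indexing of the parabolic filtration, a routine translation through the Cadman charts.
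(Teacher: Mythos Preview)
Your approach is correct and matches the paper's: the paper gives no formal proof, simply stating that the proposition follows by ``making this condition explicit'' via the Borne--Vistoli correspondence and the local Cadman charts, which is precisely the route you take. One small slip worth flagging: the identity $\Omega^1_Z = \Omega^1_Z(\log R)(-R)$ is false when $\dim Z > 1$ (twisting by $-R$ also kills the directions tangent to $R$), but you do not actually need it---the correct statement you use next, that a holomorphic $\lambda$-connection is the same as a logarithmic one with vanishing residues along each $R_i$, follows directly from the exact sequence $0 \to \Omega^1_Z \to \Omega^1_Z(\log R) \to \bigoplus_i \Oo_{R_i} \to 0$.
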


One may translate using this equivalence between the parabolic and stack-theoretic
points of view, in particular the numerous works on 
harmonic theory and moduli for parabolic bundles become
relevant for the problem we are considering here. Particularly so in the basic case of
a root stack. A further discussion of the details, such as the behavior of the
harmonic metrics near $D_i$, would take us too far afield and these aspects 
are amply treated
already in the many available references. 

The analogue of parabolic structures for principal $G$-bundles is not completely
straightforward: one needs to introduce the notion of parahoric structure, and this
is the subject of current ongoing research by several authors \cite{BoalchParahoric}
\cite{BalajiSeshadri}.

For smooth proper $X$ it is natural to formulate Poincar\'e duality. 
The importance of Poincar\'e duality for the study
of fundamental groups has become apparent in recent works of Bruno Klingler.
The coarse moduli space of a smooth proper DM-stack $X$ is a proper rational homology
manifold. The cohomology of the stack is the same as that of its coarse moduli space, so it is easy to see that Poincar\'e duality holds for $H^{\hdot}(X,\qq )$. This has been remarked for example by Abramovich, Graber and Vistoli in \cite{AGV}, and was undoubtedly one of the reasons
for Deligne's comment about rational homology manifolds in \cite{hodge3}. Still, for
cohomology with coefficients in a local system it is better to have an intrinsic proof
such as was given by Behrend. 

\begin{theorem}
Suppose $X$ is a connected smooth proper DM-stack of dimension $n$. Then
the fundamental class of $X$ gives a canonical isomorphism
$H^{2n}(X,\cc )\cong \cc$; and for any local system $L$ on $X$, the cup product
followed by the trace $L\otimes L^{\ast}\rightarrow \cc$ gives a perfect pairing
$$
H^i(X,L)\times H^{2n-i}(X,L^{\ast})\rightarrow H^{2n}(X,\cc )\cong \cc .
$$
\end{theorem}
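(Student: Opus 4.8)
The plan is to treat the constant-coefficient assertion via the rational homology manifold property of the coarse space $X^{\bf c}$, as already noted above, and to obtain the local-system duality by Verdier duality on $X$ itself. Concretely, I would work in the bounded derived category $D^b_c(X,\cc)$ of constructible complexes of sheaves of $\cc$-vector spaces on the Deligne--Mumford stack $X$ — set up on the topological realization $|X|$ together with its system of étale charts, or equivalently via the étale site of $X$ — where one has the six operations $f_{\ast},f_!,f^{\ast},f^!,\otimes,R\mathcal{H}om$, a dualizing complex $\omega_X$, and the biduality functor $\mathbb{D}_X(-)=R\mathcal{H}om(-,\omega_X)$ (following Behrend). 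For the structure map $a\colon X\to\mathrm{pt}$, properness gives $a_!=a_{\ast}$, and the basic Verdier duality isomorphism reads $Ra_{\ast}\mathbb{D}_X(\mathcal{F})\simeq R\mathrm{Hom}_{\cc}(R\Gamma(X,\mathcal{F}),\cc)$ for $\mathcal{F}\in D^b_c(X,\cc)$.

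The heart of the matter is the identification $\omega_X\simeq\cc_X[2n]$. Since this is étale-local on $X$, it suffices to treat a chart of the form $[U/\Gamma]$ with $U$ a smooth complex manifold of complex dimension $n$ and $\Gamma$ a finite group. The dualizing complex of $U$ in the analytic topology is $\cc_U[2n]$, because $U$ is a topological $2n$-manifold oriented by its complex structure; the $\Gamma$-action is holomorphic, hence orientation-preserving, so $\cc_U[2n]$ is $\Gamma$-equivariantly the shifted constant sheaf, and because $\cc[\Gamma]$-modules have no higher group cohomology this descends along $U\to[U/\Gamma]$ to $\cc_{[U/\Gamma]}[2n]$. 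The local identifications glue, again because the étale gluing data are holomorphic and thus respect orientations, giving a canonical $\omega_X\simeq\cc_X[2n]$; this is also what pins down the fundamental class and the trace map $H^{2n}(X,\cc)\cong H^0(Ra_{\ast}\omega_X)\to\cc$.

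Granting this, apply the duality isomorphism to $\mathcal{F}=L$, a local system of finite-dimensional $\cc$-vector spaces. Then $\mathbb{D}_X(L)=R\mathcal{H}om(L,\cc_X[2n])=L^{\ast}[2n]$, so $R\Gamma(X,L^{\ast})[2n]\simeq R\mathrm{Hom}_{\cc}(R\Gamma(X,L),\cc)$, and extracting the $j$-th cohomology (using finiteness of $H^{\ast}(X,L)$, which also forces the vanishing outside $[0,2n]$) yields $H^j(X,L^{\ast})\cong H^{2n-j}(X,L)^{\ast}$. It remains to check that the perfect pairing so produced is the cup product $H^i(X,L)\otimes H^{2n-i}(X,L^{\ast})\to H^{2n}(X,\cc)$ composed with the trace; this is a formal diagram chase, using that the trace is induced by the adjunction counit $Ra_{\ast}a^!\cc\to\cc$ under $a^!\cc=\omega_X=\cc_X[2n]$ and that cup product corresponds to the evaluation $L\otimes L^{\ast}\to\cc_X$, exactly as for smooth compact manifolds.

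The main obstacle is concentrated in the second paragraph: one needs a functioning Verdier-duality formalism for Deligne--Mumford stacks and the computation of the dualizing complex of a smooth proper such stack. The point to emphasize is that one genuinely cannot reduce this to Poincaré duality for the space $|X|$, since $|X|$ is typically not a rational homology manifold — already $X=B\Gamma$ for finite $\Gamma$ realizes to the infinite-dimensional $B\Gamma$ — nor to level-wise duality along a hypercovering $Z_{\hdot}\to X$, since dualizing the total complex of a simplicial object reverses the simplicial direction and so does not close up; the $\cc$-acyclicity of finite stabilizers must be used, which is precisely the content of the local computation $\omega_{[U/\Gamma]}\simeq\cc[2n]$. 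An alternative that avoids building the stacky formalism from scratch is to push $L$ forward along the finite map $\pi\colon X\to X^{\bf c}$ and run Verdier duality on the $\qq$-homology manifold $X^{\bf c}$ with the constructible complex $\pi_{\ast}L$, via $\pi^!\cc_{X^{\bf c}}\simeq\cc_X$ — but verifying that last isomorphism recapitulates the same local analysis.
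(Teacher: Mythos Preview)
Your proposal is correct and follows exactly the route the paper indicates: the paper's proof consists of the single sentence ``We refer to Behrend \cite{BehrendDuality},'' and what you have written is a faithful sketch of Behrend's Verdier-duality argument (six-functor formalism on $D^b_c(X,\cc)$, identification $\omega_X\simeq\cc_X[2n]$ via \'etale-local quotient charts, then application to $\mathcal{F}=L$). Your discussion of why one cannot simply work on $|X|$ or na\"ively dualize a simplicial hypercovering, and of the alternative via $\pi_\ast$ to the coarse space, goes beyond what the paper says but is accurate and matches the remark just before the theorem that for local-system coefficients one wants Behrend's intrinsic proof rather than the rational-homology-manifold observation for $X^{\bf c}$.
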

\begin{proof}
We refer to Behrend \cite{BehrendDuality}. 
\end{proof}

Poincar\'e duality allows us to prove the purity of the mixed twistor structure
on cohomology. 

\begin{corollary}
\label{pureHS}
Suppose $X$ is a connected smooth proper DM-stack. If $f:Z\rightarrow X$
is a dominant morphism from another smooth proper DM-stack (in particular $Z$
could be a smooth projective variety) then for any local system $L$, 
pullback along $f$
is an injection
$$
f^{\ast}: H^i(X,L)\hookrightarrow H^i(Z,f^{\ast}L).
$$
If $L$ is a pure variation of
Hodge structure of weight $w$, then the 
mixed Hodge structure on $H^i(X,L)$ is pure of weight $i+w$. 
\end{corollary}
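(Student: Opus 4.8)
The plan is to establish the injectivity statement first and then read off purity from it via Deligne's simplicial construction of the mixed Hodge structure.

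For the injectivity $f^{\ast}\colon H^i(X,L)\hookrightarrow H^i(Z,f^{\ast}L)$ I would argue in three reductions. First, reduce to the case in which $Z$ is a smooth projective variety: by Lemma \ref{easycover} there is a surjective proper $q\colon Z'\to Z$ with $Z'$ smooth projective, and since $X$ is connected and smooth, hence irreducible, some irreducible component of $Z'$ dominates $X$; as $(f\circ q)^{\ast}=q^{\ast}\circ f^{\ast}$, injectivity of pullback along that component forces injectivity of $f^{\ast}$. Second, for $Z$ a smooth projective variety of dimension $m\geq n:=\dim X$ dominating $X$, fix a projective embedding and set $W=Z\cap H_1\cap\cdots\cap H_{m-n}$ for general hyperplanes $H_j$; by Bertini $W$ is a smooth projective variety of dimension $n$, and the restriction $g:=f|_{W}\colon W\to X$ is still surjective, hence generically finite since $\dim W=\dim X$ and $g$ is proper --- this is the standard fact that a general linear section of complementary dimension of a variety dominating $X$ still dominates $X$. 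Since $g^{\ast}$ is $f^{\ast}$ followed by restriction to $W$, it is enough to treat $g$. Third, for a surjective generically finite $g\colon W\to X$ of degree $d\geq 1$ between a smooth projective variety and a smooth proper DM-stack of the same dimension $n$, define a transfer map $g_{!}\colon H^i(W,g^{\ast}L)\to H^i(X,L)$ as the transpose, under the Poincar\'e duality pairings of the preceding theorem on $X$ and on $W$, of $g^{\ast}\colon H^{2n-i}(X,L^{\ast})\to H^{2n-i}(W,g^{\ast}L^{\ast})$; the projection formula $g_{\ast}[W]=d\,[X]$ (equivalently $\int_W g^{\ast}\eta=d\int_X\eta$ on $H^{2n}$) then yields $g_{!}\circ g^{\ast}=d\cdot\mathrm{id}$, so $g^{\ast}$ is injective since $d>0$.

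For purity, take $L$ a polarizable variation of Hodge structure of weight $w$ and choose a proper surjective hypercovering $Z_{\hdot}\to X$ by smooth projective varieties as in Theorem \ref{fullresolution}, whose augmentation $f\colon Z_0\to X$ is surjective. The mixed Hodge structure on $H^i(X,L)\cong H^i(Z_{\hdot},f^{\ast}L)$, constructed following \cite{hodge3} from this hypercovering, is computed by the first-quadrant spectral sequence $E_1^{p,q}=H^q(Z_p,f_p^{\ast}L)\Rightarrow H^{p+q}(X,L)$, which is a spectral sequence of Hodge structures: each $f_p^{\ast}L$ is a polarizable, hence semisimple, variation of Hodge structure of weight $w$ on the smooth projective variety $Z_p$, so $H^q(Z_p,f_p^{\ast}L)$ is pure of weight $q+w$ by the classical Hodge theory of variations, and the differentials are morphisms of Hodge structures. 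Hence each $E_{\infty}^{p,i-p}$ is pure of weight $i-p+w$, and these are the graded pieces of the weight filtration on $H^i(X,L)$; moreover $E_{\infty}^{0,i}$ is a sub-Hodge-structure of the pure group $H^i(Z_0,f^{\ast}L)$ and the edge homomorphism $H^i(X,L)\twoheadrightarrow E_{\infty}^{0,i}\hookrightarrow H^i(Z_0,f^{\ast}L)$ is exactly $f^{\ast}$. The injectivity of $f^{\ast}$ just proved forces this surjection onto $E_{\infty}^{0,i}$ to be an isomorphism, i.e. $E_{\infty}^{p,i-p}=0$ for all $p\geq 1$; therefore $H^i(X,L)\cong E_{\infty}^{0,i}$ is pure of weight $i+w$.

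The main obstacle is the equidimensional step in the first part: one must make sure the general linear section $W$ really surjects onto all of $X$ (equivalently, is generically finite over $X$) rather than onto a proper closed subset, and one must check that the Poincar\'e duality pairing of the quoted theorem is compatible with $g^{\ast}$ in the naive way, so that $g_{\ast}[W]=d\,[X]$ does give $g_{!}g^{\ast}=d\cdot\mathrm{id}$. Once injectivity is secured, the purity deduction is formal, resting only on the fact that the weight filtration of $H^i(X,L)$ is computed by a spectral sequence of Hodge structures whose zeroth column is the pure group $H^i(Z_0,f^{\ast}L)$ and whose edge map to that column is $f^{\ast}$.
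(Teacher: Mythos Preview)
Your proof is correct and the injectivity argument is essentially the paper's: reduce to a smooth projective source, cut by general hyperplane sections down to $\dim X$, and use the Poincar\'e duality transfer $g_{!}$ to get $g_{!}g^{\ast}=d\cdot\mathrm{id}$. The paper streamlines the equidimensional step by invoking Theorem \ref{maincovering} directly to produce a surjective-where-\'etale $p:Y\to X$ with $\dim Y=\dim X$, and justifies $p^{\ast}=d$ on $H^{2n}$ by representing a top class by a form compactly supported in the \'etale locus; your Bertini reduction reaches the same place.

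For purity the two routes diverge in presentation. The paper simply observes that $p^{\ast}:H^i(X,L)\to H^i(Y,p^{\ast}L)$ is an injective morphism of mixed Hodge structures with pure target, and concludes by strictness. You instead unpack the hypercovering spectral sequence and use injectivity of the edge map to force $E_{\infty}^{p,i-p}=0$ for $p\geq 1$. Your argument is more explicit about how the weight filtration is built and avoids appealing to functoriality of the MHS for the map $p$; the paper's argument is shorter but implicitly uses that $p^{\ast}$ respects the MHS (which of course follows from the same simplicial construction). Both are valid and amount to the same strictness principle.
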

\begin{proof}
Suppose ${\rm dim}(X)=n$ and 
$p:Y\rightarrow Z$ is a surjective morphism from a connected smooth 
projective variety, also of dimension $n$. This exists by Theorem \ref{maincovering}.
There is an open subset $U\subset X$ over which $p$ is a finite etale covering
of degree $d$.  A top degree cohomology class on $X$ may be represented by a
form which is compactly supported in $U$, so
the pullback map 
$$
\cc \cong H^{2n}(X,\cc )\stackrel{p^{\ast}}{\rightarrow} H^{2n}(Y,\cc )\cong \cc 
$$
is multiplication by $d$. If $p_!$ denotes the Poincar\'e dual of $p^{\ast}$
the standard argument shows that $p_!(p^{\ast}u)=d\cdot u$ for $u\in H^i(X,L)$, 
implying that $p^{\ast}$ is injective.

Suppose $f:Z\rightarrow X$ is a dominant morphism of smooth proper DM-stacks. 
Choose a surjective map $q:V\rightarrow Z$ from a smooth projective variety 
with ${\rm dim}(V)={\rm dim}(Z)$. Let $Y$ be a general complete intersection of hyperplane
sections in $Z$, with ${\rm dim}(Y)={\rm dim}(X)$. The projection $p:Y\rightarrow X$ is
surjective so by the previous discussion $p^{\ast}$ is injective; it follows
that $f^{\ast}$ is injective. 

If $L$ is  a variation of pure Hodge structure of weight $w$,
the pullback map
$$
p^{\ast}:H^i(X,L)\rightarrow H^i(Y,p^{\ast}(L))
$$ 
is an injective morphism of mixed Hodge structures, whose target is pure of weight $i+w$,
therefore $H^i(X,L)$ is pure of weight $i+w$.  
\end{proof}

\section{Mixed twistor theory}

Deligne's theory of \cite{hodge3} goes over to mixed twistor structures. This is useful
for looking at the topology of simplicial smooth projective varieties, so we gives some
details here expanding upon the places where it was mentioned in \cite{twistor}.  
It will allow us to generalize Corollary \ref{pureHS} to a purity 
statement, Corollary \ref{pureTS}, 
for any semisimple local system. The
development presented here is 
undoubtedly subsumed in a theory of ``mixed twistor modules''
generalizing Saito's mixed Hodge modules 
as was done by  Sabbah for the pure case \cite{Sabbah}.

A D-mixed twistor complex is a filtered complex of sheaves of $\Oo _{\pp ^1}$-modules
$(\Ff ^{\hdot}, {W}_{\hdot})$ on $\pp ^1$ such that 
$$
H^i({W}_{n}\Ff ^{\hdot} / {W}_{n-1}\Ff ^{\hdot})
$$
is a semistable vector bundle of slope $n+i$ on $\pp ^1$, nonzero
for only finitely many $(i,n)$. 

A {B-mixed twistor complex} is a filtered complex of sheaves of $\Oo _{\pp ^1}$-modules
$(\Ff ^{\hdot}, W_{\hdot})$ on $\pp ^1$ such that 
$$
H^i(W_{m}\Ff ^{\hdot} / W_{m-1}\Ff ^{\hdot})
$$
is a semistable vector bundle of slope $m$ on $\pp ^1$, nonzero
for only finitely many $(i,m)$. 

In our notations D stands for Deligne and B for Beilinson: the D-mixed Hodge complexes
were defined by Deligne \cite{hodge3}, whereas Beilinson's treatment \cite{BeilinsonMHC},
see also Huber \cite{Huber}, refers to the B-mixed notion. See also Zucker \cite{Zucker04},
where the notion of relaxed MHC is introduced. 

I have often wondered about how to express the relationship between these two notions.
Although this materiel is well-known to experts, it seems likely that some readers
will find it useful to review the relationship. This explanation is easier to follow
in the case of mixed twistor structures, since we can work within the abelian category
of sheaves on $\pp ^1$, avoiding concerns about strictness of maps between filtered vector spaces. 

Consider first the passage from a D-mixed twistor complex to the mixed twistor structure
on cohomology. 
Recall that the spectral sequence of a filtered complex $(\Ff ^{\hdot},W_{\hdot})$
has 
$$
E_0^{k,l}:= W_{-k}\Ff ^{k+l}/W_{-k-1}\Ff ^{k+l}
$$
with differential $d_0: E_0^{k,l}\rightarrow E_0^{k,l+1}$
induced by the differential $d$ of $\Ff ^{\hdot}$. Then 
$$
E_1^{k,l}(\Ff^{\hdot}, W_{\hdot})=H^{k+l}(W_{-k}/W_{-k-1}).
$$
The differential $d_1:E_1^{k,l}\rightarrow E_1^{k+1,l}$
is, with different indices, the connecting map 
$$
H^i(W_m/W_{m-1})\rightarrow H^{i+1}(W_{m-1}/W_{m-2})
$$
coming from the short exact sequence of complexes
$$
0\rightarrow  W_{m-1}/W_{m-2}\rightarrow W_{m}/W_{m-2}\rightarrow W_{m}/W_{m-1} 
\rightarrow 0.
$$
Going back to the indices $k,l$ we obtain the expression
\begin{equation}
\label{e2expression}
E^{k,l}_2 (\Ff ^{\hdot}, W_{\hdot}) = \frac{{\rm ker}\left( 
H^{k+l}(W_{-k}/W_{-k-1})\rightarrow H^{k+l+1}(W_{-k-1}/W_{-k-2})
\right) }{{\rm im}\left( 
H^{k+l-1}(W_{1-k}/W_{-k})\rightarrow H^{k+l}(W_{-k}/W_{-k-1})
\right)} .
\end{equation}
The next differential is 
$$
d_2: E^{k,l}_2\rightarrow E^{k+2,l-1}_2.
$$
Finally, the spectral sequence abuts to $H^{k+l}(\Ff ^{\hdot})$ with the
filtration induced by $W_{\hdot}$, more precisely defined as
$$
W_mH^i(\Ff ^{\hdot}):= 
{\rm  im}\left( H^i(W_m\Ff ^{\hdot})\rightarrow H^i(\Ff ^{\hdot}) \right) .
$$
This all works in the context of filtered complexes in any abelian category.
In our case we work with the abelian category of sheaves of $\Oo _{\pp ^1}$-modules
over $\pp ^1$. 

If $(\Ff^{\hdot}, W_{\hdot})$ is a D-mixed twistor complex, then by definition
$$
E_1^{k,l}(\Ff^{\hdot}, W_{\hdot})=H^{k+l}(W_{-k}/W_{-k-1})
$$
is a semistable vector bundle of slope $(k+l-k)=l$ on $\pp ^1$. 
In particular the differential $d_1:E_1^{k,l}\rightarrow E_1^{k+1,l}$
is a strict morphism between semistable vector bundles of the same slope $l$,
so its kernel and cokernels are also semistable vector bundles of slope $l$,
and indeed the expression \eqref{e2expression} expresses $E^{k,l}_2$
as the quotient of a semistable bundle by another one of the same slope. 

\begin{corollary}
\label{Dtwist}
If $(\Ff ^{\hdot},W_{\hdot})$ is a D-mixed twistor complex, then 
$d_1$ is a strict morphism between semistable vector bundles of slope $l$,
and the cohomology $E^{k,l}_2$ of the resulting complex is a semistable vector
bundle of slope $l$. Furthermore, $d_r=0$ for all $r\geq 2$ and the spectral
sequence degenerates at $E_2$. We obtain the expression
$$
\mbox{\eqref{e2expression}} = E^{k,l}_2 = 
\frac{W_{-k}H^{k+l}(\Ff ^{\hdot})}{W_{-k-1}H^{k+l}(\Ff ^{\hdot})}.
$$
Hence, if we set 
$$
W^B_mH^i(\Ff ^{\hdot}):= W_{m-i}H^i(\Ff ^{\hdot})=
{\rm  im}\left( H^i(W_m\Ff ^{\hdot})\rightarrow H^i(\Ff ^{\hdot}) \right)
$$
then $(H^i(\Ff ^{\hdot}),W^B_{\hdot})$ is a mixed twistor structure. 
\end{corollary}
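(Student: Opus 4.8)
The plan is to feed the filtered complex $(\Ff^\hdot, W_\hdot)$ into its own spectral sequence $E_r^{k,l}(\Ff^\hdot, W_\hdot)$ and exploit at every page two elementary facts about coherent sheaves on $\pp^1$: by Grothendieck's splitting theorem a semistable vector bundle of slope $d$ is $\Oo_{\pp^1}(d)^{\oplus r}$, and $\mathrm{Hom}(\Oo_{\pp^1}(a)^{\oplus a'}, \Oo_{\pp^1}(b)^{\oplus b'}) = 0$ whenever $a > b$, while for $a = b$ such a morphism, twisted by $\Oo_{\pp^1}(-d)$, becomes a constant matrix and hence has locally free kernel, image and cokernel, each again of the form $\Oo_{\pp^1}(d)^{\oplus\bullet}$ and split off as a direct summand (in other words, the semistable bundles of slope $d$ on $\pp^1$ form an abelian subcategory of coherent sheaves). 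With this in hand, I would first note that $E_1^{k,l} = H^{k+l}(W_{-k}\Ff^\hdot / W_{-k-1}\Ff^\hdot)$ is, by the definition of a D-mixed twistor complex, semistable of slope $(-k)+(k+l) = l$, and that $d_1 : E_1^{k,l} \to E_1^{k+1,l}$ is a morphism between such bundles of the same slope $l$; the twisting remark then shows $d_1$ is strict, so $\ker d_1$, $\mathrm{im}\,d_1$ and the cohomology $E_2^{k,l}$ are all semistable of slope $l$. This proves the first two assertions and identifies the right-hand side of \eqref{e2expression} with such a bundle.

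Next I would show by induction on $r \geq 2$ that each $E_r^{k,l}$ is semistable of slope $l$ and that $d_r = 0$. Granting the statement at level $r$, the differential $d_r : E_r^{k,l} \to E_r^{k+r,\,l-r+1}$ maps a semistable bundle of slope $l$ to one of slope $l - r + 1 \leq l - 1$, so $d_r = 0$ by the $\mathrm{Hom}$-vanishing; the incoming $d_r : E_r^{k-r,\,l+r-1} \to E_r^{k,l}$ vanishes for the same reason, whence $E_{r+1}^{k,l} = E_r^{k,l}$ stays semistable of slope $l$ and the induction closes. Since $W_\hdot$ is a finite filtration the spectral sequence converges, it degenerates at $E_2$, and abutment gives $E_2^{k,l} = E_\infty^{k,l} = W_{-k}H^{k+l}(\Ff^\hdot)/W_{-k-1}H^{k+l}(\Ff^\hdot)$, which is the displayed formula.

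Finally, $H^i(\Ff^\hdot)$ carries the induced finite filtration $W_\hdot$ whose graded pieces are the locally free bundles $E_\infty^{k,l}$, so it is itself locally free on $\pp^1$; and with $W^B_m H^i := W_{m-i}H^i$ one computes $\mathrm{gr}^{W^B}_m H^i(\Ff^\hdot) = W_{m-i}H^i/W_{m-i-1}H^i = E_\infty^{\,i-m,\,m}$, which we have just shown is semistable of slope $m$. That is exactly the defining property of a mixed twistor structure, so the corollary follows. There is no serious obstacle here: the argument is formal manipulation of the spectral sequence of a filtered complex in the abelian category of $\Oo_{\pp^1}$-modules, and the only point requiring care is the bookkeeping in the induction — keeping track that $d_r$ lowers the slope by $r - 1 \geq 1$ for $r \geq 2$, so that the $\mathrm{Hom}$-vanishing applies uniformly across all pages.
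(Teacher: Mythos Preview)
Your proposal is correct and follows essentially the same approach as the paper's proof: both use that the $E_1$ terms are semistable of slope $l$, that morphisms between same-slope semistable bundles on $\pp^1$ are strict with semistable kernel and cokernel of that slope, and then that $d_r$ for $r\geq 2$ lowers the slope by $r-1\geq 1$ and therefore vanishes, forcing degeneration at $E_2$ and yielding the mixed twistor structure after the shift $W^B_m:=W_{m-i}$. Your write-up is if anything slightly more explicit than the paper's, spelling out the Grothendieck splitting justification and the index check $E_\infty^{\,i-m,\,m}$; the only small caveat is that finiteness of the filtration is not literally part of the paper's definition of a D-mixed twistor complex (only finiteness of the nonzero $E_1$ terms is), so convergence is being used as an ambient assumption rather than a consequence.
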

\begin{proof}
The first sentence is what was seen above. But then $d_2$ is a map from a semistable 
vector
bundle of slope $l$ to a semistable vector bundle of slope $l-1$, so $d_2=0$.
Hence $E_3=E_2$, $d_3$ becomes a morphism from a bundle of slope $l$ to one of slope $l-2$
so it vanishes, and so on. Inductively $d_r=0$ for all $r\geq 2$ so the spectral 
sequence degenerates at $E_2$. Thus $E_2^{k,l}=Gr^W_{-k}H^{k+l}$ and this is a
semistable bundle of slope $l$. Changing the indices, this says that $Gr_m^WH^i$
is semistable of slope $m+i$. To get a mixed twistor structure we have to shift the
filtration; the new filtration may be denoted $W^B_{\hdot}$ because it will coincide
with the filtration obtained from the Beilinson picture (see below). We have 
$$
Gr ^{W^B}_m H^i= Gr ^W_{m-i}H^i
$$
which is semistable of slope $(m-i+i)=m$, which exactly says that $H^i(\Ff ^{\hdot})$
together with its filtration $W^B_{\hdot}$ is a mixed twistor structure. 
\end{proof}

We now look at how this works if we first pass to the Beilinson point of view.

\begin{corollary}
\label{Btwist}
If $(\Ff ^{\hdot}, W^B_{\hdot})$ is a B-mixed twistor complex then the
spectral sequence degenerates at 
$$
E^{k,l}_1(\Ff ^{\hdot}, W^B_{\hdot}) = H^{k+l}(Gr ^{W^B}_{-k}\Ff ^{\hdot})
$$
which are semistable bundles of slope $-k$ on $\pp ^1$. The induced filtration
$$
W^B_mH^i(\Ff ^{\hdot}):= {\rm im} \left(
H^i(W^B_m\Ff ^{\hdot})\rightarrow H^i(\Ff ^{\hdot})
\right)
$$
gives a mixed twistor structure on $H^i(\Ff ^{\hdot})$. 
\end{corollary}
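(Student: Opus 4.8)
The plan is to run the spectral-sequence argument exactly as in the proof of Corollary \ref{Dtwist}, the only difference being that here the relevant differentials already vanish on the $E_1$ page, so degeneration occurs one step earlier. The underlying principle is the same: on $\pp^1$ there is no nonzero morphism from a semistable vector bundle of slope $\mu$ to a semistable vector bundle of slope $\mu'<\mu$, since the image, being simultaneously a quotient of the source and a subsheaf of the target (hence torsion-free, hence a bundle on the smooth curve $\pp^1$), would have slope both $\geq\mu$ and $\leq\mu'$.

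First I would compute the $E_1$ page. By the general formula for the spectral sequence of a filtered complex recalled above, applied with the filtration $W^B_{\hdot}$ in the role of $W_{\hdot}$, one has
$$
E_1^{k,l}(\Ff^{\hdot}, W^B_{\hdot}) = H^{k+l}\bigl(W^B_{-k}\Ff^{\hdot}/W^B_{-k-1}\Ff^{\hdot}\bigr) = H^{k+l}\bigl(Gr^{W^B}_{-k}\Ff^{\hdot}\bigr),
$$
which, by the very definition of a B-mixed twistor complex applied with $m=-k$ and $i=k+l$, is a semistable vector bundle on $\pp^1$ of slope $-k$, nonzero for only finitely many pairs $(k,l)$.

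Next I would check that all differentials vanish. For $r\geq 1$ the differential $d_r$ goes $E_r^{k,l}\to E_r^{k+r,\,l-r+1}$. Arguing inductively, assuming $E_r=E_1$ (true for $r=1$), $d_r$ is a morphism between semistable bundles of slopes $-k$ and $-(k+r)$ respectively, and since $-k>-(k+r)$ the slope principle above forces $d_r=0$; hence $E_{r+1}=E_r=E_1$ and the induction continues. In particular the spectral sequence degenerates at $E_1$ — note that, unlike the D-case of Corollary \ref{Dtwist} where $d_1$ is a map between bundles of \emph{equal} slope and one needs a strictness argument, here even $d_1$ vanishes outright. Degeneration identifies $E_1^{k,l}$ with $Gr^{W^B}_{-k}H^{k+l}(\Ff^{\hdot})$ for the induced filtration $W^B_m H^i(\Ff^{\hdot}) := \mathrm{im}\bigl(H^i(W^B_m\Ff^{\hdot})\to H^i(\Ff^{\hdot})\bigr)$; so, setting $m=-k$ and $i=k+l$, we conclude that $Gr^{W^B}_m H^i(\Ff^{\hdot})$ is a semistable vector bundle on $\pp^1$ of slope $m$, nonzero for only finitely many $(i,m)$, which is precisely the assertion that $(H^i(\Ff^{\hdot}),W^B_{\hdot})$ is a mixed twistor structure. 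The argument is essentially bookkeeping; the one point that genuinely needs care — and the only obstacle worth flagging — is keeping the two indexing conventions straight, in particular remembering that the $k$-th column of the spectral sequence corresponds to $W^B_{-k}/W^B_{-k-1}$, so that ``slope $m$'' in the definition of a B-mixed twistor complex reads as ``slope $-k$'' on the $E_1$ page and makes the slope comparison $-k>-(k+r)$ available.
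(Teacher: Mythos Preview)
Your proof is correct and follows essentially the same approach as the paper's. The paper's own proof is a two-sentence reference back to Corollary \ref{Dtwist}, noting that $E_1^{k,l}$ is semistable of slope $-k$ by definition and that $d_1$ already vanishes because it decreases the slope; you have simply spelled out the induction on $r$ and the index bookkeeping that the paper leaves implicit.
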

\begin{proof}
Follow the proof of Corollary \ref{Dtwist}, noting that 
$H^{k+l}(Gr ^{W^B}_{-k}\Ff ^{\hdot})$ is semistable of slope $-k$ by the definition 
of B-mixed twistor complex. The differential $d_1$ vanishes already because it decreases
the slope.
\end{proof}

Given a D-mixed twistor complex $(\Ff ^{\hdot}, {W}_{\hdot})$ with the 
differential of the complex $\Ff ^{\hdot}$ denoted by $d$,
we obtain a
B-mixed twistor complex $(\Ff ^{\hdot}, {W}^B_{\hdot})$ by setting
$$
W^B_{m}(\Ff ^i):= 
$$
$$
{\rm ker}\left(
d:W_{m-i}\Ff ^i \rightarrow \frac{W_{m-i}\Ff ^{i+1}}{W_{m-i-1}\Ff ^{i+1}}
\right) .
$$ 
This is a subobject of 
$W_{m-i}\Ff ^i$.
Note in passing that if $d=0$ this is just the same as $W_{m-i}$ explaining
the notation in Corollary \ref{Dtwist} above. 
The filtration $W^B_{\hdot}$ is usually
called $Dec(W_{\hdot})$, cf \cite{hodge3} and \cite{Huber}. 

\begin{lemma}
\label{D2B}
The above construction starting from a D-mixed twistor complex yields
a B-mixed twistor complex. More particularly,
$$
E^{k,l}_1(\Ff ^{\hdot}, W^B_{\hdot})= H^{k+l}(Gr ^{W^B}_{-k}\Ff ^{\hdot}) = 
E^{2k+l,-k}_2(\Ff ^{\hdot},W_{\hdot}).
$$
The spectral sequence for $(\Ff ^{\hdot}, W^B_{\hdot})$ degenerating at $E_1$
abuts to $W^B$ of Corollary \ref{Btwist} which in this case is
the same filtration as the $W^B$ of Corollary
\ref{Dtwist}.  Furthermore 
the construction $W\mapsto W^B$ is multiplicative: if 
$(\Ff ^{\hdot},W_{\hdot})$ and $(\Gg ^{\hdot},W_{\hdot})$ are
D-mixed twistor complexes then 
$$
W^B_{\hdot}(\Ff ^{\hdot}\otimes \Gg ^{\hdot}) 
$$
is the tensor product filtration of $W^B_{\hdot}$ on $\Ff ^{\hdot}$ and 
$W^B_{\hdot}$ on $\Gg ^{\hdot}$.
\end{lemma}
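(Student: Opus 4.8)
The plan is to recognise this construction as Deligne's filtration d\'ecal\'ee $W_{\hdot}\mapsto Dec(W_{\hdot})$, transported into the abelian category of sheaves of $\Oo_{\pp^1}$-modules, and then to combine its formal properties with the slope bookkeeping already established in Corollaries \ref{Dtwist} and \ref{Btwist}. First I would recall the purely formal content of the d\'ecalage lemma (see \cite{hodge3}, \cite{Huber}): for any filtered complex $(K^{\hdot}, W_{\hdot})$ in an abelian category, with $W^B_{\hdot}:=Dec(W_{\hdot})$ defined exactly as in the construction above, the natural maps induce isomorphisms
$$
E^{p,q}_r(K^{\hdot}, W^B_{\hdot}) \stackrel{\sim}{\longrightarrow} E^{2p+q,-p}_{r+1}(K^{\hdot}, W_{\hdot})
$$
for all $r\geq 1$, compatibly with the differentials (which match up after the index shift) and with the abutments, the latter compatibility reading $(Dec\,W)_m H^n(K^{\hdot}) = W_{m-n} H^n(K^{\hdot})$. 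The only hypothesis needed is the evident finiteness, which is inherited here since the D-mixed twistor condition forces $Gr^W_n \Ff^{\hdot}$ to be acyclic for all but finitely many $n$. Applying this with $K^{\hdot}=\Ff^{\hdot}$, $(p,q)=(k,l)$ and $r=1$, together with the tautological identification $E^{k,l}_1(\Ff^{\hdot}, W^B_{\hdot}) = H^{k+l}(Gr^{W^B}_{-k}\Ff^{\hdot})$, yields the chain of equalities displayed in the lemma.

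The rest of the first half then follows mechanically. By Corollary \ref{Dtwist} the term $E^{p,q}_2(\Ff^{\hdot}, W_{\hdot})$ is a semistable bundle of slope $q$ on $\pp^1$, so $E^{k,l}_1(\Ff^{\hdot}, W^B_{\hdot}) = E^{2k+l,-k}_2(\Ff^{\hdot}, W_{\hdot}) = H^{k+l}(Gr^{W^B}_{-k}\Ff^{\hdot})$ is semistable of slope $-k$, which is exactly the defining condition of a B-mixed twistor complex (nonvanishing for only finitely many indices is again inherited). For the degeneration I would repeat the slope argument of Corollaries \ref{Dtwist} and \ref{Btwist}: $d_1$ on $E_1(\Ff^{\hdot}, W^B_{\hdot})$ runs from a semistable bundle of slope $-k$ to one of slope $-k-1$, hence vanishes, so $E_2=E_1$, and inductively every $d_r$ with $r\geq 1$ lowers the slope and so vanishes; the spectral sequence degenerates at $E_1$. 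The induced filtration on $H^i(\Ff^{\hdot})$ is by construction the $W^B_{\hdot}$ of Corollary \ref{Btwist}, namely ${\rm im}\bigl(H^i(W^B_m\Ff^{\hdot})\to H^i(\Ff^{\hdot})\bigr)$, and by the abutment clause of the d\'ecalage lemma it equals $W_{m-i}H^i(\Ff^{\hdot})$, which is precisely the $W^B_{\hdot}$ of Corollary \ref{Dtwist}; so the two weight filtrations coincide.

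For multiplicativity I would first note that the convolution of $W_{\hdot}$ on $\Ff^{\hdot}$ with $W_{\hdot}$ on $\Gg^{\hdot}$ makes $\Ff^{\hdot}\otimes\Gg^{\hdot}$ a D-mixed twistor complex: a K\"unneth decomposition of $Gr^W_n(\Ff^{\hdot}\otimes\Gg^{\hdot})$ together with the additivity of slopes under tensor products of semistable bundles on $\pp^1$ shows $H^i\bigl(Gr^W_n(\Ff^{\hdot}\otimes\Gg^{\hdot})\bigr)$ is semistable of slope $n+i$, and the same computation shows that the convolution of $W^B_{\hdot}$ on $\Ff^{\hdot}$ with $W^B_{\hdot}$ on $\Gg^{\hdot}$ makes $\Ff^{\hdot}\otimes\Gg^{\hdot}$ a B-mixed twistor complex. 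The Leibniz rule gives at once an inclusion of this convolution filtration into $W^B_{\hdot}(\Ff^{\hdot}\otimes\Gg^{\hdot})$, the d\'ecalage of the tensor filtration $W_{\hdot}$ on $\Ff^{\hdot}\otimes\Gg^{\hdot}$, and passing to associated graded objects --- where, by the previous steps, both sides become finite direct sums of semistable bundles whose slopes match term by term under K\"unneth --- shows this inclusion is a filtered quasi-isomorphism; hence the two induced weight filtrations on $H^{\hdot}(\Ff^{\hdot}\otimes\Gg^{\hdot})$ agree, which is the asserted multiplicativity. The main obstacle will be exactly this last point: the inclusion of the convolution filtration into the d\'ecalage of the tensor filtration is in general not an equality of subcomplexes (cancellations can occur on applying $d$ to a sum of pure tensors), so one must settle for, and carefully check, that it is a filtered quasi-isomorphism, which is all that is needed for the statement about cohomology; everything else is routine once the d\'ecalage lemma is in hand, the one point needing attention being consistent tracking of the index shift $(p,q)\mapsto(2p+q,-p)$ and of the slope normalisations of Corollaries \ref{Dtwist} and \ref{Btwist}.
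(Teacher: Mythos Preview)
Your proposal is correct and follows the same underlying strategy as the paper: recognise $W^B=Dec(W)$, invoke the d\'ecalage identification of spectral sequences, and then feed in the slope information from Corollaries \ref{Dtwist} and \ref{Btwist}. The paper itself remarks that ``this is just Proposition 1.3.4 of \cite{hodge2} applied to the abelian category of sheaves of $\Oo_{\pp^1}$-modules,'' which is exactly what you cite as a black box; the paper then \emph{chooses} to unpack that proposition explicitly, writing out $E^{k,l}_0(\Ff^{\hdot},W^B_{\hdot})$, exhibiting the surjection onto $E^{2k+l,-k}_1(\Ff^{\hdot},W_{\hdot})$, and showing by hand that the kernel $\Uu^{k,l}$ is an acyclic complex split by the differential. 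Your shortcut is entirely legitimate; the paper's explicit version has the pedagogical advantage of making visible where the $E_0$ term for $W^B$ differs from the $E_1$ term for $W$ by an acyclic piece.

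On multiplicativity you actually say more than the paper, which dispatches it with ``the remaining statements of the lemma may be verified from the above discussion.'' You are right to flag that the convolution of the two $W^B$'s is only \emph{contained} in $Dec$ of the tensor filtration and that one must argue via filtered quasi-isomorphism rather than literal equality of subcomplexes; this is the honest reading of the lemma's multiplicativity claim, and your K\"unneth-plus-slope argument is the correct way to close it.
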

\begin{proof}
Unfortunately the $E_0$ term for $W^B$ doesn't coincide with the 
$E_1$ term for $W$, instead there is an extra acyclic complex there. 
This is just Proposition 1.3.4 of \cite{hodge2} applied to the abelian category
of sheaves of $\Oo _{\pp ^1}$-modules, but we write things out more explicitly.
See also the discussion of Lemma 1.3.15 of \cite{hodge2}, as well as certainly other
more recent references. 

We have
$$
E^{k,l}_0(\Ff ^{\hdot},W^B_{\hdot}) =
\frac{W^B_{-k}\Ff ^{k+l}}{W^B_{-k-1}\Ff ^{k+l}}
$$
$$
=
\frac{{\rm ker}\left(
d:W_{-2k-l}\Ff ^{k+l}\rightarrow W_{-2k-l}\Ff ^{k+l+1}/W_{-2k-l-1}
\right)}{{\ker}\left(
d:W_{-2k-l-1}\Ff ^{k+l}\rightarrow W_{-2k-l-1}\Ff ^{k+l+1}/W_{-2k-l-2}
\right)}.
$$
In particular there is a natural projection
$$
\begin{array}{ccl}
E^{k,l}_0(\Ff ^{\hdot},W^B_{\hdot})& \rightarrow &H^{k+l}(Gr^W_{-2k-l}\Ff ^{\hdot})
\;\;\;\; =\;\;\;\;  E^{2k+l, -k}_1(\Ff ^{\hdot}, W_{\hdot}) \\
& & 
=
\frac{{\rm ker}\left(
d:W_{-2k-l}\Ff ^{k+l}/W_{-2k-l-1}\rightarrow W_{-2k-l}\Ff ^{k+l+1}/W_{-2k-l-1}
\right)}{{\rm im}\left(
d:W_{-2k-l}\Ff ^{k+l-1}/W_{-2k-l-1}\rightarrow W_{-2k-l}\Ff ^{k+l}/W_{-2k-l-1}
\right)} .
\end{array}
$$
Let $\Uu ^{k,l}$ be the kernel, that is we have an exact sequence
$$
0\rightarrow \Uu ^{k,l}\rightarrow E^{k,l}_0(\Ff ^{\hdot},W^B_{\hdot})\rightarrow 
E^{2k+l, -k}_1(\Ff ^{\hdot}, W_{\hdot})\rightarrow 0.
$$
In the above expressions, look at the subobject of $W^B_{-k}\Ff ^{k+l}/W^B_{-k-1}$
determined by the image of $W_{-2k-l-1}$. It is contained in $\Uu ^{k,l}$,
and is of the form 
$W_{-2k-l-1}\Ff ^{k+l}/{\rm ker}(d)$ which is naturally isomorphic to the image, 
denoted by ${\rm im}(Gr^W_{-2k-l-1}(d^{k+l})$, of 
$$
d:Gr^W_{-2k-l-1} \Ff ^{k+l}\rightarrow Gr^W_{-2k-l-1} \Ff ^{k+l+1}.
$$
On the other hand, 
$$
\frac{W^B_{-k}\Ff ^{k+l}}{W_{-2k-l-1} \Ff ^{k+l}} = 
{\rm ker}\left( 
d:Gr ^W_{-2k-l}\Ff ^{k+l}\rightarrow Gr ^W_{-2k-l}\Ff ^{k+l+1}
\right) .
$$
The kernel of the projection from here to $H^{k+l}(Gr ^W_{-2k-l}\Ff ^{\hdot})$ is by definition the image denoted ${\rm im}(Gr^W_{-2k-l}(d^{k+l-1})$ of 
$$
d:Gr^W_{-2k-l} \Ff ^{k+l-1}\rightarrow Gr^W_{-2k-l} \Ff ^{k+l}
$$
This leads to an exact sequence
$$
0\rightarrow {\rm im}(Gr^W_{-2k-l-1}(d^{k+l})) 
\rightarrow \Uu ^{k,l} \rightarrow {\rm im}(Gr^W_{-2k-l}(d^{k+l-1})) \rightarrow 0.
$$
On the other hand, note that 
$$
d:W_{-2k-l-1}\Ff ^{k+l-1}\rightarrow \{0 \} \subset E^{k,l}_0(\Ff ^{\hdot},W^B_{\hdot})
$$
so $d$ induces a map
$$
Gr^W_{-2k-l-1}\Ff ^{k+l-1}\rightarrow \Uu ^{k,l}.
$$
The kernel of $d:Gr^W_{-2k-l-1}\Ff ^{k+l-1}\rightarrow Gr^W_{-2k-l-1}\Ff ^{k+l}$
maps to zero in $\Uu ^{k,l}$ in view of the original expression for 
$E^{k,l}_0(\Ff ^{\hdot}, W^B_{\hdot})$. Thus $d$ induces a map
$$
{\rm im}(Gr^W_{-2k-l-1}(d^{k+l-1})\rightarrow \Uu ^{k,l}.
$$
This splits the previous exact sequence, so we get a direct sum decomposition
$$
\Uu ^{k,l} = {\rm im}(Gr^W_{-2k-l-1}(d^{k+l}))\oplus 
{\rm im}(Gr^W_{-2k-l}(d^{k+l-1})).
$$
The differential 
$$
d_0: E^{k,l}_0(\Ff ^{\hdot},W^B_{\hdot})\rightarrow 
E^{k,l+1}_0(\Ff ^{\hdot},W^B_{\hdot})
$$
sends $\Uu ^{k,l}$ to $\Uu ^{k,l+1}$ and on there it
is equal to the splitting map defined above, identifying 
$$
{\rm im}(Gr^W_{-2k-l}(d^{k+l-1})\subset \Uu ^{k,l}
$$
with
$$
{\rm im}(Gr^W_{-2k-l}(d^{k+l-1})\subset 
\Uu ^{k,l+1}.
$$
It follows that the complex
$$
\cdots \stackrel{d_0}{\rightarrow} \Uu ^{k,l-1}\stackrel{d_0}{\rightarrow}  \Uu ^{k,l}
\stackrel{d_0}{\rightarrow}\cdots 
$$
is acyclic. The map 
$$
E^{k,l}_0(\Ff ^{\hdot},W^B_{\hdot})\rightarrow 
E^{2k+l, -k}_1(\Ff ^{\hdot}, W_{\hdot})
$$
which is compatible with the differential $d_0$ on the left and $d_1$ on the
right, so for $k$ fixed it induces a map of complexes. The kernel of this map of
complexes is the acyclic complex formed by the $\Uu ^{k,l}$. We get an isomorphism
on cohomology, which is to say an isomorphism between the next terms in 
the spectral sequence:
$$
E^{k,l}_1(\Ff ^{\hdot},W^B_{\hdot})\stackrel{\cong}{\rightarrow}
E^{2k+l, -k}_2(\Ff ^{\hdot}, W_{\hdot}).
$$
Using our hypothesis that $(\Ff ^{\hdot},W_{\hdot})$ is
a D-mixed twistor complex, recall from Corollary \ref{Dtwist} that
$E^{2k+l, -k}_2(\Ff ^{\hdot}, W_{\hdot})$ are semistable vector bundles of slope
$-k$ on $\pp ^1$. We get the same property for 
$$
E^{k,l}_1(\Ff ^{\hdot},W^B_{\hdot})= H^{k+l}(Gr ^{W^B}_{-k}\Ff ^{\hdot}),
$$
which is exactly the property required to say that $(\Ff ^{\hdot}, W^B_{\hdot})$ is
a B-mixed twistor complex. The remaining statements of the lemma may be verified
from the above discussion. 
\end{proof}

Let $MTC^D$ (resp. $MTC^B$) be the category of D-mixed (resp. B-mixed) twistor
complexes. 
Notice that a complex in the category $MTS$ in the category of mixed twistor structures,
is in particular a B-mixed twistor complex. Thus we have a functor 
$$
{\rm Cpx}(MTS) \rightarrow MTC^B,
$$
and Beilinson shows that this gives an equivalence of derived categories. 
There isn't a natural lift along the functor $Dec : MTC^D\rightarrow MTC^B$,
but $Dec$ also induces an equivalence of derived categories. 

The difference between $MTC^D$ and $MTC^B$ may be seen in the loss of information going
from $MTC^D$ to $MTC^B$: a D-mixed
twistor complex yields the associated $E^{2k+l,-k}_1$ terms of the
spectral sequence, which are themselves semistable bundles of slope $-k$ on $\pp ^1$.
However, the $E^{k,l}_0$-terms of the spectral sequence for the associated B-mixed   
twistor complex are extensions of these bundles by terms $\Uu ^{k,l}$ of an acyclic
complex. From here, one cannot in general recover the $E^{2k+l,-k}_1$ term of the 
original D-mixed twistor complex. It is a question of taste, how much one wants
to consider this extra information as a part of the geometrical structure.
For a given singular variety, if we choose different simplicial
resolutions, the Deligne $E^{2k+l,-k}_1$-terms might be different.
On the other hand, Deligne's $E_2$ terms, which are the same as Beilinson's $E_1$
terms, are invariant as may be stated in the following corollary. 

\begin{corollary}
Suppose $(\Ff ^{\hdot},W_{\hdot})\stackrel{\phi}{\rightarrow} 
(\Gg ^{\hdot}, W_{\hdot})$ is
a morphism of D-mixed twistor complexes. Suppose that $\phi$
induces an isomorphism (resp. injection, resp. surjection)
on cohomology $\phi : H^i(\Ff ^{\hdot})\cong H^i(\Gg ^{\hdot})$.
Then the map induced by $\phi$ 
$$
E^{k,l}_2(\Ff ^{\hdot}, W_{\hdot}) \rightarrow E^{k,l}_2(\Gg ^{\hdot}, W_{\hdot})
$$
is an isomorphism (resp. injection, resp. surjection) of pure vector bundles on $\pp ^1$.

Suppose $(\Ff ^{\hdot},W^B_{\hdot})\stackrel{\phi}{\rightarrow} 
(\Gg ^{\hdot}, W^B_{\hdot})$ is
a morphism of B-mixed twistor complexes. Suppose that $\phi$
induces an isomorphism (resp. injection, resp. surjection)
on cohomology $\phi : H^i(\Ff ^{\hdot})\cong H^i(\Gg ^{\hdot})$.
Then the map induced by $\phi$ 
$$
E^{k,l}_1(\Ff ^{\hdot}, W_{\hdot}) \rightarrow E^{k,l}_1(\Gg ^{\hdot}, W_{\hdot})
$$
is an isomorphism (resp. injection, resp. surjection)
of pure vector bundles on $\pp ^1$. 
\end{corollary}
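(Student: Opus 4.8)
The plan is to reduce both assertions to the strictness of morphisms of mixed twistor structures. The relevant $E$-term has in each case already been identified with a graded piece of the weight filtration on cohomology. Indeed, by Corollary \ref{Dtwist} a D-mixed twistor complex has its spectral sequence degenerating at $E_2$, with
$$
E^{k,l}_2(\Ff ^{\hdot},W_{\hdot}) \;=\; W^B_lH^{k+l}(\Ff ^{\hdot})/W^B_{l-1}H^{k+l}(\Ff ^{\hdot}) \;=\; Gr^{W^B}_lH^{k+l}(\Ff ^{\hdot}),
$$
a semistable bundle of slope $l$ on $\pp ^1$; and by Corollary \ref{Btwist} a B-mixed twistor complex has its spectral sequence degenerating at $E_1$, with $E^{k,l}_1(\Ff ^{\hdot},W^B_{\hdot})=Gr^{W^B}_{-k}H^{k+l}(\Ff ^{\hdot})$, semistable of slope $-k$. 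Since the spectral sequence of a filtered complex and the resulting filtration on cohomology are functorial in the filtered complex, a morphism $\phi$ of D-mixed (resp. B-mixed) twistor complexes induces on each $H^{k+l}$ a morphism of mixed twistor structures, and under the above identifications $E^{k,l}_2(\phi )$ (resp. $E^{k,l}_1(\phi )$) becomes $Gr^{W^B}_lH^{k+l}(\phi )$ (resp. $Gr^{W^B}_{-k}H^{k+l}(\phi )$). So it suffices to show that $Gr^{W^B}_m$ carries a morphism of mixed twistor structures which is injective (resp. surjective, resp. bijective) on underlying bundles to a map of the same kind.

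The one real ingredient is strictness: a morphism $\psi :A\rightarrow B$ of mixed twistor structures satisfies $\psi (W_mA)=\psi (A)\cap W_mB$ for every $m$. This is the twistor analogue of Deligne's strictness theorem for morphisms of mixed Hodge structures \cite{hodge2}; the mechanism that makes it work, inside the abelian category of $\Oo _{\pp ^1}$-modules, is that there is no nonzero morphism of semistable sheaves on $\pp ^1$ from one of strictly larger slope to one of strictly smaller slope (a nonzero quotient of the source has slope $\ge$ that of the source, a nonzero subsheaf of the target has slope $\le$ that of the target, and the slopes of the image give a contradiction), which plays the role of the vanishing of morphisms between pure Hodge structures of distinct weight. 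I expect this imported statement to be the only real obstacle; once it is granted the rest is bookkeeping. Granting it, if $\phi$ is injective on every $H^i$, then for $x\in W^B_mH^i(\Ff ^{\hdot})$ whose image lies in $W^B_{m-1}H^i(\Gg ^{\hdot})$, strictness at level $m-1$ writes that image as $H^i(\phi )(x')$ with $x'\in W^B_{m-1}H^i(\Ff ^{\hdot})$, and injectivity forces $x=x'$, so $Gr^{W^B}_mH^i(\phi )$ is injective; if $\phi$ is surjective on every $H^i$, then strictness at level $m$ gives $H^i(\phi )(W^B_mH^i(\Ff ^{\hdot}))=W^B_mH^i(\Gg ^{\hdot})$, so $Gr^{W^B}_mH^i(\phi )$ is surjective; the bijective case is the conjunction of the two.

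Feeding this back through the identifications of the first paragraph gives exactly that $E^{k,l}_2(\phi )$, respectively $E^{k,l}_1(\phi )$, is an isomorphism (resp. injection, resp. surjection); and since source and target are semistable bundles of the same slope on $\pp ^1$ --- slope $l$ in the D-case, slope $-k$ in the B-case --- this is a map of pure vector bundles on $\pp ^1$, which is the assertion. As a variant one could treat only the B-case and deduce the D-case from it by applying the functor $Dec$ together with Lemma \ref{D2B}, which identifies $E^{2k+l,-k}_2(-,W_{\hdot})$ with $E^{k,l}_1(-,W^B_{\hdot})$ functorially and does not alter the underlying complex; but the argument via strictness handles both cases at once.
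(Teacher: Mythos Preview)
Your proof is correct and follows essentially the same approach as the paper: identify the $E_2$ (resp.\ $E_1$) terms with the associated graded pieces of the mixed twistor structure on cohomology via Corollaries~\ref{Dtwist} and~\ref{Btwist}, then invoke strictness of morphisms of mixed twistor structures to conclude that injectivity and surjectivity pass to the graded pieces. The paper simply cites \cite{twistor} for strictness, whereas you helpfully spell out both the semistability reason behind it and the explicit deduction that strictness propagates injectivity/surjectivity to $Gr^{W^B}_m$; this is the same argument, just more fully unpacked.
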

\begin{proof}
In both cases, the indicated terms of the spectral sequence are equal to the
associated graded pieces of the mixed twistor structure given by Corollaries
\ref{Dtwist} and \ref{Btwist}. Strictness for maps between mixed twistor structures
\cite{twistor} says that injectivity and surjectivity pass to the associated-graded
pieces. 
\end{proof}

Suppose we are given a functor 
$$
G: \Delta \rightarrow MTC^D
$$
denoted by $k\mapsto (G^{\hdot}(k),W_{\hdot})$. 
Then Deligne defines the {total complex} 
$$
{\bf tot}(G)^j:=  \bigoplus _{i+k=j} G^i(k)
$$
with weight filtration 
$$
W^{{\rm Dec}_1}_m {\bf tot}(G)^j:= \bigoplus _{i+k=j}W_{m-k}G^i(k).
$$
The differentials of ${\bf tot}(G)^{\hdot}$ are obtained by combining the differentials
of $G^{\hdot}(k)$ with the alternating sums of the simplicial face maps. 

If we are given a functor 
$$
G: \Delta \rightarrow MTC^B, \;\;\; k\mapsto (G^{\hdot}(k),W^B_{\hdot})
$$ 
then Beilinson considers the same total complex with differential
$$
{\bf tot}(G)^j:=  \bigoplus _{i+k=j} G^i(k)
$$
but with weight filtration 
$$
W^B_m {\bf tot}^B(G)^j:= \bigoplus _{i+k=j}W_{m}G^i(k).
$$

\begin{proposition}
\label{cosimp}
For a cosimplicial D-mixed twistor complex $G$, the
total complex 
$({\bf tot}(G)^{\hdot}, W^{{\rm Dec}_1}_{\hdot})$ is again a D-mixed twistor complex, inducing
a mixed twistor structure on $H^i({\rm tot}(G)^{\hdot})$.

For a cosimplicial
B-mixed twistor complex $G$, the total complex  
$({\bf tot}(G)^{\hdot}, W^B_{\hdot})$ is again a B-mixed twistor complex, inducing
a mixed twistor structure on $H^i({\bf tot}(G)^{\hdot})$. 

If we start with a cosimplicial D-mixed twistor complex $G$
and let $W^BG(k)$ be the filtration of $G^{\hdot}(k)$ considered in Lemma \ref{D2B},
varying functorially in $k$ to give 
a cosimplicial B-mixed twistor complex. Both of these induce the same
mixed twistor structure on $H^i({\bf tot}(G)^{\hdot})$.
\end{proposition}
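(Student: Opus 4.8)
The plan is to treat Proposition~\ref{cosimp} as the twistor version of Deligne's construction of the mixed Hodge structure on the cohomology of a simplicial (mixed Hodge) variety \cite[\S 8]{hodge3}: once the levelwise d\'ecalage bookkeeping has been carried out — which is precisely the content of Corollaries~\ref{Dtwist}, \ref{Btwist} and Lemma~\ref{D2B} — the only new ingredient is to run the same argument one more time along the (co)simplicial variable, working inside the abelian category of sheaves of $\Oo_{\pp^1}$-modules so that no strictness issues intervene.

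First I would show that $(\mathbf{tot}(G)^{\bullet}, W^{\mathrm{Dec}_1}_{\bullet})$ is a D-mixed twistor complex. The point is that the shift by the simplicial degree built into $W^{\mathrm{Dec}_1}$ is arranged exactly so that the alternating sum of coface maps $\delta\colon G^i(k)\to G^i(k+1)$ induces the zero map on the associated graded: $\delta$ is filtered for the $W_{\bullet}$ of the $G(k)$, and at simplicial level $k+1$ the graded piece of index $m$ is taken modulo a $W$-level that already contains the image under $\delta$ of the corresponding subspace at level $k$. Hence $\mathrm{Gr}^{W^{\mathrm{Dec}_1}}_m\mathbf{tot}(G)^{\bullet}$ reduces to a finite direct sum of the associated-graded complexes $\mathrm{Gr}^W_{\ast}G^{\bullet}(k)$, each placed in the appropriate cohomological position, with the shift of the $W$-index matched to the simplicial degree so that — by the D-mixed twistor hypothesis on $G(k)$, i.e.\ Corollary~\ref{Dtwist} applied levelwise — $H^i$ of each summand is a semistable bundle of slope $m+i$ on $\pp^1$. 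Taking cohomology, $H^i(\mathrm{Gr}^{W^{\mathrm{Dec}_1}}_m\mathbf{tot}(G)^{\bullet})$ is a subquotient of a finite direct sum of such bundles, hence again semistable of slope $m+i$, nonzero for only finitely many $(i,m)$ since the $G(k)$ are uniformly bounded and only finitely many $k$ contribute in each total degree. Corollary~\ref{Dtwist} then equips $H^i(\mathbf{tot}(G)^{\bullet})$ with a mixed twistor structure. The B-case is the same argument with the shift switched off: $\delta$ now survives on $\mathrm{Gr}^{W^B}_m$, so $\mathrm{Gr}^{W^B}_m\mathbf{tot}(G)^{\bullet}$ is the total complex of the cosimplicial complex $k\mapsto \mathrm{Gr}^{W^B}_mG^{\bullet}(k)$; since $H^q$ of the latter is semistable of slope $m$ independently of $k$, the resulting spectral sequence has all $E_1$ terms semistable of slope $m$, so $H^i(\mathrm{Gr}^{W^B}_m\mathbf{tot}(G)^{\bullet})$ is too, and Corollary~\ref{Btwist} applies.

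For the last assertion I would note first that the cosimplicial B-mixed twistor complex $W^BG$ has the \emph{same} total complex as $G$ — only the filtration changes — and that this filtration is exactly the Deligne d\'ecalage, along the total degree, of $W^{\mathrm{Dec}_1}$: writing $\mathrm{Dec}(W^{\mathrm{Dec}_1})_m\mathbf{tot}(G)^j = \ker\!\big(d\colon W^{\mathrm{Dec}_1}_{m-j}\mathbf{tot}(G)^j \to \mathbf{tot}(G)^{j+1}/W^{\mathrm{Dec}_1}_{m-j-1}\mathbf{tot}(G)^{j+1}\big)$ and unwinding it over the bicomplex indices $(i,k)$, the coface part of $d$ contributes nothing (it already lands in the level one quotients out) while the internal-differential part at bidegree $(i,k)$ imposes precisely the condition defining $W^B_mG^i(k) = \ker\!\big(d_G\colon W_{m-i}G^i(k)\to G^{i+1}(k)/W_{m-i-1}G^{i+1}(k)\big)$; as $W^B_mG^i(k)\subseteq W_{m-i}G^i(k)$ the two filtrations on $\mathbf{tot}(G)^j$ agree. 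Granting this identity, Lemma~\ref{D2B} applied to the \emph{single} D-mixed twistor complex $(\mathbf{tot}(G)^{\bullet}, W^{\mathrm{Dec}_1})$ identifies its d\'ecalage $(\mathbf{tot}(G)^{\bullet}, \mathrm{Dec}(W^{\mathrm{Dec}_1})) = (\mathbf{tot}(G)^{\bullet}, W^B)$ as the associated B-mixed twistor complex and asserts that the two induce the same weight filtration, hence the same mixed twistor structure, on $H^i(\mathbf{tot}(G)^{\bullet})$. This is exactly what is to be shown.

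The step I expect to cost the most care is the first: verifying that $W^{\mathrm{Dec}_1}$ really is a subcomplex and pinning down $\mathrm{Gr}^{W^{\mathrm{Dec}_1}}$ with the correct shift of the weight index, together with the acyclic correction complexes that accompany any d\'ecalage. This is formal but delicate — it is Deligne's \cite[Prop.~1.3.4]{hodge2} transported to $\pp^1$-sheaves and run along the simplicial variable — and one can largely bypass the acyclic corrections by checking directly, as above, that the shifted naive filtration already kills $\delta$ on the associated graded. The remaining pieces, namely the $E_1$-degeneration in the B-case and the combinatorial identity $\mathrm{Dec}(W^{\mathrm{Dec}_1}) = W^B$, are routine once the index conventions are fixed.
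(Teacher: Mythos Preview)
Your proposal is correct and is precisely the approach the paper has in mind: the paper's own proof consists of the single line ``As in \cite{hodge3}'', and you have supplied a faithful and considerably more detailed expansion of Deligne's construction transported to the twistor setting, invoking Corollaries~\ref{Dtwist}, \ref{Btwist} and Lemma~\ref{D2B} exactly where they belong. Your key computation for the comparison step---that the coface part of $d_{\rm tot}$ already lands in $W^{\mathrm{Dec}_1}_{m-j-1}$, so that $\mathrm{Dec}(W^{\mathrm{Dec}_1})$ on $\mathbf{tot}(G)$ coincides on the nose with the levelwise-then-total Beilinson filtration $W^B$---is correct and in fact slightly sharper than the cautionary Remark following the proposition suggests.
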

\begin{proof}
As in \cite{hodge3}. 
\end{proof} 

\begin{remark}
Given a cosimplicial D-mixed twistor complex $G$, we get a D-mixed twistor complex
$({\bf tot}(G),W^{{\rm Dec}_1}_{\hdot})$ from the first paragraph of the proposition, then 
a B-mixed twistor complex by Lemma \ref{D2B}. On the other hand, applying the construction
of Lemma \ref{D2B} levelwise we get a cosimplicial B-mixed twistor complex,
which also gives a B-mixed twistor complex by the second paragraph of the proposition.
These will not in general be the same; the third paragraph of the proposition says
that they still induce the same weight filtration on the total cohomology. 
\end{remark}

Suppose $X$ is a smooth projective variety. A 
polarizable pure variation of twistor structure
of weight $w$ (VTS) is just a semisimple local system $L$ on $X$. The weight $w$ may be chosen
arbitrarily, and determines the realization of $L$ into a family of twistor
structures parametrized by $x\in X$, which we denote by $L^w$. 
See \cite{twistor} \cite{Sabbah}. 

Given a VTS $L^w$ of weight $w$, we obtain a D-mixed twistor complex 
$(A^{\hdot} _{\rm tw}(X,L^w), W_{\hdot})$ 
as follows. 
The weight filtration will be trivially concentrated
in degree $w$, that is to say 
\begin{equation}
\label{weightdef}
W_mA^{i} _{\rm tw}(X,L^w) = \left\{\begin{array}{ll}
A^{i}_{\rm tw}(X,L^w) & m\geq w \\
0 & m<w . \end{array} \right.
\end{equation}
So we just have to define the complex $A^{\hdot} _{\rm tw}(X,L^w)$. 
Let $\Ll$ be the $C^{\infty}$ bundle underlying the local system, and put
$$
A^i_{\rm tw}(X,L^w):= A^i(X,L)\otimes _{\cc}\Oo _{\pp ^1}(w+i).
$$
Since $L$ is
semisimple, it has a structure of harmonic bundle \cite{Corlette} \cite{DonaldsonApp}, 
giving a decomposition of the
flat connection $d$ on $\Ll$ into
$$
d=\partial + \overline{\partial} + \theta +\overline{\theta} 
$$
in the notations of \cite{hbls}. 
Let $\lambda , \mu : \Oo _{\pp ^1}\rightarrow \Oo _{\pp ^1}(1)$ denote the two sections
vanishing respectivly at $0$ and $\infty$. Then 
$$
d_{\rm tw} := \lambda ( \partial + \overline{\theta}) 
+ \mu (\overline{\partial} + \theta ) = \lambda D' + \mu D''
$$
defines an operator
$$
d_{\rm tw}: A^i(X,L)\otimes _{\cc}\Oo _{\pp ^1}(w+i)\rightarrow
A^{i+1}(X,L)\otimes _{\cc}\Oo _{\pp ^1}(w+i+1)
$$
which is to say a differential for $A^{\hdot}_{\rm tw}(X,L^w)$. 

The variation of twistor structure on $L$ corresponds to a prefered section of
the twistor moduli stack $\Mm _{DH}(X,GL(n))$, and the above
complex is the Deligne-Hitchin glueing of the complexes calculating cohomology
of $\lambda$-connections on $X$ and $\overline{X}$, see \cite{Sabbah} \cite{twistor}.

\begin{lemma}
\label{harmonicMTC}
The complex $(A^{\hdot}_{\rm tw}(X,L^w), d_{\rm tw})$ together with
the weight filtration $W_{\hdot}$ of \eqref{weightdef} concentrated trivially in degree $w$,
is a D-mixed twistor complex.
\end{lemma}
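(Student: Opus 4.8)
The plan is to reduce the statement to a single cohomology computation and then invoke nonabelian Hodge theory for the harmonic bundle underlying $L$. First I would note that the weight filtration of \eqref{weightdef} is concentrated in the single degree $w$, so the only possibly nonzero graded piece is $W_wA^{\hdot}_{\rm tw}(X,L^w)/W_{w-1}A^{\hdot}_{\rm tw}(X,L^w)=A^{\hdot}_{\rm tw}(X,L^w)$ itself. Since on $\pp ^1$ a semistable vector bundle of slope $s$ is precisely $\Oo _{\pp ^1}(s)^{\oplus r}$, and since the slope prescribed by the definition of a D-mixed twistor complex is $n+i=w+i$, the whole assertion comes down to this: the cohomology sheaf $H^i(A^{\hdot}_{\rm tw}(X,L^w),d_{\rm tw})$ is isomorphic to $\Oo _{\pp ^1}(w+i)^{\oplus b_i}$, where $b_i:=\dim _{\cc}H^i(X,L)$, and it vanishes outside the range $0\leq i\leq 2\dim X$, which takes care of the finiteness condition with $n=w$ throughout.

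Next I would fix the harmonic metric on $L$ --- available since $L$ is semisimple, \cite{Corlette} \cite{DonaldsonApp} --- producing the operators $D'=\partial +\overline{\theta}$, $D''=\overline{\partial}+\theta$ and their formal adjoints, so that $d_{\rm tw}=\lambda D'+\mu D''$. The essential analytic input I would use is the principal Laplacian identity for harmonic bundles on a compact K\"ahler manifold (the K\"ahler identities of \cite{hbls}, in the twistor formulation of \cite{twistor} \cite{Sabbah}): upon specializing $d_{\rm tw}$ at an arbitrary point $\xi\in\pp ^1$ --- this is $D''$ at $\xi=0$, $D'$ at $\xi=\infty$, and a nonzero scalar multiple of $D'+\zeta ^{-1}D''$ at an interior point $\zeta$ --- the associated Laplacian is a strictly positive smooth multiple of the one fixed Laplacian $\Delta$ of the harmonic bundle. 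Consequently the space $\mathcal{H}^i\subset A^i(X,L)$ of $\Delta$-harmonic $i$-forms is independent of $\xi$, has dimension $b_i$, and every element of it is killed by $D'$, $D''$ and by their formal adjoints; in particular $\mathcal{H}^i$ represents the cohomology of each specialized complex uniquely.

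Then I would consider the inclusion of graded $\Oo _{\pp ^1}$-modules $\mathcal{H}^i\otimes _{\cc}\Oo _{\pp ^1}(w+i)\hookrightarrow A^i(X,L)\otimes _{\cc}\Oo _{\pp ^1}(w+i)=A^i_{\rm tw}(X,L^w)$. Because harmonic forms are $D'$- and $D''$-closed, $d_{\rm tw}=\lambda D'+\mu D''$ vanishes on the source, so this is a subcomplex with zero differential whose degree-$i$ term $\mathcal{H}^i\otimes\Oo _{\pp ^1}(w+i)$ is $\Oo _{\pp ^1}(w+i)^{\oplus b_i}$. To conclude I would check that the induced map to $H^i(A^{\hdot}_{\rm tw}(X,L^w),d_{\rm tw})$ is an isomorphism of $\Oo _{\pp ^1}$-modules; the cleanest route is the Hodge decomposition of $A^{\hdot}_{\rm tw}(X,L^w)$ over $\pp ^1$ into the harmonic part, $\mathrm{im}(d_{\rm tw})$, and the image of the adjoint, which varies holomorphically thanks to the principal Laplacian identity. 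Equivalently one argues fibre by fibre: the fibre at $\xi$ is $A^{\hdot}(X,L)$ with differential $d_{\rm tw,\xi}$, and the composite $\mathcal{H}^i\to\ker d_{\rm tw,\xi}\to H^i$ is an isomorphism by the previous paragraph. This identifies $H^i(A^{\hdot}_{\rm tw}(X,L^w),d_{\rm tw})$ with $\Oo _{\pp ^1}(w+i)^{\oplus b_i}$, semistable of slope $w+i=n+i$ (with $n=w$), nonzero for only finitely many $(i,n)$, and hence shows $(A^{\hdot}_{\rm tw}(X,L^w),W_{\hdot})$ is a D-mixed twistor complex.

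I expect the main obstacle to be the precise formulation and reference for the principal Laplacian identity in a form uniform over all of $\pp ^1$ --- in particular controlling the two special points where $d_{\rm tw}$ degenerates to the Dolbeault operator $D''$ or to $D'$ --- together with the bookkeeping that the fibrewise harmonic identifications glue into a single $\Oo _{\pp ^1}$-linear isomorphism carrying the expected twist $\Oo _{\pp ^1}(w+i)$. Everything else --- the reduction in the first paragraph and the subcomplex argument --- is formal, and the write-up should be brief, as suggested by the invocation of \cite{hodge3} elsewhere in this section.
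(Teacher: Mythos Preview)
Your proposal is correct and very close in spirit to the paper's proof: both reduce the D-mixed twistor condition to the single identification $H^i(A^{\hdot}_{\rm tw}(X,L^w),d_{\rm tw})\cong H^i(X,L)\otimes\Oo_{\pp^1}(w+i)$, and both rest on the harmonic theory of \cite{hbls}. The one substantive difference is in how the sheaf-level identification over $\pp^1$ is obtained. You invoke the principal Laplacian identity and the inclusion of harmonic forms $\mathcal H^i\otimes\Oo_{\pp^1}(w+i)\hookrightarrow A^i_{\rm tw}$, then propose to check it is a quasi-isomorphism via a Hodge decomposition ``varying holomorphically'' or, equivalently, fibre by fibre. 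The paper instead invokes the $D'D''$-lemma and identifies the cohomology with the Bott--Chern type quotient $\ker(D',D'')/\mathrm{im}(D'D'')$, tensored with $\Oo_{\pp^1}(w+i)$. The advantage of the paper's formulation is that it sidesteps exactly the obstacle you flag: the adjoint $d_{\rm tw}^{\ast}$ is not $\Oo_{\pp^1}$-linear (it involves $\bar\lambda,\bar\mu$), so the three-term Hodge decomposition does not globalize holomorphically, and a purely fibrewise quasi-isomorphism does not automatically give one at the sheaf level. Your route can still be made rigorous---split $A^i=\mathcal H^i\oplus(\mathcal H^i)^{\perp}$ using the fixed Laplacian, check $d_{\rm tw}$ preserves the splitting, and then exhibit local contracting homotopies $2(D'')^{\ast}G$ and $2(D')^{\ast}G$ on the two affine charts of $\pp^1$---but the $D'D''$-lemma packaging gets there with less analytic bookkeeping.
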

\begin{proof}
See \cite{twistor}. 
Recall from \cite{hbls} that the cohomology of $d$ is the same as
that of $({\rm ker}(\partial + \overline{\theta}), \overline{\partial} + \theta )$
or symmetrically $({\rm ker}(\overline{\partial} + \theta), \partial + \overline{\theta}  )$, these cohomologies are isomorphic to the spaces of harmonic forms,
and in fact there is a $D'D''$-lemma. From these, the cohomology bundle 
$H^i(A^{\hdot}_{\rm tw}(X,L^w),d_{\rm tw})$ is isomorphic to the
cohomology of the sequence
$$
A^{i-2}(X,L)\stackrel{D'D''}{\rightarrow}
A^i(X,L)\stackrel{(D',D'')}{\rightarrow}
A^{i+1}(X,L)\oplus A^{i+1}(X,L),
$$
all tensored with $\Oo _{\pp ^1}(w+i)$. 
Hence
$$
H^i(A^{\hdot}_{\rm tw}(X,L^w),d_{\rm tw}) = H^i(X,L)\otimes \Oo _{\pp ^1}(w+i).
$$
The D-mixed twistor property follows immediately.
\end{proof}

We now complete the twistor analogue of the main construction of Hodge III \cite{hodge3}. 
If $L_{\hdot}$ is a local system on a simplicial smooth projective variety $X_{\hdot}$
such that each $L_k$ is a semisimple local system on $X_k$, then for any integer $w$
$L_{\hdot}$ has a structure of polarizable variation of pure twistor structure
of weight $w$ denoted $L^w_{\hdot}$. 

\begin{corollary}
\label{simpMTS}
In this situation, the cohomology $H^i(X_{\hdot}, L^w_{\hdot})$ has a
natural mixed twistor structure whose underlying bundle over $\pp ^1$ is
obtained by the Deligne-Hitchin glueing. 

This mixed twistor structure 
is functorial for morphisms between local systems, compatible with cup-product,
and contravariantly functorial for morphisms of simplicial varieties in the following way. 
Suppose $f:X_{\hdot}\rightarrow Y_{\hdot}$ is
a morphism of simplicial smooth projective varieties, and that $L_{\hdot}$
is a local system on $Y_{\hdot}$ with each $L_k$ semisimple. 
Fix a weight $w$. Then
$f$ induces a map of mixed twistor structures 
$$
H^i(Y_{\hdot}, L^w_{\hdot})\rightarrow H^i(X_{\hdot}, f^{\ast}(L)_{\hdot}^w).
$$
If the map on cohomology is an isomorphism then it is an isomorphism of mixed twistor
structures. 
\end{corollary}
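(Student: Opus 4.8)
The plan is to exhibit $H^i(X_{\hdot},L^w_{\hdot})$ as the cohomology of a total complex assembled from the level-wise twistor complexes of Lemma \ref{harmonicMTC}, and then to read off the mixed twistor structure from Proposition \ref{cosimp}.

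First I would observe that the whole package attached to a semisimple local system $L$ on a smooth projective variety --- the polystable Higgs bundle of \cite{hbls}, the preferred section of $\Mm _{DH}$, the realization $L^w$, and the twistor complex $(A^{\hdot}_{\rm tw}(X,L^w),d_{\rm tw})$ of Lemma \ref{harmonicMTC} --- is \emph{canonical}: the harmonic metric enters only through the operators $\partial$, $\overline{\partial}$, $\theta$, $\overline{\theta}$, and these are unchanged by the automorphisms of $(E,\theta)$ through which two harmonic metrics on a semisimple bundle differ. Hence the construction is functorial for pullback along morphisms of smooth projective varieties, and for a morphism $\phi :X_m\to X_k$ in the simplicial structure of $X_{\hdot}$ together with the transition isomorphism $\phi ^{\ast}(L_k^w)\cong L_m^w$ one obtains a morphism of D-mixed twistor complexes $A^{\hdot}_{\rm tw}(X_k,L_k^w)\to A^{\hdot}_{\rm tw}(X_m,L_m^w)$, namely pullback of forms. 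Thus $G:k\mapsto (A^{\hdot}_{\rm tw}(X_k,L_k^w),W_{\hdot})$, with $W_{\hdot}$ the filtration of \eqref{weightdef} concentrated trivially in degree $w$, is a cosimplicial D-mixed twistor complex.

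Next I would apply Proposition \ref{cosimp}: $({\bf tot}(G)^{\hdot},W^{{\rm Dec}_1}_{\hdot})$ is again a D-mixed twistor complex, so by Corollary \ref{Dtwist} the cohomology $H^i({\bf tot}(G)^{\hdot})$ carries a mixed twistor structure. It remains to identify it. By construction ${\bf tot}(G)^j=\bigoplus _{p+k=j}A^p(X_k,L_k)\otimes _{\cc}\Oo _{\pp ^1}(w+p)$, with differential combining $d_{\rm tw}$ with the alternating face maps; restricted over the affine chart $\lambda \in \aaa ^1\subset \pp ^1$ this is exactly the total complex computing the (hyper)cohomology of $X_{\hdot}$ with coefficients in the associated simplicial family of $\lambda$-connections --- at $\lambda =1$ the simplicial de Rham complex, hence $H^i(X_{\hdot},L_{\hdot})$ --- and the two affine charts are glued by the Deligne-Hitchin recipe, so $H^i({\bf tot}(G)^{\hdot})$ is precisely the Deligne-Hitchin gluing of the $\lambda$-connection cohomologies of $X_{\hdot}$ over $X$ and over $\overline{X}$. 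This is what we take for $H^i(X_{\hdot},L^w_{\hdot})$, now equipped with a mixed twistor structure. (Equivalently, the weight spectral sequence of $W^{{\rm Dec}_1}$ has, by Lemma \ref{harmonicMTC}, $E_1$-term $\bigoplus _k H^{\hdot}(X_k,L_k)\otimes \Oo _{\pp ^1}(w+\hdot)$ with the simplicial differential, converging after one more page as in Corollary \ref{Dtwist}.) For the functoriality statements: a morphism of simplicial local systems $L_{\hdot}\to L'_{\hdot}$ (automatically a morphism of the canonical VTS's) and, respectively, a morphism $f:X_{\hdot}\to Y_{\hdot}$ of simplicial smooth projective varieties each induce a morphism of cosimplicial D-mixed twistor complexes, hence of total complexes, hence of the resulting mixed twistor structures on cohomology; compatibility with cup product comes from the lax-monoidal pairing $A^{\hdot}_{\rm tw}(X_k,L_k^w)\otimes A^{\hdot}_{\rm tw}(X_k,M_k^{w'})\to A^{\hdot}_{\rm tw}(X_k,(L_k\otimes M_k)^{w+w'})$ (wedge of forms, tensor of local systems, $\Oo (a)\otimes \Oo (b)\to \Oo (a+b)$) together with the multiplicativity of the filtration $W^B=Dec(W_{\hdot})$ recorded in Lemma \ref{D2B}. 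For the last assertion, if $f$ induces an isomorphism $H^i(Y_{\hdot},L^w_{\hdot})\xrightarrow{\ \sim\ }H^i(X_{\hdot},f^{\ast}(L)^w_{\hdot})$ on cohomology, then applying the (unlabelled) Corollary immediately preceding \ref{simpMTS} to the morphism ${\bf tot}(G_Y)\to {\bf tot}(G_X)$ of D-mixed twistor complexes shows the induced map on $E_2$-terms --- i.e.\ on the graded pieces $\mathrm{Gr}^{W^B}$ of the two mixed twistor structures --- is an isomorphism of pure bundles on $\pp ^1$; since morphisms of mixed twistor structures are strict \cite{twistor}, the map is then an isomorphism of mixed twistor structures.

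The step I expect to be the main obstacle is the one asserting canonicity and hence functoriality of the twistor complex $A^{\hdot}_{\rm tw}(X,L^w)$ in the pair $(X,L)$: it is exactly this that makes the level-wise complexes assemble into a genuine cosimplicial object and that produces the contravariant functoriality in $X_{\hdot}$, and it rests on the choice-free nature of the nonabelian Hodge correspondence of \cite{hbls}. Everything after that is a formal transcription of Deligne's argument from \cite{hodge3} into the abelian category of $\Oo _{\pp ^1}$-modules, already carried out at the level one needs in Lemma \ref{D2B}, Proposition \ref{cosimp} and the preceding corollary.
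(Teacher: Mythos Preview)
Your proposal is correct and follows essentially the same route as the paper: assemble the level-wise D-mixed twistor complexes of Lemma \ref{harmonicMTC} into a cosimplicial object, apply Proposition \ref{cosimp} to get a total D-mixed twistor complex, and read off the mixed twistor structure on cohomology together with its functoriality and the strictness consequence. The paper's own proof is a terse version of exactly this; you have simply written out the identification of the total complex with the Deligne--Hitchin glueing and the cup-product compatibility (via the multiplicativity in Lemma \ref{D2B}) in more detail than the paper does.

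One small caveat on the point you flag as the main obstacle: your phrasing that the operators $\partial,\overline{\partial},\theta,\overline{\theta}$ are ``unchanged'' under the automorphisms relating two harmonic metrics is slightly too strong in the polystable (non-stable) case---they are conjugated by that automorphism, not literally fixed. What is true, and is all you need, is that the resulting complex $(A^{\hdot}_{\rm tw}(X,L^w),d_{\rm tw})$ is canonical up to canonical isomorphism, and that pulling back a harmonic metric along a morphism gives a harmonic metric on the pullback, so pullback of forms is a genuine chain map. This is enough to produce the cosimplicial object and the contravariant functoriality, exactly as the paper asserts without further comment.
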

\begin{proof}
The D-mixed twistor complexes of Lemma \ref{harmonicMTC} are contravariantly 
functorial, so they
fit together into a cosimplicial D-mixed twistor complex. 
(Complexes of forms on simplicial manifolds are discussed in \cite{Dupont}  \cite{Jeffrey}.)
By Proposition \ref{cosimp} this gives a total D-mixed twistor complex inducing 
a mixed twistor structure on cohomology. Functoriality follows from the construction and
the last phrase comes from the strictness property for mixed twistor structures
\cite{twistor}. 
\end{proof}

\begin{corollary}
\label{pureTS}
Suppose $X$ is a connected smooth proper DM-stack. 
If $L$ is semisimple local system considered as a pure variation of
twistor structure of weight $w$, then the 
mixed twistor structure on $H^i(X,L)$ is pure of weight $i+w$. 
\end{corollary}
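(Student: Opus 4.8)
The plan is to follow the pattern of Corollary \ref{pureHS}, replacing the pure Hodge structure on $H^i$ of a smooth projective variety by the pure twistor structure of Lemma \ref{harmonicMTC}, and deducing purity downstairs from an injection into a pure object upstairs. Recall first how the mixed twistor structure on $H^i(X,L)$ is obtained: one chooses a proper surjective hypercovering $a:Z_{\hdot}\rightarrow X$ by smooth projective varieties, starting from a surjective-where-etale map $Z_0\rightarrow X$ (Theorems \ref{maincovering} and \ref{fullresolution}); by Lemma \ref{cohdescentstack} we have $H^i(X,L)\cong H^i(Z_{\hdot},L_{\hdot})$ with $L_{\hdot}=a^{\ast}L$, and each $L_k$ is semisimple because it is the pullback of the semisimple $L$ along the dominant map $Z_k\rightarrow X$, pullback preserving polystability of the associated Higgs bundle (Theorem \ref{smoothcasecorr}). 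So Corollary \ref{simpMTS}, applied to $L^w_{\hdot}$, furnishes the mixed twistor structure in question, which is independent of the hypercovering by the topping-off property of Theorem \ref{fullresolution} together with the last sentence of Corollary \ref{simpMTS}.

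First I would reduce to an injectivity statement exactly as in the proof of Corollary \ref{pureHS}. Take, via Theorem \ref{maincovering}, a surjective map $p:Y\rightarrow X$ from a connected smooth projective variety $Y$ with $\dim Y=\dim X=n$, which is finite etale of some degree $d\geq 1$ over a dense open substack $U\subset X$. Poincar\'e duality holds for $H^{\hdot}(X,-)$ by the theorem of Behrend quoted above \cite{BehrendDuality}, and for $Y$ classically; representing a top class on $X$ by a form supported in $U$ shows that $p^{\ast}:H^{2n}(X,\cc)\rightarrow H^{2n}(Y,\cc)$ is multiplication by $d$, so by the projection formula $p_{!}\circ p^{\ast}=d$ on $H^i(X,L)$ and hence
$$
p^{\ast}:H^i(X,L)\longrightarrow H^i(Y,p^{\ast}L)
$$
is injective. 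On the other hand, since $Y$ is smooth projective and $p^{\ast}L$ is a semisimple local system viewed as a pure variation of twistor structure of weight $w$, Lemma \ref{harmonicMTC} together with Corollary \ref{Dtwist} shows that $H^i(Y,(p^{\ast}L)^w)$ carries a mixed twistor structure whose weight filtration satisfies $\mathrm{Gr}^{W^B}_m=0$ unless $m=i+w$; that is, it is pure of weight $i+w$.

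The main obstacle, and the only real point, is to check that $p^{\ast}$ is a morphism of mixed twistor structures, since the functoriality of Corollary \ref{simpMTS} is phrased for morphisms of simplicial smooth projective varieties while $X$ is a stack. To bridge this I would realize $p$ inside that framework: as the variety $Y$ produced by Theorem \ref{maincovering} is surjective where etale, it may be taken as the level $Z_0$ of a hypercovering $Z_{\hdot}\rightarrow X$ constructed as in Theorem \ref{fullresolution}, with augmentation $Z_0\rightarrow X$ equal to $p$. Then, under $H^i(X,L)\cong H^i(Z_{\hdot},L_{\hdot})$, the map $p^{\ast}$ becomes the edge homomorphism $H^i\big({\bf tot}(G)\big)\rightarrow H^i\big(G^{\hdot}(0)\big)$ of the cosimplicial D-mixed twistor complex $G$ built level by level from Lemma \ref{harmonicMTC}; since the simplicial differentials of ${\bf tot}(G)$ only raise the cosimplicial degree, the projection ${\bf tot}(G)\rightarrow G^{\hdot}(0)$ onto the $k=0$ summand is a morphism of complexes carrying $W^{{\rm Dec}_1}_{m}$ into $W_{m}$, hence a filtered morphism of D-mixed twistor complexes, and therefore induces a morphism of mixed twistor structures on cohomology (Corollary \ref{Dtwist}, Proposition \ref{cosimp}). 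Combining the three ingredients, $p^{\ast}$ exhibits $H^i(X,L^w)$ as a sub-object of $H^i(Y,(p^{\ast}L)^w)$, which is pure of weight $i+w$; by strictness of morphisms of mixed twistor structures \cite{twistor}, a sub-object of a pure object of weight $i+w$ is itself pure of weight $i+w$, which is the assertion.
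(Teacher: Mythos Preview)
Your argument is correct and follows the same route as the paper: produce a surjective map $p:Y\rightarrow X$ from a smooth projective variety, use Poincar\'e duality to get injectivity of $p^{\ast}$, observe that the target is pure of weight $i+w$, and conclude by strictness. The paper's own proof simply quotes Corollary~\ref{pureHS} for the injectivity and then asserts without comment that $p^{\ast}$ is a morphism of mixed twistor structures; you have in addition supplied the missing justification for that last point by realizing $p$ as the $Z_0$-level of a hypercovering and checking that the projection ${\bf tot}(G)\to G^{\hdot}(0)$ is a filtered chain map, which is a worthwhile clarification.
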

\begin{proof}
Choose a dominant morphism from a smooth projective variety $p:Z\rightarrow X$.
By Corollary  \ref{pureHS}, the morphism on cohomology 
$$
H^i(X,L)\rightarrow H^i(Z,p^{\ast}(L))
$$
is injective. This is a morphism of mixed twistor structures
and the one on the right is pure, so the one on the left is pure too.
\end{proof}

It would clearly be interesting to develop a theory of variations of mixed twistor
structures over simplicial varieties, leading to a mixed twistor structure on the
total cohomology. This would go beyond our present scope; but see \cite{twistor} for
a discussion of VMTS on a single smooth variety.

\section{Finite group actions}

In this section, we discuss some examples which may
be obtained by considering finite group actions.

Suppose $\Phi$ is a finite group acting on a connected smooth projective
variety $X$, and $G$ is a complex linear algebraic
group. Then $\Phi$ acts on the moduli stacks $\Mm _{\eta}(X,G)$
preserving all of the
various structures. 

\begin{lemma}
In the above situation, the substack of stacky fixed points is identified with
the moduli stack for the quotient $Y=X\stackquot \Phi$:
$$
\Mm _{\eta}(X,G)^{\Phi} \cong \Mm _{\eta}(Y,G).
$$
\end{lemma}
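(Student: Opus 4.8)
The plan is to identify both sides with the moduli of $\Phi$-equivariant $\eta$-local systems on $X$. First I would present the quotient stack $Y = X\stackquot\Phi$ by its action groupoid $X\times\Phi\twoarrows X$ and take the associated nerve $N_{\hdot}$, with levels $N_k = X\times\Phi^k$. Since $X$ is a smooth projective variety and $\Phi$ is finite, each $N_k$ is a smooth projective variety, and $N_{\hdot}\to Y$ is the {\v C}ech nerve of the finite \'etale atlas $X\to Y$; using $X\times_YX\cong X\times\Phi$ one checks that $N_{\hdot}\to Y$ is simultaneously an \'etale hypercovering and a proper surjective hypercovering by smooth projective varieties, so the descent machinery of the earlier sections applies. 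In particular, pullback along $X\to Y$ gives an equivalence $\Mm_\eta(Y,G)\cong\Mm_\eta(N_{\hdot},G)$: for $\eta = B$ this is Proposition \ref{topdescentstack} (here $\pi_1(Y)\cong\pi_1(|N_{\hdot}|)$), and for $\eta = DR,H,Hod$ it follows from Lemma \ref{lsdescends} and Lemma \ref{vbdescent2} (and their $\lambda$-connection versions), noting that for $\eta = H,Hod$ the semiharmonic-type condition on $Y$ is detected by pullback to the \'etale cover $X$; the case $\eta = DH$ then follows by gluing over $\pp^1$.

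Next I would unwind $\Mm_\eta(N_{\hdot},G)$ via Lemma \ref{limdiag}: an $S$-point is an $\eta$-local system $E$ on $X\times S/S$ together with an isomorphism $\varphi:\partial_0^\ast E\cong\partial_1^\ast E$ over $N_1\times S = \coprod_{\phi\in\Phi}(X\times S)$ satisfying the cocycle condition over $N_2\times S = X\times\Phi^2\times S$. With the face maps normalized so that $\partial_1(x,\phi)=x$ and $\partial_0(x,\phi)=\phi x$, writing $\varphi = (a_\phi)_{\phi\in\Phi}$ with $a_\phi:\phi^\ast E\cong E$, the simplicial identities over $N_2$ become exactly the cocycle relation $a_{\phi\psi}=a_\phi\circ\phi^\ast(a_\psi)$, and the degeneracy-compatibility $s_0^\ast\varphi = \mathrm{id}$ — which, as observed just before Lemma \ref{vbdescent1}, is automatic — forces $a_e=\mathrm{id}$. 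This is precisely the data of an object of $\Mm_\eta(X,G)$ equipped with a lift of the canonical fixed-point structure for the $\Phi$-action induced by pullback along the automorphisms $\phi\colon X\to X$, i.e.\ precisely an object of the stacky fixed-point substack $\Mm_\eta(X,G)^\Phi$. Morphisms match under the same dictionary, and the equivalence is manifestly compatible with the extra structures ($\lambda$-connections, Higgs fields, the $\Gm$-action, the hyperk\"ahler and twistor structures) because all of these are preserved by the pullback functors $\phi^\ast$.

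The step I expect to be the main obstacle is purely bookkeeping rather than substantive: keeping the conventions straight so that the direction of the $\Phi$-action on $\Mm_\eta(X,G)$ matches the face maps of the action groupoid, and verifying that the hexagon-type coherence condition defining a descent datum on $(N_0,N_1,N_2)$ is literally the cocycle condition defining homotopy $\Phi$-fixed points. Everything else — the descent equivalence $\Mm_\eta(Y,G)\cong\Mm_\eta(N_{\hdot},G)$ and the compatibility of semiharmonic type with the \'etale cover $X\to Y$ — is either routine or already available from the earlier sections, so the proof should be short once these identifications are set up carefully.
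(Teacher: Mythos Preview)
Your proposal is correct and takes essentially the same approach as the paper: both identify a stacky fixed point with an object of $\Mm_\eta(X,G)$ equipped with a compatible $\Phi$-action, which is precisely a descent datum for the finite \'etale cover $X\to Y=X\stackquot\Phi$. The paper compresses this into a single sentence, whereas you spell it out via the nerve $N_{\hdot}$ of the action groupoid and the descent/$2$-limit machinery of the earlier sections; this is more detail than strictly necessary here but is entirely correct and in the spirit of the paper.
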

\begin{proof}
A stacky fixed point in $\Mm _{\eta}(X,G)$ is defined as an object together with
a compatible action of $\Phi$ covering the action on $X$, 
which is exactly the same as an object with descent data down to $Y$. 
\end{proof}

One can  observe that for a global quotient stack, 
the fundamental group is also the fundamental group of a smooth projective variety,
and the fixed point stack of the preceding lemma can be interpreted in this way.
This observation was already present in Daskalopoulos-Wentworth \cite{DaskalopoulosWentworth}. 

\begin{proposition}
If $Y=X\stackquot \Phi$ is a global quotient 
stack for a group $\Phi$ acting on a connected smooth projective variety $X$,
then we can construct a connected smooth projective variety $Z$ and a map $f:Z\rightarrow Y$
inducing an isomorphism  $\pi _1(Z,z)\cong \pi  _1(Y,f(z))$.
In particular, $\Mm _{\eta}(Y,G)\cong \Mm _{\eta}(Z,G)$. 
\end{proposition}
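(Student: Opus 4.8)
The plan is to build $Z$ as the free quotient $(X\times W)/\Phi$ for a suitable simply connected smooth projective variety $W$ equipped with a free $\Phi$-action, and then to read off the isomorphism on $\pi _1$ by comparing the two resulting fibrations over $B\Phi$. The main step is the Godeaux--Serre construction of $W$. Choose a faithful complex representation $V$ of $\Phi$ (for instance the regular representation), let $B\subset \pp(V)$ be the union of the fixed loci of the nontrivial elements of $\Phi$, a proper closed $\Phi$-invariant subvariety, and take $W$ to be a general complete intersection of members of the $\Phi$-invariant linear system $|\Oo_{\pp(V)}(d)|$ for $d$ large. For $d\gg 0$ a Bertini-type argument gives that a general such $W$ is smooth, is $\Phi$-stable, is disjoint from $B$ (so the induced $\Phi$-action on $W$ is free), and has dimension $\geq 2$, whence $\pi _1(W)=1$ by the Lefschetz hyperplane theorem. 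Making the Bertini and base-point-freeness statements for the invariant system precise is the step I expect to demand the most care, although it is classical.

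Given $W$, set $Z:=(X\times W)/\Phi$ for the diagonal action. As $\Phi$ is finite and acts freely on the smooth projective variety $X\times W$, the quotient $Z$ is again a smooth projective variety, and it is connected because $X$ and $W$ are. The $\Phi$-equivariant first projection $X\times W\to X$ descends to a morphism of Deligne--Mumford stacks
$$
f\colon Z = [(X\times W)/\Phi]\longrightarrow [X/\Phi] = Y ,
$$
which is surjective. Fix $(x,w)\in X\times W$, let $z\in Z$ be its image and $f(z)\in Y$ the image of $x$; these will be the chosen basepoints.

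To compute $\pi _1$ I would compare Borel constructions. Since the $\Phi$-action on $X\times W$ is free, $|Z|$ is homotopy equivalent to the homotopy quotient $(X\times W)_{h\Phi}$, so there is a homotopy fibre sequence $X\times W\to |Z|\to B\Phi$; likewise the realization of the quotient stack gives $|Y|\simeq X_{h\Phi}$ together with a homotopy fibre sequence $X\to |Y|\to B\Phi$, and $f$ realizes to a map between these sequences covering the identity of $B\Phi$, with fibre map the projection $X\times W\to X$. Because $X$ and $X\times W$ are connected and $\pi _2(B\Phi)=0$, the homotopy exact sequences become short exact sequences of groups
$$
1\to \pi _1(X\times W)\to \pi _1(|Z|,z)\to \Phi\to 1,\qquad 1\to \pi _1(X)\to \pi _1(|Y|,f(z))\to \Phi\to 1 ,
$$
and $f$ induces a morphism between them which is the identity on $\Phi$ and, on the normal subgroups, the projection $\pi _1(X\times W)=\pi _1(X)\times\pi _1(W)\to\pi _1(X)$. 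Since $\pi _1(W)=1$ this is an isomorphism, so the five lemma yields $\pi _1(Z,z)\stackrel{\sim}{\to}\pi _1(Y,f(z))$, which is the first assertion.

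For the last clause, the case $\eta=B$ is immediate: $\Mm _B(\cdot,G)$ depends only on the fundamental group of the topological realization — it is ${\rm Hom}(\pi _1,G)\stackquot G$ — so $f^{\ast}$ is an isomorphism $\Mm _B(Y,G)\cong\Mm _B(Z,G)$. The remaining cases $\eta=DR,H,Hod,DH$ follow by transport along the natural equivalences already in place, all compatible with pullback along $f$: the Riemann--Hilbert equivalence $\Mm _{DR}^{\rm an}\cong\Mm _B^{\rm an}$ and the Kobayashi--Hitchin correspondence of Proposition \ref{correspondence} and Theorem \ref{smoothcasecorr}, the latter applicable to $Y$ because the smooth proper DM-stack $Y$ satisfies Condition \ref{finiteindex} by Theorem \ref{unibranch}. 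The only subtlety is that one wants these comparisons to match the scheme/stack structures rather than merely the underlying groupoids; this holds because $f^{\ast}$ is an algebraic morphism which becomes an isomorphism after passing through the analytic, resp. differential-geometric, comparisons, and both sides are Artin stacks of finite type.
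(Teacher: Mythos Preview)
Your proof is correct and follows essentially the same route as the paper. The paper takes a smooth projective $U$ with $\pi_1(U)\cong\Phi$ (citing Serre's construction) and lets $P$ be its universal cover, then sets $Z=(X\times P)/\Phi$ and invokes the long exact sequence for the fibre bundle $Z\to Y$ with simply connected fibre $P$; your $W$ is precisely this $P$, since Serre's construction is the Godeaux--Serre complete-intersection argument you spell out, and your five-lemma comparison of Borel constructions is the same homotopy-exact-sequence reasoning.
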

\begin{proof}
Indeed, there exists a smooth projective variety $U$ with $\pi _1(U)=G$ by Serre's construction \cite{SerrePi}, see also Browder and Katz \cite{BrowderKatz}. Let $P$ be the universal cover of $U$, so $P$ is simply
connected and has a free action of $G$. Now put 
$$
Z:= Y\times P / G.
$$
This is a smooth projective variety provided with a map $f:Z\rightarrow Y$ which is a fiber bundle in the etale
topology of $Y$. The fiber $P$ is simply connected fiber, so the long exact sequence of homotopy groups implies that $f$ induces an isomorphism on $\pi _1$. 
\end{proof}

A more subtle question concerns the quotient of the group action. 
The group action preserves all of the structure on the moduli stack, 
hence for example the subset of smooth points of the moduli space quotient
$M_{\eta}(X,G)/\Phi$ admits a hyperk\"ahler structure. 
This suggests that 
$\Mm _{\eta}(X,G)\stackquot \Phi$ should itself be viewed as a kind of ``nonabelian $1$-motive''.
We look at how to realize it as a connected component of a moduli stack.

Consider first the case where a finite group $\Phi$ acts on a group $G$ but
acts trivially on $X$. Let $H = G\rtimes \Phi$ be the semidirect product fitting into
the split exact sequence 
$$
1\rightarrow G \rightarrow H \rightarrow \Phi \rightarrow 0.
$$
This induces a sequence of maps of moduli stacks
$$
\Mm _{\eta}(X,G)\rightarrow \Mm _{\eta}(X,H)\rightarrow \Mm _{\eta}(X,\Phi ).
$$
The trivial $\Phi$-torsor has $\Phi$ as group of automorphisms,
so it corresponds to a map
$$
B\Phi \rightarrow \Mm _{\eta}(X,\Phi ).
$$

\begin{lemma}
\label{actsongroup}
With the above notations we have a cartesian square of moduli stacks
for $\eta =B,DR,H \ldots $ refering to any type of local system 
$$
\begin{array}{ccc}
\Mm _{\eta}(X,G) \stackquot \Phi & \rightarrow & \Mm _{\eta}(X,H) \\
\downarrow && \downarrow \\
B\Phi & \rightarrow & \Mm _{\eta}(X,\Phi ).
\end{array}
$$
\end{lemma}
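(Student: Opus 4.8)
The plan is to show the square is cartesian by computing the groupoids of $S$-valued points of both sides and producing an equivalence natural in $S$ (compatible with the two projections); since everything in sight is a $2$-functor from test schemes to groupoids, this suffices, and one must work at the level of groupoids, not sets, because $B\Phi$ has nontrivial automorphisms. Fix $S$ (with a structure map to $\aaa ^1$ in the cases $\eta = \mathrm{Hod}, DH$). Recall that $(B\Phi )(S)$ is the groupoid of $\Phi$-torsors on $S$, that $B\Phi \rightarrow \Mm _{\eta}(X,\Phi )$ sends such a torsor to its pullback along $X\times S\rightarrow S$ regarded as a locally constant $\Phi$-local system, and that $\Mm _{\eta}(X,H)\rightarrow \Mm _{\eta}(X,\Phi )$ is extension of structure group along $H\rightarrow \Phi$, i.e. $P\mapsto P/G$. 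Unwinding the definition of the $2$-fibre product, an object of $\bigl( B\Phi \times _{\Mm _{\eta}(X,\Phi )}\Mm _{\eta}(X,H)\bigr)(S)$ is a triple $(T,P,\theta )$ with $T$ a $\Phi$-torsor on $S$, $P$ an $\eta$-local system for $H$ on $X\times S$ relative to $S$, and $\theta$ an isomorphism of induced $\Phi$-local systems between $\mathrm{pr}_S^{*}T$ and $P/G$; a morphism $(T,P,\theta )\rightarrow (T',P',\theta ')$ is a pair of an isomorphism $T\cong T'$ and an isomorphism $P\cong P'$ intertwining $\theta ,\theta '$.

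Next I would feed this into the reduction-of-structure-group dictionary for the split exact sequence $1\rightarrow G\rightarrow H\rightarrow \Phi \rightarrow 1$, with splitting $s:\Phi \rightarrow H$. Working étale-locally on $S$ we trivialise $T$; then $\theta$ picks out a section of $P/G$, equivalently, since $G=\ker (H\rightarrow \Phi )$, a reduction of $P$ to a $G$-torsor $Q\subset P$, and conversely $P=Q\times _G H$. So over each étale chart of $S$ the data $(P,\theta )$ amounts to an $\eta$-local system $Q$ for $G$ on $X\times S$; and a self-morphism $(\gamma ,\phi )$ with $\gamma \in \Phi =\mathrm{Aut}(\underline{\Phi})$ sends $Q$ to the $G$-reduction attached to the $\gamma$-translated section, which is the image of $Q$ under right translation by $s(\gamma )$, so $\phi$ followed by this translation is an isomorphism $Q\cong {}^{\gamma}Q$, where ${}^{\gamma}Q$ denotes $Q$ with structure group retwisted by the image of $\gamma$ in $\mathrm{Aut}(G)$. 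Collecting these over all $\gamma$ one reads off exactly a $\Phi$-equivariant structure on $Q$ covering the $\Phi$-action on $G$; globally over $S$ the clutching data of $T$ twists this collection, so that $(T,P,\theta )$ is precisely a $\Phi$-torsor $T$ on $S$ together with a $\Phi$-equivariant map $T\rightarrow \Mm _{\eta}(X,G)$, which is by definition an object of $(\Mm _{\eta}(X,G)\stackquot \Phi )(S)$. The same analysis on arrows gives the matching of morphisms, and one checks that under this equivalence the first projection records $T$ while the second recovers $P=Q\times _G H$ by extension of structure group along $G\hookrightarrow H$, so the square is identified with the tautological cartesian square defining the fibre product.

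Finally I would observe that every operation used — extension of structure group along $G\hookrightarrow H$ and $H\rightarrow \Phi$, reduction along a section, and retwisting the structure group by an element of $\Phi$ — is functorial in the group and commutes with the extra data distinguishing the various $\eta$ (an integrable connection, a $\lambda$-connection, a Higgs field of semiharmonic type, and the Deligne–Hitchin glueing for $\eta =DH$), so the argument is uniform; alternatively, for $\eta \neq B$ one can transport the Betti statement using Proposition \ref{correspondence} together with the Riemann–Hilbert equivalence. I expect the genuinely delicate point to be the middle step: verifying with the correct cocycle that the automorphisms of the trivial $\Phi$-torsor on the $B\Phi$-side match the $\Phi$-equivariant structure on the $G$-object with $\Phi$ acting through $\Phi \rightarrow \mathrm{Aut}(G)$, and handling a non-trivial $\Phi$-torsor on the base $S$ rather than the trivial one — this is exactly where the semidirect-product hypothesis $H=G\rtimes \Phi$ is used and where a $2$-categorical comparison is essential.
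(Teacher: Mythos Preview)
The paper states this lemma without proof, so there is no argument to compare against. Your approach is the standard and correct one: compute $S$-points of both sides as groupoids, use the reduction-of-structure-group dictionary for the split extension $1\rightarrow G\rightarrow H\rightarrow \Phi \rightarrow 1$ to identify an $H$-local system together with a trivialisation of its associated $\Phi$-torsor with a $G$-local system, and then track how changing the trivialisation by $\gamma\in\Phi$ transports the $G$-reduction by right multiplication by $s(\gamma)$, which retwists the structure group via $\gamma\in\mathrm{Aut}(G)$. This is exactly the content of the lemma, and your remark that the construction is uniform in $\eta$ because extension and reduction of structure group commute with connections, Higgs fields, etc., is the right way to handle the various flavours.

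One small point worth sharpening in the write-up: when you say ``a $\Phi$-equivariant structure on $Q$'', be careful that for the trivial $\Phi$-torsor on $S$ the datum of a $\Phi$-equivariant map $\Phi_S\rightarrow \Mm_\eta(X,G)$ is simply the single object $Q=f(1)$, with no extra structure; the isomorphisms $Q\cong {}^\gamma Q$ you produce arise not as part of the object but as the automorphisms in the groupoid $(\Mm_\eta(X,G)\stackquot\Phi)(S)$ lying over the automorphism $\gamma$ of the trivial torsor. Phrasing it this way makes the matching of morphisms cleaner and avoids any suggestion that the object carries a chosen family of such isomorphisms.
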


Suppose now that $\Phi$ acts on our smooth projective variety $X$
with DM-stack quotient $Y:=X\stackquot \Phi$; in fact $X$ could also
be a DM-stack itself. 

Let $G\wr \Phi$ denote the wreath product
(these have been used for geometry, cf 
\cite{WangZhou}), 
that is the semidirect product of
$\Phi$ with its permutation action on $\prod _{\Phi}G$. 
Elements are denoted $(v,(g_w)_{w\in \Phi})$. There is a canonically
split projection $G\wr \Phi \rightarrow \Phi$, which induces a map on 
moduli spaces.

There is an action of $\Phi$ on $G\wr \Phi$, combining its adjoint action 
on itself, its translation action on $\prod _{w\in \Phi}G$, and its given action
on $G$. The formula is
$$
\varphi \in \Phi : (v,(g_w)_{w\in \Phi})\mapsto (\varphi v\varphi ^{-1}, (\varphi (g_{\varphi ^{-1}w}))_{w\in \Phi}).
$$
Let $H:= (G\wr \Phi )\rtimes \Phi$ be the semidirect product for this action. 

The covering $X\rightarrow Y$ is a $\Phi$-torsor which induces a point
denoted 
$$
[X]=\ast \rightarrow \Mm _{\eta}(Y,\Phi )
$$
in any of the moduli spaces of $\Phi$-local systems over $Y$, which all parametrize
$\Phi$-torsors since $\Phi$ is a finite group. 

Use first this torsor and the group $G\wr \Phi$ to transform the action of $\Phi$
on $\Mm _{\eta}(X,G)$ to an action on the group only, the case of Lemma \ref{actsongroup}.

\begin{proposition}
\label{changeaction}
Let $Y:= X\stackquot \Phi$ be the DM-stack quotient. Then 
for any type of local system $\eta$ we have a cartesian
diagram of moduli stacks 
$$
\begin{array}{ccc}
\Mm _{\eta}(X,G) & \rightarrow & \Mm _{\eta}(Y,G\wr \Phi )\\
\downarrow & & \downarrow \\
\left[ X \right] & \rightarrow & \Mm _{\eta}(Y,\Phi ).
\end{array}
$$
This is compatible with the action of $\Phi$, given on $X$ and $G$,
thereby induced on $G\wr\Phi$, and by the adjoint action on $\Phi$ for the
lower right corner. 
\end{proposition}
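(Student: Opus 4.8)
The plan is to present $Y=X\stackquot\Phi$ by the étale hypercovering coming from the $\Phi$-action groupoid, use the descent description of the moduli stacks to rewrite both sides of the square in terms of $\Phi$-equivariant $\eta$-local systems on $X$, recognize the resulting identity as a non-abelian Shapiro lemma for the finite étale $\Phi$-torsor $p\colon X\to Y$, and finally track the $\Phi$-equivariance by following a relabelling of indices through that lemma. Nothing in the argument should be special to $\eta$: it will use only the descent description of $\Mm _{\eta}(Y,-)$ and the fact, recalled at the beginning of this section, that $\Phi$ acts on $\Mm _{\eta}(X,-)$ preserving all the structures.

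First I would fix the hypercovering. Since $\Phi$ is finite, the action groupoid $\Phi\times X\rightrightarrows X$ presents $Y$, and its nerve $X\times\Phi^{\bullet}$ is an étale (hence proper surjective) hypercovering of $Y$ each of whose levels is a disjoint union of copies of $X$, hence a smooth projective variety (or DM-stack). By the definition of $\Mm _{\eta}(Y,-)$ via a hypercovering (Proposition \ref{modulistacks}, together with the descent results Lemma \ref{lsdescends}, Lemma \ref{vbdescent2} and their evident analogues for $\lambda$-connections and Higgs fields, which apply a fortiori to an étale hypercovering) and Lemma \ref{limdiag}, for any linear group $K$ the stack $\Mm _{\eta}(Y,K)$ is identified with the stack of objects of $\Mm _{\eta}(X,K)$ equipped with a $\Phi$-action covering the given action on $X$; this is the identification of the ``stacky fixed points'' lemma at the beginning of this section, applied with the trivial $\Phi$-action on $K$. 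Under it, $\Mm _{\eta}(Y,G\wr\Phi)\to\Mm _{\eta}(Y,\Phi)$ is induced by $G\wr\Phi\twoheadrightarrow\Phi$, and the point $[X]\in\Mm _{\eta}(Y,\Phi)$ becomes the trivial $\Phi$-torsor $X\times\Phi$ with the equivariant structure in which $\Phi$ acts on the factor $\Phi$ by left translation, this being $p^{\ast}$ of the torsor $X\to Y$ with its canonical trivialization $X\times _{Y}X\cong X\times\Phi$.

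The core step is to compute the fibre product. An object of $\Mm _{\eta}(Y,G\wr\Phi)\times _{\Mm _{\eta}(Y,\Phi)}[X]$ is a $\Phi$-equivariant $(G\wr\Phi)$-$\eta$-local system $P$ on $X$ together with a $\Phi$-equivariant isomorphism of its induced $\Phi$-torsor with $X\times\Phi$. Via the splitting $\Phi\hookrightarrow G\wr\Phi$ that isomorphism is a reduction of structure group of $P$ to the normal subgroup $\prod _{w\in\Phi}G$, i.e.\ a $\Phi$-indexed family $(Q_{w})_{w\in\Phi}$ of objects of $\Mm _{\eta}(X,G)$; the equivariant structure on $P$, constrained to be compatible with the left-translation structure on $X\times\Phi$, becomes for each $\varphi\in\Phi$ an isomorphism $\varphi^{\ast}Q_{w}\cong Q_{\varphi w}$ in $\Mm _{\eta}(X,G)$, satisfying the cocycle identity. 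Such a family is determined up to canonical isomorphism by $Q:=Q_{e}$ alone (one recovers $Q_{w}$ as $(w^{-1})^{\ast}Q$ with the tautological comparison maps), and conversely any $Q$ produces such a family; restriction to the $e$-component and this induction construction are mutually inverse, yielding a natural equivalence
$$
\Mm _{\eta}(Y,G\wr\Phi)\times _{\Mm _{\eta}(Y,\Phi)}[X]\ \xrightarrow{\ \sim\ }\ \Mm _{\eta}(X,G),
$$
which is exactly the asserted cartesian square.

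It remains to match the two $\Phi$-actions. On $\Mm _{\eta}(X,G)$ the element $\varphi$ acts through its given action on $X$ and on $G$, say by $Q\mapsto {}^{\varphi}(\varphi^{\ast}Q)$ where ${}^{\varphi}(-)$ is twisting the $G$-torsor by $\varphi\colon G\to G$; on the fibre product it acts through the automorphism $(v,(g_{w}))\mapsto(\varphi v\varphi^{-1},(\varphi(g_{\varphi^{-1}w}))_{w})$ of $G\wr\Phi$, whose restriction to $\prod _{w}G$ is ``translate the index by $\varphi$, then apply $\varphi\colon G\to G$''. Transporting this through the equivalence above, twisting $P$ by the automorphism replaces $(Q_{w})$ by $({}^{\varphi}Q_{\varphi^{-1}w})$ and alters the comparison isomorphisms accordingly; composing with the equivariant structure $\varphi^{\ast}Q_{e}\cong Q_{\varphi^{-1}}$ shows the new $e$-component is ${}^{\varphi}(\varphi^{\ast}Q_{e})$, i.e.\ $\varphi\cdot Q_{e}$. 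The same bookkeeping shows the induced action on $\Mm _{\eta}(Y,\Phi)$ is the adjoint action on $\Phi$ and fixes $[X]$, so the square is $\Phi$-equivariant. The main obstacle is precisely this last step: arranging the left/right conventions and index shifts so that the relabelling of the $Q_{w}$ reproduces exactly the displayed wreath-product automorphism, and carrying it out torsor-theoretically so that nothing is lost when $X$ is disconnected or when the $\Phi$-action on $X$ has nontrivial stabilizers — both being harmlessly absorbed into the stack $Y$. Everything else is formal.
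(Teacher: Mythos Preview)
Your proof is correct and follows essentially the same route as the paper's. Both arguments are instances of the nonabelian Shapiro lemma for the finite \'etale torsor $X\to Y$: the paper builds the forward map directly by forming $\prod_{w\in\Phi} w^{\ast}P$ on $X$ with its evident $\Phi$-action and descending it to a $G\wr\Phi$-bundle on $Y$, while you first pull everything back to $X$ via the action-groupoid hypercovering and then perform the same induction/restriction on $\Phi$-equivariant objects there. The inverse constructions match as well (the paper's ``project to the identity coordinate $g_1$'' is your ``restrict to the $e$-component $Q_e$''), and you have in fact spelled out the $\Phi$-equivariance check more carefully than the paper does.
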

\begin{proof}
If $P$ is a principal $G$-bundle over $X$, 
a group element $w\in \Phi$ 
tranlates it to a new one $w^{\ast}P$ defined by
$(w^{\ast}P)_x:= P_{w^{-1}x}$. 
We get a principal $\prod _{w\in \Phi}G$-bundle over $X$
$$
\prod _{w\in \Phi}w^{\ast}P \rightarrow X,
$$
but $\Phi$ also acts on this bundle so it may be considered as
a principal $G\wr \Phi$-bundle over $Y$.
This construction respects structures of flat $\lambda$-connection or
a structure of topological local system, so it defines 
a map 
$$
\Mm _{\eta}(X,G) \rightarrow 
\Mm _{\eta}(Y,G\wr \Phi ).
$$
The image under the map $G\wr \Phi \rightarrow \Phi$ 
which in our notations is just projection
to the first coordinate, is naturally isomorphic to the covering $X$ considered
as a $\Phi$-torsor. 
This completes the construction of the commutative square in the proposition.
It is compatible with the various actions of $\Phi$. 

To finish the proof we have to show that it is cartesian. Suppose $Q$ is a $G\wr \Phi$-bundle over $Y$, projecting to a $\Phi$-torsor provided with an isomorphism 
to $X$. This gives a map $Q\rightarrow X$ which is a $\prod _{w\in \Phi}G$-torsor
over $X$. Changing structure group by the projection at the identity element 
$$
\prod _{w\in \Phi}G \rightarrow G , \;\;\;\; (g_w)_{w\in \Phi}\mapsto g_1
$$
yields a $G$-torsor $P$ over $X$. This construction provides the required isomorphism
between $\Mm _{\eta}(X,G\wr \Phi )$ and the fiber product in the cartesian square. 
\end{proof}

The semidirect product $H= (G\wr \Phi )\rtimes \Phi$ fits into an exact sequence
$$
1\rightarrow \prod _{w\in \Phi}G \rightarrow H \rightarrow (\Phi \rtimes \Phi )\rightarrow 1
$$
where the quotient is the semidirect product made using the adjoint action of $\Phi$ on
itself. The $\Phi$-torsor $X$ yields by extension of structure group a $\Phi \rtimes \Phi$-torsor $X\times ^{\Phi}(\Phi \rtimes \Phi)$ which projects to the trivial $\Phi$-torsor under the quotient map $\Phi \rtimes \Phi \rightarrow \Phi$. The group 
$\Phi$ acts by automorphisms on $X\times ^{\Phi}(\Phi \rtimes \Phi)$,
giving a map to the moduli stack
\begin{equation}
\label{bphimap}
B\Phi \rightarrow \Mm _{\eta}(X,\Phi \rtimes \Phi ).
\end{equation}

\begin{corollary}
\label{quotexpression}
If $\Phi$ acts on $G$ and $X$, setting $Y:= X\stackquot \Phi$ and 
$H:= (G\wr \Phi )\rtimes \Phi$, we have a cartesian square of algebraic stacks
$$
\begin{array}{ccc}
\Mm _{\eta}(X,G)\stackquot \Phi & \rightarrow &  \Mm _{\eta}(Y,H) \\
\downarrow & & \downarrow \\
B\Phi & \rightarrow & \Mm _{\eta}(X, \Phi \rtimes \Phi ).
\end{array} 
$$
\end{corollary}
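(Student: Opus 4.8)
The plan is to obtain the corollary by combining the two cartesian squares proved just above: that of Proposition~\ref{changeaction}, which trades the action of $\Phi$ on the \emph{space} $X$ for an action of $\Phi$ on the \emph{group} $G\wr\Phi$ over $Y$, and that of Lemma~\ref{actsongroup}, which absorbs an action of $\Phi$ on a group into the corresponding semidirect product.

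First I would pass to $\Phi$-quotients in the cartesian square of Proposition~\ref{changeaction}. Since $\Phi$ is finite, the quotient-stack functor $\stackquot\Phi$ commutes with $2$-fiber products of algebraic stacks carrying compatible $\Phi$-actions: the only apparent difficulty, that in the fiber product a gluing element of $\Phi$ may intervene over the base, is removed by re-choosing the representative, so that $[(\mathcal A\times_{\mathcal C}\mathcal B)/\Phi]\cong[\mathcal A/\Phi]\times_{[\mathcal C/\Phi]}[\mathcal B/\Phi]$. Applied to Proposition~\ref{changeaction} this exhibits $\Mm_\eta(X,G)\stackquot\Phi$ as the $2$-fiber product of $\Mm_\eta(Y,G\wr\Phi)\stackquot\Phi$ and $[X]\stackquot\Phi$ over $\Mm_\eta(Y,\Phi)\stackquot\Phi$, where the lower $\Phi$ acts by the adjoint action and $[X]$, the point classifying the torsor $X\to Y$, is $\Phi$-fixed because conjugating the structure group does not change a torsor; hence $[X]\stackquot\Phi\cong B\Phi$.

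Next I would apply Lemma~\ref{actsongroup} with its $G$ replaced by $G\wr\Phi$ and its $X$ replaced by $Y$. This is legitimate precisely because, by the compatibility clause of Proposition~\ref{changeaction}, the relevant $\Phi$-action on $\Mm_\eta(Y,G\wr\Phi)$ is induced purely by the action of $\Phi$ on the group $G\wr\Phi$ (the formula displayed before Lemma~\ref{actsongroup}), the stack $Y$ itself being fixed. Since $(G\wr\Phi)\rtimes\Phi=H$, Lemma~\ref{actsongroup} presents $\Mm_\eta(Y,G\wr\Phi)\stackquot\Phi$ as the $2$-fiber product of $\Mm_\eta(Y,H)$ and $B\Phi$ over $\Mm_\eta(Y,\Phi)$. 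Substituting this into the previous description realizes $\Mm_\eta(X,G)\stackquot\Phi$ as an iterated $2$-fiber product over $\Mm_\eta(Y,H)$, built from the two legs $B\Phi$ and $[X]\stackquot\Phi\cong B\Phi$ and the two base stacks $\Mm_\eta(Y,\Phi)$ and $\Mm_\eta(Y,\Phi)\stackquot\Phi$ (the latter formed with the adjoint action).

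The remaining step — reorganizing this iterated fiber product into the single cartesian square of the statement — is where the work lies, and is the step I expect to be the main obstacle. Here $\Phi$ plays two genuinely different roles, the permutation index in $G\wr\Phi=(\prod_\Phi G)\rtimes\Phi$ and the outer factor of $H$, and it is the exact sequence $1\to\prod_\Phi G\to H\to\Phi\rtimes\Phi\to1$ recalled before the corollary that fuses the two base stacks above into the single base $\Mm_\eta(X,\Phi\rtimes\Phi)$. Correspondingly the two legs must be matched with the one map~\eqref{bphimap}: by construction the $\Phi\rtimes\Phi$-torsor $X\times^\Phi(\Phi\rtimes\Phi)$ occurring there has trivial image under $\Phi\rtimes\Phi\to\Phi$ (which, via the semidirect-product splitting, accounts for the $B\Phi$ produced by Lemma~\ref{actsongroup}) and has $X$ as its $\Phi$-part (which accounts for $[X]$), while its automorphism group is $\Phi$ (which supplies the single $B\Phi$ of the statement). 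One must check that these matchings are mutually compatible and that the two copies of $B\Phi$ collapse to one without leaving a spurious gerbe; granting that, the composed square reads $\Mm_\eta(X,G)\stackquot\Phi\cong\Mm_\eta(Y,H)\times_{\Mm_\eta(X,\Phi\rtimes\Phi)}B\Phi$, which is the assertion. Everything outside this reorganization is the formal calculus of $2$-fiber products and quotient stacks.
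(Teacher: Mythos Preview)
Your proposal is correct and follows essentially the same route as the paper: use Proposition~\ref{changeaction} to replace the geometric $\Phi$-action on $X$ by the group-theoretic $\Phi$-action on $G\wr\Phi$ over $Y$, then apply Lemma~\ref{actsongroup}, and finally fuse the two pullbacks into a single one over the map~\eqref{bphimap}. The paper's own proof compresses exactly this into three sentences; your version is more explicit about the $2$-categorical bookkeeping (commutation of $\stackquot\Phi$ with fiber products, the identification $[X]\stackquot\Phi\cong B\Phi$, and the collapse of the two copies of $B\Phi$), which is precisely the content the paper leaves implicit in the phrase ``combining the two pullbacks amounts to taking the pullback over the map~\eqref{bphimap}.''
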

\begin{proof}
Proposition \ref{changeaction} allows us to express the action of $\Phi$ on 
the moduli stack
$\Mm _{\eta}(X,G)$ as coming from an action on the group $G\wr \Phi$ only,
for local systems over the DM-stack quotient $Y=X\stackquot \Phi$.  
Lemma \ref{actsongroup} then gives the stack quotient of the moduli space as a pullback
over $B\Phi$. Combining the two pullbacks amounts to taking the pullback 
over the map \eqref{bphimap}. 
\end{proof}

This corollary motivates the introduction of DM-stacks for looking at group actions on
the moduli of local systems over
a smooth projective variety $X$. 

\begin{question}
What are the properties of the induced square 
$$
\begin{array}{ccc}
M _{\eta}(X,G)/ \Phi & \rightarrow & M _{\eta}(Y,H )\\
\downarrow & & \downarrow \\
\ast & \rightarrow & M _{\eta}(Y,\Phi \rtimes \Phi ).
\end{array}
$$
of coarse moduli spaces?
\end{question}

The moduli space $M_{\eta}(X,\Phi \rtimes \Phi )$ is discrete. 
It would be good to be able to say that 
$M _{\eta}(X,G)/ \Phi$ is identified as an irreducible component of
$M _{\eta}(Y,H )$ but that seems to be a perhaps somewhat delicate question about
character varieties.

\section{Fundamental groups of irreducible varieties}

Many years ago, Domingo Toledo asked the following question: is every finitely
presented group the fundamental group of an irreducible singular variety?
In this section we give a streamlined argument to show that the answer is `yes'. 

Take note of the following construction. Suppose $X$ is quasiprojective,
$Z$ a closed subscheme, and $r:Z\rightarrow Y$ a finite morphism.
Then there is a scheme $W$ obtained by ``contracting along $r$''. More precisely,
$W$ is provided with a morphism $p:X\rightarrow W$ and a factorization
$$
\begin{array}{ccc}
Z & \hookrightarrow & X \\
\downarrow && \downarrow \\
Y & \rightarrow & W
\end{array}
$$
which is universal, that is to say it is a cocartesian square in the category of schemes.
Furthermore $Y\hookrightarrow W$ is a closed embedding, the above square is also cartesian,
$W$ is separated of finite type over $\cc$, and the morphism $p$ is finite. 
The coproduct may be denoted by 
$$
W = X/r = X\cup ^ZY.
$$
The associated diagram of topological spaces 
$$
\begin{array}{ccc}
Z^{\rm top} & \hookrightarrow & X^{\rm top} \\
\downarrow && \downarrow \\
Y^{\rm top} & \rightarrow & W^{\rm top}
\end{array}
$$
is also cocartesian and cartesian. 

From this we get the
Brown-Van Kampen statement: that for any $0\leq n \leq \infty$
the diagram of $n$-groupoids 
$$
\begin{array}{ccc}
\Pi _n(Z^{\rm top}) & \rightarrow & \Pi _n(X^{\rm top}) \\
\downarrow && \downarrow \\
\Pi _n(Y^{\rm top}) & \rightarrow & \Pi _n(W^{\rm top})
\end{array}
$$
is cocartesian in the $n+1$-category of $n$-groupoids. For $n=\infty$ this
just says that the previous diagram of spaces is a homotopy pushout. 

For $n=1$, the diagram of fundamental groupoids
$$
\begin{array}{ccc}
\Pi _1(Z^{\rm top}) & \rightarrow & \Pi _1(X^{\rm top}) \\
\downarrow && \downarrow \\
\Pi _1(Y^{\rm top}) & \rightarrow & \Pi _1(W^{\rm top})
\end{array}
$$
is a cocartesian diagram in the $2$-category of groupoids.

\begin{theorem}
Suppose $\Upsilon$ is a finitely presented group. Then there is
an irreducible projective variety $W$ with $\pi _1(W^{\rm top})\cong \Upsilon$. 
\end{theorem}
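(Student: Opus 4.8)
The plan is to realise $\Upsilon$ as $\pi_1$ of an algebraic version of its presentation $2$-complex, assembled out of the contraction construction $W=X\cup^ZY$ above. Fix a finite presentation $\Upsilon=\langle g_1,\dots,g_n\mid r_1,\dots,r_m\rangle$. First I would produce an irreducible projective nodal rational curve $C$ with exactly $n$ nodes, by applying the contraction construction to $X=\pp^1$, the closed subscheme $Z=\{a_1,b_1,\dots,a_n,b_n\}$ of $2n$ distinct points, and $Y=\{p_1,\dots,p_n\}$ with $r(a_i)=r(b_i)=p_i$. A high power of $\Oo_{\pp^1}(1)$ restricts trivially to $Z$, hence glues to an ample bundle on $C$, so $C$ is projective; it is irreducible and reduced. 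Topologically $C^{\rm top}\simeq S^2\vee\bigvee^{n}S^1$, so $\pi_1(C^{\rm top})\cong F_n$ with $g_i$ the loop through the $i$-th node, and I identify $F_n$ with the free group on the $g_i$ so that $\Upsilon=F_n/\langle\langle r_1,\dots,r_m\rangle\rangle$.

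The key geometric input is: for every word $w\in F_n$ there is a connected projective curve $B$ with $\pi_1(B^{\rm top})\cong\zz$ together with a finite morphism $\psi\colon B\to C$ carrying the generator of $\pi_1(B^{\rm top})$ to a conjugate of $w$. To build it, write $w=g_{i_1}^{\varepsilon_1}\cdots g_{i_\ell}^{\varepsilon_\ell}$ and take $B$ to be a cycle of $2\ell$ copies of $\pp^1$, so $\pi_1(B^{\rm top})=\zz$; map each component isomorphically onto the normalisation $\widetilde C=\pp^1$, choosing these isomorphisms so that the $2\ell$ nodes of $B$ map alternately to the nodes $p_{i_t}$ of $C$ — using the two branches $a_{i_t},b_{i_t}$ in the order prescribed by $\varepsilon_t$ — and to a fixed basepoint $\bar *\in C$. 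These choices are compatible at each node, so $\psi$ is a well-defined finite morphism, and tracing the loop represented by the generator of $\pi_1(B^{\rm top})$ recovers $w$ up to cyclic rotation, hence up to conjugacy. I apply this with $w=r_j$ to get $B_j$ and $\psi_j\colon B_j\to C$.

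Next set $X_0:=C\times\pp^M$ with $M$ large; it is an irreducible projective variety and ${\rm pr}_1$ induces $\pi_1(X_0^{\rm top})\xrightarrow{\ \sim\ }\pi_1(C^{\rm top})=F_n$. Choosing a very ample bundle on $B_j$ to get a closed immersion $B_j\hookrightarrow\pp^M$, the map $B_j\to C\times\pp^M$ with first component $\psi_j$ and second component that immersion is itself a closed immersion, with image a closed subscheme $\delta_j\hookrightarrow X_0$ satisfying ${\rm pr}_1|_{\delta_j}=\psi_j$; since $\dim X_0\ge 3$, after applying generic automorphisms of the $\pp^M$-factor the $\delta_j$ become pairwise disjoint. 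By construction $\pi_1(\delta_j^{\rm top})\cong\zz$ maps into $\pi_1(X_0^{\rm top})=F_n$ with image generated by a conjugate of $r_j$. Let $Z:=\bigsqcup_j\delta_j\subset X_0$, fix an ample bundle $\Ll$ on $X_0$, and embed each $\delta_j$ as a closed subscheme $\iota_j\colon\delta_j\hookrightarrow\pp^{M_j}$ by a sufficiently high power (uniform in $j$) of $\Ll|_{\delta_j}$; put $Y:=\bigsqcup_j\pp^{M_j}$, $r:=\bigsqcup_j\iota_j\colon Z\to Y$ (finite), and $W:=X_0\cup^Z Y$, with the finite surjection $p\colon X_0\to W$. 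Then the chosen power of $\Ll$ on $X_0$ and $\bigsqcup_j\Oo_{\pp^{M_j}}(1)$ on $Y$ agree on $Z$, hence glue to a line bundle $\Ll_W$ on $W$ with $p^{\ast}\Ll_W$ ample; since ampleness descends along finite surjections, $\Ll_W$ is ample and $W$ is projective. It is irreducible (the image of $X_0$) and reduced (its local rings are fibre products of reduced rings), hence an irreducible projective variety.

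It remains to compute $\pi_1(W^{\rm top})$. Apply the Brown--Van Kampen statement ($n=1$) to the cocartesian square defining $W$: since each $\pp^{M_j}$ is connected and simply connected and each $\delta_j$ is connected, pushing $\Pi_1(\delta_j^{\rm top})\to\Pi_1((\pp^{M_j})^{\rm top})\simeq\ast$ into $\Pi_1(X_0^{\rm top})$ amounts to killing the normal closure of the image of $\pi_1(\delta_j^{\rm top})$. Carrying this out for $j=1,\dots,m$, with a basepoint kept in $X_0\setminus Z$ so that the relevant images are the conjugates of $\langle r_j\rangle$, yields $\pi_1(W^{\rm top})=\pi_1(X_0^{\rm top})/\langle\langle r_1,\dots,r_m\rangle\rangle=F_n/\langle\langle r_1,\dots,r_m\rangle\rangle\cong\Upsilon$. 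The step I expect to be the main obstacle is the geometric lemma of the second paragraph — exhibiting, for an arbitrary word, an honest finite morphism from a cycle of rational curves realising that word on $\pi_1$ — together with the care needed to keep the $\delta_j$ disjoint and to match ample line bundles along $Z$ so that $W$ comes out projective rather than merely proper; the $\pi_1$ computation itself is then a routine application of the stated Van Kampen theorem.
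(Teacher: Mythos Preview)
Your geometric lemma and the final Van Kampen computation are both fine; the gap is that your $W$ is not irreducible. In the pushout $W = X_0 \cup^Z Y$ you take $r = \bigsqcup_j \iota_j : Z \to Y$ to be a closed immersion of the curves $\delta_j$ into projective spaces $\pp^{M_j}$ with $M_j \geq 2$. The contraction construction gives $Y \hookrightarrow W$ closed and $p : X_0 \to W$ finite with $p^{-1}(Y) = Z$, so the image of $p$ is only $(W \setminus Y) \cup r(Z)$, which misses all of $\pp^{M_j} \setminus \iota_j(\delta_j)$. Hence $p$ is not surjective (contrary to your assertion), your ampleness-descent argument does not apply as stated, and each $\pp^{M_j}$---meeting $p(X_0)$ only along a curve---is its own irreducible component of $W$. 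The obvious repair, contracting each $\delta_j$ to a point instead, does restore irreducibility, but then no ample line bundle on $X_0$ can restrict trivially to the positive-degree curves $\delta_j$, and projectivity becomes the obstacle.

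The paper sidesteps both issues by running the contraction the other way. It embeds in a simply-connected $X = \pp^N$ a configuration of lines $Z$ modelling the $1$-skeleton of an \emph{unfolded} $2$-complex for $\Upsilon$ (unfolded along a maximal tree in the dual graph of the triangles, so that the unfolded complex is simply connected), and takes $r : Z \to Y$ to be the finite \emph{surjective} refolding map onto the configuration $Y$ modelling the original $1$-skeleton. Surjectivity of $r$ makes $p : \pp^N \to W$ surjective, so $W$ is irreducible as the image of $\pp^N$; the cocartesian square of fundamental groupoids then reads off $\Pi_1(W) \sim \Upsilon$ from $\Pi_1(\pp^N) = \ast$ and the map $\Pi_1(Z) \to \Pi_1(Y)$.
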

\begin{proof}
Suppose $\Upsilon$ is a finitely presented group. 
It may be realized as the fundamental group of a $2$-dimensional simplicial complex $A$.
Here $A$ consists of a set of vertices, plus a subset of pairs
of vertices called the edges, and a subset of triples of vertices called the triangles,
such that the edges of the triangles are contained in the set of edges. Such a complex
$A$ is realized into a topological space $|A|$ in an obvious way.

We furthermore may assume that every vertex is contained in some edge, every 
edge is contained in some triangle, and the set of triangles is connected by the
adjacency relation (two triangles being adjacent if they share the same edge).

Let $G$ be the dual graph whose points are the triangles, and whose edges are the edges
common to two triangles. Choose a maximal tree $T\subset G$. This determines a set of
edges of $A$. Define the {unfolding} of $A$ along $T$ denoted by $\tilde{A}$
to be the simplicial complex formed by the triangles of $A$ joined together along
only those edges corresponding to elements of $T$. 

Observe that the topological realization $|\tilde{A}|$ is simply connected, being a union
of triangles inductively joined along single edges according to the tree pattern. 
On the
other hand, the $1$-skeleta are $1$-dimensional simplicial complexes provided
with a map preserving the structure of simplicial complex
$$
\tilde{A}_1\rightarrow A_1
$$
which induces a map on realizations
$$
|\tilde{A}_1|\rightarrow |A_1|.
$$
The diagram of spaces 
$$
\begin{array}{ccc}
|\tilde{A}_1| & \rightarrow & |\tilde{A}| \\
\downarrow && \downarrow \\
 |A_1| & \rightarrow & |A|
\end{array}
$$
is cocartesian, so the corresponding diagram of fundamental groupoids
$$
\begin{array}{ccc}
\Pi _1(|\tilde{A}_1|) & \rightarrow & \Pi _1(|\tilde{A}| )\\
\downarrow && \downarrow \\
 \Pi _1(|A_1|)& \rightarrow & \Pi _1(|A|)
\end{array}
$$
is cocartesian. Note however that $\Pi _1(|\tilde{A}| )=\ast$ is trivial and
$\Pi _1(|A|)$ is equivalent to the group $\Upsilon$. Thus $\Upsilon$ is expressed as
the homotopy contraction of $\Pi _1(|A_1|)$ along $\Pi _1(|\tilde{A}_1|)$.

Now $A_1$ and $\tilde{A}_1$ are just graphs and the map preserves the edge structure.
Hence there are configurations of lines $Y$ and $Z$, that is to say $Y=\bigcup Y_i$
and $Z=\bigcup Z_j$ with $Y_i\cong \pp ^1$ and $Z_j\cong \pp ^1$, such that the
$Y_i$ correspond to edges of $A_1$ meeting at points corresponding to the vertices of $A_1$,
and the $Z_j$ correspond to edges of $\tilde{A}_1$ meeting at points corresponding to the vertices of $\tilde{A}_1$. 
The map $\tilde{A}_1\rightarrow A_1$ corresponds to a finite map $Z\rightarrow Y$.
We obtain a commutative diagram 
$$
\begin{array}{ccc}
\Pi _1(|\tilde{A}_1|) & \rightarrow & \Pi _1(Z^{\rm top} )\\
\downarrow && \downarrow \\
 \Pi _1(|A_1|)& \rightarrow & \Pi _1(Y^{\rm top})
\end{array}
$$
where the horizontal arrows are equivalences of groupoids. 

Embedd now $Z$ in a projective space $X$, and let $W$ be the quotient obtained
by contracting $X$ along $Z\rightarrow Y$. As $\Pi _1(X^{\rm top} )\sim \ast$,
it follows that the diagram 
$$
\begin{array}{ccc}
\Pi _1(Z^{\rm top}) & \rightarrow & \Pi _1(X^{\rm top}) \\
\downarrow && \downarrow \\
\Pi _1(Y^{\rm top}) & \rightarrow & \Pi _1(W^{\rm top})
\end{array}
$$
is the same as 
$$
\begin{array}{ccc}
\Pi _1(|\tilde{A}_1|) & \rightarrow & \Pi _1(|\tilde{A}| )\sim \ast\\
\downarrow && \downarrow \\
 \Pi _1(|A_1|)& \rightarrow & \Pi _1(|A|) \sim \Upsilon .
\end{array}
$$
Thus $\pi _1(W^{\rm top})\cong \Upsilon$, and $W$ is irreducible by construction. 
\end{proof}

\begin{question}
Is it possible to  construct an irreducible variety $W$ with $\pi _1(W)\cong \Upsilon$,
such that the singularities of $W$ are normal crossings?
\end{question}

\end{document}